\numberwithin{equation}{section}
\newtheorem{theorem}{Theorem}[section]
\newtheorem{proposition}[theorem]{Proposition}
\newtheorem{definition}[theorem]{Definition}
\newtheorem{lemma}[theorem]{Lemma}
\newtheorem{remark}[theorem]{Remark}
\newtheorem{corollary}[theorem]{Corollary}
\newtheorem{example}[theorem]{Example}
\newcommand{\N}{\mathbb N}
\newcommand{\C}{\mathbb C}
\renewcommand{\L}{\mathbf L}
\newcommand{\K}{\mathbf K}
\renewcommand{\P}{\mathcal P}
\newcommand{\Cc}{\mathcal C}
\newcommand{\Ss}{\mathfrak S}
\newcommand{\ASM}{\mathbf{ASM}}
\newcommand{\HFM}{\mathbf{HFM}}
\newcommand{\NCP}{\mathbf{NCP}}
\newcommand{\Cat}{\mathbf{Cat}}
\newcommand{\Dyck}{\mathbf{Dyck}}
\newcommand{\DPoset}{\mathcal D}
\newcommand{\TPoset}{\mathcal T}
\newcommand{\JIrr}{\mathcal J}
\newcommand{\MIrr}{\mathcal M}
\newcommand{\CT}{\operatorname {CT}}
\newcommand{\BPD}{\operatorname {BPD}}
\newcommand{\BCPD}{\operatorname {BCPD}}
\newcommand{\bcc}{\operatorname {bcc}}
\newcommand{\Upper}{\mathbf{Up}}
\newcommand{\Lower}{\mathbf{Lo}}
\newcommand{\JIdeal}{\operatorname J}
\newcommand{\jAnti}{\operatorname j}
\newcommand{\MIdeal}{\operatorname M}
\newcommand{\mAnti}{\operatorname m}
\newcommand{\Nappe}{\operatorname N}
\newcommand{\Epos}{\operatorname{E}_{\mathrm{pos}}} 
\newcommand{\Eval}{\operatorname{E}_{\mathrm{val}}} 
\newcommand{\TL}{\operatorname {TL}}
\newcommand{\Sym}{\operatorname{Sym}}
\newcommand{\QSym}{\operatorname{QSym}}
\definecolor{darkblue}{rgb}{0,0,0.7} 
\newcommand{\darkblue}{\color{darkblue}} 
\newcommand{\defn}[1]{\textsl{\darkblue #1}} 
\newcommand{\set}[2]{\left\{ #1 \;\middle|\; #2 \right\}} 
\renewcommand{\b}[1]{\boldsymbol{#1}} 
\newcommand{\dotprod}[2]{\langle #1 | #2 \rangle} 
\newcommand{\ie}{\textit{i.e.}~} 
\newcommand{\eg}{\textit{e.g.}~} 
\newcommand{\eqdef}{\mbox{\,\raisebox{0.2ex}{\scriptsize\ensuremath{\mathrm:}}\ensuremath{=}\,}} 
\newcommand{\ssm}{\smallsetminus} 
\newcommand{\precdot}{\prec\mathrel{\mkern-5mu}\mathrel{\cdot}}
\newcommand{\cross}[1][black]{\raisebox{-.15cm}{\includegraphics[scale=.9]{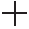}}}
\newcommand{\elbow}[1][black]{\raisebox{-.15cm}{\includegraphics[scale=.9]{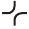}}}
\def\l@section{\@tocline{1}{1pt}{0pc}{}{}}
\def\l@part{\@tocline{1}{7pt}{0pc}{}{}}
\let\oldtocpart=\tocpart
\renewcommand{\tocpart}[2]{\sc\large\oldtocpart{#1}{#2}}
\let\oldtocsection=\tocsection
\renewcommand{\tocsection}[2]{\bf\oldtocsection{#1}{#2}}
\let\oldtocsubsubsection=\tocsubsubsection
\renewcommand{\tocsubsubsection}[2]{\quad\oldtocsubsubsection{#1}{#2}}
\title[Heaps of dodecahedra, Catalan congruences on ASMs, and TL algebra bases]{Heaps of rhombic dodecahedra, \\ catalan congruences on alternating sign matrices, \\ and bases of the Temperley--Lieb algebra}
\thanks{FH was partially supported by the European projet FRESCO (European Union’s Horizon 2020 research and innovation program grant No.101001995) and by the French--Austrian project PAGCAP (ANR-21-CE48-0020 \& FWF I 5788).
VP was partially supported by the Spanish project PID2022-137283NB-C21 of MCIN/AEI/10.13039/501100011033 / FEDER, UE, by the Spanish--German project COMPOTE (AEI PCI2024-155081-2 \& DFG 541393733), by the Severo Ochoa and María de Maeztu Program for Centers and Units of Excellence in R\&D (CEX2020-001084-M), by the Departament de Recerca i Universitats de la Generalitat de Catalunya (2021 SGR 00697), and by the French--Austrian project PAGCAP (ANR-21-CE48-0020 \& FWF I 5788).}
\author{Florent Hivert}
\address[Florent Hivert]{LISN (UMR CNRS 9015), Université Paris Saclay, INRIA, CNRS}
\email{florent.hivert@lisn.fr}
\urladdr{\url{http://www.lisn.fr/~hivert/}}
\author{Vincent Pilaud}
\address[Vincent Pilaud]{Universitat de Barcelona \& Centre de Recerca Matemàtica, Barcelona}
\email{vincent.pilaud@ub.edu}
\urladdr{\url{https://www.ub.edu/comb/vincentpilaud/}}
\author{Ludovic Schwob}
\address[Ludovic Schwob]{LIGM, Université Gustave Eiffel, Champs-sur-Marne}
\email{ludovic.schwob@edu.univ-eiffel.fr}
\urladdr{\url{https://igm.univ-mlv.fr/~schwob/}}
\begin{document}

\vspace*{.6cm}

\maketitle

\begin{abstract}
We prove that the excedance relation on permutations defined by N.~Bergeron and L.~Gagnon actually extends to a congruence of the lattice on alternating sign matrices.
Motivated by this example, we study all lattice congruences of the lattice on alternating sign matrices whose quotient is isomorphic to the Stanley lattice on Dyck paths, which we call catalan congruences.
We prove that the maxima of the congruence classes are always covexillary permutations (and all covexillary permutations appear this way), and that the minimal permutations in each class are always precisely the $321$-avoiding permutations.
Finally, we show that any choice of representative permutations in each congruence class yield a basis of the \mbox{Temperley--Lieb} algebra with parameter~$2$, vastly generalizing the bases arising from the excedance relation. 
\end{abstract}

\enlargethispage{-.5cm}
\tableofcontents


\enlargethispage{-.1cm}
\section{Introduction}
\label{sec:introduction}

It is well known that the quotient $\K[X]/\langle\Sym^+(X)\rangle$ of the polynomial ring on an alphabet~$X = \{x_1,\dots,x_n\}$ of $n$~variables by the ideal generated by symmetric polynomials with no constant terms is of dimension~$n!$ and is isomorphic to the regular representation of the symmetric group~$\Ss_n$.
Chevalley–Shephard–Todd's theorem asserts that this statement remains true when the symmetric group is replaced by any real or complex reflection group.
In the combinatorics community, a large body of work has been devoted to understanding this representation-theoretic structure in more explicit terms.
One notable outcome is A.~Lascoux’s theory of Schubert polynomials~\cite{lascouxschutzenberger87,macdonald.1991}, which provides a remarkably elegant basis for the above quotient.
This perspective also connects to the celebrated $n!$ conjecture~\cite{Garsia1993,Haiman2001}, which extends the setting from one set of variables to two alphabets and gives rise to deep interactions between algebraic geometry, representation theory, and combinatorics.

There is another closely related case which, however, remains much less understood: the ring of quasi-symmetric polynomials.
Recall that these were introduced by I.~Gessel~\cite{Gessel.1984} as generating series encoding permutations with a prescribed descent set.
While many parallels with the theory of symmetric functions have been developed, both from combinatorial and representation-theoretic perspectives~\cite{KrobThibon.1997, Hivert2000}, the invariant-theoretic viewpoint has long remained elusive.
Nevertheless, there have been persistent indications that a structure analogous to the symmetric and coinvariant settings should exist.

On one hand, the first author proved that the ring of quasi-symmetric polynomials is invariant under a modified action of the symmetric group~\cite{Hivert2000}.
In contrast to the classical situation, this action is not faithful and factors through the Temperley--Lieb algebra~$\TL(2)$, whose dimension is the $n$-th Catalan number~$C_n$.
On the other hand, J.~C.~Aval and N.~Bergeron~\cite{AvalBergeron2003} noticed that the quotient $\K[X]/\langle\QSym^+(X)\rangle$ is also of dimension~$C_n$.
However, the connection between these two occurrences of Catalan numbers is far from straightforward.
The reason is that the Temperley–Lieb action is not multiplicative, and therefore does not descend to the quotient.
As a result, the space~$\K[X]/\langle\QSym^+(X)\rangle$ does not naturally inherit an action of~$\TL(2)$, making the relationship between the two Catalan structures subtle to uncover.

In a recent breakthrough~\cite{BergeronGagnon}, N.~Bergeron and L.~Gagnon constructed a particular set of~$C_n$ permutations whose projection in $\TL(2)$ gives a basis, and such that the graded ideal of the vanishing ideal is equal to~$\langle\QSym^+(X)\rangle$.
More precisely, they defined the \defn{excedance equivalence}~$\equiv$ on permutations of~$[n]$, where two permutations are excedance equivalent if they share the same sets of positions and of values of their excedances (see \cref{def:excedanceRelation,fig:excedanceRelation}).
We summarize their main results into the following statement.

\begin{theorem}[\cite{BergeronGagnon}]
\label{thm:main1}
For the excedance relation on permutations,
\begin{enumerate}[(i)]
\item the excedance classes are intervals of the Bruhat order, whose minimal elements are precisely the $321$-avoiding permutations, and whose maximal elements are some covexillary permutations already considered in~\cite{Zinno,GobetWilliams},
\item the poset quotient of the (strong) Bruhat order on permutations by the excedance relation is isomorphic to the Stanley lattice on Dyck paths,
\item any choice of representatives of the excedance classes yields a basis of the Temperley--Lieb algebra~$\TL(2)$,
\item the top degree homogeneous components of the vanishing ideal of the set of permutations which are maximal in their excedance classes and the positive degree quasisymmetric polynomials generate the same ideal.
\end{enumerate}
\end{theorem}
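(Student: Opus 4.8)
The plan is to handle the four statements in order, building everything on a careful study of the fibers of the excedance map $\mathrm{Exc} \colon \sigma \mapsto (\Epos(\sigma), \Eval(\sigma))$, where $\Epos(\sigma) = \set{i}{\sigma(i) > i}$ and $\Eval(\sigma) = \set{\sigma(i)}{\sigma(i) > i}$. The first step is to characterize which pairs $(P,V)$ of subsets of $[n]$ with $|P| = |V|$ actually arise as $(\Epos(\sigma), \Eval(\sigma))$. Realizability amounts, by a Hall-type matching argument, to pairing each value of $V$ with a position of $P$ strictly below it; the resulting threshold inequalities are exactly a ballot/non-negativity condition, so reading $1, 2, \dots, n$ and recording the pattern of memberships in $P$ and in $V$ produces a lattice path whose Dyck condition is precisely the realizability constraint. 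This simultaneously yields the Catalan count $C_n$ and fixes $\Dyck$ as the target of the quotient map in (ii).

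For (i), inside a fixed fiber $\mathrm{Exc}^{-1}(P,V)$ I would exhibit the two extreme permutations explicitly. The minimal candidate places the values of $V$ into the positions of $P$ in increasing order and the remaining values into the remaining positions in increasing order; this makes both the excedance subword and the non-excedance subword increasing, which is the classical characterization of $321$-avoidance, and one checks it realizes $(P,V)$. The maximal candidate comes from the opposite ``greedy from the right'' filling, which I would identify with the covexillary permutations of \cite{Zinno, GobetWilliams}. The real work is to prove that the fiber equals the Bruhat interval between these two: I would analyze how a Bruhat cover $\sigma \lessdot \tau = \sigma\, t_{ij}$ (transposing positions $i<j$ with $\sigma(i) < \sigma(j)$) affects $\Epos$ and $\Eval$, show that the covers internal to a fiber are exactly those fixing both statistics, and that any two elements of a fiber are connected by such covers. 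I expect this ``interval'' claim to be the main obstacle, since controlling the excedance statistics under arbitrary Bruhat covers is delicate.

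For (ii), I would transport the Dyck-path labeling of the fibers through the quotient, sending each class to its Dyck path, and check that the order induced on classes by the Bruhat order coincides with the componentwise (Stanley) order on Dyck paths. One inclusion is immediate from the extremal descriptions in (i); for the converse I would lift each Stanley cover of Dyck paths to a Bruhat cover between suitable representatives (for instance between the $321$-avoiding minima), so that the quotient map becomes an order isomorphism, with the lattice and distributivity inherited from the Stanley lattice.

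For (iii) and (iv) I would pass to the algebra. For (iii), recall that $\TL(2)$ has dimension $C_n$ with a diagram basis that I would index by the same Dyck paths (equivalently by the $321$-avoiding minima). Writing $T_\sigma$ for the image of $\sigma$ under the Temperley--Lieb action, I would establish a unitriangularity statement: with respect to the Stanley order on classes, $T_\sigma$ for any $\sigma$ in class $C$ equals the basis element indexed by $C$ plus a combination of basis elements indexed by strictly smaller classes. Since this leading behaviour depends only on the class, any transversal of the classes gives a unitriangular, hence invertible, change of basis. For (iv), I would identify permutations with the points $(\sigma(1), \dots, \sigma(n)) \in \K^n$, take the $C_n$ covexillary maxima, and compute the initial ideal of their vanishing ideal for a degree order, the goal being that it coincides with $\langle \QSym^+(X) \rangle$, which matches the Aval--Bergeron count $C_n = \dim \K[X]/\langle\QSym^+(X)\rangle$. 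The obstacle here is exactly this initial-ideal computation, namely showing that the top-degree leading forms are generated precisely by the fundamental quasi-symmetric relations, for which I would exploit the descent/excedance compatibility of the covexillary maxima.
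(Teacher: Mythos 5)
First, be aware that the paper does not prove this statement at all: it is imported wholesale from \cite{BergeronGagnon} (hence the citation in the theorem header) and serves only as the starting point. What the paper proves is the generalization \cref{thm:main2}, by a route entirely different from yours: the excedance relation is shown to extend to a lattice congruence of~$\ASM_n$ (a short computation on height function matrices, since the relation is agreement of the diagonal and superdiagonal), the congruences of~$\ASM_n$ with quotient the Stanley lattice are classified by walls/depth triangles inside the tetrahedral poset~$\JIrr(\ASM_n)$, maxima of classes are characterized via depth antichains (\cref{thm:covexillary}), minimal permutations via nappes (\cref{thm:minimalPermutation}), and the Temperley--Lieb bases via a triangularity induction (\cref{sec:TL}). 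Your proposal instead reconstructs a direct combinatorial approach on permutations, closer in spirit to the original of \cite{BergeronGagnon}. The main dividend of the paper's lattice-theoretic detour is precisely the point you identify as your main obstacle: classes of a lattice congruence are \emph{automatically} intervals, whereas in your setup intervalness must be proved by hand.

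And on that point your sketch has a genuine gap: showing that the Bruhat covers internal to a fiber are exactly those preserving $(\Epos,\Eval)$, and that the fiber is connected by such covers, yields only that the fiber is a connected subposet with the stated minimum and maximum; it does not yield that the fiber \emph{is} an interval. The missing direction --- every permutation~$\rho$ with $\sigma_\lambda\le\rho\le\tau_\lambda$ has the prescribed excedance statistics, i.e., nothing in the Bruhat interval escapes the fiber --- is the actual content of \cref{thm:BG}\,(2) and is not addressed by any step of your plan. Two further points. In~(iii), the unitriangularity claim (``the leading behaviour depends only on the class, with coefficient~$1$'') is asserted rather than proved; this is exactly where the paper's generalized argument needs the dichotomy of \cref{prop:minimalPermutation} (at least one of $\sigma\circ(i,j)$, $\sigma\circ(j,k)$ stays in the class, and if both do then so do $\sigma\circ(i,k)$ and the two $3$-cycles), so some such structural input is unavoidable; and your part~(iv) is a computation plan rather than an argument --- note that the paper explicitly defers the analogue of~(iv) to a future paper. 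Finally, a smaller issue: you define excedances strictly ($\sigma(i)>i$) while \cref{def:excedanceRelation} uses weak excedances ($i\le\sigma_i$); the two equivalence relations are genuinely different partitions of~$\Ss_n$ (already for $n=3$: weakly $231\equiv 321$, strictly $312\equiv 321$), exchanged by $\sigma\mapsto\sigma^{-1}$, so (i)--(iii) survive the translation, but (iv) concerns the specific set of maxima and does not obviously transfer across it.
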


Our work starts from the observation that this excedance relation on the Bruhat order actually extends to a congruence of the lattice of alternating sign matrices~\cite{MillsRobbinsRumsey1, MillsRobbinsRumsey2, RobbinsRumsey} (which is the MacNeille completion of the Bruhat order~\cite{LascouxSchutzenberger_TreillisBasesCoxeter}).
The congruences of a lattice are the equivalence relations which respect the lattice structure, and they come with rich combinatorial and algebraic properties.
Our main results (partially) extend the results of~\cite{BergeronGagnon} to all lattice congruences~$\equiv$ on alternating sign matrices whose quotient is isomorphic to the Stanley lattice, which we call \defn{catalan congruences}.

\begin{theorem}
\label{thm:main2}
For any catalan congruence~$\equiv$ of the lattice of alternating sign matrices,
\begin{enumerate}[(i)]
\item the maximal alternating sign matrices of the congruence classes of~$\equiv$ are covexillary permutation matrices,
\item the minimal permutation matrices in the congruence classes of~$\equiv$ are precisely the $321$-avoiding permutation matrices,
\item any choice of representative permutation matrices in each congruence class of~$\equiv$ yields a basis of the Temperley--Lieb algebra.
\end{enumerate}
\end{theorem}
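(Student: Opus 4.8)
The plan is to treat the three parts through the common lens of finite lattice congruence theory, using the presentation of the alternating sign matrix lattice $\ASM_n$ by its join-irreducibles and its forcing order, together with the two canonical order-embeddings of the quotient back into $\ASM_n$. I would first recall that any lattice congruence $\equiv$ is determined by the set of cover relations it contracts (closed under forcing), that every congruence class is an interval $[\pi_\downarrow(c), \pi^\uparrow(c)]$, and that the maps $c \mapsto \pi_\downarrow(c)$ and $c \mapsto \pi^\uparrow(c)$ send the quotient isomorphically onto the subposets of class-minima and class-maxima of $\ASM_n$. Since $\equiv$ is catalan, the quotient is the Stanley lattice, which is distributive with $C_n$ elements; the first task is to record the precise combinatorial consequence of this hypothesis, namely which join-irreducibles of $\ASM_n$ survive in the quotient and how the surviving ones are ordered by forcing, as this is what will drive all three statements.

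For (i), I would characterize the class-maxima locally: $A = \pi^\uparrow(c)$ if and only if none of the covers above $A$ is contracted. The work is to show that this condition forces $A$ to be a genuine permutation matrix, and moreover a covexillary one. For the first point I would argue that a non-permutation entry of an ASM always sits inside a rank-two interval (a ``polygon'') of $\ASM_n$ one of whose upper covers must be contracted once the quotient is required to be distributive and isomorphic to the Stanley lattice; hence a class-top cannot carry such an entry. For covexillarity I would show that an occurrence of the pattern $3412$ in a class-top would likewise produce an uncontracted cover above it, contradicting maximality, so that the class-tops form exactly the Catalan family of covexillary permutations already appearing in \cite{Zinno,GobetWilliams}, whose count $C_n$ matches the number of classes. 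I expect the identification of \emph{which} covexillary permutations arise, and the verification that they are permutation matrices rather than proper ASMs, to be the most delicate point of this part, since it is precisely where the Stanley hypothesis must be used quantitatively rather than qualitatively.

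For (ii), I would use that the $321$-avoiding permutations are counted by $C_n$ and are in explicit bijection with the Dyck paths indexing the Stanley lattice. It suffices to prove that each class contains exactly one $321$-avoiding permutation and that it is the unique minimal permutation matrix of the class. Monotonicity of the projection gives that the $321$-avoiding permutation attached to a Dyck path lands in the corresponding class; bijectivity then follows from the $C_n = C_n$ count together with the fact that no two $321$-avoiding permutations are $\equiv$-equivalent, which I would deduce from the forcing description of $\equiv$ (a contracted cover between two $321$-avoiding permutations would survive in the quotient). That the $321$-avoiding element is minimal among the permutation matrices of its class, even though the absolute class-minimum $\pi_\downarrow(c)$ may be a proper ASM, would follow by comparing it with any other permutation in the class along a saturated chain.

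For (iii), the dimension of $\TL(2)$ is $C_n$, so it is enough to prove that the images of any transversal of the classes under the natural map $\C[\Ss_n] \to \TL(2)$ are linearly independent (the specialization to parameter $2$ being exactly what makes this the Temperley--Lieb quotient of $\C[\Ss_n] = \mathcal H_n(1)$). The strategy is a unitriangularity argument with respect to the Stanley order: attach to each permutation $w$ an intrinsic leading diagram in the diagram basis of $\TL(2)$, check that the $321$-avoiding representatives map exactly onto this basis (recovering the known basis and hence a bijection between classes and diagrams), and then show that for any $w$ in a class $c$ the image is $d_c$ plus strictly lower terms in the Stanley order, with $d_c$ depending only on $c$. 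The whole theorem for an arbitrary catalan congruence then reduces to this statement, and its heart---that $w \equiv w'$ implies equal leading diagrams, uniformly over all catalan congruences---is what I expect to be the main obstacle: it is exactly the point where the order-theoretic input (the forcing structure of catalan congruences) must be translated into the non-multiplicative Temperley--Lieb action, generalizing the excedance-specific computation underlying \cref{thm:main1}. I would approach it by induction along contracted covers inside a class, proving that crossing such a cover alters the image only by diagrams strictly below in the Stanley order.
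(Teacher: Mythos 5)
Your lattice-theoretic framework (classes are intervals, class-tops are elements with no contracted cover above them, the congruence is determined by which join-irreducibles survive) is the right starting point and matches the paper's. But at each of the three places where you yourself flag ``the main obstacle,'' the mechanism you propose is either absent or would fail, and it is the same missing idea each time. Since $\ASM_n$ is distributive, its congruence lattice is Boolean: every subset of join-irreducibles can be contracted independently, and there is no nontrivial forcing between distinct ones. Hence your local argument for (i) --- that a $-1$ entry sits in a rank-two ``polygon'' one of whose upper covers ``must be contracted'' once the quotient is Stanley --- cannot work: in a distributive lattice no rank-two configuration ever forces a contraction (opposite sides of a square are perspective, adjacent sides are independent). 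You also have the logic inverted: to contradict maximality of a class-top one must exhibit a \emph{contracted} cover above it, not an uncontracted one. What the paper extracts instead, and what your plan never supplies, is the global consequence of the Stanley hypothesis: by \cref{thm:congruencesDistributiveLattices2}, a catalan congruence is exactly a subposet of $\JIrr(\ASM_n)\cong\TPoset_n$ isomorphic to $\DPoset_n$ (a ``wall''), and a rank-counting argument (\cref{prop:walls}) shows that any wall meets each fiber of the projection $(x_1,x_2,x_3,x_4)\mapsto(x_1,x_2+x_3,x_4)$ exactly once. This yields the depth-triangle encoding and the Lipschitz-type inequalities of \cref{prop:depthTriangleInequalities}; maximality of $A$ forces its canonical meet-antichain into the wall, and two ``crossing'' ascents --- which exist as soon as $A$ has a $-1$ entry or contains $3412$ (\cref{lem:covexillary,lem:ascentsDescentsASMs}) --- would give two wall elements violating those inequalities. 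That is the actual proof of (i), quantitative in exactly the way you anticipated but did not carry out.

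For (ii) and (iii) the gap is a second missing lemma, the paper's \cref{prop:minimalPermutation}: if $\sigma$ contains a $321$-pattern at positions $i<j<k$, then at least one of $\sigma\circ(i,j)$, $\sigma\circ(j,k)$ (both strictly below $\sigma$ in Bruhat order) is congruent to $\sigma$, and if both are then so is $\sigma\circ(i,k)$; this is proved by intersecting walls with ``nappes'' (upper envelopes of the lower sets $\JIdeal(P_\sigma)$). Without it your counting scheme for (ii) cannot close: even granting that no two $321$-avoiding permutations are equivalent, this only shows each class contains at most one of them; it does not exclude a class having an additional minimal permutation that contains $321$, which is the substantive half of the statement. (Your auxiliary claim that ``the $321$-avoiding permutation attached to a Dyck path lands in the corresponding class'' is also unjustified: which $321$-avoiding permutation lies in which class depends on the congruence.) For (iii), your unitriangularity outline agrees in spirit with the paper's, but the leading-coefficient computation is precisely where both halves of \cref{prop:minimalPermutation} are consumed: writing $\sigma=\sigma\circ(i,j)+\sigma\circ(j,k)-\sigma\circ(i,j,k)-\sigma\circ(i,k,j)+\sigma\circ(i,k)$ in $\TL_n(2)$, one needs that either exactly one of the first two terms stays in the class of $\sigma$ (coefficient $1$ by induction) or all five terms do (coefficient $1+1-1-1+1=1$). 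Your proposed induction ``along contracted covers inside a class'' provides no mechanism for controlling how the Temperley--Lieb image changes across a single cover; the paper's induction runs along this five-term relation instead, with the case analysis supplied by \cref{prop:minimalPermutation}.
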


\enlargethispage{-.1cm}
A few comments on this statement. 
In~(i), the covexillary permutation matrices that appear as maxima of congruence classes depend on the congruence.
However, any covexillary permutation matrix appears as the maxima of a class for at least one catalan congruence.
In~(ii), the minimal alternating sign matrix of each class is not necessarily a permutation (we actually discuss along the paper the specific congruences with this property).
But the permutations in each congruence class form an interval of the Bruhat order, whose minimum is $321$-avoiding and whose maximum is covexillary, and (by counting) all $321$ permutations appear as minimal permutations of congruence classes.
Finally, we note that the problem to extend \cref{thm:main1}\,(iv) to any catalan congruence is under investigation.

Our approach to \cref{thm:main2} is a combination of lattice and geometric perspectives on alternating sign matrices.
It is well known that the lattice of alternating sign matrices is distributive, hence isomorphic to the inclusion lattice of lower sets of its join irreducible subposet by the fundamental theorem for distributive lattices.
As already observed and exploited in~\cite{ElkiesKuperbergLarsenPropp, Propp01, Str11}, the join irreducible poset of the lattice of alternating sign matrices can be seen geometrically as a poset on the integer points inside the $(n-2)$th dilate of a $3$-dimensional tetrahedron resting on an edge (see \cref{fig:posets}).
Pushing slightly further this geometric perspective in \cref{subsec:ASMs}, we interpret a lower set in this tetrahedral poset as a stack of rhombic tetrahedra (see \cref{fig:ASM5}). 
The upper hull of this stack is the colored polygonal surface separating the lower set from its complementary upper set (see \cref{fig:ASM2}).
Projecting this surface to the ground (see \cref{fig:ASM3}) naturally leads to interpret geometrically the classical combinatorial models in bijection with alternating sign matrices (\eg height functions, aztec diamond~\cite{ElkiesKuperbergLarsenPropp}, six-vertex configurations~\cite{Kuperberg}, osculating path configurations~\cite{Behrend}, bumpless pipe dreams~\cite{Weigandt}, fully packed loops~\cite{BatchelorBloteNienhuisYung, deGier}), and to cook up some seemingly new models (\eg cliff configurations).
We note that this perspective also naturally leads to higher dimensional analogues of alternating matrices which will deserve further investigation.

The quotients of a distributive lattice~$\L$ are distributive lattices, whose join irreducible posets are induced subposets of the join irreducible poset of~$\L$.
Understanding quotients of the lattice of alternating sign matrices isomorphic to the Stanley lattice on Dyck paths thus boils down to finding subposets of the tetrahedron poset isomorphic to the triangular poset.
Simple geometric arguments show that these triangular subposets of the tetrahedron are encoded by certain depth triangles (see \cref{subsec:depthTriangles}), which are in natural bijections with doubly gapless Gelfand--Tsetlin patterns with bottom row~$12 \dots (n-1)$ (see \cref{subsec:catalanTriangles}), and with bicolored pipe dreams with $n-2$ pipes (see \cref{subsec:bicoloredPipeDreams}).
Interestingly, these depth triangles are themselves endowed with a natural lattice structure (see \cref{fig:catalanLattice}), which we exploit to show \cref{thm:main2} in \cref{sec:maxima,,sec:minimalPermutations,,sec:TL}.

Finally, we introduce in \cref{sec:Pn/Sn} a general operation of symmetrization of posets which specializes to the join irreducible posets of the lattices considered in this paper.
We show in particular that the poset resulting of this symmetrization can be described by inequalities indexed by upper sets of the original poset.


\section{Two distributive lattices}
\label{sec:distributiveLattices}


\subsection{Meet and join representations in distributive lattices}
\label{subsec:meetJoinRepresentationsDistributiveLattices}

We first recall some classical facts about distributive lattices.
A \defn{lower} (resp.~\defn{upper}) \defn{set} of a poset~$\P$ is a subset~$X$ of~$\P$ such that~$x \in X$ and~$x \ge y$ (resp.~$x \le y$) implies~$y \in X$.
A \defn{lattice}~$\L$ is a poset where any two elements~$x,y \in \L$ admits a \defn{join}~$x \vee y$ (least upper bound) and a \defn{meet}~$x \wedge y$ (greatest lower bound).
It is \defn{distributive} if~$x \vee (y \wedge z) = (x \vee y) \wedge (x \vee z)$ and~$x \wedge (y \vee z) = (x \wedge y) \vee (x \wedge z)$ for all~$x,y,z \in \L$ (in fact, these two conditions are equivalent).
An element~$x \in \L$ is \defn{join} (resp.~\defn{meet}) \defn{irreducible} if it covers (resp.~is covered by) a single element~$x_\star$ (resp.~$x^\star$).
We denote by~$\JIrr(\L)$ (resp.~$\MIrr(\L)$) the subposet of~$\L$ induced by its join (resp.~meet) irreducible elements.
The following classical result is known as the fundamental theorem for distributive lattices.

\begin{theorem}[Birkhoff]
\label{thm:ftfdl}
~
\begin{itemize}
\item The set~$\Lower(\P)$ (resp.~$\Upper(\P)$) of lower (resp.~upper) sets of any poset~$\P$ ordered by inclusion forms a distributive lattice.
\item Conversely, any distributive lattice~$\L$ is isomorphic to the inclusion poset of lower (resp.~upper) sets of its join (resp.~meet) irreducible poset~$\JIrr(\L)$ (resp.~$\MIrr(\L)$).
\end{itemize}
\end{theorem}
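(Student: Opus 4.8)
The plan is to prove Birkhoff's representation theorem in the two directions stated.

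\medskip

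\noindent\textbf{First direction.} For the first bullet, I would show that $\Lower(\P)$ ordered by inclusion is a distributive lattice, and deduce the upper-set case by a standard order-reversal argument (the map $X \mapsto \P \ssm X$ is an order-reversing bijection between $\Lower(\P)$ and $\Upper(\P)$, so whatever is established for one transfers to the other). The key observation is that for lower sets, the meet and join are simply intersection and union: if $X$ and $Y$ are lower sets, then so are $X \cap Y$ and $X \cup Y$, because the defining closure condition (if $x \in X$ and $y \le x$ then $y \in X$) is preserved under both operations. Since intersection is the greatest lower bound and union is the least upper bound in any inclusion poset of sets closed under these operations, $\Lower(\P)$ is a lattice with $X \wedge Y = X \cap Y$ and $X \vee Y = X \cup Y$. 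Distributivity is then immediate, since set intersection and union satisfy the distributive laws pointwise. This direction is essentially routine bookkeeping with closure conditions.

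\medskip

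\noindent\textbf{Second direction.} For the converse, given a distributive lattice $\L$, I would construct an explicit isomorphism $\L \to \Lower(\JIrr(\L))$ and verify it is an order isomorphism. The natural candidate is the map
\[
\varphi \colon x \longmapsto \set{j \in \JIrr(\L)}{j \le x},
\]
sending each element to the set of join irreducibles it dominates; this is always a lower set of $\JIrr(\L)$. The crux is to prove $\varphi$ is a bijection preserving order in both directions. Order preservation in one direction is clear since $x \le y$ forces $\{j \le x\} \subseteq \{j \le y\}$. The substantial content is that $\varphi$ is injective and surjective, which rests on the representation fact that every element of a (finite) distributive lattice is the join of the join irreducibles below it: $x = \bigvee \set{j \in \JIrr(\L)}{j \le x}$. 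Injectivity follows immediately from this, since $x$ is recovered from $\varphi(x)$ by taking the join. For surjectivity, given a lower set $I \subseteq \JIrr(\L)$ one sets $x = \bigvee I$ and must check $\varphi(x) = I$; the inclusion $I \subseteq \varphi(x)$ is trivial, and the reverse inclusion is exactly where distributivity is indispensable.

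\medskip

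\noindent\textbf{Main obstacle.} The hard part will be the reverse inclusion $\varphi(\bigvee I) \subseteq I$ for a lower set $I$, equivalently the claim that a join irreducible $j$ satisfying $j \le \bigvee I$ must already lie in $I$. This is where distributivity does the essential work: if $j \le \bigvee_{i \in I} i$ then $j = j \wedge \bigvee_{i \in I} i = \bigvee_{i \in I} (j \wedge i)$ by the distributive law, and since $j$ is join irreducible it cannot be written as a join of strictly smaller elements, forcing $j = j \wedge i$, hence $j \le i$, for some $i \in I$; as $I$ is a lower set, $j \in I$. (One should note that this argument uses finiteness to guarantee that joins of join irreducibles exist and that the relevant representations are finite; in the present paper all lattices in question are finite, so this is unproblematic.) Having established that $\varphi$ is an order-preserving bijection whose inverse $I \mapsto \bigvee I$ is also order-preserving completes the isomorphism, and the meet-irreducible / upper-set statement follows by dualizing.
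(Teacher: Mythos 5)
The paper offers no proof of this statement: it is quoted as Birkhoff's classical fundamental theorem for finite distributive lattices, so there is nothing in the paper to compare your argument against. Your proof is the standard textbook one, and it is correct: lower sets are closed under union and intersection, hence form a sublattice of the power set and inherit distributivity; and the map $x \mapsto \JIdeal(x) \eqdef \set{j \in \JIrr(\L)}{j \le x}$ is an order isomorphism onto $\Lower(\JIrr(\L))$, with injectivity resting on the finite-lattice fact $x = \bigvee \JIdeal(x)$ and surjectivity on exactly the distributivity-plus-irreducibility argument you spell out. Two small points. First, the representation fact $x = \bigvee \JIdeal(x)$, which you only cite, deserves its one-line inductive proof: a minimal counterexample $x$ is neither the bottom (the empty join) nor join irreducible, hence $x = a \vee b$ with $a, b < x$, and minimality applies to $a$ and $b$. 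Second, ``follows by dualizing'' should be handled with care for the meet/upper half: applying your result to the opposite lattice yields $\L^{\mathrm{op}} \cong \Lower\bigl(\JIrr(\L^{\mathrm{op}})\bigr) = \Upper\bigl(\MIrr(\L)\bigr)$ under inclusion, so what duality actually gives is that $\L$ is isomorphic to the upper sets of $\MIrr(\L)$ ordered by \emph{reverse} inclusion; equivalently, the natural map $x \mapsto \MIdeal(x)$ is inclusion-reversing, which is how the paper uses it (an element of $\L$ is a ``cut'' into a lower set and the complementary upper set). The literal phrasing of the theorem glosses over this too: since $\kappa$ identifies $\JIrr(\L)$ with $\MIrr(\L)$, one does get $\L \cong \Lower(\MIrr(\L))$ under inclusion, but $\L \cong \Upper(\MIrr(\L))$ under inclusion fails in general (for the five-element lattice obtained by stacking a point on top of a diamond, $\L$ has two atoms while $\Upper(\MIrr(\L))$ has only one).
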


For~$x \in \L$, we denote by~$\JIdeal(x) \eqdef \set{j \in \JIrr(\L)}{j \le x}$ the corresponding lower set of~$\JIrr(\L)$, and by~$\jAnti(x) \eqdef \max(\JIdeal(x))$ the corresponding antichain of~$\JIrr(\L)$.
We define dually \linebreak $\MIdeal(x) \eqdef \set{m \in \MIrr(\L)}{x \le m}$ and $\mAnti(x) \eqdef \min(\MIdeal(x))$.

Note that there is a canonical isomorphism between the posets $\JIrr(\L)$ and $\MIrr(\L)$ defined by~$\kappa(j) = \bigvee\set{x \in \L}{x \wedge j = j_\star}$.
It has the property that~$\MIdeal(x) = \kappa(\JIrr(\L) \ssm \JIdeal(x))$.
In other words, an element of~$\L$ can be thought of as a \defn{cut} separating a lower set from an upper set in the poset~$\JIrr(\L) \simeq \MIrr(\L)$, 

For instance, the chain with $n+1$ elements is (isomorphic to) the lattice of lower (resp.~upper) sets of a chain~$1-2-\dots-n$, and the map~$\kappa$ is given by~$\kappa(i) = i-1$.

 
\subsection{Dyck paths}
\label{subsec:DyckPaths}

A \defn{Dyck path} of semilength~$n$ is a path with up steps~$(1,1)$ and down steps~$(1,-1)$, starting at~$(0,0)$, ending at~$(2n,0)$, and never passing strictly below the horizontal axis.
They define the \defn{Dyck path lattice}~$\Dyck_n$ (also known as the \defn{Stanley lattice}) with the relation~$P \le Q$ if~$P$ stays below~$Q$.
See \cref{fig:Dyck3ASM3}\,(left).
The join (resp.~meet) irreducible Dyck paths are those with a single peak (resp.~valley), hence the poset~$\JIrr(\Dyck_n)$ (resp.~$\MIrr(\Dyck_n)$) is isomorphic to the triangular poset~$\DPoset_n$ of triples $(y_1,y_2,y_3)$ with $y_1+y_2+y_3 =n-2$, ordered by $(y_1,y_2,y_3) \le (z_1,z_2,z_3)$ if and only if $y_1+y_2\le z_1+z_2$ and $y_2+y_3 \le z_2+z_3$.
See \cref{fig:posets}\,(left).
(Note that this poset could be encoded using two coordinates. However, we prefer to work with barycentric coordinates throughout the paper, as they make the presentation more symmetric.)

\begin{figure}
	\centering
	\includegraphics[scale=.6]{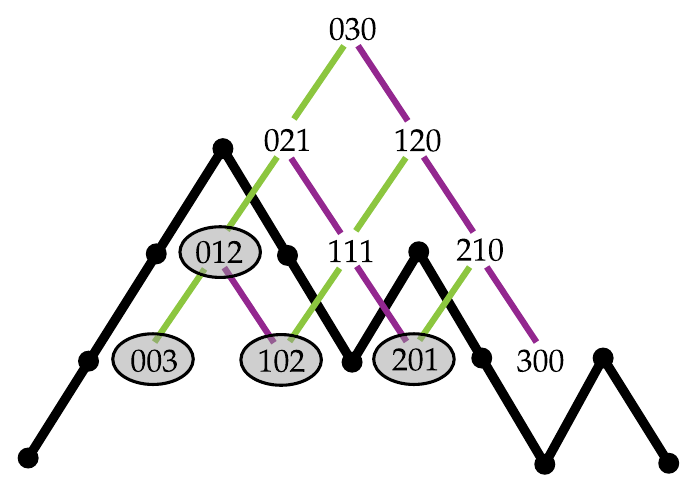}
	\caption{A lower set of $\DPoset_5$  and the corresponding Dyck path of semilength~$5$.}
	\label{fig:dyckpath}
\end{figure}

\begin{figure}
	\centerline{
	\includegraphics[scale=.7]{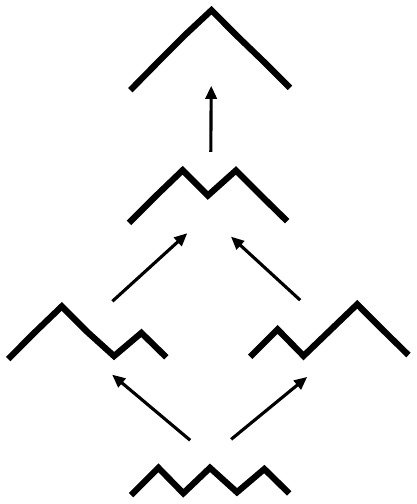}
	\qquad\qquad
	\includegraphics[scale=.7]{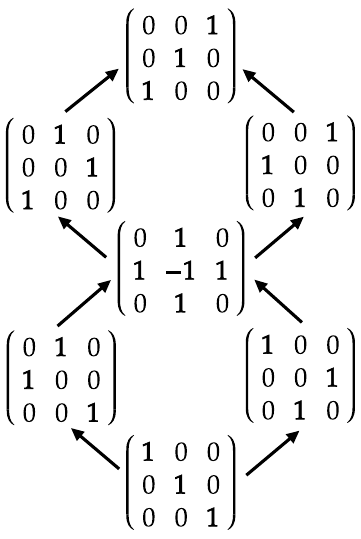}
	}
	\caption{The Stanley lattice~$\Dyck_3$ (left) and the ASM lattice~$\ASM_3$ (right).}
	\label{fig:Dyck3ASM3}
\end{figure}

\begin{figure}
	\centering
	\includegraphics[scale=.7]{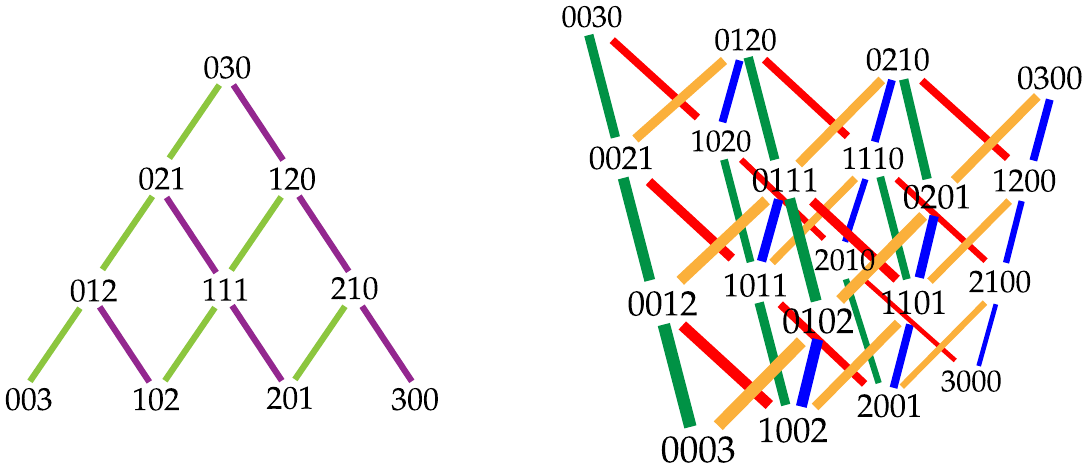}
	\caption{The posets $\DPoset_5$ (left) and $\TPoset_5$ (right).}
	\label{fig:posets}
\end{figure}


\subsection{Alternating sign matrices}
\label{subsec:ASMs}

We closely follow the presentation of~\cite{Propp01}.
For~${n \ge 1}$, one can define three families of matrices, illustrated in \cref{fig:ASM1}:
\begin{itemize}
\item An \defn{alternating sign matrix} (ASM) of order~$n$ is an $(n \times n)$-matrix~$A = (A_{i,j})_{i,j \in [n]}$ with entries~$-1$, $0$ or~$1$, with row and column sums equal to~$1$, and such that nonzero entries alternate between~$1$ and~$-1$ in each row and each column.
\item A \defn{corner sum matrix} (CSM) of order~$n$ is an $(n+1) \times (n+1)$-matrix whose first row and column consist of~$0$ and last row and column consist of the numbers from~$0$ to~$n$ in increasing order, and each entry is either equal to, or one more than the preceding entry in its row and in its column.
\item A \defn{height function matrix} (HFM) of order~$n$  is an $(n+1) \times (n+1)$-matrix whose first (resp.~last) row and column consist of the numbers from~$0$ to~$n$ in increasing (resp.~decreasing) order, and any two consecutive entries in a row or a column differ~by~$1$.
\end{itemize}
As illustrated in \cref{fig:ASM1}, there are simple bijections between these families of matrices:
\begin{itemize}
\item the CSM~$C$ of an ASM~$A$ is defined by~$C_{i,j} = \sum_{i' \le i, j' \le j} A_{i',j'}$ for~$0 \le i,j \le n$,
\item the HFM~$H$ of a CSM~$C$ is defined by~$H_{i,j} = i+j-2C_{i,j}$ for~$0 \le i,j \le n$.
\end{itemize}
(Note that we label the rows and columns of ASMs from $1$ to~$n$ and the rows and columns of CSMs and HFMs from $0$ to~$n$.)

\begin{figure}
	\centerline{
	$\begin{pmatrix} \cdot & \cdot & + & \cdot & \cdot \\[.1cm] \cdot & + & - & + & \cdot \\[.1cm] \cdot & \cdot & + & - & + \\[.1cm] + & - & \cdot & + & \cdot \\[.1cm] \cdot & + & \cdot & \cdot & \cdot \end{pmatrix}$\quad
	$\begin{pmatrix} 0 & 0 & 0 & 0 & 0 & 0 \\ 0 & 0 & 0 & 1 & 1 & 1 \\ 0 & 0 & 1 & 1 & 2 & 2 \\ 0 & 0 & 1 & 2 & 2 & 3 \\ 0 & 1 & 1 & 2 & 3 & 4 \\ 0 & 1 & 2 & 3 & 4 & 5 \end{pmatrix}$\quad
	$\begin{pmatrix} 0 & 1 & 2 & 3 & 4 & 5 \\ 1 & 2 & 3 & 2 & 3 & 4 \\ 2 & 3 & 2 & 3 & 2 & 3 \\ 3 & 4 & 3 & 2 & 3 & 2 \\ 4 & 3 & 4 & 3 & 2 & 1 \\ 5 & 4 & 3 & 2 & 1 & 0 \end{pmatrix}$\quad
	}
	\caption{An ASM, a CSM, and a HFM, all corresponding to each other.}
	\label{fig:ASM1}
\end{figure}

The HFMs of order~$n$ ordered by entrywise comparison define a distributive lattice~$\HFM_n$ where the join (resp.~meet) of two HFMs~$H$ and~$H'$ is the HFM with~$(i,j)$-entry $\max(H_{i,j}, H'_{i,j})$ (resp.~$\min(H_{i,j}, H'_{i,j})$).
Note that two HFMs~$H, H' \in \HFM_n$ form a cover relation~$H \lessdot H'$ if and only if there is~$0 < i,j < n$ such that~$H'_{i,j} = H_{i,j} + 2$ and~$H'_{k,\ell} = H_{k,\ell}$ for all~$0 < k, \ell < n$ with~$(i,j) \ne (k,\ell)$.
We then say that~$(i,j)$ is an \defn{ascent} of~$H$ and a \defn{descent} of~$H'$.

We now introduce a geometric interpretation of the join irreducible poset following~\cite{Propp01, Str11}.
The join irreducible poset~$\JIrr(\HFM_n)$ can be seen geometrically as a poset~$\TPoset_n$ on the integer points inside the $(n-2)$th dilate of a $3$-dimensional tetrahedron resting on an edge~\cite{Propp01}.
See \cref{fig:posets}.
Namely, using barycentric coordinates, this poset is defined on the quadruples~$\b{x} := (x_1, x_2, x_3, x_4) \in \N^4$ with~$x_i \ge 0$ for~$i \in [4]$ and~$x_1 + x_2 + x_3 + x_4 = n-2$ ordered by $\b{x} \le \b{y}$ if and only if $x_1\ge y_1$, $x_2\le y_2$, $x_3\le y_3$ and $x_4\ge y_4$.
Its cover relations are the pairs~$\b{x} \lessdot \b{y}$ such that~$\b{y} - \b{x}$ is one of the four vectors~$b \eqdef (-1,1,0,0)$, $r \eqdef (-1,0,1,0)$, $o \eqdef (0,1,0,-1)$ and~$g \eqdef (0,0,1,-1)$.
We draw these vectors in $4$ distinct colors, blue, red, orange, green.
We can thus see an HFM as a lower set in this 4-colored tetrahedral poset, as illustrated in \cref{fig:ASM2}\,(left).

Note that via the above-mentioned bijection between ASMs and HFMs, we can transport the lattice~$\HFM_n$ on HFMs of order~$n$ to a lattice~$\ASM_n$ on ASMs of order~$n$.
See \cref{fig:Dyck3ASM3}\,(right).
Identifying a permutation~$\sigma$ of~$[n]$ with its matrix~$P_\sigma \eqdef [\delta_{\sigma(i),j}]_{i,j \in [n]}$, this lattice~$\ASM_n$ is the MacNeille completion of the Bruhat order on permutations of~$[n]$~\cite{LascouxSchutzenberger_TreillisBasesCoxeter}.
We call \defn{ascents} and \defn{descents} of an ASM the ascents and descents of its corresponding~HFM.
The join (resp.~meet) irreducible ASMs are known to be the matrices of the \defn{bigrassmannian} (resp.~\defn{anti-bigrassmannian}) permutations, \ie the permutations~$\sigma$ such that $\sigma$ and its inverse~$\sigma^{-1}$ have only one descent (resp.~ascent)~\cite{LascouxSchutzenberger_TreillisBasesCoxeter}.
Namely, an integer point~$\b{x} = (x_1, x_2, x_3, x_4)$ in the tetrahedron corresponds to the bigrassmannian and anti-bigrassmannian permutations
\[
j(\b{x}) = \begin{pmatrix} I_{x_1} & 0 & 0 & 0 \\ 0 & 0 & I_{x_2+1} & 0 \\ 0 & I_{x_3+1} & 0 & 0 \\ 0 & 0 & 0 & I_{x_4} \end{pmatrix}
\qquad\text{and}\qquad
m(\b{x}) = \begin{pmatrix} 0 & 0 & 0 & \bar I_{x_2} \\ 0 & \bar I_{x_1+1} & 0 & 0 \\ 0 & 0 & \bar I_{x_4+1} & 0 \\  \bar I_{x_3} & 0 & 0 & 0 \end{pmatrix}
\]
where~$I_k$ (resp.~$\bar I_k$) is the $k \times k$ identity (resp.~anti-identity) matrix with $1$ on the diagonal (resp.~anti-diagonal) and~$0$ everywhere else.

It is also interesting to represent an ASM as the surface~$S$ separating the lower set~$X$ of~$\TPoset_n$ and its complement.
For this, we include additional edges so that each integer point of the tetrahedron gets $12$ incident arrows (one in the direction~$\b{e}_i-\b{e}_j$ for each~$i \ne j \in [4]$).
The horizontal edges, in direction~$\pm(1,0,0,-1)$ and~$\pm(0,1,-1,0)$, are colored in gray.
The additional endpoints are considered in~$X$ (resp.~in its complement) if they are incident to an arrow which is also incident to one of the two bottom (resp.~top) faces of the tetrahedron.
We then add a rhombus orthogonal to each arrow from a point in~$X$ to a point in its complement (the color of the rhombus matches the color of the orthogonal arrow).
The corresponding surface is illustrated in \cref{fig:ASM2}\,(middle and right)\footnote{See also \url{https://www.ub.edu/comb/vincentpilaud/documents/publications/ASM.html}.}.

\begin{figure}[h]
	\centerline{
	\includegraphics[scale=.14,valign=c]{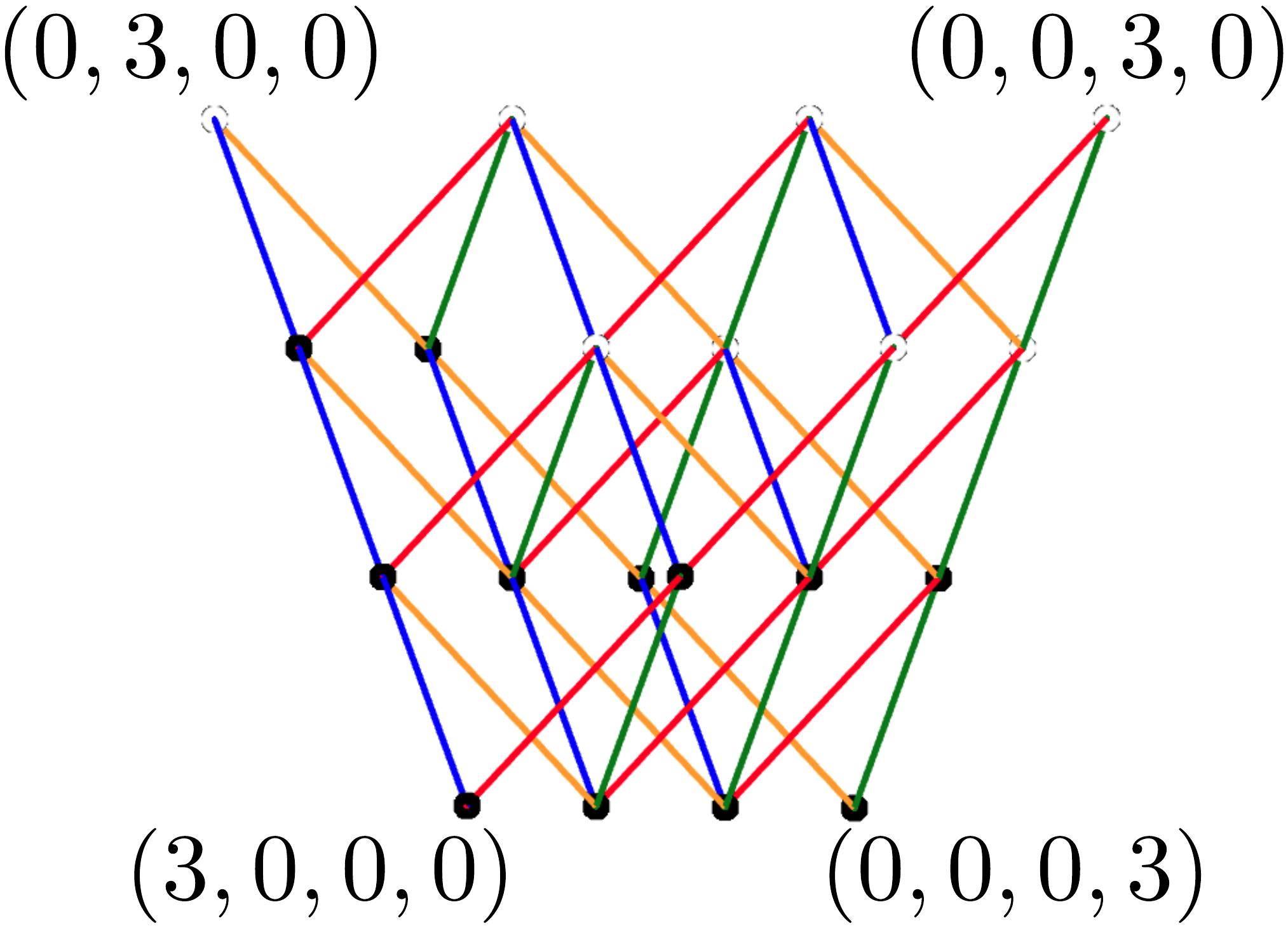}
	\includegraphics[scale=.2,valign=c]{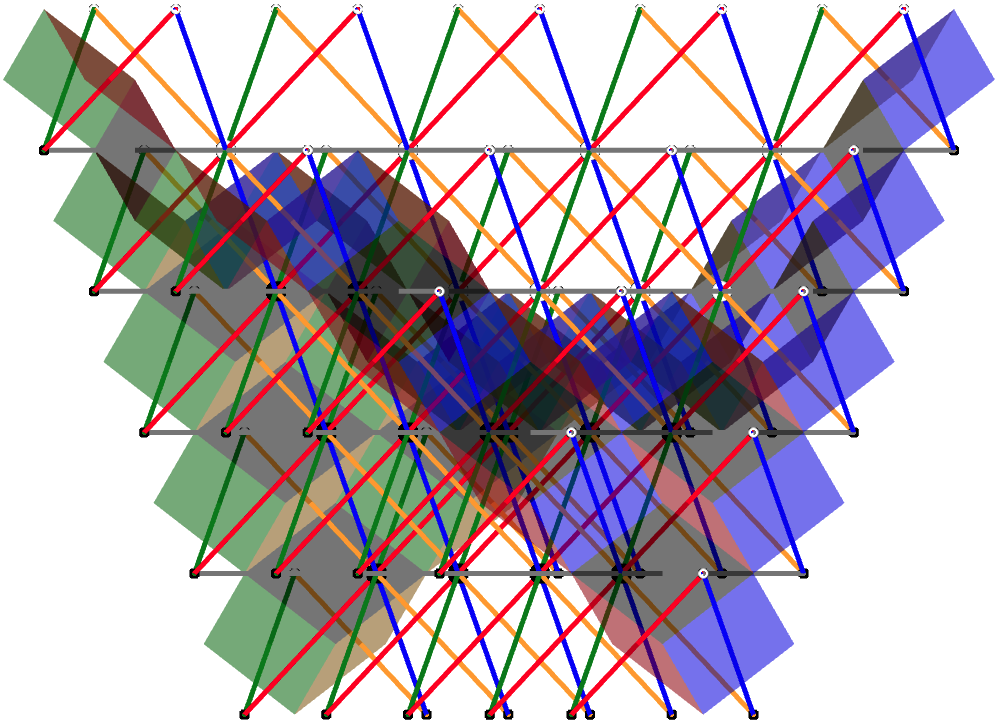}
	\includegraphics[scale=.2,valign=c]{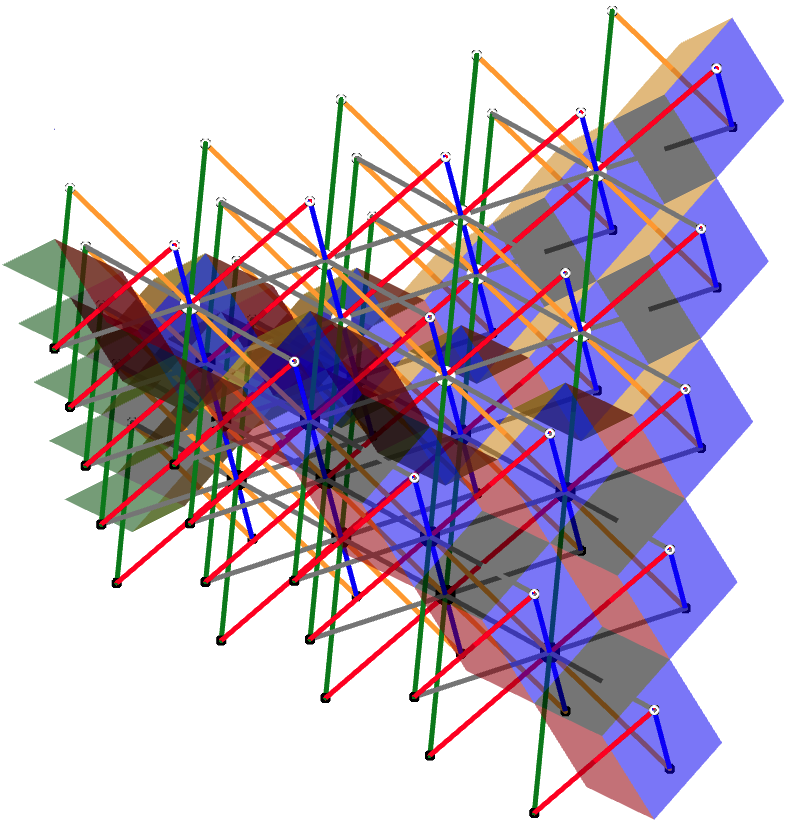}
	}
	\caption{The ASM of \cref{fig:ASM1} seen as a lower set~$X$ of the tetrahedron poset~$\TPoset_5$ (left) or equivalently as the surface separating~$X$ from its complement (middle and right).}
	\label{fig:ASM2}
\end{figure}

As illustrated in \cref{fig:ASM3}, looking at this surface~$S$ from above (meaning in the direction~$(-1,1,1,-1)$, or more precisely under the projection~${(x_1,x_2,x_3,x_4) \mapsto (x_1+x_2, x_1+x_3)}$) makes transparent some well-known bijections between classical equivalent combinatorial models for ASMs (and naturally motivates combinatorial models that were not considered before to the best of our knowledge):

\enlargethispage{.5cm}
\begin{itemize}
\item Seen from above, the colored rhombi of~$S$ forms a \defn{rainbow tiling}. See \cref{fig:ASM3}\,(a). The tiled zone is formed by $n$ rows and $n$ columns of $n+1$ tiles each. Hence, we have~$2n(n+1)$ tiles in total. We call \defn{integer points} the vertices in the middle of a row and a column (see \cref{fig:ASM3}\,(c)), and \defn{half integer points} the other vertices (see \cref{fig:ASM3}\,(d)). A rainbow tiling is a $4$-coloring of the tiles such that
\begin{itemize}
\item the south/east/north/west tiles are colored red/blue/orange/green, 
\item the rows (resp.~columns) are colored with cold colors blue/green (resp.~with warm colors red/orange), 
\item each integer point sees either~$2$ or~$4$ colors.
\end{itemize}
Note that, up to recoloring, this rainbow tiling is equivalent to a 4-coloring of the \defn{aztec diamond}~\cite{ElkiesKuperbergLarsenPropp}.

\item The \defn{ASM} records the saddle points of~$S$, seen from above. Namely, the $+1$ and $-1$ of the ASM are the two types of saddle points of~$S$ (depending on the orientation of the principal directions of the saddle point). They can be visualized as the rainbow integer points (\ie surrounded by the four colors red/blue/orange/green) of the rainbow tiling of \cref{fig:ASM3}\,(a). See \cref{fig:ASM3}\,(b).

\item The \defn{six-vertex configuration}~\cite{Kuperberg} is given by the horizontal diagonals of the rhombi of~$S$, oriented counterclockwise around the surface~$S$ (in other words, in the direction of the march of the dahu\footnote{https://en.wikipedia.org/wiki/Dahu}), and seen from above. See \cref{fig:ASM3}\,(c). Equivalently, it can be obtained by placing south/west/north/east pointing arrows in the red/blue/orange/green tiles of the rainbow tiling of \cref{fig:ASM3}\,(a). This gives an orientation of the integer grid such that
\begin{itemize}
\item the extremal horizontal (resp.~vertical) arrows are incoming (resp.~outgoing), 
\item each integer point has two incoming and two outgoing arrows.
\end{itemize}
It has $6$ types of vertices \smash{\includegraphics[scale=.11,valign=c]{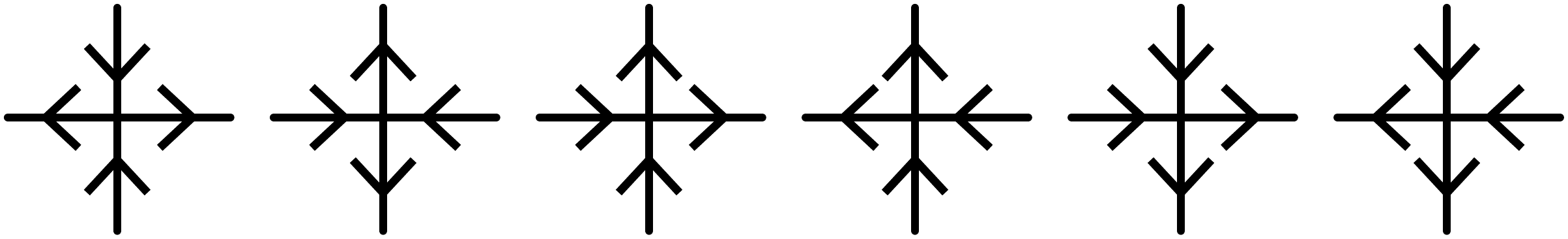}}, hence the name.
Note that, as the nonzero entries of the ASM correspond to saddle points of~$S$, they correspond to the vertices of the six-vertex configuration with opposite incoming arrows and opposite outgoing arrows. See the connection between \cref{fig:ASM3}\,(b)\,\&\,(c).

\item The \defn{six-square configuration} is given by the non-horizontal diagonals of the rhombi of~$S$, oriented upwards, and seen from above. See \cref{fig:ASM3}\,(d). Equivalently, it can be obtained by placing east/south/west/north pointing arrows in the red/blue/oran\-ge/green tiles of the rainbow tiling of \cref{fig:ASM3}\,(a). Equivalently, it is the dual directed graph of the six-vertex configuration of \cref{fig:ASM3}\,(c) (meaning that each arrow is rotated by~$90^\circ$). This gives an orientation of the half-integer grid such that
\begin{itemize}
\item the arrows on the bottom/right/top/left side are oriented east/south/west/north, 
\item each mesh, traveled clockwise, has two forward and two backward arrows.
\end{itemize}
It has $6$ types of squares \includegraphics[scale=.11,valign=c]{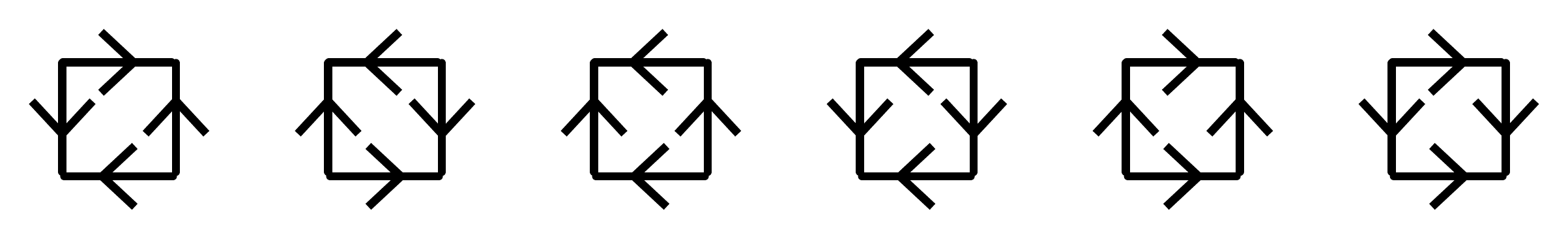}, hence the name.

\item The \defn{peak--pit matrix} is given by the local optima of the surface~$S$, seen from above. \cref{fig:ASM3}\,(e). They can be visualized as the rainbow half integer points of the rainbow tiling of \cref{fig:ASM3}\,(a). Equivalently, they are the points with indegree or outdegree~$4$ in the six-square configuration of \cref{fig:ASM3}\,(d). We are not aware that this model has been studied before.

\item The \defn{osculating path configuration}~\cite{Behrend} or \defn{bumpless pipe dream}~\cite{Weigandt} is given by the horizontal diagonals of the red and blue rhombi of~$S$, seen from above. See \cref{fig:ASM3}\,(f). Equivalently, they are the east and south pointing arrows in the six-vertex configuration of \cref{fig:ASM3}\,(c). They correspond to distinct adjacent entries in the CSM (see \cref{fig:ASM3}\,(g)) and to decreasing adjacent entries in the HFM (see \cref{fig:ASM3}\,(h)).

\item The \defn{HFM} records the height of the surface above each integer point seen from above. See \cref{fig:ASM3}\,(h).

\item The \defn{fully packed loop}~\cite{BatchelorBloteNienhuisYung, deGier} is given by the horizontal diagonals of the red and orange (resp.~green and blue) rhombi of~$S$ located at odd (resp.~even) heights, seen from above. See \cref{fig:ASM3}\,(e).

\item The \defn{cliffs configuration} is given by the cliffs (gray rhombi) of~$S$, seen from above. See \cref{fig:ASM3}\,(f). Equivalently, it is obtained from the HFM by joining the two equal heights in each $(2 \times 2)$-submatrix containing $3$ distinct heights. Cliffs configurations are exactly the configurations of paths where each point inside a square grid belongs to an even number of diagonal steps (0, 2 or 4), and points on the border belong to one diagonal step and are connected to points of the same height. We are not aware that this model has been studied before.
\end{itemize}

\begin{figure}
	\centerline{
	\begin{tabular}{ccccc}
	(a) & (b) & (c) & (d) & (e) \\
	\includegraphics[scale=.11,valign=c]{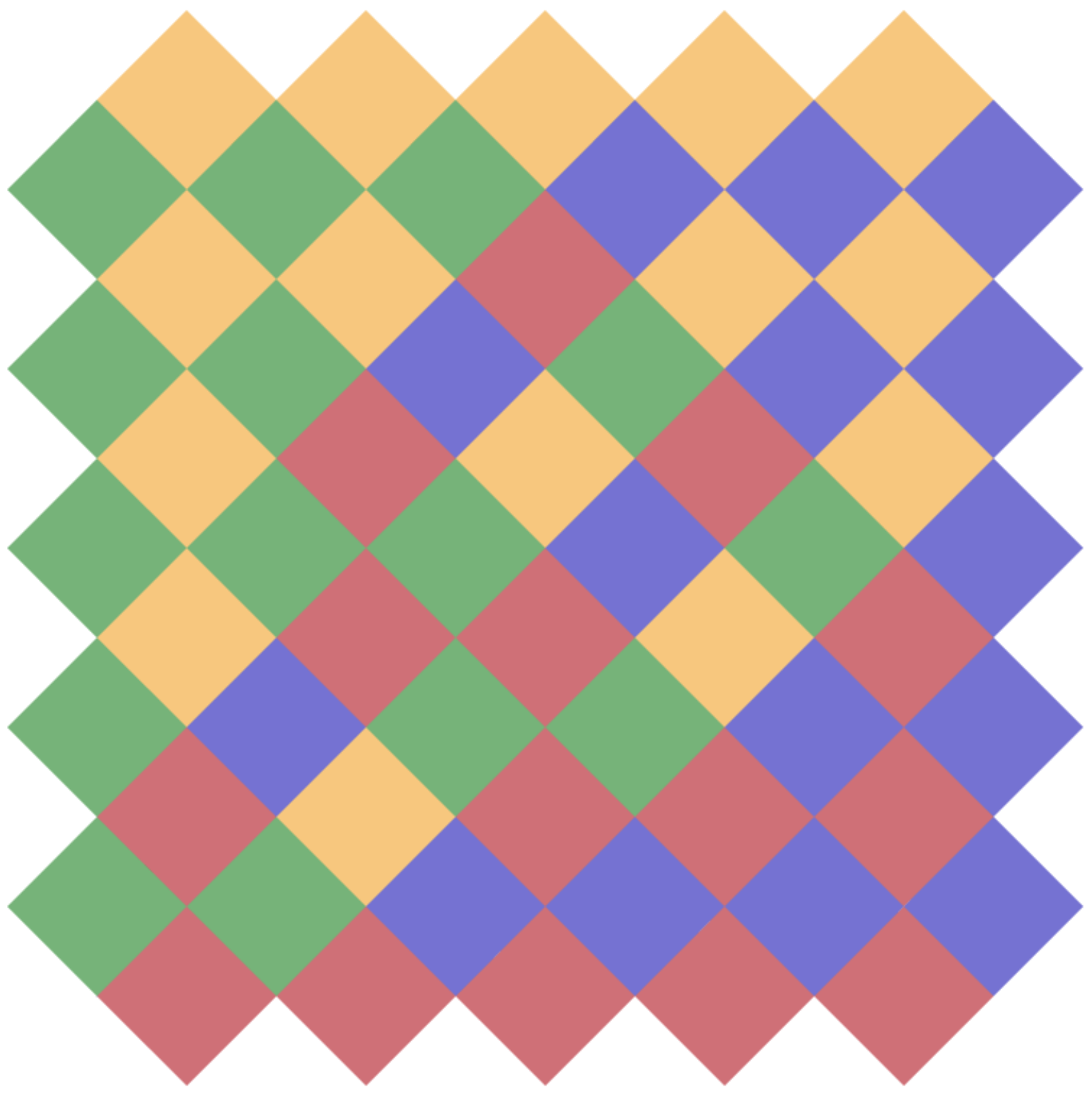} &
	\includegraphics[scale=.11,valign=c]{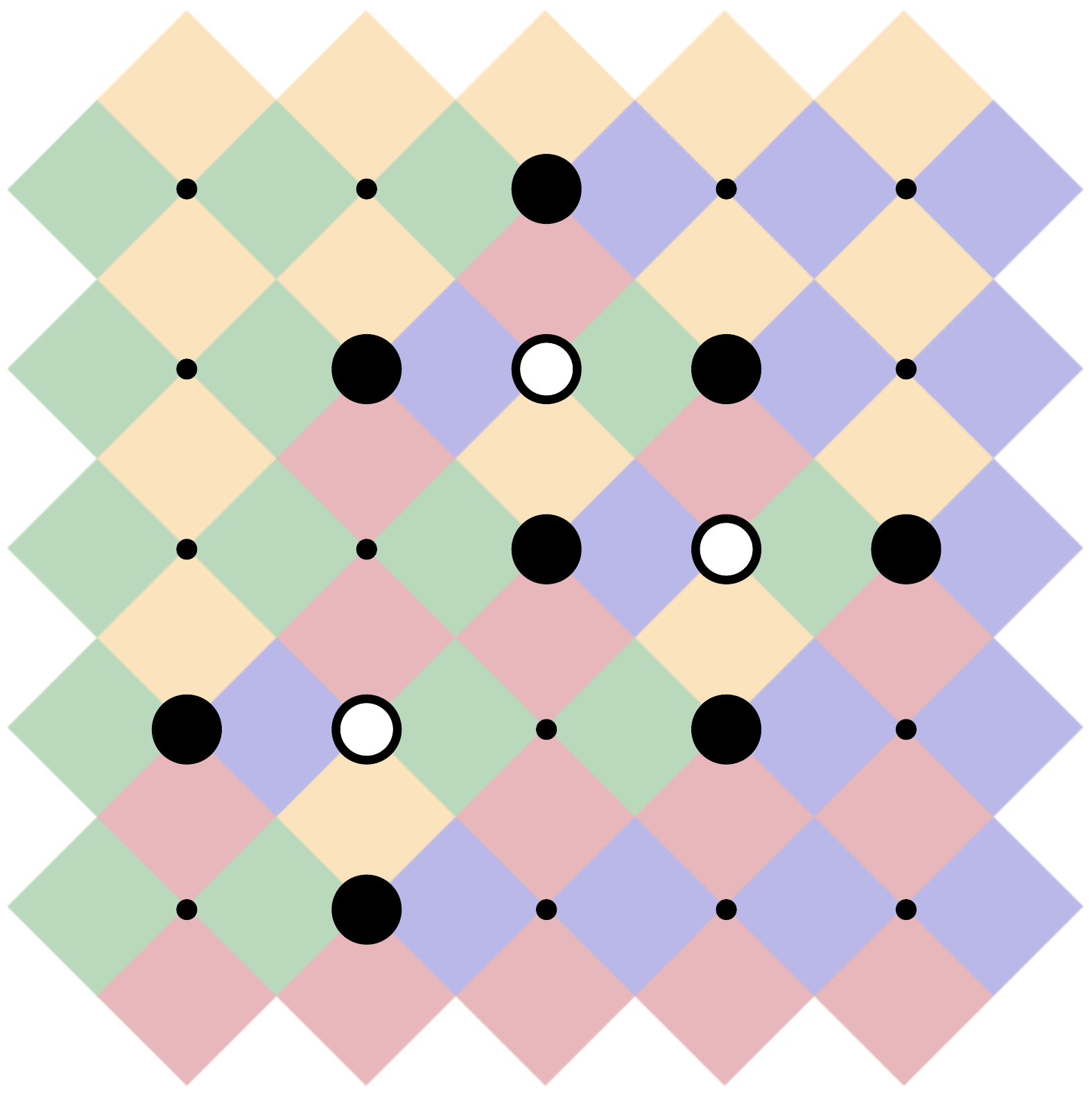} &
	\includegraphics[scale=.11,valign=c]{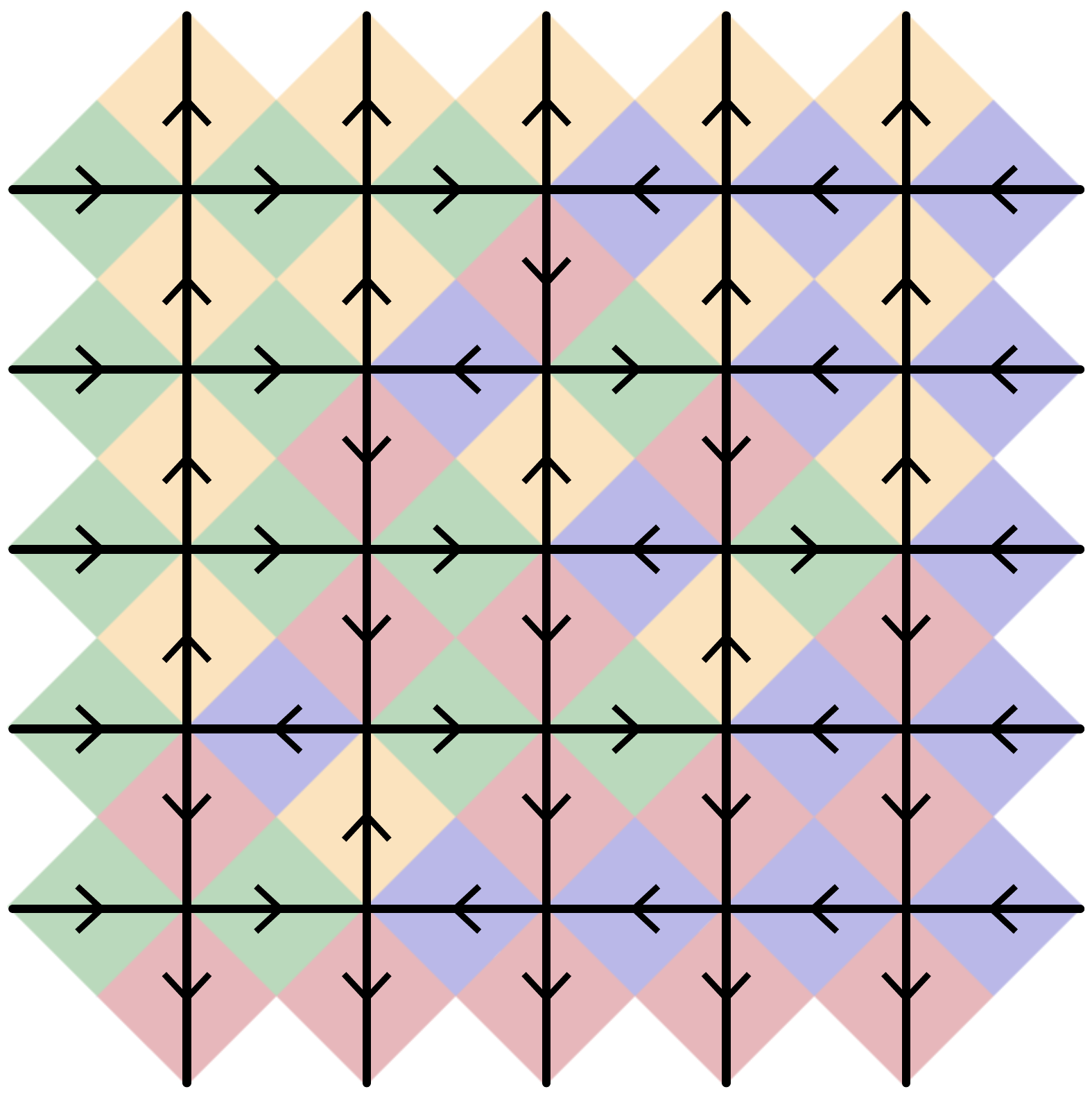} &
	\includegraphics[scale=.11,valign=c]{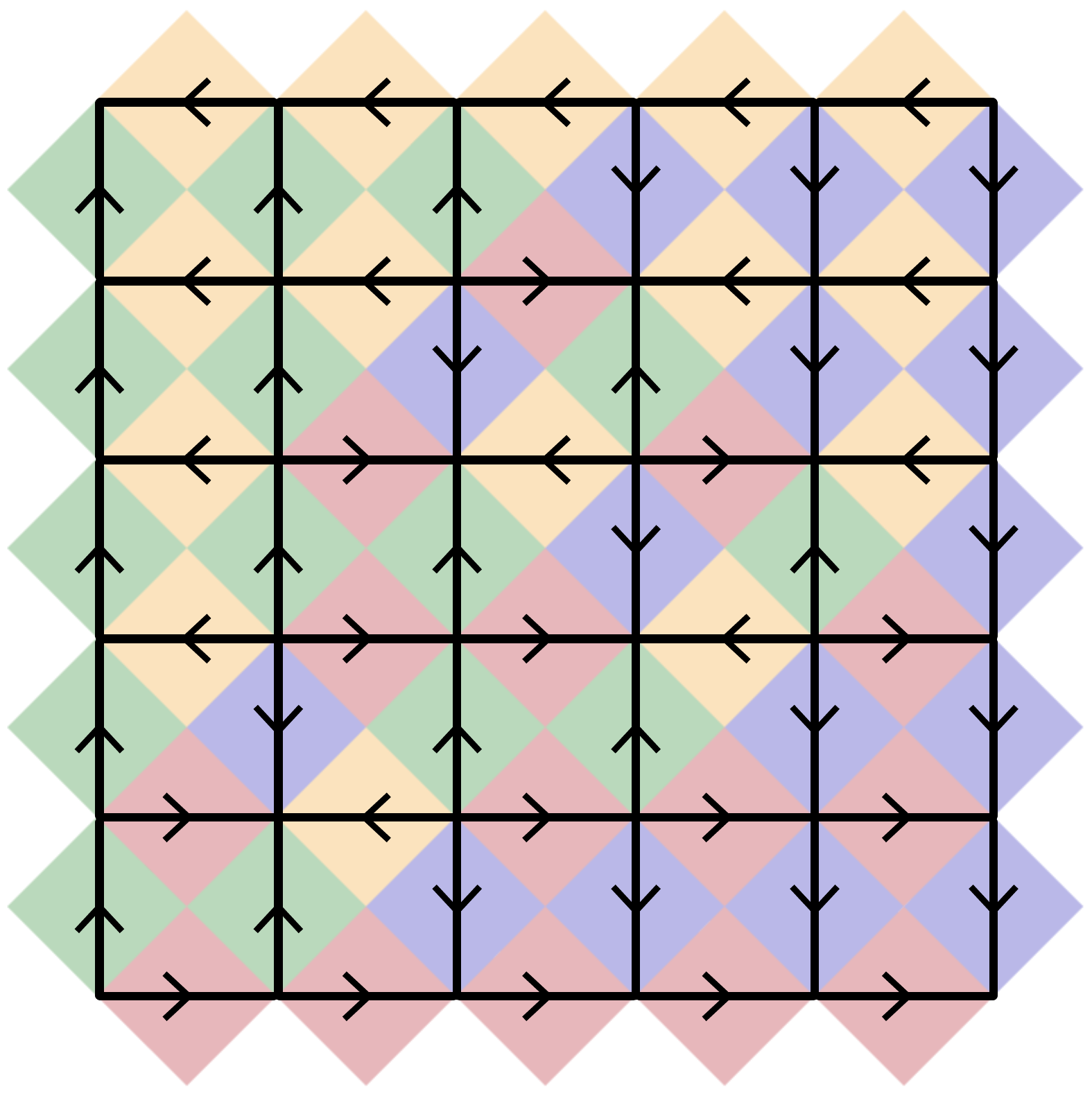} &
	\includegraphics[scale=.11,valign=c]{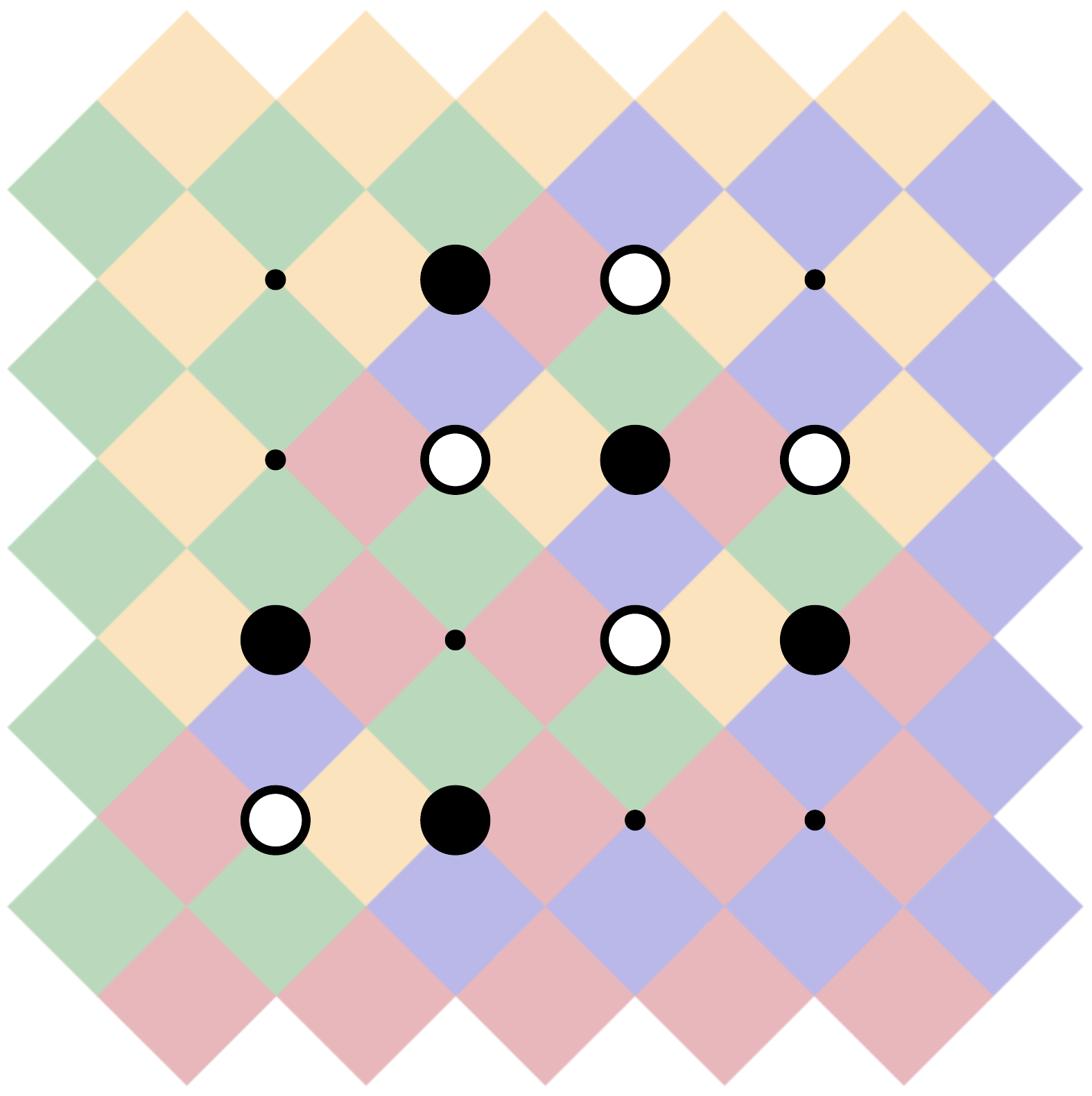} \\[1.5cm]
	\includegraphics[scale=.11,valign=c]{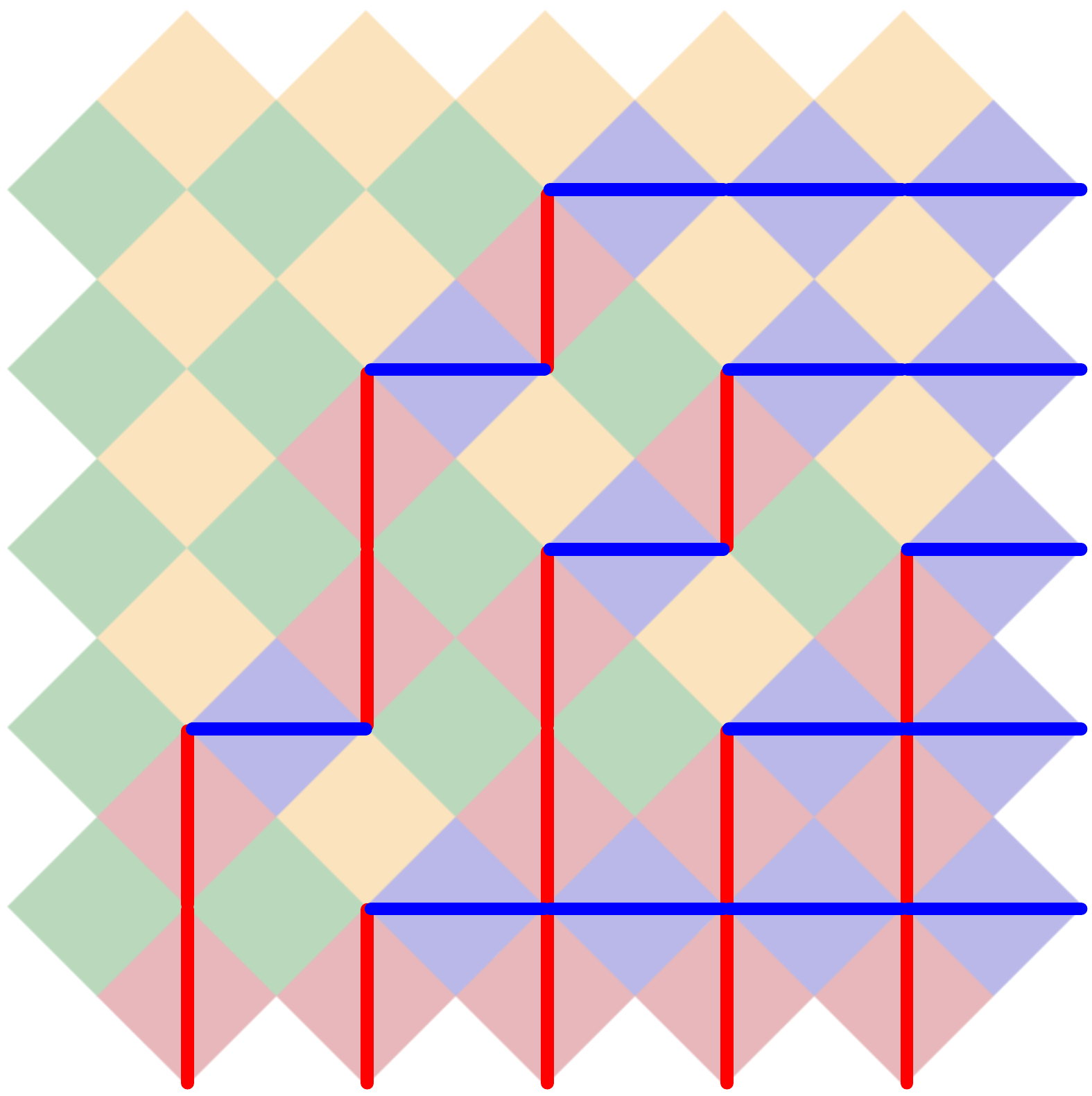} &
	\includegraphics[scale=.11,valign=c]{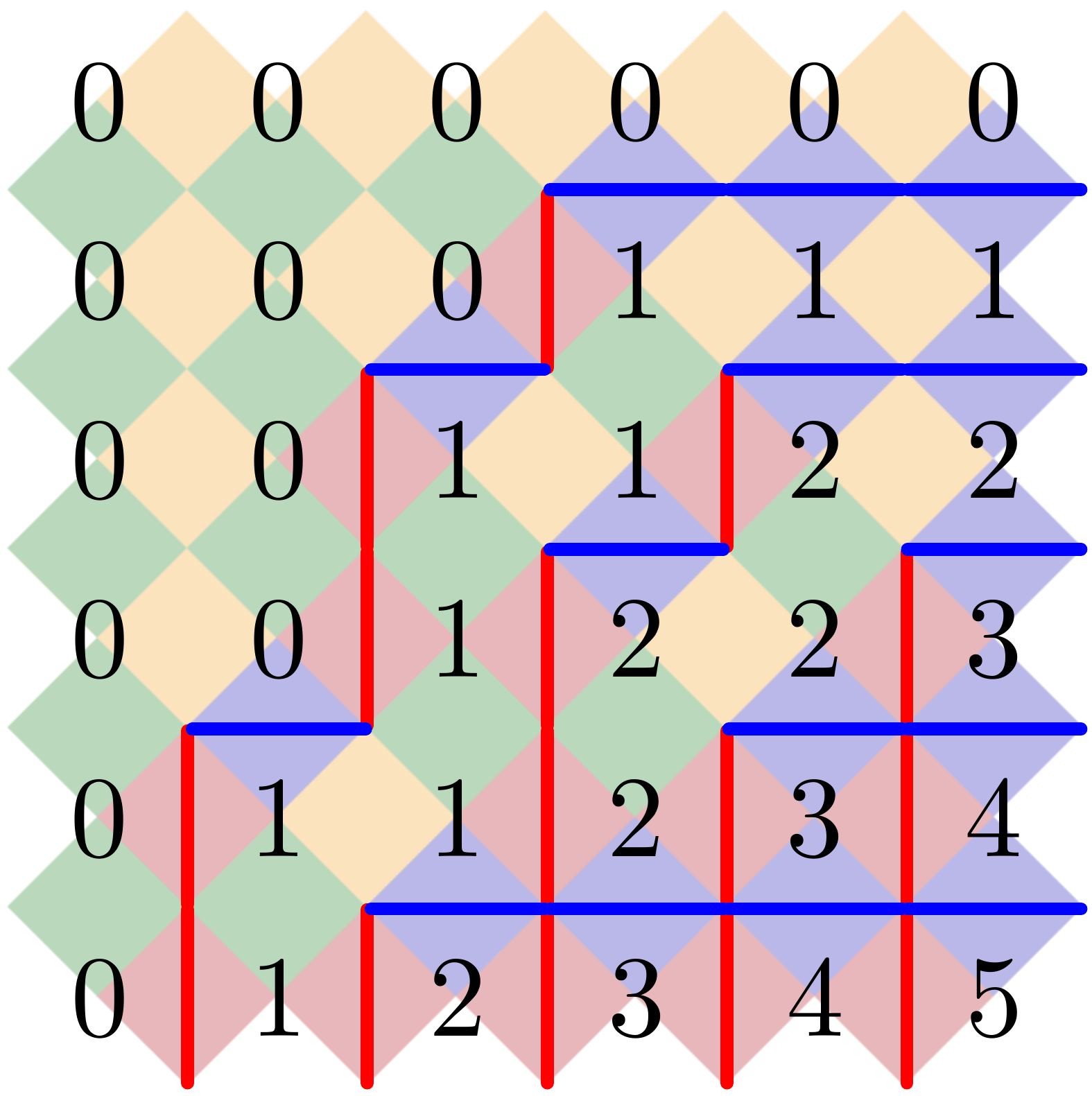} &
	\includegraphics[scale=.11,valign=c]{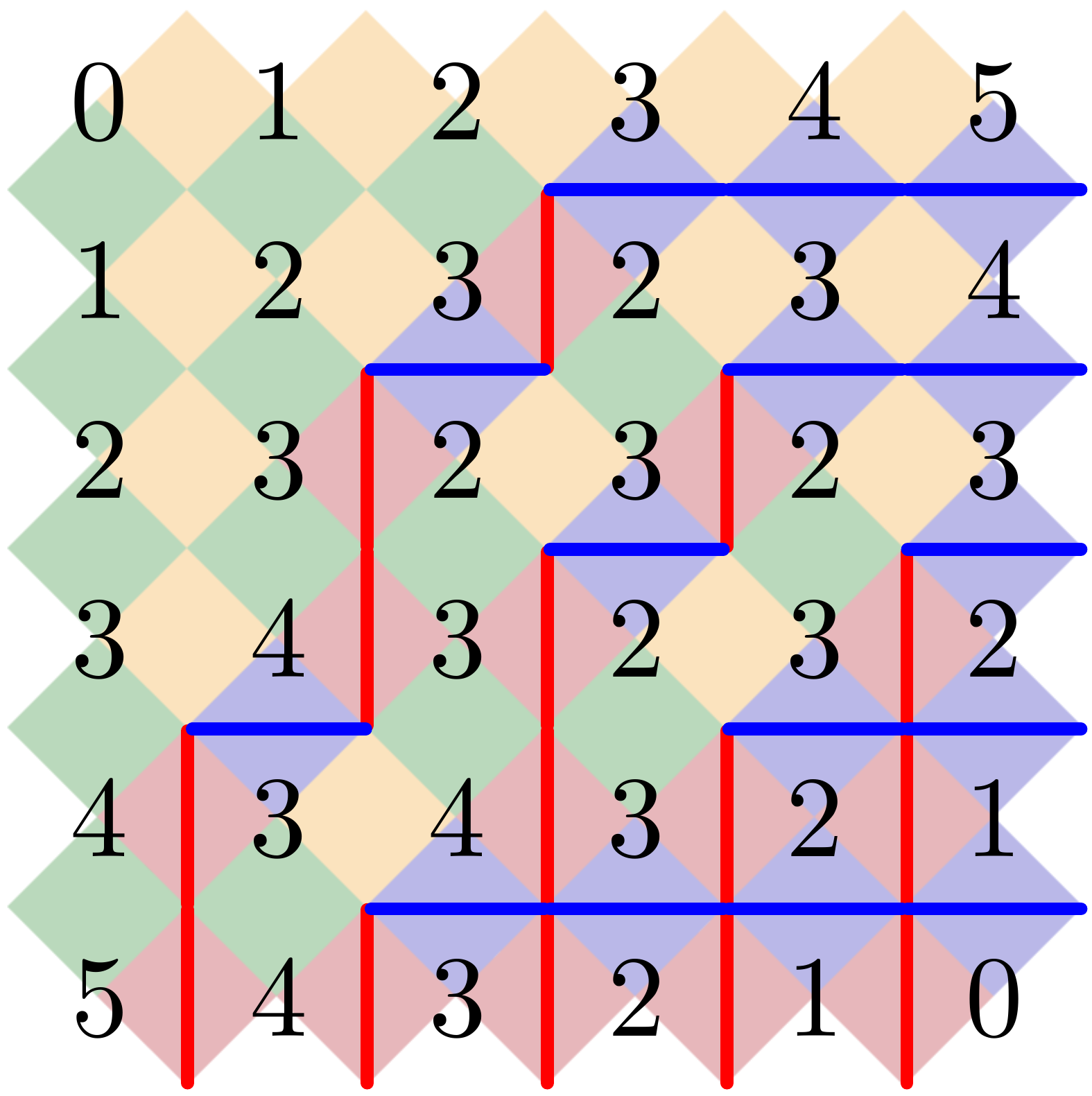} &
	\includegraphics[scale=.11,valign=c]{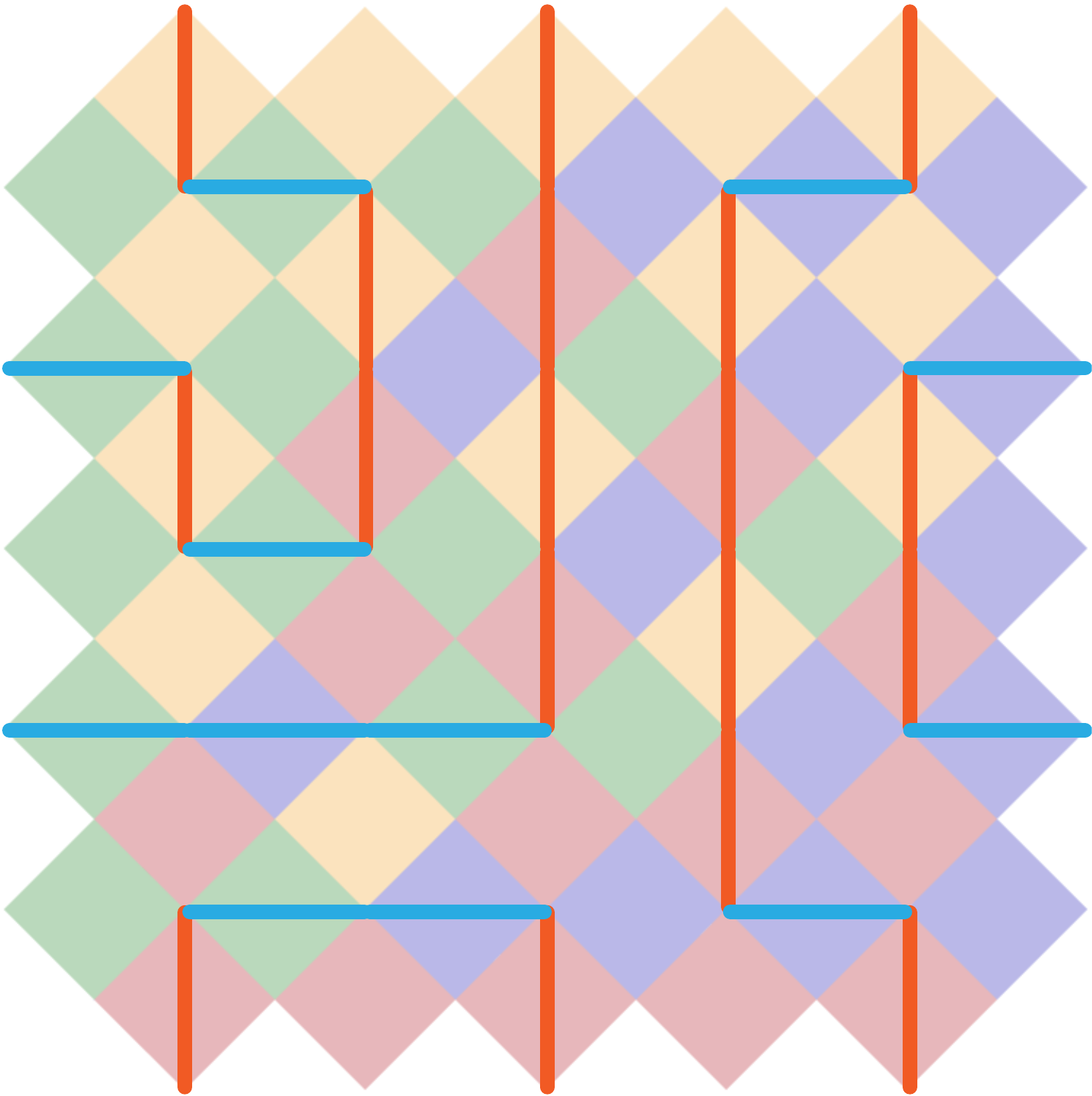} &
	\includegraphics[scale=.11,valign=c]{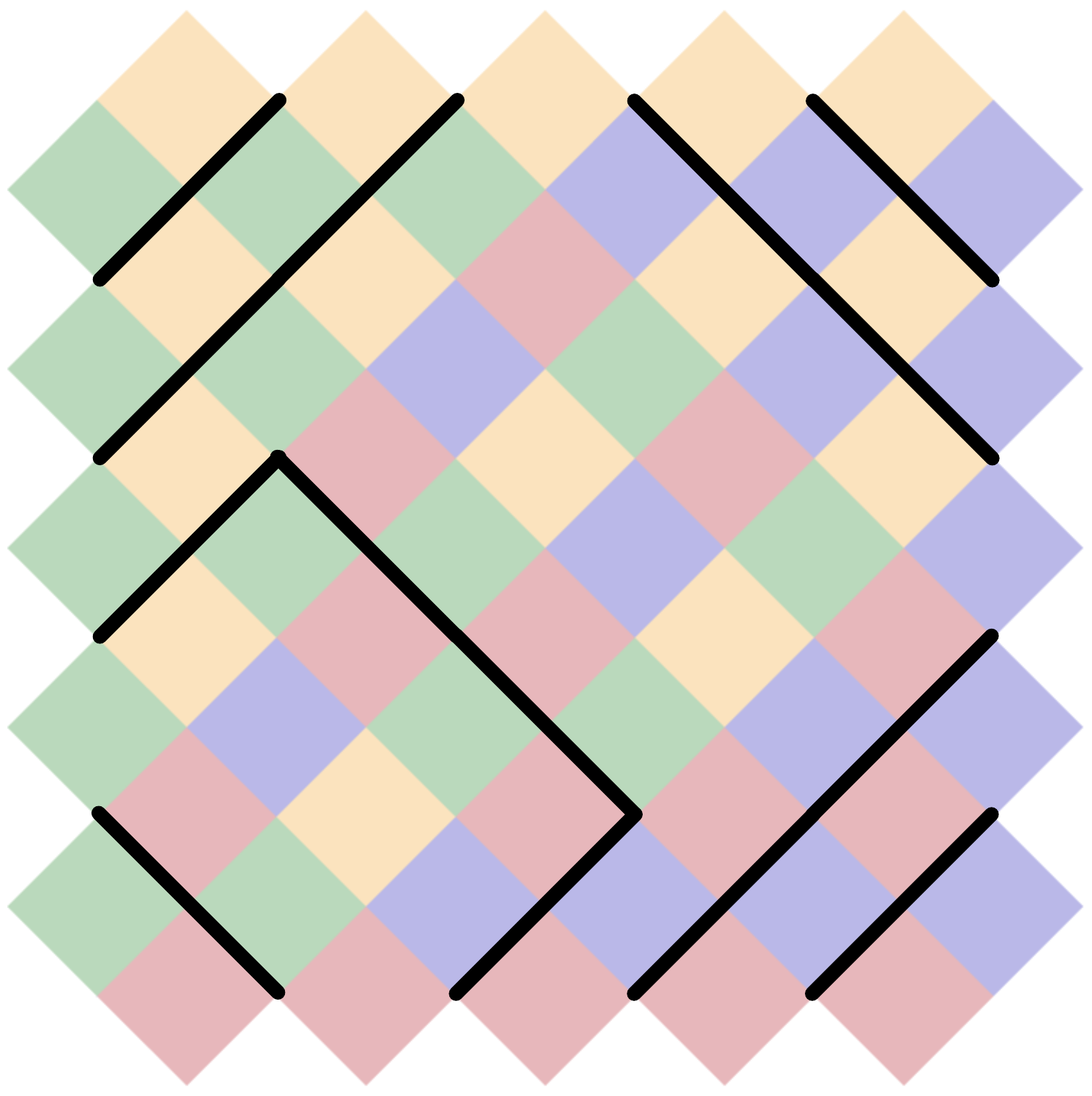} \\[1.4cm]
	(f) & (g) & (h) & (i) & (j)
	\end{tabular}
	}
	\caption{Reading bijections on the surface of the ASM of \cref{fig:ASM1}.}
	\label{fig:ASM3}
\end{figure}

\cref{fig:ASM4} illustrates these correspondence on bigger examples of ASM (b), peak-pit matrix~(e), fully packed loop (i), and cliff configuration (j).

\begin{figure}[h]
	\centerline{
	\begin{tabular}{cc}
	alternating sign matrix & peak-pit matrix \\[.2cm]
	\includegraphics[height=8cm,valign=c]{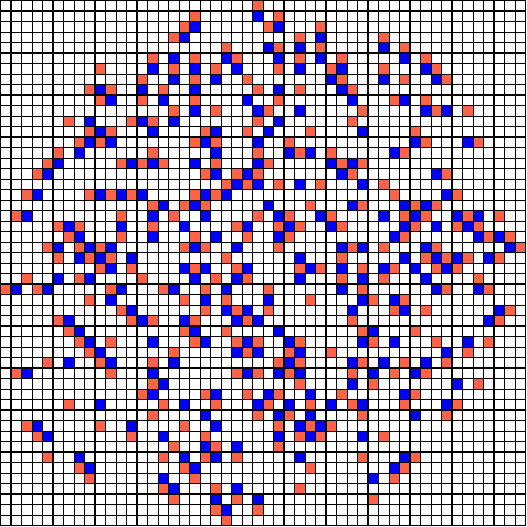} &
	\includegraphics[height=8cm,valign=c]{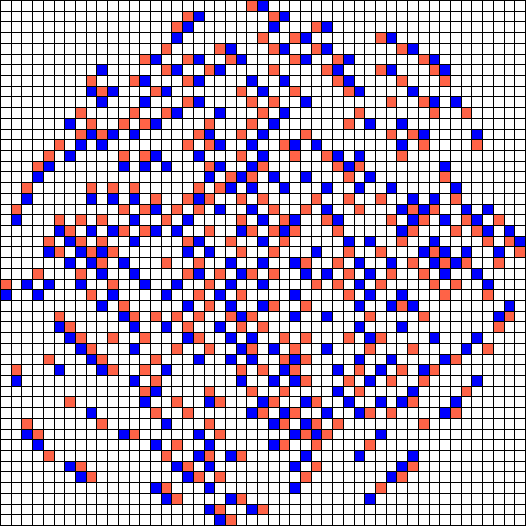} \\[4cm]
	\includegraphics[height=8cm,valign=c]{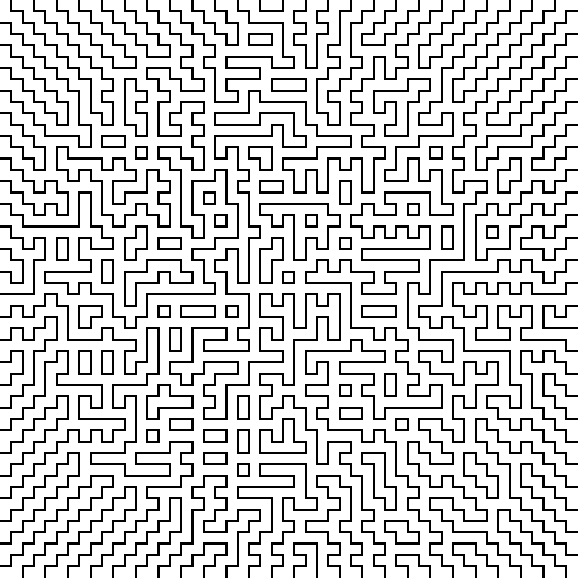} &
	\includegraphics[height=8cm,valign=c]{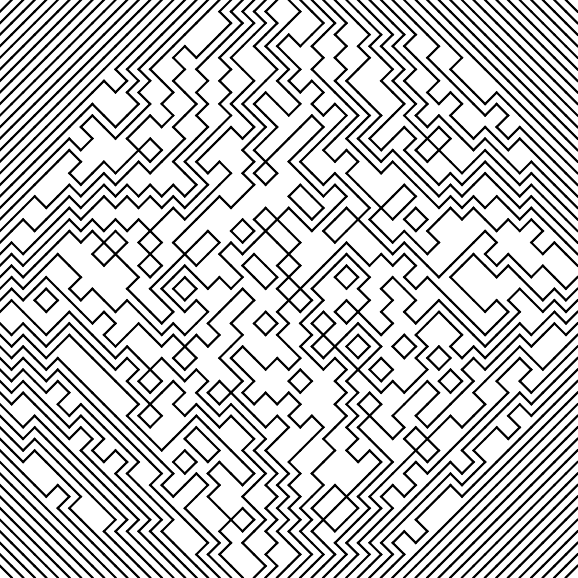} \\[4cm]
	fully packed loop & cliff configuration
	\end{tabular}
	}
	\caption{Bigger examples of the correspondence between ASM, peak-pit matrix, fully packed loop, and cliff configuration.}
	\label{fig:ASM4}
\end{figure}

Finally, it is also interesting to represent an ASM as a stack of rhombic dodecahedra.
Remember that the \defn{rhombic dodecahedron} is the polytope obtained 
\begin{itemize}
\item either as the convex hull of the $8$ vertices~$\pm\b{e}_1 \pm\b{e}_2 \pm\b{e}_3$ of the standard cube together with the~$6$ points~$\pm2\b{e}_i$ for~$i \in [3]$,
\item or as the intersection of the $12$ halfspaces defined by~$\dotprod{\b{r}}{\b{x}} \le 1$ for the roots~$\b{r}$ of type~$A_3$.
\end{itemize}
See \cref{fig:ASM5}\,(left).
It is precisely the Voronoi cell of the integer point lattice generated by the root system of type~$A_3$.
Hence, we can see a lower set~$X$ of the tetrahedron poset as a stack of rhombic dodecahedra.
The advantage of this model is that it can be 3d-printed and manipulated (see \cref{fig:ASM5}), after adding a suitable support (containing the rhombic dodecahedra corresponding to the points added to~$X$ above).
The surface~$S$ discussed above is the upper boundary of this stacking (including the additional rhombic dodecahedra of the support).

\begin{figure}
	\centerline{
	\includegraphics[scale=.3,valign=c]{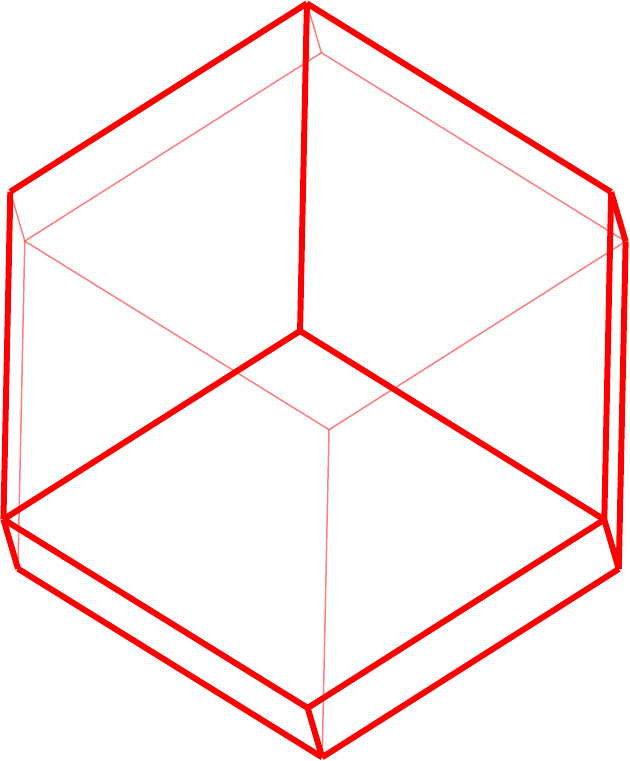} \quad
	\includegraphics[height=5cm,valign=c]{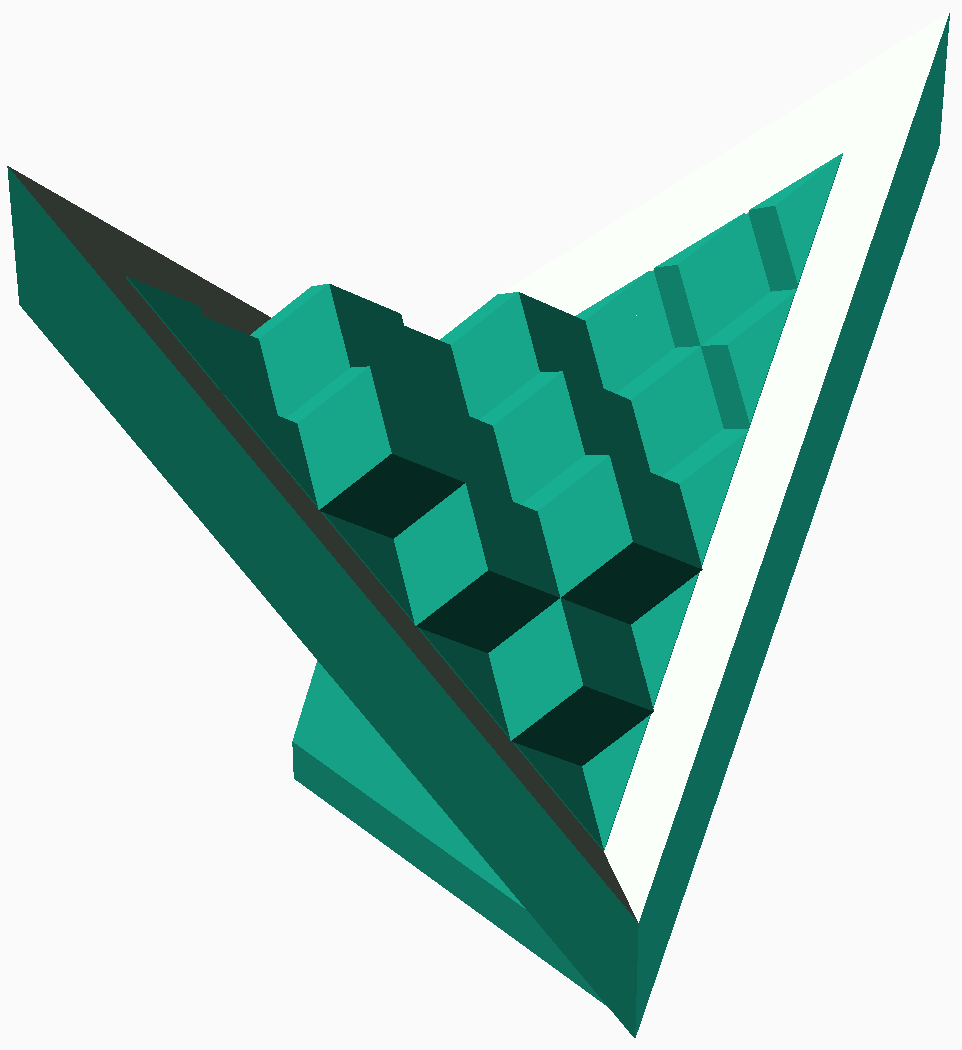} \quad
	\includegraphics[height=5cm,valign=c]{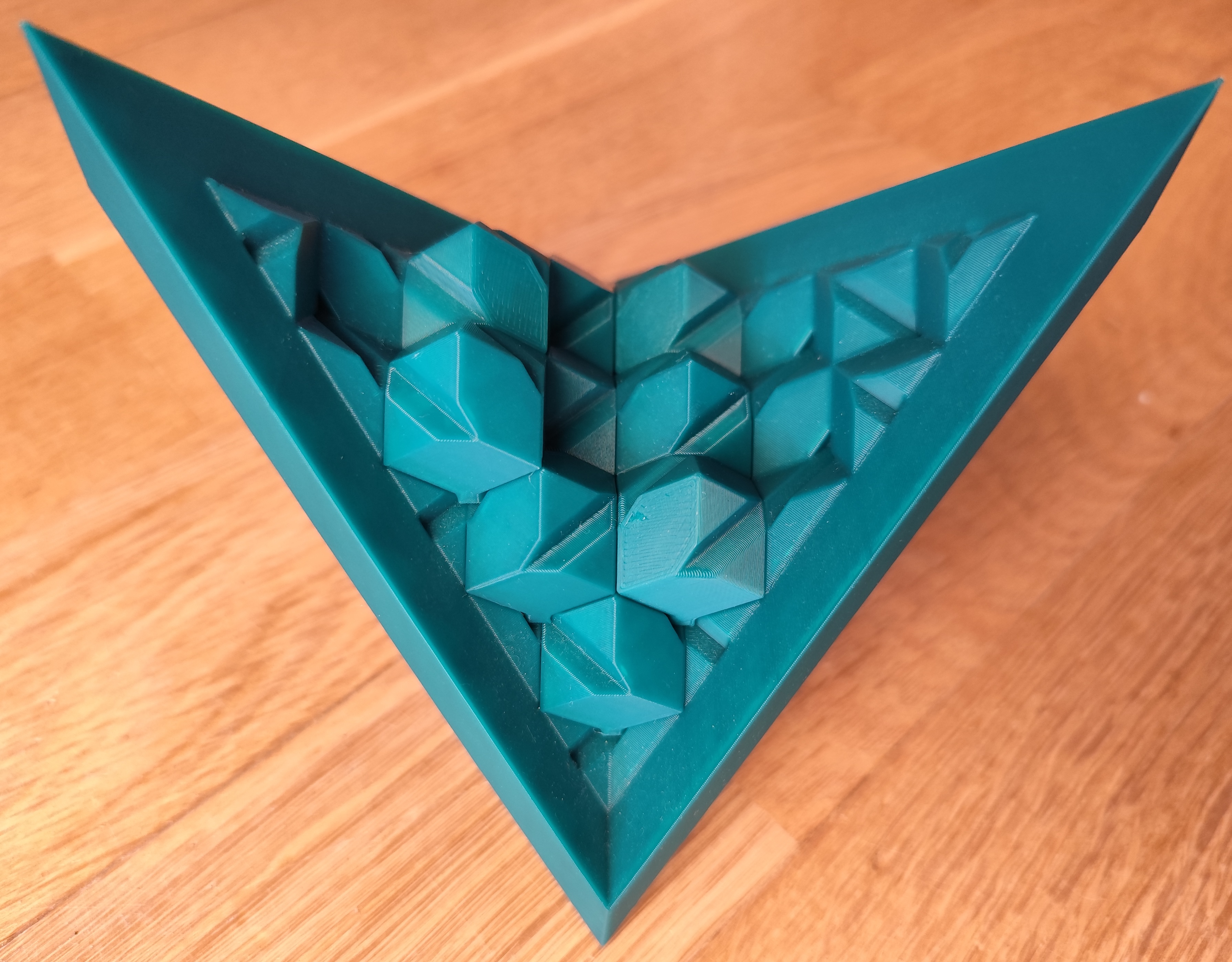}
	}
	\caption{The rhombic dodecahedron (left), and the ASM of \cref{fig:ASM1} seen as a pile of rhombic dodecahedra, as 3d objects in OpenSCAD (middle) and in real life (right).}
	\label{fig:ASM5}
\end{figure}



\subsection{Higher dimensional analogues}
\label{subsec:higherDimensionalAnalogues}

It is of course tempting to consider higher dimensional analogues of \cref{subsec:DyckPaths,subsec:ASMs}.
Fix three integers~$d, k, n$ and consider the poset~$P(d,k,n)$ whose
\begin{itemize}
    \item elements are the integer points in the \((n-2)\)th dilate of the \(d\)-dimensional simplex embedded in the standard way in \(\mathbb{R}^{d+1}\),
    \item cover relations are pairs of points \((\b{x},\b{y})\) such that \( \b{x} - \b{y} = \b{e}_i - \b{e}_j \) with \( i \leq k < j \).
\end{itemize}
Let \( Q(d,k,n) \) denote the distributive lattice of lower sets of \( P(d,k,n) \).
For example, \( Q(2,2,n) \) is the Stanley lattice, and \( Q(3,2,n) \) is the ASM lattice.

The Voronoi cell of the integer point lattice generated by the root system of type~$A_d$ is the graphical zonotope~\( Z(d) \) of a \((d+1)\)-cycle.
This is a polytope with \( 2 \times (2^d - 1) \) vertices (corresponding to all the acyclic orientations of the \((d+1)\)-cycle) and \( 2 \times \binom{d+1}{2} \) facets (corresponding to the biconnected subsets of the \((d+1)\)-cycle).
For \( d = 3 \), it is the rhombic dodecahedron with 14 vertices and 12 facets illustrated in \cref{fig:ASM5}.
We can thus see an element of~$D(d,k,n)$ as a stacking of copies of~$Z(d)$.

Now, one can try to understand the other models for ASMs in this framework.
More precisely, given an element of \( Q(d,k,n) \), \ie~a stacking of copies of~\( Z(d) \), we can look at it from the direction~$\b{w} = (\b{e}_1 + \ldots + \b{e}_k) - (\b{e}_{k+1} + \ldots + \b{e}_{d+1})$ and try to encode it by the following models:
\begin{itemize}
\item remembering the height above each point, seen from the direction~$\b{\omega}$,
\item remembering the colors (\ie directions) of the upper faces of the stacking, seen from the direction~$\b{\omega}$,
\item remembering the saddle points of the stacking, more precisely their types ($1$ or~$-1$) and positions under the projection in direction~$\b{w}$,
\item remembering the peaks and pits of the stacking, seen from the direction~$\b{w}$,
\item remembering the cliffs of the stacking, seen from the direction~$\b{\omega}$.
\end{itemize}
When the dimension is too small (\(d = 1\) and sometimes for \(d = 2\)), some of these models are not rich enough to recover the stacking (in particular the saddle point model). But as soon as the dimension is at least \(d = 3\), all these models allow one to encode the stackings. This yields interesting generalizations of Dyck paths and ASMs that deserve further study.


\section{The excedance congruence of~$\ASM_n$}
\label{sec:excedanceQuotient}

This section defines a relevant lattice congruence of~$\ASM_n$, extending the excedance relation of the Bruhat order on permutations defined by N.~Bergeron and L.~Gagnon in~\cite{BergeronGagnon}.


\subsection{Lattice morphisms and congruences}
\label{subsec:latticeCongruences}

A \defn{lattice morphism} is a map~$\phi: \L \to \K$ between two lattices which respects meets and joins, \ie such that $\phi(x \vee y) = \phi(x) \vee \phi(y)$ and~$\phi(x \wedge y) = \phi(x) \wedge \phi(y)$ for any~$x,y \in \L$.
A \defn{congruence} of a lattice~$\L$ is an equivalence relation~$\equiv$ on~$\L$ which respects meets and joins, \ie $x \equiv x'$ and~$y \equiv y'$ implies $x \vee y \equiv x' \vee y'$ and~$x \wedge y \equiv x' \wedge y'$ for any~$x,y \in \L$.
The \defn{quotient}~$\L/{\equiv}$ is the lattice on the equivalence classes of~$\L$ where
\begin{itemize}
\item $X \le Y$ if and only if there exist~$x \in X$ and~$y \in Y$ such that~$x \le y$,
\item $X \vee Y$ (resp.~$X \wedge Y$) is the equivalence class of~$x \vee y$ (resp.~$x \wedge y$) for any representatives~$x \in X$ and~$y \in Y$.
\end{itemize}
Note that the equivalence classes of a congruence are always intervals of~$\L$, and that the quotient~$\L/{\equiv}$ is isomorphic (as poset) to the subposet of~$\L$ induced by the minimal (resp.~maximal) elements of the congruence classes.
Observe also that
\begin{itemize}
\item the projection map~$\L \to \L/{\equiv}$ of a congruence~$\equiv$ on~$\L$ is a surjective lattice morphism,
\item conversely, the fibers of a surjective lattice morphism~$\L \to \K$ define a congruence~$\equiv$ on~$\L$ whose quotient~$\L/{\equiv}$ is isomorphic to~$\K$.
\end{itemize}


\subsection{The excedance relation on permutations}
\label{subsec:excedanceRelation}

We now recall the amazing properties of the excedance relation on permutations defined in~\cite{BergeronGagnon}.

A \defn{noncrossing partition} (NCP) of~$[n]$ is a collection~$\lambda \subseteq \set{(i,j)}{1 \le i \le j \le n}$ such that there is no~$(i,j), (k,\ell) \in \lambda$ with~$i \le k < j \le \ell$.
Equivalently, it is a collection of arcs over the points~$1, \dots, n$ on the horizontal line, such that any two arcs do not cross in their interior, nor share their left endpoints nor their right endpoints.
We denote by~$\lambda^- \eqdef \set{i}{(i,j) \in \lambda}$ the set of left endpoints and~$\lambda^+ \eqdef \set{j}{(i,j) \in \lambda}$ the set of of right endpoints of~$\lambda$.
We denote by~$\NCP_n$ the transitive closure of the relation~$\precdot$ on NCPs of~$[n]$ defined by~$\lambda \precdot \mu$ if and only if~$\lambda = \mu \ssm \{(i,i+1)\}$ for some~$i \in [n-1]$ or~$\lambda = \mu \ssm \{(i,k)\} \cup \{(i,j), (j,k)\}$ for some~$1 \le i < j < k \le n$.
Note that the standard bijection between Dyck paths of semilength~$n$ and NCPs of~$[n]$ (see \cref{fig:bijectionDyckPathNonCrossingPartition}) sends the Stanley lattice~$\Dyck_n$ to the lattice~$\NCP_n$ (see \cref{fig:noncrossingpartitions}).

\begin{figure}[ht]
	\centerline{\includegraphics[scale=1]{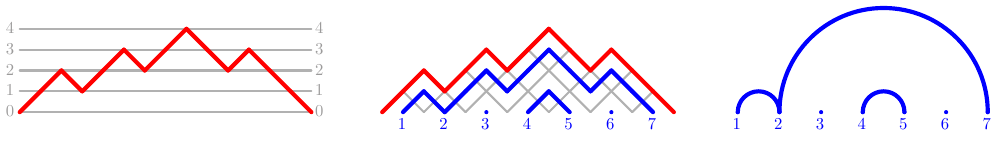}}
	\caption{The standard bijection from Dyck paths (left) to NCPs (right).}
	\label{fig:bijectionDyckPathNonCrossingPartition}
\end{figure}

\begin{figure}[ht]
	\centering
	\includegraphics[scale=.8]{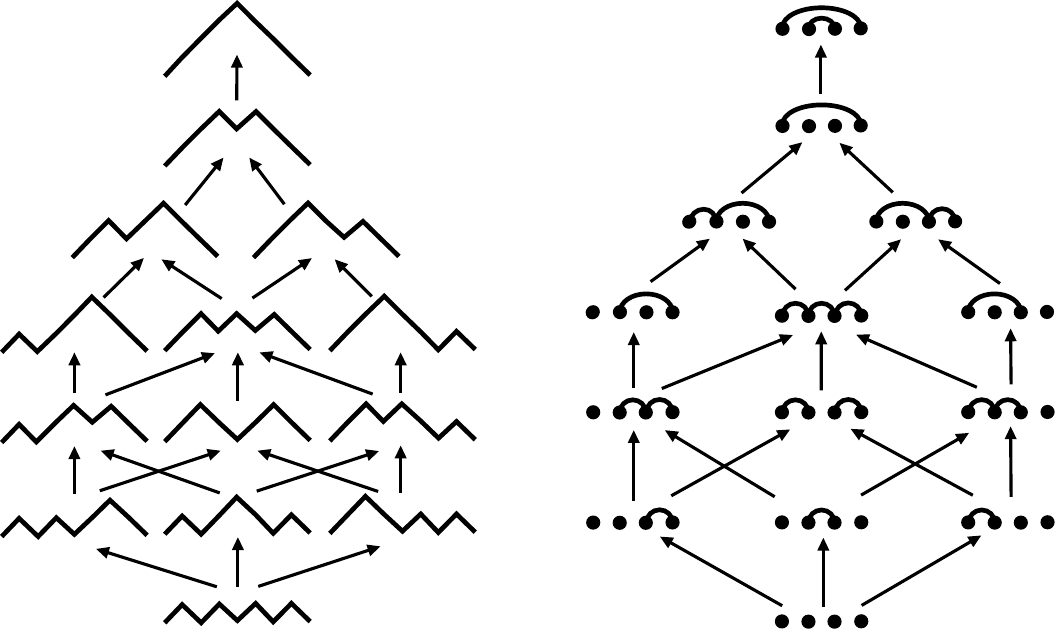}
	\caption{The Stanley lattice~$\Dyck_4$ on Dyck paths of semilength~$4$ (left) is isomorphic to the lattice~$\NCP_n$ on noncrossing partitions of~$[4]$ (right).}
	\label{fig:noncrossingpartitions}
\end{figure}

\pagebreak
\begin{definition}[\cite{BergeronGagnon}]
\label{def:excedanceRelation}
\begin{figure}
	\centering
	\includegraphics[scale=.7]{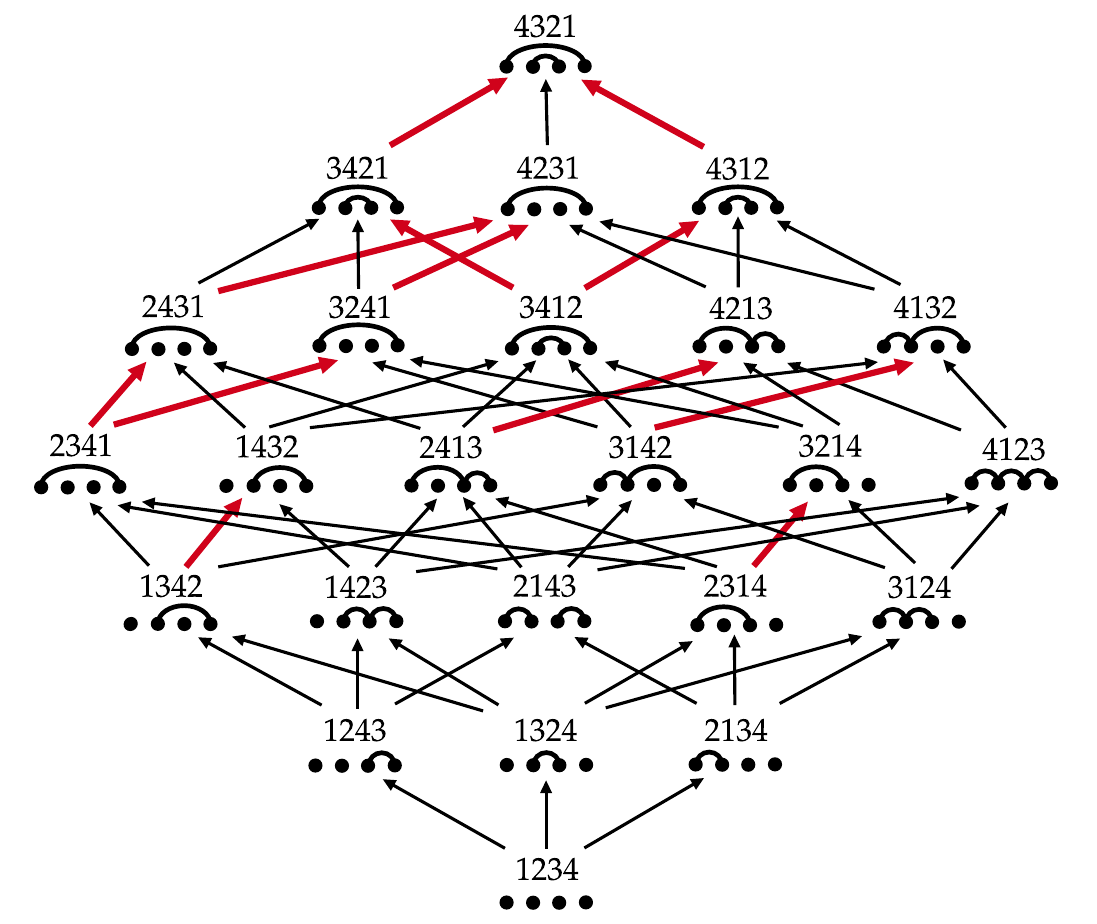}
	\caption{The excedance relation on the Bruhat order on~$\Ss_4$. Cover relations between two excedance equivalent permutations are red and bolded.}
	\label{fig:excedanceRelation}
\end{figure}
A \defn{weak excedance} of a permutation~$\sigma \in \Ss_n$ is a pair~$(i, \sigma_i)$ such that~$i \le \sigma_i$.
Define the \defn{excedance positions}~$\Epos(\sigma) \eqdef \set{i}{i \le \sigma_i}$ and the \defn{excedance values}~$\Eval(\sigma) \eqdef \set{\sigma_i}{i \le \sigma_i}$.
The \defn{excedance relation} on~$\Ss_n$ is defined by~$\sigma \equiv \tau$ if and only if~$\Epos(\sigma) = \Epos(\tau)$ and~$\Eval(\sigma) = \Eval(\tau)$.
We denote by~$\Ss_n/{\equiv}$ the poset quotient on equivalence classes of~$\equiv$, defined by~$X \le Y$ if and only if there are representatives~$x \in X$ and~$y \in Y$ such that~$x \le y$ in Bruhat order.
See \cref{fig:excedanceRelation} for an illustration when~$n = 4$.
\end{definition}

\begin{definition}[\cite{BergeronGagnon}]
\enlargethispage{.1cm}
For~$\lambda \in \NCP_n$, define two permutations~$\sigma_\lambda$ and~$\tau_\lambda$ of~$[n]$~by
\[
\sigma_\lambda(j) \eqdef 
\begin{cases}
\text{$r$-th element of~$\lambda^+$} & \text{if $i$ is the $r$-th element of~$\lambda^-$} \\
\text{$r$-th element of~$[n] \ssm \lambda^+$} & \text{if $i$ is the $r$-th element of~$[n] \ssm \lambda^-$} \\
\end{cases}
\]
and
\[
\tau_\lambda(j) \eqdef 
\begin{cases}
i & \text{if } j \notin \lambda^- \text{ and } (i,j) \in \lambda \\
\text{maximum of the connected component of~$j$ in~$\lambda$} & \text{if } j \notin \lambda^-
\end{cases}
\]
and the set of permutations
\[
C_\lambda \eqdef \set{\sigma \in \Ss_n}{\Epos(\sigma) = [n] \ssm \lambda^- \text{ and } \Eval(\sigma) = [n] \ssm \lambda^+ }.
\]
\end{definition}

Note that the permutations~$\tau_\lambda$ where already considered by M.~Zinno~\cite{Zinno} and known to provide a basis of the Temperley--Lieb algebra (see \cref{sec:TL} for details).
N.~Bergeron and L.~Gagnon extended this as follows~\cite{BergeronGagnon}.

\begin{theorem}[\cite{BergeronGagnon}]
\label{thm:BG}
For any~$n \in \N$,
\begin{enumerate}
\item For any~$\lambda \in \NCP_n$, the permutation~$\sigma_\lambda$ (resp.~$\tau_\sigma$) avoids the pattern~$321$ (resp.~$3412$).
\item For any~$\lambda \in \NCP_n$, the set~$C_\lambda$ is the interval~$[\sigma_\lambda, \tau_\lambda]$ of the Bruhat order on~$\Ss_n$.
\item The map~$\lambda \mapsto C_\lambda$ is a bijection from the NCPs of~$[n]$ to the excedance classes of~$\Ss_n$.
\item The quotient~$\Ss_n/{\equiv}$ is isomorphic to the Stanley lattice: $\lambda \le \mu$ if and only if~${C_\lambda \le C_\mu}$.
\item Any choice of representatives of the excedance classes yields a basis of the Temperley--Lieb algebra~$\TL_n(2)$ (see \cref{sec:TL} for details).
\end{enumerate}
\end{theorem}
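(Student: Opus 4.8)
The plan is to establish the five statements roughly in order: reduce the pattern-avoidance and interval claims (1)--(2) to a direct analysis of the two distinguished permutations $\sigma_\lambda$ and $\tau_\lambda$, bootstrap the enumerative and order-theoretic claims (3)--(4) from these, and attack the Temperley--Lieb statement (5) last by a triangularity argument over Zinno's basis. For (1), I would note that $\sigma_\lambda$ is the merge of two increasing sequences: it is increasing on the openers $\lambda^-$, which it sends in order onto $\lambda^+$, and increasing on the remaining positions $[n]\ssm\lambda^-$, which it sends in order onto $[n]\ssm\lambda^+$. A permutation whose positions split into two blocks on each of which it is increasing has no decreasing subsequence of length three, so $\sigma_\lambda$ avoids $321$. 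For $\tau_\lambda$, which sends each closer to its matched opener and each remaining point to the maximum of its block, I would argue by contradiction: an occurrence of $3412$ would exhibit two value jumps forcing two arcs of $\lambda$ either to cross or to share an endpoint, contradicting the noncrossing hypothesis.

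For (2), I would first verify that both $\sigma_\lambda$ and $\tau_\lambda$ realize the prescribed data $\Epos=[n]\ssm\lambda^-$ and $\Eval=[n]\ssm\lambda^+$, so that both lie in $C_\lambda$, and then show that $C_\lambda$ is exactly the Bruhat interval they span. I would encode Bruhat order through the rank function $r_\sigma(i,j)=\#\set{k\le i}{\sigma_k\le j}$, where $\sigma\le\sigma'$ iff $r_\sigma\ge r_{\sigma'}$ entrywise. Fixing $\Epos$ and $\Eval$ forces $\sigma$ to restrict to a weakly excedant bijection on the excedance cells and to an anti-excedant bijection on the complementary cells; among all permutations with this data the increasing matching on each part maximizes every $r_\sigma(i,j)$ and is therefore the Bruhat minimum $\sigma_\lambda$, while the greedy decreasing matching minimizes every $r_\sigma(i,j)$ and is the Bruhat maximum $\tau_\lambda$. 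Convexity of the class, that every permutation Bruhat-between $\sigma_\lambda$ and $\tau_\lambda$ again carries the data $([n]\ssm\lambda^-,[n]\ssm\lambda^+)$, would then follow by tracking how $\Epos$ and $\Eval$ may change along a single Bruhat cover.

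I would treat parts (3) and (4) together. For (3), injectivity of $\lambda\mapsto C_\lambda$ reduces to the fact that a noncrossing partition is recovered from its opener set $\lambda^-$ and closer set $\lambda^+$ by the unique noncrossing matching, so that $\lambda\mapsto([n]\ssm\lambda^-,[n]\ssm\lambda^+)$ is injective; nonemptiness of each class is free since $\sigma_\lambda\in C_\lambda$, and surjectivity follows by matching the Catalan count $|\NCP_n|=C_n$ on both sides. For (4), I would compare cover relations: using minimal representatives, $C_\lambda\le C_\mu$ in the quotient iff $\sigma_\lambda\le\sigma_\mu$ in Bruhat order, and I would check that the two generating moves of $\NCP_n$, namely adjoining a short arc $(i,i+1)$ or splitting $(i,k)$ into $(i,j),(j,k)$, correspond precisely to minimal upward Bruhat steps of these representatives. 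This yields an order isomorphism $\NCP_n\simeq\Ss_n/{\equiv}$, which is the Stanley lattice since $\NCP_n\simeq\Dyck_n$.

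The hardest part is (5), and this is where I expect the main obstacle. I would take Zinno's theorem that the projections of the maxima $\tau_\lambda$ to $\TL_n(2)$ form a basis, and upgrade it to ``any choice of representatives'' by unitriangularity: for a representative $\sigma\in C_\lambda$, I would expand its image in the $\tau_\mu$-basis and argue that the coefficient of $\tau_\lambda$ is a unit while every other $\tau_\mu$ that occurs has $\mu<\lambda$ in the Stanley order, making the change-of-basis matrix unitriangular and hence invertible. The crux is establishing this triangularity, that is, controlling how the image of a permutation in $\TL_n(2)$ evolves as one moves within a single excedance class. I would attempt this by induction on Bruhat length inside the class, analyzing one cover relation at a time through the defining relations of $\TL_n(2)$ and showing that each step introduces only contributions indexed by strictly smaller noncrossing partitions.
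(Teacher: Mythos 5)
The first thing to say is that the paper contains no proof of this statement: \cref{thm:BG} is imported verbatim from~\cite{BergeronGagnon} and used as a black box (e.g.\ in \cref{lem:maxBG} and the corollary following it), so there is no internal argument to compare yours against line by line. What the paper does prove, in \cref{sec:quotientsASMs,sec:maxima,sec:minimalPermutations,sec:TL}, is a generalization of parts (1), (2) and (5) to arbitrary catalan congruences of~$\ASM_n$, by entirely different means (walls in the tetrahedron poset, depth triangles, nappes). Judged against that machinery, your plan for (1)--(4) is a plausible reconstruction of the Bergeron--Gagnon route, but your part (5) has a genuine gap, and it sits exactly where the paper has to do real work.

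Concretely: your triangularity strategy for (5) is the right shape, but the crux you defer --- ``controlling how the image of a permutation in $\TL_n(2)$ evolves as one moves within a single excedance class'' --- is the entire theorem, and your proposed mechanism points the wrong way. The only usable rewriting rule is $(ik) = () - (ij) - (jk) + (ijk) + (ikj)$ applied to a $321$-pattern $i<j<k$, which expands a permutation \emph{downward} in Bruhat order into $321$-avoiding permutations, i.e.\ toward the \emph{minima} of the classes; it does not let you expand a representative in Zinno's basis of \emph{maxima} $\tau_\mu$ ``one cover relation at a time''. This is why the paper's \cref{sec:TL} works over the $321$-avoiding basis $T_n$, and the ingredient that closes the induction there is \cref{prop:minimalPermutation}: for any $321$-pattern $i<j<k$ of $\sigma$, at least one of $\sigma\circ(i,j)$, $\sigma\circ(j,k)$ is congruent to $\sigma$, and if both are, then $\sigma\circ(i,k)$ (and the two $3$-cycles) are as well --- which is what forces the coefficient of the class minimum to remain $1$ at every step of the rewriting. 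Without a lemma of this kind, your unitriangularity claim is unsupported. Two smaller gaps: in (3), surjectivity ``by matching the Catalan count on both sides'' is circular, since counting the excedance classes is precisely what is at stake; instead one should check directly that for any $\sigma$ the deficiency positions and deficiency values satisfy the prefix condition guaranteeing a unique noncrossing matching between them. And in (2), Bruhat convexity of $C_\lambda$ does not follow from ``tracking a single cover'' --- a cover can change the excedance data, and the issue is to show that any cover leaving the class also leaves the interval, which needs the rank-matrix argument in both directions, not a local observation.
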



\subsection{The excedance congruence on~$\ASM$}
\label{subsec:excedanceCongruence}

We now extend the excedance relation on permutations to a lattice congruence on~$\ASM_n$.
For this, we consider the diagonal and the superdiagonal of the corresponding HFMs.

\begin{definition}
The \defn{excedance congruence} of~$\HFM_n$ is the equivalence relation~$\equiv$ defined by~$H \equiv H'$ if and only if~$H_{i,j} = H'_{i,j}$ for all~$0 \le i,j \le n$ with~$j-i \in \{0,1\}$.
The \defn{excedance congruence} of~$\ASM_n$ is the corresponding equivalence relation~$\equiv$ on~$\ASM_n$.
\end{definition}

\begin{lemma}
The excedance congruence is a lattice congruence on~$\HFM_n$ (or~$\ASM_n$).
\end{lemma}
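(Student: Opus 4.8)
The plan is to reduce the statement to the fact, recalled just above the lemma, that the lattice operations on $\HFM_n$ are computed entrywise: for $H, H' \in \HFM_n$, the join $H \vee H'$ has $(i,j)$-entry $\max(H_{i,j}, H'_{i,j})$ and the meet $H \wedge H'$ has $(i,j)$-entry $\min(H_{i,j}, H'_{i,j})$. Write $S \eqdef \set{(i,j)}{0 \le i,j \le n \text{ and } j-i \in \{0,1\}}$ for the set of diagonal and superdiagonal positions, so that by definition $H \equiv H'$ holds precisely when $H_{i,j} = H'_{i,j}$ for every $(i,j) \in S$.

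First I would check the congruence condition directly. Assume $H \equiv \tilde H$ and $H' \equiv \tilde H'$. For any position $(i,j) \in S$ we then have $H_{i,j} = \tilde H_{i,j}$ and $H'_{i,j} = \tilde H'_{i,j}$, so
\[
(H \vee H')_{i,j} = \max(H_{i,j}, H'_{i,j}) = \max(\tilde H_{i,j}, \tilde H'_{i,j}) = (\tilde H \vee \tilde H')_{i,j},
\]
and likewise $(H \wedge H')_{i,j} = (\tilde H \wedge \tilde H')_{i,j}$. Letting $(i,j)$ range over all of $S$ yields $H \vee H' \equiv \tilde H \vee \tilde H'$ and $H \wedge H' \equiv \tilde H \wedge \tilde H'$, which is exactly the assertion that $\equiv$ respects joins and meets.

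A slightly more conceptual formulation, which I would record as the underlying reason, is that the restriction map $\pi \colon \HFM_n \to \prod_{(i,j) \in S} \mathbb{Z}$ sending $H$ to $(H_{i,j})_{(i,j) \in S}$, with the target ordered coordinatewise, is a lattice morphism onto its image---precisely because join and meet in $\HFM_n$ are entrywise. The excedance congruence is then the fiber relation of $\pi$, and the fibers of a surjective lattice morphism always form a lattice congruence, as noted in \cref{subsec:latticeCongruences}. The statement for $\ASM_n$ follows immediately, since the bijection carrying $\HFM_n$ to $\ASM_n$ is a lattice isomorphism and transports $\equiv$ to the excedance congruence on $\ASM_n$. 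I do not anticipate a real obstacle: the only input is the entrywise description of $\vee$ and $\wedge$, and the key observation is simply that the value of a coordinatewise $\max$ or $\min$ at a monitored position $(i,j) \in S$ depends only on the two values sitting at that position, so the computation never appeals to entries outside $S$.
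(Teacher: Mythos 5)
Your proof is correct and follows essentially the same route as the paper: a direct verification that the entrywise $\max$/$\min$ description of join and meet in $\HFM_n$ forces equality of the diagonal and superdiagonal entries of $H \vee K$ and $H' \vee K'$ (and dually for the meet), which is word-for-word the paper's argument. The additional reformulation via the restriction map $\pi$ and fibers of a surjective lattice morphism is a valid and slightly more conceptual packaging of the same observation, but it does not change the substance.
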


\begin{proof}
Let~$H \equiv H'$ and~$K \equiv K'$ in~$\HFM_n$.
For any~$0 \le i, j \le n$ with~$j-i \in \{0,1\}$, we have~$H_{i,j} = H'_{i,j}$ and~$K_{i,j} = K'_{i,j}$, hence~$(H \vee K)_{i,j} = \max(H_{i,j}, K_{i,j}) = \max(H'_{i,j}, K'_{i,j}) = (H' \vee K')_{i,j}$.
We conclude that~$H \vee K \equiv H' \vee K'$.
The proof is symmetric for the meet.
\end{proof}

We now check that the excedance relation on alternating sign matrices restricts to the excedance relation on permutations.
Denote by~$P_\sigma$ the permutation matrix of~$\sigma \in \Ss_n$, that is, where~$(P_\sigma)_{i,j} = \delta_{\sigma_i,j}$ for all~$i,j \in [n]$.

\begin{lemma}
\label{lem:excedanceRelationVSexcedanceCongruence}
For any permutations~$\sigma, \tau \in \Ss_n$, we have~$\sigma \equiv \tau$ if and only if~$P_\sigma \equiv P_\tau$.
\end{lemma}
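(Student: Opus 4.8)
The plan is to unwind both the definition of the excedance congruence on $\HFM_n$ and the definition of the excedance relation on $\Ss_n$, and to show they agree after translating a permutation matrix $P_\sigma$ into its height function matrix $H = H(\sigma)$. By definition, $P_\sigma \equiv P_\tau$ means $H(\sigma)_{i,j} = H(\tau)_{i,j}$ whenever $j - i \in \{0,1\}$, so the whole statement reduces to proving that the diagonal entries $H(\sigma)_{i,i}$ and the superdiagonal entries $H(\sigma)_{i,i+1}$ are jointly equivalent to recording $\Epos(\sigma)$ and $\Eval(\sigma)$. First I would compute $H(\sigma)_{i,j}$ explicitly in terms of $\sigma$. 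Chasing through the two bijections in \cref{subsec:ASMs}, the corner sum matrix of $P_\sigma$ is $C_{i,j} = \#\set{k \le i}{\sigma_k \le j}$, and then $H_{i,j} = i + j - 2C_{i,j}$. So everything hinges on understanding $C_{i,i}$ and $C_{i,i+1}$.

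The key computation is to read off the excedance data from consecutive diagonal/superdiagonal differences. Concretely, I would look at how $C_{i,j}$ changes as $j$ increases by one along a fixed row, and as $i$ increases by one along a fixed column, since $C_{i,j} - C_{i,j-1} \in \{0,1\}$ detects whether there is a $k \le i$ with $\sigma_k = j$, and $C_{i,j} - C_{i-1,j} \in \{0,1\}$ detects whether $\sigma_i \le j$. The cleanest route is to show that the collection of values $\set{H_{i,i}}{0 \le i \le n}$ together with $\set{H_{i,i+1}}{0 \le i \le n-1}$ determines, and is determined by, the two Boolean sequences $i \mapsto [\sigma_i > i]$ and $j \mapsto [\exists k \le j : \sigma_k = j]$. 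The first sequence is exactly the indicator of $\Epos(\sigma)$ (recording positions $i$ with $i \le \sigma_i$, i.e.\ weak excedances; one must be slightly careful with the boundary between $\le$ and $<$, but the diagonal entry pins down the equality case), and the second is exactly the indicator of $\Eval(\sigma)$. I would phrase this as: the local increments of $H$ across the diagonal and superdiagonal read off whether each position is an excedance position and whether each value is an excedance value.

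Having established the translation in both directions, the equivalence follows formally: $\Epos(\sigma) = \Epos(\tau)$ and $\Eval(\sigma) = \Eval(\tau)$ hold if and only if the two Boolean sequences agree for $\sigma$ and $\tau$, if and only if all diagonal and superdiagonal entries of $H(\sigma)$ and $H(\tau)$ coincide, which is precisely $P_\sigma \equiv P_\tau$. I expect the main obstacle to be purely bookkeeping: getting the boundary conventions right (the rows and columns of HFMs are indexed from $0$ to $n$ while permutations act on $[n]$), and correctly matching the weak-excedance condition $i \le \sigma_i$ against the strict comparisons that naturally arise from the corner sum increments. In particular I would double check the diagonal case $j = i$, which is what distinguishes the position $i$ with $\sigma_i = i$ (a weak excedance) from a strict descent, and confirm that the combination of diagonal and superdiagonal values disambiguates these cases so that $\Epos$ and $\Eval$ are recovered exactly rather than up to the fixed-point ambiguity.
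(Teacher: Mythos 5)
Your plan follows the same route as the paper's own proof: compute $C_{i,j} = \#\set{k \le i}{\sigma_k \le j}$ and $H_{i,j} = i+j-2C_{i,j}$, then read excedance data off the increments of $H$ along the staircase $H_{0,0}, H_{0,1}, H_{1,1}, H_{1,2}, \dots, H_{n,n}$. The genuine gap is the step you postpone (``confirm that the combination of diagonal and superdiagonal values disambiguates these cases so that $\Epos$ and $\Eval$ are recovered exactly rather than up to the fixed-point ambiguity''): that confirmation is impossible, because the disambiguation fails. Indeed, the two kinds of increments are $H_{i,i+1}-H_{i,i} = 1-2\bigl(C_{i,i+1}-C_{i,i}\bigr)$, which equals $-1$ exactly when $\sigma^{-1}(i+1) \le i$, and $H_{i+1,i+1}-H_{i,i+1} = 1-2\bigl(C_{i+1,i+1}-C_{i,i+1}\bigr)$, which equals $-1$ exactly when $\sigma_{i+1} \le i+1$. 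Hence the diagonal and superdiagonal of $H$ encode exactly the \emph{strict} excedance values $\set{v}{\sigma^{-1}(v) < v}$ and the \emph{strict} excedance positions $\set{p}{\sigma_p > p}$: your second Boolean sequence must carry the bound $k \le j-1$ rather than $k \le j$, and fixed points are invisible to this data. Concretely, $\sigma = 312$ and $\tau = 321$ have the same staircase $0,1,2,3,2,1,0$, so $P_{312} \equiv P_{321}$ for the congruence as defined (agreement for $j-i \in \{0,1\}$, with $(P_\sigma)_{i,j} = \delta_{\sigma_i,j}$), while $\Epos(312) = \{1\} \ne \{1,2\} = \Epos(321)$ and $\Eval(312) = \{3\} \ne \{2,3\} = \Eval(321)$, so $312 \not\equiv 321$ for the excedance relation. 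No amount of bookkeeping closes this gap; with the conventions as literally stated, the lemma itself fails.

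This is not only your gap but also the paper's: its proof asserts $H_{i,i}-H_{i,i+1} = 2\sum_{i'<i}A_{i',i} - 1$ (the correct identity is $H_{i,i}-H_{i,i+1} = 2\sum_{i'\le i}A_{i',i+1} - 1$) and then concludes that this difference is $1$ if and only if $i \in \Eval(\sigma)$, silently trading the strict condition for the weak one --- exactly the $\le$ versus $<$ boundary you were worried about. The statement becomes true, and your argument then goes through essentially verbatim, if the excedance congruence is taken on the diagonal and the \emph{sub}diagonal, \ie agreement for $j-i \in \{0,-1\}$: the increments along that staircase equal $-1$ exactly when $\sigma_{i+1} \le i$ (the complement of $\Epos(\sigma)$) and exactly when $\sigma^{-1}(i+1) \le i+1$ (that is, $\Eval(\sigma)$), so the subdiagonal data is equivalent to the pair $\bigl(\Epos(\sigma), \Eval(\sigma)\bigr)$. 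As a sanity check, $231$ and $321$ --- a Bergeron--Gagnon class --- then get identical data, while $312$ is separated. Equivalently, one can keep $j-i\in\{0,1\}$ and transpose the permutation matrix convention. So the right outcome of your planned ``double check'' is not that the equality cases disambiguate, but that the conventions in the statement must be repaired before any proof can succeed.
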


\begin{proof}
Note that for an ASM~$A$ with HFM~$H$, we have~$H_{i,i}-H_{i,i+1} = 2\sum_{i' < i} A_{i',i} - 1$.
For~$A = P_\sigma$, we thus obtain that~$H_{i,i}-H_{i,i+1} = 1$ if and only if~$i \in \Eval(\sigma)$.
Similarly, $H_{i,i+1}-H_{i+1,i+1} = 1$ if and only if~$i \in \Epos(\sigma)$.
Finally, recall that~$H_{i,i}-H_{i,i+1} \in \{-1,1\}$ and~$H_{i,i+1}-H_{i+1,i+1} \in \{-1,1\}$.
Consequently, the values~$H_{i,j}$ for~$i,j \in [n]$ with~$j-i \in \{0,1\}$ and the pair~$\big( \Epos(\sigma), \Eval(\sigma) \big)$ determine each other.
\end{proof}

We will now describe the classes of the excedance congruence.
For this, define the \defn{height sequence} of a Dyck path as the sequence of heights of its integer points (see \cref{fig:ASM2NCP}).

\begin{lemma}
\label{lem:heightSequence}
For any HFM~$H$, the sequence~$H_{1,1}, H_{1,2}, H_{2,2}, H_{2,3}, \dots, H_{n-1,n-1}, H_{n-1,n}, H_{n,n}$ is the height sequence of a Dyck path.
\end{lemma}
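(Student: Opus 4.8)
The plan is to read off the claimed sequence directly from the definition of a height function matrix and verify the two defining properties of a height sequence of a Dyck path, namely that consecutive entries differ by $\pm 1$ (up/down steps) and that the path never dips below the horizontal axis while starting and ending at the correct heights. First I would record the sequence in order as $h_0, h_1, \dots, h_{2n-1}, h_{2n}$ where the even-indexed terms $h_{2k} = H_{k,k}$ lie on the diagonal and the odd-indexed terms $h_{2k-1} = H_{k-1,k}$ lie on the superdiagonal, so that the sequence alternates between a diagonal and a superdiagonal entry of $H$. The boundary conditions on HFMs give $H_{0,0}=0$, but note the stated sequence begins at $H_{1,1}$; so I would first pin down the correct endpoints. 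Since $H_{0,0}=0$ and entries in a row differ by $1$, we get $H_{0,1}=1$ and hence $H_{1,1} \in \{0,2\}$; tracing the boundary one sees the sequence starts at height $H_{1,1}$ and ends at $H_{n,n}$, and I would check these match the start/end heights of a semilength-$n$ Dyck path (heights $0$ and $0$, or after a shift, the appropriate endpoints read off from \cref{fig:ASM2NCP}).

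The key step is the difference condition. By the definition of an HFM, any two consecutive entries in a row or a column differ by exactly $1$. Each consecutive pair in the listed sequence is either a horizontal neighbor $H_{k,k}, H_{k,k+1}$ (same row $k$, adjacent columns) or a vertical neighbor $H_{k-1,k}, H_{k,k}$ (adjacent rows, same column $k$). In both cases the HFM axiom forces the difference to be $\pm 1$, so every consecutive pair in the sequence differs by exactly $1$: this gives a lattice path with unit up/down steps, which is the up-step/down-step structure of a Dyck path.

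The remaining and genuinely substantive step is the nonnegativity (never passing below the axis), which is where the real content lies, since the alternating-difference property alone only gives an arbitrary $\pm 1$ path. Here I would invoke the corner sum matrix. Recall $H_{i,j} = i + j - 2C_{i,j}$ and that $C_{i,j} = \sum_{i' \le i, j' \le j} A_{i',j'}$ is the partial sum of the ASM. I plan to show that the diagonal and superdiagonal heights are bounded below appropriately by translating the inequality into a statement about $C$. Concretely, for a diagonal entry $H_{k,k} = 2k - 2C_{k,k}$ and a superdiagonal entry $H_{k-1,k} = 2k-1 - 2C_{k-1,k}$, the nonnegativity of the path (relative to its starting baseline) reduces to upper bounds of the form $C_{k,k} \le k$ and $C_{k-1,k} \le k$, which hold because each corner sum $C_{i,j}$ is a sum of at most $\min(i,j)$ row-contributions each bounded by $1$ (as the row sums of the ASM are $1$ and partial row sums lie in $\{0,1\}$). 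I expect the main obstacle to be this nonnegativity estimate: getting the baseline right and confirming the precise bound on $C_{i,j}$ along the diagonal band $j - i \in \{0,1\}$, rather than the trivial difference-by-one observation. Once the bound $C_{i,j} \le \min(i,j)$ along this band is established, nonnegativity follows and the sequence is exactly the height sequence of a Dyck path of semilength $n$.
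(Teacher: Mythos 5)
Your proposal follows the same skeleton as the paper's (one-line) proof: read the listed entries as a lattice path and check the three defining properties of a Dyck path height sequence --- endpoints at height $0$, consecutive differences $\pm 1$, and nonnegativity. The $\pm 1$ step is handled identically. Where you differ is nonnegativity: the paper simply cites the fact~$H_{i,j}\ge 0$, while you derive it through the corner sum matrix via $C_{i,j}\le \min(i,j)$, hence $H_{i,j}=i+j-2C_{i,j}\ge |i-j|\ge 0$. This is correct (partial row and column sums of an ASM do lie in $\{0,1\}$), but it quietly routes through the HFM--CSM bijection even though the lemma concerns HFMs alone, and it is more work than needed: the same bound $H_{i,j}\ge |i-j|$ falls directly out of the HFM axioms, since walking along row~$i$ from $H_{i,0}=i$ changes the entry by $\pm1$ at each of $j$ steps (so $H_{i,j}\ge i-j$), and walking along column~$j$ from $H_{0,j}=j$ gives $H_{i,j}\ge j-i$. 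So this step is not really ``where the real content lies''; it is as immediate as the others.

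The one place your write-up stops short is the endpoints, and there you spotted a genuine defect but then deferred it (``I would check these match'') instead of resolving it --- and that check cannot succeed for the statement as literally written. Indeed $H_{1,1}\in\{0,2\}$, and in the paper's own examples (\cref{fig:ASM1,fig:ASM2NCP}) one has $H_{1,1}=2$, so the sequence beginning at $H_{1,1}$ is not the height sequence of any Dyck path. The correct reading is that the sequence starts at the corner: it is $H_{0,0},H_{0,1},H_{1,1},H_{1,2},\dots,H_{n-1,n},H_{n,n}$ (exactly the red entries of \cref{fig:ASM2NCP}), which starts and ends at $0$ by the boundary conditions, has steps $\pm 1$, stays nonnegative, and has semilength $n$. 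The paper's own proof carries the same off-by-one (its claim $H_{1,1}=0=H_{n,n}$ only holds if the indices are shifted so that the sequence begins at the corner entry), so you were right to flag it; but a complete proof must commit to the corrected sequence and conclude, rather than leave the endpoint matching as an open check.
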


\begin{proof}
It immediately follows from the fact that~$H_{1,1} = 0 = H_{n,n}$ and that~$H_{i,j} \ge 0$ and~$H_{i,j} - H_{i,j+1} \in \{-1,1\}$ and~$H_{i,j} - H_{i+1,j} \in \{-1,1\}$ for any~$i,j \in [n]$.
\end{proof}

Combining \cref{lem:heightSequence} with the bijections between ASMs and HFMs and between NCPs, Dyck paths, and height sequences, we obtain a map~$A \mapsto \b{\lambda}(A)$ from ASMs to NCPs.
See \cref{fig:ASM2NCP} for an illustration.

\begin{definition}
\begin{figure}
	\centerline{
	$\begin{pmatrix} \cdot & + & \cdot & \cdot & \cdot & \cdot & \cdot \\ \cdot & \cdot & \cdot & \cdot & \cdot & + & \cdot \\ \cdot & \cdot & + & \cdot & \cdot & - & + \\ \cdot & \cdot & \cdot & \cdot & + & \cdot & \cdot \\ + & \cdot & - & + & \cdot & \cdot & \cdot \\ \cdot & \cdot & \cdot & \cdot & \cdot & + & \cdot \\ \cdot & \cdot & + & \cdot & \cdot & \cdot & \cdot \\ \end{pmatrix}$\quad
	$\begin{pmatrix} \color{red}\b{0} & \color{red}\b{1} & 2 & 3 & 4 & 5 & 6 & 7 \\ 1 & \color{red}\b{2} & \color{red}\b{1} & 2 & 3 & 4 & 5 & 6 \\ 2 & 3 & \color{red}\b{2} & \color{red}\b{3} & 4 & 5 & 4 & 5 \\ 3 & 4 & 3 & \color{red}\b{2} & \color{red}\b{3} & 4 & 5 & 4 \\ 4 & 5 & 4 & 3 & \color{red}\b{4} & \color{red}\b{3} & 4 & 3 \\ 5 & 4 & 3 & 4 & 3 & \color{red}\b{2} & \color{red}\b{3} & 2 \\ 6 & 5 & 4 & 5 & 4 & 3 & \color{red}\b{2} & \color{red}\b{1} \\ 7 & 6 & 5 & 4 & 3 & 2 & 1 & \color{red}\b{0} \\ \end{pmatrix}$\quad	
	\raisebox{-2cm}{\includegraphics[scale=.95]{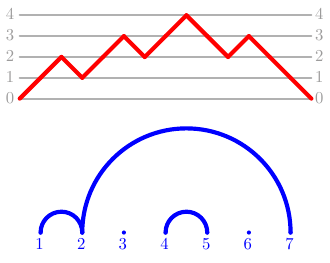}}
	}
	\caption{An ASM (left), its HFM (middle), its Dyck path (top right), and its NCP (bottom right).}
	\label{fig:ASM2NCP}
\end{figure}
For~$\lambda \in \NCP_n$, we define the set of ASMs
\[
D_\lambda \eqdef \set{A \in \ASM_n}{\b{\lambda}(A) = \lambda}.
\]
\end{definition}

\begin{lemma}
\label{lem:restrictionClasses}
For any~$\lambda \in \NCP_n$, we have~$C_\lambda = \set{\sigma \in \Ss_n}{P_\sigma \in D_\lambda}$.
\end{lemma}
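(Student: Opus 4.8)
The plan is to show that the two conditions ``$\sigma \in C_\lambda$'' and ``$P_\sigma \in D_\lambda$'' are both governed by a single piece of data attached to~$\sigma$, namely the pair $(\Epos(\sigma), \Eval(\sigma))$, and then to match the two resulting descriptions of~$\lambda$. By definition, $\sigma \in C_\lambda$ means $\Epos(\sigma) = [n]\ssm\lambda^-$ and $\Eval(\sigma) = [n]\ssm\lambda^+$, whereas $P_\sigma \in D_\lambda$ means $\b\lambda(P_\sigma) = \lambda$. By \cref{thm:BG}, $\lambda \mapsto C_\lambda$ is a bijection from $\NCP_n$ onto the excedance classes, so there is a well-defined map $f \colon \Ss_n \to \NCP_n$ sending $\sigma$ to the unique NCP with $\sigma \in C_{f(\sigma)}$ (equivalently, the NCP whose left and right endpoint sets are $[n]\ssm\Epos(\sigma)$ and $[n]\ssm\Eval(\sigma)$). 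On the other side, $g \colon \sigma \mapsto \b\lambda(P_\sigma)$ is also a map $\Ss_n \to \NCP_n$. The desired equality $C_\lambda = \set{\sigma \in \Ss_n}{P_\sigma \in D_\lambda}$ is exactly the equality of fibers $f^{-1}(\lambda) = g^{-1}(\lambda)$ for all~$\lambda$, so it suffices to prove $f = g$.

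The first step is to observe that both $f$ and $g$ factor through $\sigma \mapsto (\Epos(\sigma), \Eval(\sigma))$. For~$f$ this is its definition. For~$g$, recall that $\b\lambda(P_\sigma)$ is obtained from the HFM~$H$ of~$P_\sigma$ by reading off its height sequence (a Dyck path by \cref{lem:heightSequence}) and applying the standard bijection to~$\NCP_n$; but this height sequence consists precisely of the entries $H_{i,j}$ with $j - i \in \{0,1\}$, which by \cref{lem:excedanceRelationVSexcedanceCongruence} determine and are determined by $(\Epos(\sigma), \Eval(\sigma))$. Hence $g(\sigma)$ also depends only on $(\Epos(\sigma), \Eval(\sigma))$, and the identity $f = g$ reduces to a statement that no longer mentions permutations: for every pair $(E_p, E_v)$ occurring as $(\Epos(\sigma), \Eval(\sigma))$, the NCP with endpoint sets $([n]\ssm E_p, [n]\ssm E_v)$ must coincide with the NCP extracted from the Dyck path whose up/down pattern is the one prescribed by $(E_p, E_v)$ through \cref{lem:excedanceRelationVSexcedanceCongruence}.

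The second step is to make both descriptions of this NCP explicit and compare them. On one side, \cref{lem:excedanceRelationVSexcedanceCongruence} records which steps of the height sequence go up and which go down, the ``value'' steps (from $H_{i,i}$ to $H_{i,i+1}$) being governed by~$\Eval(\sigma)$ and the ``position'' steps (from $H_{i,i+1}$ to $H_{i+1,i+1}$) by~$\Epos(\sigma)$. On the other side, the standard bijection between Dyck paths and noncrossing partitions reads the step pattern as openings (left endpoints) and closings (right endpoints) of arcs. Aligning these two readings, and checking that the complementations $\lambda^- \mapsto [n]\ssm\lambda^-$ and $\lambda^+ \mapsto [n]\ssm\lambda^+$ are exactly what converts one into the other, gives $f = g$. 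I expect this final alignment to be the main obstacle: it is purely a matter of conventions, but a delicate one, since one must correctly interleave the value- and position-steps of the height sequence of \cref{lem:heightSequence} with the opening/closing structure of the Dyck-to-NCP bijection, and verify that the complementation has the right orientation on both the position and the value side.

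To avoid carrying out an explicit arc-by-arc verification in full generality, the cleanest way to finish is to exploit that both $(C_\lambda)_{\lambda \in \NCP_n}$ and $\big(\set{\sigma}{P_\sigma \in D_\lambda}\big)_{\lambda \in \NCP_n}$ are partitions of~$\Ss_n$ indexed by~$\NCP_n$ --- the first by \cref{thm:BG}, the second because the fibers of~$g$ partition~$\Ss_n$. Given two partitions of a set indexed by the same index set, a single inclusion $C_\lambda \subseteq \set{\sigma}{P_\sigma \in D_\lambda}$ for every~$\lambda$ already forces equality of all the blocks. Thus it is enough to prove just one direction, say that $\sigma \in C_\lambda$ implies $\b\lambda(P_\sigma) = \lambda$, which is a direct computation: reconstruct the height sequence of~$P_\sigma$ from $(\Epos(\sigma), \Eval(\sigma))$ via \cref{lem:excedanceRelationVSexcedanceCongruence}, read off its NCP, and confirm it equals the NCP with endpoints $([n]\ssm\Epos(\sigma), [n]\ssm\Eval(\sigma)) = (\lambda^-, \lambda^+)$.
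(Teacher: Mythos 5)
Your proposal is correct and takes essentially the same route as the paper: the paper's proof of this lemma is simply ``Same arguments as in \cref{lem:excedanceRelationVSexcedanceCongruence}'', meaning both arguments rest on the computation that, for a permutation matrix~$P_\sigma$, the diagonal and superdiagonal entries of its HFM (hence the height sequence, hence~$\b{\lambda}(P_\sigma)$) and the pair~$(\Epos(\sigma),\Eval(\sigma))$ determine each other, matched against the definition of~$C_\lambda$ by complementation. The extra scaffolding you add (the maps~$f$ and~$g$, and the partition trick reducing to a single inclusion) is sound but not really needed, since the determination in that lemma is already bidirectional, and the final convention check against the Dyck-path-to-NCP bijection that you flag as the main obstacle is precisely what the paper also leaves implicit.
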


\begin{proof}
Same arguments as in \cref{lem:excedanceRelationVSexcedanceCongruence}.
\end{proof}

\begin{lemma}
The map~$\lambda \mapsto D_\lambda$ is a bijection from the NCPs of~$[n]$ to the excedance classes of~$\ASM_n$.
\end{lemma}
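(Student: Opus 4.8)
The plan is to identify the fibers of the map $A \mapsto \b{\lambda}(A)$ with the excedance classes of~$\ASM_n$, and then to check that this map is surjective onto~$\NCP_n$; these two facts together show at once that $\lambda \mapsto D_\lambda = \b{\lambda}^{-1}(\lambda)$ is a bijection from~$\NCP_n$ onto the excedance classes.

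First I would make the (essentially tautological) observation that $\b{\lambda}(A)$ depends only on the entries~$H_{i,j}$ of the HFM~$H$ of~$A$ with~$j - i \in \{0,1\}$, and in fact determines them. Indeed, these entries are exactly the terms of the height sequence of \cref{lem:heightSequence}, up to the forced boundary values~$H_{0,0} = 0$ and~$H_{0,1} = 1$ which carry no information, and $\b{\lambda}(A)$ is obtained from this sequence by composing the bijections between height sequences, Dyck paths, and noncrossing partitions. Since each of these maps is a bijection, the height sequence, and hence the whole tuple of diagonal and superdiagonal entries of~$H$, can be recovered from~$\b{\lambda}(A)$. Therefore, for two ASMs~$A, A'$ with HFMs~$H, H'$, one has~$\b{\lambda}(A) = \b{\lambda}(A')$ if and only if~$H_{i,j} = H'_{i,j}$ for all~$0 \le i,j \le n$ with~$j - i \in \{0,1\}$, which is precisely the defining condition~$A \equiv A'$ of the excedance congruence. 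Consequently the nonempty fibers of~$\b{\lambda}$ are exactly the excedance classes, so~$D_\lambda$ is a single excedance class as soon as it is nonempty, and distinct NCPs give disjoint (hence distinct) sets~$D_\lambda$.

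It remains to verify surjectivity, \ie that~$D_\lambda \neq \emptyset$ for every~$\lambda \in \NCP_n$; this is the only non-formal point and thus the main obstacle. Here I would invoke the results already available on permutations: by \cref{lem:restrictionClasses} we have~$C_\lambda = \set{\sigma \in \Ss_n}{P_\sigma \in D_\lambda}$, and by \cref{thm:BG} the set~$C_\lambda$ equals the nonempty Bruhat interval~$[\sigma_\lambda, \tau_\lambda]$. In particular~$P_{\sigma_\lambda} \in D_\lambda$, so~$D_\lambda$ is nonempty. (Alternatively, one can avoid \cref{thm:BG} and argue directly that every Dyck path height sequence is realized by at least one HFM, by exhibiting an explicit filling extending the prescribed diagonal and superdiagonal.) Combining this with the previous paragraph, every~$\lambda$ indexes exactly one excedance class, and every excedance class arises as~$D_{\b{\lambda}(A)}$ for any~$A$ in it, so~$\lambda \mapsto D_\lambda$ is the desired bijection.
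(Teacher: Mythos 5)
Your proof is correct and follows essentially the same route as the paper: the paper's entire proof is your first step, namely that $\b{\lambda}(A) = \b{\lambda}(A')$ if and only if the HFMs $H$ and $H'$ agree on the diagonal and superdiagonal, \ie $A \equiv A'$, so the fibers of $\b{\lambda}$ are exactly the excedance classes. Your explicit verification that each $D_\lambda$ is nonempty (via \cref{lem:restrictionClasses} and \cref{thm:BG}) addresses a surjectivity point that the paper's proof leaves implicit, so it only makes the argument more complete.
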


\begin{proof}
For~$A,A' \in \ASM_n$ with corresponding~$H,H' \in \HFM_n$, we have $\b{\lambda}(A) = \b{\lambda}(A')$ if and only if~$H$ and~$H'$ have the same diagonal and superdiagonal, that is~$H \equiv H'$, that is~$A \equiv A'$.
\end{proof}

\begin{lemma}
\label{lem:maxBG}
For any NCP~$\lambda$, the maximal ASM of the class~$D_\lambda$ is the permutation matrix~$P_{\tau_\lambda}$.
\end{lemma}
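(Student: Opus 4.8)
The plan is to reduce the statement to the single assertion that the maximum of the class $D_\lambda$ is a permutation matrix, and then to pin down that permutation with the results of Bergeron and Gagnon. Since $\equiv$ is a lattice congruence, each class $D_\lambda$ is an interval of $\ASM_n$ (as recalled in \cref{subsec:latticeCongruences}), so it has a unique maximum, which I will denote $M_\lambda$. By \cref{thm:BG}\,(2) the permutation $\tau_\lambda$ lies in $C_\lambda$, and \cref{lem:restrictionClasses} then gives $P_{\tau_\lambda} \in D_\lambda$, so $P_{\tau_\lambda} \le M_\lambda$ in $\ASM_n$. Suppose we have shown that $M_\lambda$ is itself a permutation matrix, say $M_\lambda = P_\rho$. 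Then \cref{lem:restrictionClasses} gives $\rho \in C_\lambda$, and since $\ASM_n$ restricts to the Bruhat order on permutation matrices, $P_{\tau_\lambda} \le P_\rho$ forces $\tau_\lambda \le \rho$; but $\tau_\lambda$ is the maximum of $C_\lambda = [\sigma_\lambda,\tau_\lambda]$ by \cref{thm:BG}\,(2), so $\rho \le \tau_\lambda$, whence $\rho = \tau_\lambda$ and $M_\lambda = P_{\tau_\lambda}$. Thus everything hinges on proving that $M_\lambda$ is a permutation matrix.

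\textbf{Describing the maximum.}
I would work on the HFM side. By definition of the excedance congruence, $D_\lambda$ consists of all HFMs sharing a prescribed diagonal and superdiagonal, namely the Dyck path recorded by \cref{lem:heightSequence}, so the HFM of $M_\lambda$ is the entrywise-largest HFM with this prescribed band. I would make it explicit as a distance transform: writing $F$ for the set of cells on the boundary, the diagonal, and the superdiagonal (all fixed throughout $D_\lambda$), and using that two entries adjacent in a row or column of an HFM differ by exactly $1$, every $H \in D_\lambda$ satisfies $H_{k,\ell} \le H_{i,j} + |k-i| + |\ell-j|$ for all $(i,j) \in F$. Hence the entrywise maximum is
\[
H^{M}_{k,\ell} = \min_{(i,j) \in F} \big( H_{i,j} + |k-i| + |\ell-j| \big),
\]
and a short check (parity forces the adjacent differences to be odd, the triangle inequality forces them to have absolute value at most $1$, and the values on $F$ are mutually consistent since they come from an actual HFM) confirms that $H^M$ is a valid HFM lying in $D_\lambda$, so it is the HFM of $M_\lambda$. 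Equivalently, via the cover relations of $\HFM_n$ described in \cref{subsec:ASMs}, $M_\lambda$ being maximal in $D_\lambda$ means that no interior vertex off the band is a local minimum, since any such vertex could be raised by $2$ while staying in $D_\lambda$.

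\textbf{The crux: no $-1$.}
Using $A_{i,j} = \tfrac12\big( H_{i-1,j} + H_{i,j-1} - H_{i-1,j-1} - H_{i,j} \big)$, an entry $-1$ occurs exactly at a $2 \times 2$ block of the shape $\left(\begin{smallmatrix} a & a-1 \\ a-1 & a \end{smallmatrix}\right)$, that is, at a saddle whose antidiagonal corners lie strictly below its diagonal corners; in other words, an ASM in $D_\lambda$ is a permutation matrix if and only if its HFM is submodular. The plan is to show $H^M$ has no such block by exploiting that the fixed band runs in the \emph{main-diagonal} direction: a distance transform from sources aligned along main diagonals develops only ridges along the antidiagonal direction, which produce $+1$ saddles, never the diagonal ridges that produce $-1$ saddles. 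Concretely, from a putative $-1$ one steps from one of its two low corners to a strictly lower neighbour and repeats, producing a strictly decreasing lattice path; being confined to a finite grid of nonnegative heights, it must terminate at a local minimum, which I would argue is an interior off-band vertex, contradicting maximality.

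\textbf{Main obstacle.}
I expect the bookkeeping in this last step to be the real difficulty, and it cannot be skipped: a distance transform from sources placed along an \emph{anti}diagonal does create genuine $-1$ saddles, so the argument must use the orientation of the band in an essential way. The delicate point is to rule out that the descending path reaches the fixed band or the boundary before meeting a local minimum; here one must invoke the Dyck-path heights prescribed by \cref{lem:heightSequence} along the main-diagonal band, together with the fact that the linear ramps of the boundary have no interior local minima, to force the descent to stop at an off-band vertex. An alternative, more computational route would bypass the descent entirely by computing the HFM of $P_{\tau_\lambda}$ from the NCP description of $\tau_\lambda$ and checking that it already saturates the distance-transform bound above, thereby proving directly that $H^A \le H^{\tau_\lambda}$ for every $A \in D_\lambda$; this trades the orientation argument for a careful analysis of the connected components of $\lambda$ and how they match the valleys of the band, which I anticipate to be of comparable difficulty.
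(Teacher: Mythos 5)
Your reduction step is correct and is exactly how the paper argues: once the maximum $M_\lambda$ of $D_\lambda$ is known to be a permutation matrix, \cref{lem:restrictionClasses} and \cref{thm:BG}\,(2) identify it as $P_{\tau_\lambda}$. Your description of $M_\lambda$ on the HFM side is also sound: the class is closed under join, so its maximum is the entrywise maximum, and your distance-transform formula $H^M_{k,\ell} = \min_{(i,j)\in F}\bigl(H_{i,j}+|k-i|+|\ell-j|\bigr)$ does produce a valid HFM in $D_\lambda$ dominating the class (the parity and Lipschitz checks you indicate go through). The genuine gap is the crux itself: the claim that $H^M$ has no $-1$ saddle is never proven. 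The ``ridges along the antidiagonal direction'' heuristic is not an argument, and your descending-path scheme is left open at precisely the point you flag yourself --- nothing rules out that the strict descent terminates at a local minimum lying on the fixed band or boundary (the Dyck path of \cref{lem:heightSequence} does have valleys, including returns to height $0$), in which case no contradiction with maximality arises. The alternative computational route is likewise only declared, not executed. Since this is the entire content of the lemma beyond the formal reduction, the proposal as it stands does not prove the statement. For comparison, the paper does not prove this step here either: it defers it to \cref{sec:maxima}, where \cref{thm:covexillary} shows in full generality that maxima of classes of \emph{any} catalan congruence are (covexillary) permutation matrices.

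That said, your setup can be completed with a short argument using \cref{lem:ascentsDescentsASMs}: if $M_\lambda$ had an entry $-1$ at $(u,v)$, there would be two ascents $(i,j)$ and $(k,\ell)$ with $i<u\le k$ and $j\ge v>\ell$, hence in strictly antidiagonal position ($i<k$, $j>\ell$). Two such cells cannot both lie on the band $\set{(i,j)}{j-i\in\{0,1\}}$, because band membership together with $i<k$ forces $j\le i+1\le k\le \ell$. So at least one ascent is an interior cell off the band; raising $H^M$ there by $2$ stays in $D_\lambda$ (the diagonal and superdiagonal are untouched) and produces a strictly larger element, contradicting maximality. This is the precise form of your orientation intuition --- the $-1$ is excluded not by tracking a descent path, but because the two ascents it forces cannot both be absorbed by a band running in the main-diagonal direction.
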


\begin{proof}
We skip the proof that the maxima of classes are~always permutation matrices as it will be proved in more generality in \cref{sec:maxima}.
The result then follows from \cref{lem:restrictionClasses,thm:BG}\,(2).
\end{proof}

In contrast, note that the minima of excedance classes on~$\ASM_n$ are not permutation matrices in general, although it follows from~\cref{lem:restrictionClasses,thm:BG}\,(2) that each excedance class~$D_\lambda$ contains a minimal permutation matrix~$P_{\sigma_\lambda}$.

\begin{corollary}
The lattice quotient~$\ASM_n/{\equiv}$ is isomorphic to the Stanley lattice: $\lambda \le \mu$ if and only if~$D_\lambda \le D_\mu$.
\end{corollary}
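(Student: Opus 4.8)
The plan is to exhibit the bijection $\lambda \mapsto D_\lambda$ as the isomorphism induced by a surjective lattice morphism, so that the statement follows from the general principle recalled in \cref{subsec:latticeCongruences}: the fibers of a surjective lattice morphism $\L \to \K$ form a congruence whose quotient is isomorphic to~$\K$.

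First, I would repackage \cref{lem:heightSequence} as a map $\eta \colon \HFM_n \to \Dyck_n$ sending an HFM $H$ to the Dyck path whose height sequence is the diagonal--superdiagonal sequence of that lemma. Through the bijection between ASMs and HFMs and the standard bijection between Dyck paths and NCPs (which, as recalled in \cref{subsec:DyckPaths}, is a lattice isomorphism sending $\Dyck_n$ to $\NCP_n$), the composite $\ASM_n \cong \HFM_n \xrightarrow{\eta} \Dyck_n \cong \NCP_n$ is exactly the map $A \mapsto \b{\lambda}(A)$. Hence its fibers are precisely the classes $D_\lambda$, and it is surjective since $\lambda \mapsto D_\lambda$ is a bijection onto the classes. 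It is moreover order preserving, since $H \le H'$ entrywise forces $H_{i,j} \le H'_{i,j}$ for $j - i \in \{0,1\}$, so the height sequence of $\eta(H)$ lies pointwise below that of $\eta(H')$, i.e.\ $\eta(H) \le \eta(H')$ in the Stanley order.

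The heart of the proof is to upgrade $\eta$ from order preserving to a genuine lattice morphism. Joins and meets in $\HFM_n$ are the entrywise $\max$ and $\min$, and extracting the diagonal--superdiagonal subsequence commutes with these operations; it therefore suffices to show that the join (resp.\ meet) in the Stanley lattice $\Dyck_n$ is computed as the pointwise maximum (resp.\ minimum) of height sequences. I would prove this directly: given Dyck paths with height sequences $P$ and $Q$, the parity constraint $P_i \equiv Q_i \equiv i \pmod 2$ forces $R_i \eqdef \max(P_i, Q_i) \equiv i \pmod 2$, and a short case analysis on which of $P_i, Q_i$ attains the maximum shows $R_{i+1} - R_i \in \{-1, +1\}$; together with $R \ge 0$ and the correct endpoint values, this proves that $R$ is a Dyck path, visibly the least upper bound of $P$ and $Q$ for the containment order. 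The meet is symmetric. This yields $\eta(H \vee H') = \eta(H) \vee \eta(H')$ and $\eta(H \wedge H') = \eta(H) \wedge \eta(H')$, so $\eta$ is a surjective lattice morphism.

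Applying the general principle to $\eta$ then shows that the congruence induced by its fibers---which is the excedance congruence, since its classes are the $D_\lambda$---has quotient $\HFM_n/{\equiv} \cong \Dyck_n$, whence $\ASM_n/{\equiv} \cong \Dyck_n \cong \NCP_n$, and unwinding the identifications gives exactly $\lambda \le \mu \iff D_\lambda \le D_\mu$. I expect the only genuine obstacle to be the lattice lemma for Dyck paths (that Stanley joins and meets are pointwise extrema of height sequences); the remainder is bookkeeping through the chain of bijections, where the one point requiring care is that the ASM--HFM and Dyck--NCP correspondences are lattice isomorphisms, not merely order isomorphisms, so that the morphism property of $\eta$ transfers cleanly to the statement about $\ASM_n/{\equiv}$.
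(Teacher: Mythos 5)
Your proof is correct, but it follows a genuinely different route from the paper's. The paper deduces the corollary in one line from \cref{lem:maxBG} together with \cref{thm:BG}\,(4): since a lattice quotient is isomorphic, as a poset, to the subposet induced by the maxima of its congruence classes, and since by \cref{lem:maxBG} these maxima are exactly the permutation matrices $P_{\tau_\lambda}$, the statement reduces to the Bergeron--Gagnon theorem that the excedance quotient of the Bruhat order is the Stanley lattice. That route is short but not self-contained at this point of the text: the proof of \cref{lem:maxBG} is itself deferred to \cref{sec:maxima}, and \cref{thm:BG}\,(4) is imported from \cite{BergeronGagnon}. You avoid both ingredients by building the restriction map $\eta\colon \HFM_n \to \Dyck_n$ and checking that it is a surjective lattice morphism, the one genuinely new point being that joins and meets in the Stanley lattice are computed as pointwise maxima and minima of height sequences (your parity argument for this is the standard one and is correct); the conclusion then follows from the fiber principle recalled in \cref{subsec:latticeCongruences}. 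Your surjectivity step correctly leans on the earlier lemma that $\lambda \mapsto D_\lambda$ is a bijection onto the congruence classes, which precedes the corollary in the paper, so no circularity arises. What your approach buys is independence from \cite{BergeronGagnon} and from the deferred \cref{lem:maxBG}, giving a self-contained and more structural proof; what the paper's approach buys is brevity and the explicit identification of the quotient with the subposet of maxima $P_{\tau_\lambda}$, which is the form in which the quotient is exploited in the later sections.
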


\begin{proof}
This follows from \cref{lem:maxBG,thm:BG}\,(4).
\end{proof}

The goal of this paper is to show that all properties of \cref{thm:BG} actually hold for any lattice quotient of~$\ASM_n$ isomorphic to~$\Dyck_n$.


\section{Catalan congruences of~$\ASM_n$}
\label{sec:quotientsASMs}

This paper focuses on the following generalization of \cref{subsec:excedanceCongruence}.

\begin{definition}
A \defn{catalan congruence} is a congruence of the lattice~$\ASM_n$ on alternating sign matrices whose quotient is isomorphic to the Stanley lattice~$\Dyck_n$ on Dyck paths.
\end{definition}

The two catalan congruences when~$n = 3$ are represented in \cref{fig:ASMQuotients}.
In this section, we describe all catalan congruences and encode them bijectively using several combinatorial structures, namely walls, depth triangles, catalan triangles and bicolored pipe dreams.

\begin{figure}[h]
	\centering
	\includegraphics[scale=.7]{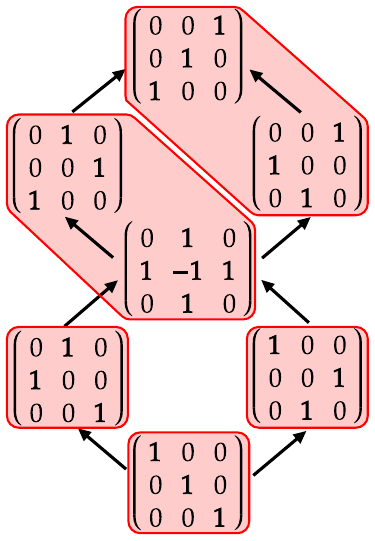}
	\qquad\qquad
	\includegraphics[scale=.7]{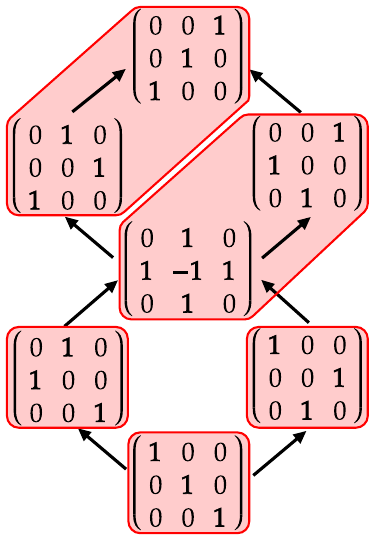}
	\caption{The two lattice congruences~$\ASM_3$ whose quotient are isomorphic to~$\Dyck_3$.}
	\label{fig:ASMQuotients}
\end{figure}


\subsection{Quotients of distributive lattices}
\label{subsec:quotientsDistributiveLattices}

Quotients of distributive lattices are distributive (by the definition of~$\vee$ and~$\wedge$ on~$\L/{\equiv}$), and behave properly with respect to Birkhoff's fundamental theorem of distributive lattices (\cref{thm:ftfdl}).
Recall that we denote by~$j_\star$ the single element covered by a join irreducible~$j$.

\begin{theorem}
\label{thm:congruencesDistributiveLattices1}
For a congruence~$\equiv$ on a distributive lattice~$\L$, denote by~$\JIrr(\equiv)$ the subposet of~$\JIrr(\L)$ induced by the join irreducibles~$j$ such that~$j \not\equiv j_\star$.
Then
\begin{itemize}
\item an element $x \in \L$ is minimal in its congruence class if and only if the corresponding antichain~$\jAnti(x)$ is contained in~$\JIrr(\equiv)$,
\item the quotient~$\L/{\equiv}$ is isomorphic to the distributive lattice of lower sets of~$\JIrr(\equiv)$.
\end{itemize}
Dual statements hold for meet irreducibles.
\end{theorem}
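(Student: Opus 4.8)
The plan is to work inside the concrete model furnished by Birkhoff's theorem (\cref{thm:ftfdl}): identify~$\L$ with the lattice~$\Lower(\P)$ of lower sets of~$\P \eqdef \JIrr(\L)$, so that each~$x \in \L$ becomes the lower set~$\JIdeal(x) \subseteq \P$, each join irreducible is a principal lower set~$j_p \eqdef \set{q \in \P}{q \le p}$, and~$(j_p)_\star = j_p \ssm \{p\}$. The cornerstone I would establish first is a local criterion for when a cover relation is collapsed. Given a cover~$I \lessdot I \cup \{p\}$ (so~$p$ is minimal in~$\P \ssm I$, whence all~$q < p$ lie in~$I$), one checks the perspectivity identities~$I \wedge j_p = (j_p)_\star$ and~$I \vee j_p = I \cup \{p\}$. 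Feeding these into the congruence property then yields, in both directions, that~$I \equiv I \cup \{p\}$ if and only if~$(j_p)_\star \equiv j_p$, i.e.\ if and only if~$p \notin \JIrr(\equiv)$. I expect this perspectivity step to be the conceptual crux; everything afterward is bookkeeping on lower sets.

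For the first bullet, I would use that congruence classes are intervals (hence convex), so~$x$ is minimal in its class if and only if no lower cover of~$x$ is collapsed. The lower covers of~$I = \JIdeal(x)$ are exactly the sets~$I \ssm \{p\}$ for~$p \in \max(I) = \jAnti(x)$, and by the criterion above such a cover is collapsed precisely when~$p \notin \JIrr(\equiv)$. Hence~$x$ is minimal if and only if every~$p \in \jAnti(x)$ lies in~$\JIrr(\equiv)$, that is~$\jAnti(x) \subseteq \JIrr(\equiv)$.

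For the second bullet, I would first compute the downward projection~$\pi_\downarrow(x)$ onto the minimum of the class of~$x$. Starting from~$I$ and repeatedly stripping off maximal elements lying outside~$\JIrr(\equiv)$ (each such step being a collapsed cover by the criterion, and keeping a lower set), the process terminates by finiteness at a lower set all of whose maximal elements lie in~$\JIrr(\equiv)$. A short induction identifies this limit as the lower set~$\downarrow_\P(I \cap \JIrr(\equiv))$ of~$\P$ generated by~$I \cap \JIrr(\equiv)$; in particular the minimum of a class depends only on~$I \cap \JIrr(\equiv)$. Writing~$M$ for the subposet of minima (isomorphic to~$\L/{\equiv}$ as noted before the statement), I would then set~$\phi(I) \eqdef I \cap \JIrr(\equiv)$ and~$\psi(L) \eqdef {\downarrow_\P}\, L$. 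One checks that~$\phi$ maps~$M$ into~$\Lower(\JIrr(\equiv))$ and~$\psi$ maps back into~$M$ (its maximal elements lie in~$L \subseteq \JIrr(\equiv)$, so it is minimal by the first bullet), that both are inclusion-preserving, and that~$\phi \circ \psi = \mathrm{id}$ and~$\psi \circ \phi = \mathrm{id}$ (the latter because~$\psi(\phi(I)) = \pi_\downarrow(I) = I$ for~$I \in M$). This exhibits~$\L/{\equiv} \cong M \cong \Lower(\JIrr(\equiv))$ as lattices.

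Finally, the dual statements for meet irreducibles follow by applying the whole argument to the opposite lattice~$\L^{\mathrm{op}}$, using~$\JIrr(\L^{\mathrm{op}}) = \MIrr(\L)$ and that~$\equiv$ is equally a congruence of~$\L^{\mathrm{op}}$; this turns minima into maxima,~$\jAnti$ into~$\mAnti$, and lower sets into upper sets. The only genuinely delicate points are the perspectivity criterion of the first paragraph and the verification that the stripping process computes~$\downarrow_\P(I \cap \JIrr(\equiv))$; the remaining claims are immediate from the definitions.
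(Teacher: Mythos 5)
The paper gives no proof of this theorem: it is stated as standard background on congruences of distributive lattices (only its consequence, \cref{thm:congruencesDistributiveLattices2}, carries a reference), so there is no in-paper argument to compare yours against. That said, your proof is correct and complete. The perspectivity computation in the Birkhoff model ($I \wedge j_p = (j_p)_\star$ and $I \vee j_p = I \cup \{p\}$ for a cover $I \lessdot I \cup \{p\}$, yielding that the cover is collapsed if and only if $j_p \equiv (j_p)_\star$) is exactly the right crux, and both directions do follow from applying the congruence property to these identities. The remaining verifications also go through: convexity of congruence classes reduces minimality of $x$ to non-collapse of its lower covers, which are precisely the sets $\JIdeal(x) \ssm \{p\}$ for $p \in \jAnti(x)$; your stripping process stays within a single class and terminates at a lower set $J$ with $\max(J) \subseteq I \cap \JIrr(\equiv)$, and since only elements outside $\JIrr(\equiv)$ are ever removed one also has $I \cap \JIrr(\equiv) \subseteq J$, whence $J = {\downarrow_\P}\bigl(I \cap \JIrr(\equiv)\bigr)$ as claimed; and your maps $\phi$ and $\psi$ are mutually inverse and order-preserving in both directions, which suffices because a poset isomorphism between lattices is automatically a lattice isomorphism. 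The dualization through $\L^{\mathrm{op}}$, turning $\JIrr$ into $\MIrr$, minima into maxima, and lower sets into upper sets, is likewise sound.
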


This yields in particular the following statement.

\begin{theorem}[\cite{Th50}]
\label{thm:congruencesDistributiveLattices2}
Given two finite distributive lattices~$\L$ and~$\K$, the lattice congruences of~$\L$ whose quotients are isomorphic to~$\K$ are in bijection with the subposets of $\JIrr(\L)$ isomorphic to $\JIrr(\K)$.
\end{theorem}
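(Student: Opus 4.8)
The plan is to promote \cref{thm:congruencesDistributiveLattices1} into a bijection between all congruences of~$\L$ and all induced subposets of~$\JIrr(\L)$, and then to restrict it to the congruences whose quotient is isomorphic to~$\K$. The forward map is~$\Phi \colon {\equiv} \mapsto \JIrr(\equiv)$, which by \cref{thm:congruencesDistributiveLattices1} sends a congruence to an induced subposet~$\mathcal Q$ of~$\JIrr(\L)$ satisfying~$\L/{\equiv} \simeq \Lower(\mathcal Q)$. For its inverse~$\Psi$, I would send an induced subposet~$\mathcal Q \subseteq \JIrr(\L)$ to the congruence~$\equiv_{\mathcal Q}$ given by the fibers of the restriction map~$\Lower(\JIrr(\L)) \to \Lower(\mathcal Q)$, ${I \mapsto I \cap \mathcal Q}$ (using the identification~$\L \simeq \Lower(\JIrr(\L))$ of \cref{thm:ftfdl}). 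Granting that~$\Phi$ and~$\Psi$ are mutually inverse, the statement follows quickly: by \cref{thm:ftfdl} we have~$\K \simeq \Lower(\JIrr(\K))$, and since~$\P \mapsto \Lower(\P)$ reflects isomorphisms (because~$\JIrr(\Lower(\P)) \simeq \P$), the quotient~$\L/{\equiv} \simeq \Lower(\mathcal Q)$ is isomorphic to~$\K$ if and only if~$\mathcal Q \simeq \JIrr(\K)$. Hence~$\Phi$ restricts to the announced bijection.

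First I would verify that~$\Psi$ is a right inverse of~$\Phi$. The map~${I \mapsto I \cap \mathcal Q}$ clearly commutes with unions and intersections, takes values in~$\Lower(\mathcal Q)$, and is onto (every lower set~$M$ of~$\mathcal Q$ equals~$\mathcal Q$ intersected with the lower set it generates in~$\JIrr(\L)$); being a surjective lattice morphism, its fibers form a congruence~$\equiv_{\mathcal Q}$ with quotient~$\Lower(\mathcal Q)$. To identify~$\Phi(\Psi(\mathcal Q)) = \JIrr(\equiv_{\mathcal Q})$, I would invoke \cref{thm:congruencesDistributiveLattices1}: under~$\L \simeq \Lower(\JIrr(\L))$ a join irreducible~$j$ corresponds to~$\JIdeal(j)$ and its lower cover~$j_\star$ to~$\JIdeal(j) \ssm \{j\}$, so~$j \equiv_{\mathcal Q} j_\star$ if and only if~$\JIdeal(j) \cap \mathcal Q = (\JIdeal(j) \ssm \{j\}) \cap \mathcal Q$, that is, if and only if~$j \notin \mathcal Q$. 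Therefore~$\JIrr(\equiv_{\mathcal Q}) = \mathcal Q$, proving~$\Phi \circ \Psi = \mathrm{id}$ and the surjectivity of~$\Phi$ onto induced subposets.

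The delicate step, and the one I expect to be the main obstacle, is the injectivity of~$\Phi$, equivalently~$\Psi \circ \Phi = \mathrm{id}$: one has to recover the entire congruence from the set of join irreducibles it contracts. The crucial lemma, which is exactly where distributivity is used, states that a cover relation~$x \lessdot y$ of~$\L$ is contracted by~$\equiv$ if and only if~$j \equiv j_\star$, where~$j$ is the unique join irreducible with~$y = x \vee j$ and~$j \not\le x$ (so that~$x \wedge j = j_\star$). Indeed, meeting~$x \equiv y$ with~$j$ gives~$j_\star = x \wedge j \equiv y \wedge j = j$, while conversely~$j_\star \equiv j$ gives~$x = x \vee j_\star \equiv x \vee j = y$. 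Combined with the fact that a congruence is determined by the cover relations it contracts---since~$u \equiv v$ with~$u \le v$ holds precisely when all edges of some maximal chain from~$u$ to~$v$ are contracted, and any equivalence~$a \equiv b$ reduces to the comparable ones~$a \equiv a \vee b$ and~$b \equiv a \vee b$---this shows that~$\equiv$ is determined by~$\{ j \in \JIrr(\L) : j \equiv j_\star \} = \JIrr(\L) \ssm \JIrr(\equiv)$, hence by~$\JIrr(\equiv)$. This yields~$\Psi \circ \Phi = \mathrm{id}$; everything else is bookkeeping on top of \cref{thm:congruencesDistributiveLattices1,thm:ftfdl}.
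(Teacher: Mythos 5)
Your proposal is correct. There is, however, nothing in the paper to compare it against line by line: the paper does not prove \cref{thm:congruencesDistributiveLattices2} at all, but cites it (the reference \cite{Th50}) and simply remarks that it follows ``in particular'' from \cref{thm:congruencesDistributiveLattices1}, which is itself stated without proof. Your argument supplies exactly the two ingredients that this gesture leaves implicit. Surjectivity of~${\equiv} \mapsto \JIrr(\equiv)$ via the restriction morphism~$I \mapsto I \cap \mathcal Q$ on~$\Lower(\JIrr(\L))$, together with the computation~$\JIrr(\equiv_{\mathcal Q}) = \mathcal Q$ (using~$\JIdeal(j_\star) = \JIdeal(j) \ssm \{j\}$), is clean and correct. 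Injectivity is indeed the delicate point, and your key lemma --- a cover~$x \lessdot y$ is contracted if and only if the unique join irreducible~$j$ with~$y = x \vee j$, $j \not\le x$ satisfies~$j \equiv j_\star$, proved by meeting with~$j$ and joining with~$x$ --- is precisely where distributivity enters; combined with the facts that congruence classes are convex and that any relation~$a \equiv b$ reduces to the comparable relations~$a \equiv a\vee b \equiv b$, it correctly shows a congruence is determined by its contracted join irreducibles. The final reduction, that~$\Lower$ reflects isomorphisms because~$\JIrr(\Lower(\P)) \simeq \P$, is also right and is needed to pass from ``quotient isomorphic to~$\K$'' to ``subposet isomorphic to~$\JIrr(\K)$''. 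In short: your route is the standard textbook proof of this classical result, and it is the natural completion of the derivation the paper only sketches; the only caveat worth recording is that one should read ``subposet'' as \emph{induced} subposet throughout, which is how the paper uses the word and how your bijection produces them.
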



\subsection{Walls}
\label{subsec:walls}

We now specialize \cref{thm:congruencesDistributiveLattices2} to understand all catalan congruences.
We have seen in \cref{sec:distributiveLattices} that $\JIrr(\ASM_n)$ is isomorphic to the tetrahedron poset~$\TPoset_n$, and $\JIrr(\Dyck_n)$ is isomorphic to the triangular poset~$\DPoset_n$ (see \cref{fig:posets}).
Hence, by \cref{thm:congruencesDistributiveLattices2}, quotients of $\ASM_n$ isomorphic to $\Dyck_n$ are in bijection with subposets of~$\TPoset_n$ isomorphic to~$\DPoset_n$.
We now show that this isomorphism always factorizes through the horizontal projection $(x_1, x_2, x_3, x_4) \mapsto (x_1, x_2+x_3, x_4)$ of~$\TPoset_n$.

\begin{proposition}
\label{prop:walls}
Let $\P$ be a subposet of the tetrahedron poset~$\TPoset_n$ isomorphic to the triangular poset~$\DPoset_n$.
For any~$(y_1, y_2, y_3)\in \DPoset_n$, the subposet~$\P$ contains exactly one element $(x_1, x_2, x_3, x_4)$ such that $x_1 = y_1$ and $x_4 = y_3$ (and thus~$x_2+x_3 = y_2$).
\end{proposition}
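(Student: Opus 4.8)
The plan is to show that the restriction to $\P$ of the horizontal projection $\pi \colon (x_1,x_2,x_3,x_4) \mapsto (x_1, x_2+x_3, x_4)$ is a bijection $\P \to \DPoset_n$; the statement is exactly this, since the element of $\P$ projecting to $(y_1,y_2,y_3)$ has $x_1=y_1$, $x_2+x_3=y_2$ and $x_4=y_3$. First I record three elementary facts: $\pi$ is order preserving and surjective onto $\DPoset_n$; the function $\mathrm{rk}(\b{x}) = x_2+x_3$ is a rank function on $\TPoset_n$ (each of $b,r,o,g$ raises it by one) which $\pi$ carries to the rank $y_2$ on $\DPoset_n$, so $\pi$ is rank preserving; and each fiber $\pi^{-1}(y_1,y_2,y_3) = \set{(y_1,x_2,x_3,y_3)}{x_2+x_3 = y_2}$ is an antichain of $\TPoset_n$. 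Granting (proved next) that the abstract grading of $\P\cong\DPoset_n$ agrees with $\mathrm{rk}$, both posets have $n-1-k$ elements of rank $k$, so $\pi|_\P$ is a bijection as soon as it is injective on each rank level. Since the rank-$k$ elements of $\DPoset_n$ are determined by their first coordinate $y_1 \in \{0,\dots,n-2-k\}$, this reduces the whole proposition to the assertion that at each rank $k$ the first coordinates $x_1$ of the elements of $\P$ are pairwise distinct, \ie that they range over exactly $\{0,1,\dots,n-2-k\}$.

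The enabling lemma is that every cover of $\P$ is a cover of $\TPoset_n$. Indeed, $\P$ is graded of length $n-2$ with a unique maximum, so a maximal chain of $\P$ has $n-1$ elements; along any $\P$-cover $\b{x} \lessdot \b{x}'$ one has $\b{x} <_{\TPoset_n} \b{x}'$, and $\mathrm{rk}$ is strictly increasing on $<_{\TPoset_n}$, so the ranks strictly increase along such a chain. A strictly increasing sequence of $n-1$ values inside $\{0,\dots,n-2\}$ must be $0,1,\dots,n-2$, whence consecutive chain elements differ in rank by exactly one; as $\TPoset_n$ is graded, each $\P$-cover is then a genuine $\TPoset_n$-cover, hence one of the four types $b,r,o,g$, and the abstract rank of $\P$ coincides with $\mathrm{rk}$. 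I then record the effect on the first coordinate: $b$ and $r$ lower $x_1$ by one while $o$ and $g$ fix it (dually $o,g$ lower $x_4$ while $b,r$ fix it).

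The profile $\set{x_1}{\b{x}\in\P,\ \mathrm{rk}(\b{x})=k}=\{0,\dots,n-2-k\}$ is now proved by induction on $k$. For $k=0$, the minimal elements of $\P$ are the images of the $n-1$ minimal elements of $\DPoset_n$, they all have $\mathrm{rk}=0$, and $\TPoset_n$ has exactly $n-1$ elements of rank $0$; so $\P$ contains all of them, with first coordinates $0,\dots,n-2$. For the step I transport through the isomorphism the incidences of $\DPoset_n$: each rank-$(k+1)$ element covers exactly two rank-$k$ elements, and each rank-$k$ element is covered by two rank-$(k+1)$ elements except the two extreme ones (first coordinate $0$ or $n-2-k$), covered by one. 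Thus a rank-$(k+1)$ element $\b{z}\in\P$ covers exactly two rank-$k$ elements of $\P$; by the cover-type analysis their first coordinates lie in $\{z_1,z_1+1\}$, and by induction they are distinct, hence equal to $z_1$ and $z_1+1$. Writing $N_c$ for the number of rank-$(k+1)$ elements of $\P$ with first coordinate $c$, the up-degree of the rank-$k$ element with $x_1=j$ equals $N_j+N_{j-1}$ (with $N_{-1}=N_{n-2-k}=0$); matching against the transported up-degrees ($1$ at the extremes, $2$ in between) gives $N_0=1$ and then $N_c=1$ for all $c\in\{0,\dots,n-3-k\}$ by a telescoping recursion, which is exactly the claim at rank $k+1$.

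Combining these facts, $\pi|_\P$ is a level-wise bijection and hence a bijection $\P\to\DPoset_n$; the preimage of $(y_1,y_2,y_3)$ is the rank-$y_2$ element of $\P$ whose first coordinate is $y_1$, whose fourth coordinate is then $n-2-y_2-y_1=y_3$, which proves the proposition. The main obstacle is the inductive step: one must convert the purely order-theoretic content of the abstract isomorphism $\P\cong\DPoset_n$ into genuine control of the \emph{metric} coordinate $x_1$, and the bridge is precisely the lemma that $\P$-covers are $\TPoset_n$-covers, which lets one read off whether each cover is of type $b,r$ or $o,g$ and thus propagate the triangular cover-incidences of $\DPoset_n$ rank by rank.
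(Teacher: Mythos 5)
Your reduction of the proposition to level-wise injectivity of the projection, your lemma that every cover of $\P$ is a cover of $\TPoset_n$ (hence of one of the types $b,r,o,g$, with abstract rank agreeing with $x_2+x_3$), your base case, and your claim that a rank-$(k+1)$ element $\b{z}\in\P$ covers exactly the two rank-$k$ elements of $\P$ with first coordinates $z_1$ and $z_1+1$ are all correct. This is also a genuinely different (and much longer) route than the paper's, which after the same rank-preservation observation simply counts minimal elements: two distinct elements of $\P$ with equal first and fourth coordinates would have only $x_2+x_3+1$ rank-$0$ elements of $\TPoset_n$ below them, whereas two distinct rank-$(x_2+x_3)$ elements of $\DPoset_n$ must have at least $x_2+x_3+2$.

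However, your final ``matching'' step has a genuine gap. The isomorphism $\P\cong\DPoset_n$ only gives you the \emph{multiset} of up-degrees of the rank-$k$ elements of $\P$, namely $\{1,1,2,\dots,2\}$; it does not tell you that the element of $\P$ with first coordinate $0$ (or $n-2-k$) is one of the two elements of up-degree~$1$. That identification is exactly the kind of compatibility between the abstract isomorphism and the coordinates that the proposition is trying to establish, so assuming it is circular, and the induction hypothesis (which only concerns the set of first coordinates at rank $k$) does not supply it. Moreover, the purely numerical constraints you invoke really are insufficient: for $n-2-k=3$, the assignment $N_0=2$, $N_1=0$, $N_2=1$ yields up-degrees $(d_0,d_1,d_2,d_3)=(2,2,1,1)$, which has the correct multiset $\{1,1,2,2\}$, satisfies $d_j=N_j+N_{j-1}$ and $\sum_c N_c=n-2-k$, yet violates the conclusion. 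So the telescoping recursion never gets started. The gap is repairable inside your framework: in $\DPoset_n$, two distinct elements of rank $k+1$ never cover the same pair of rank-$k$ elements (the covered pair of $(z_1,k+1,z_3)$ determines $z_1$). Transporting this through the isomorphism, two rank-$(k+1)$ elements of $\P$ with the same first coordinate $c$ would both cover the unique rank-$k$ elements of $\P$ with first coordinates $c$ and $c+1$, a contradiction; hence $N_c\le 1$, and since $\sum_{c=0}^{n-3-k}N_c=n-2-k$ equals the number of available values of $c$, pigeonhole gives $N_c=1$ for all $c$, completing your induction. (Alternatively, connectivity of the bipartite covering graph between consecutive ranks of $\DPoset_n$, which is a path, rules out $N_c=0$ and yields the same conclusion.)
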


\begin{proof}
The posets~$\TPoset_n$ and $\DPoset_n$ are both ranked of height~$n-2$ and therefore any morphism must preserve the rank.
Namely, $\TPoset_n$ has rank function $(x_1, x_2, x_3, x_4) \mapsto x_2+x_3$ and $\DPoset_n$ has rank function $(y_1, y_2, y_3)\mapsto y_2$.
Hence, any subposet~$\P\subset \TPoset_n$ isomorphic to $\DPoset_n$ must contain $n-1-i$ elements of rank $i$ for all~$0 \le i \le n-2$.
Therefore, $\P$ cannot contain two distinct elements $\mathbf x$ and $\mathbf w$ where $x_1=w_1$ and $x_4=w_4$, since there would be $x_2+x_3+1$ elements of rank $0$ lesser than $\mathbf x$ or $\mathbf w$ in $\TPoset_n$, but two distinct elements of rank $x_2+x_3$ in $\DPoset_n$ should have at least $x_2+x_3+2$ elements of rank $0$ lesser than them.
\end{proof}

In other words, \cref{prop:walls} states that the subposets of~$\TPoset_n$ isomorphic to~$\DPoset_n$ can be seen as vertical (possibly undulating) sections of~$\TPoset_n$, which motivates the following name.

\begin{definition}
A \defn{wall} of order~$n$ is a subposet of~$\TPoset_n$ isomorphic to~$\DPoset_n$.
\end{definition}

\begin{example}
\label{exm:excedanceQuotientWall}
The \defn{excedance quotient} of~\cref{subsec:excedanceCongruence} is given by the \defn{undulating wall}
\[
\set{(x_1,x_2,x_3,x_4) \in \TPoset_n}{x_3-x_2 \in \{0,-1\}}.
\]
See \cref{fig:excedanceQuotient}\,(left).
\end{example}

We will also need the following observation.
Visually, it states that a wall cannot enter the interior of the black butterfly illustrated in \cref{fig:butterflyHourglass}\,(left) at each of its points.

\begin{lemma}
\label{lem:wall}
If $(x_1,x_2,x_3,x_4)$ and $(x'_1,x'_2,x'_3,x'_4)$ are in a wall, then $x'_2-x_2$, $x_3-x'_3$, $x'_1+x'_2-x_1-x_2$ and $x_1+x_3-x'_1-x'_3$ cannot be all strictly positive or all strictly negative.
\end{lemma}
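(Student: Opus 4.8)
The plan is to show that inside a wall the $\TPoset_n$-order is controlled only by the two coordinates $x_1$ and $x_4$, and then to argue by contradiction with a short case analysis. First I would record that the horizontal projection $\pi\colon\TPoset_n\to\DPoset_n$, $\pi(\b x)=(x_1,x_2+x_3,x_4)$, is order preserving: if $\b x\le\b y$ in $\TPoset_n$ then in particular $x_1\ge y_1$ and $x_4\ge y_4$, which is exactly $\pi(\b x)\le\pi(\b y)$ in $\DPoset_n$ (recall that, as the coordinates sum to $n-2$, one has $(y_1,y_2,y_3)\le(z_1,z_2,z_3)$ iff $y_1\ge z_1$ and $y_3\ge z_3$). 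By \cref{prop:walls}, $\pi$ restricts to a bijection from any wall $\P$ onto $\DPoset_n$; and since $\P\cong\DPoset_n$ has the same number of comparable pairs, an order preserving bijection onto it is forced to be an isomorphism. Thus for $\b a,\b b\in\P$ one has $\b a\le\b b$ iff $a_1\ge b_1$ and $a_4\ge b_4$. Feeding this back into the definition of the $\TPoset_n$-order yields the monotonicity I will use repeatedly: whenever $a_1\ge b_1$ and $a_4\ge b_4$ for two wall points, then $a_2\le b_2$ and, expanding $a_3,b_3$, also $0\le b_2-a_2\le (a_1-b_1)+(a_4-b_4)$.

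Next I set up the bookkeeping. Write $u=x_1-x_1'$, $v=x_4-x_4'$ and $A=x_2'-x_2$. Using $\sum x_i=\sum x_i'=n-2$, the four quantities in the statement become $A$, $B=A-u-v$, $C=A-u$ and $D=A-v$. Exchanging $\b x$ and $\b x'$ negates all four, so it suffices to prove that they cannot all be strictly positive; assume they are. From $A>0$ and $B>0$ the points $\b x,\b x'$ must be incomparable in the wall: $\b x\le\b x'$ would force $x_3\le x_3'$, i.e. $B\le0$, while $\b x\ge\b x'$ would force $x_2\ge x_2'$, i.e. $A\le0$. By the first step, incomparability means precisely that $u$ and $v$ have strictly opposite signs.

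It then remains to treat the two mixed-sign configurations. Say $u>0>v$ (the case $u<0<v$ being symmetric). The pair $(x_1',x_4)$ is admissible, since $x_1',x_4\ge 0$ and $x_1'+x_4\le x_1+x_4\le n-2$, so by \cref{prop:walls} there is a wall point $\b w$ with $w_1=x_1'$ and $w_4=x_4$. Applying the Step~1 monotonicity to the pairs $(\b x,\b w)$ and $(\b x',\b w)$ gives $0\le w_2-x_2\le u$ and $0\le w_2-x_2'\le -v$; hence $A=(w_2-x_2)-(w_2-x_2')$ lies in $[v,u]$, and in particular $A\le u$, contradicting $C=A-u>0$. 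The symmetric case $u<0<v$ uses the wall point at coordinates $(x_1,x_4')$ and contradicts $D=A-v>0$.

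I expect the genuine obstacle to be exactly this mixed-sign configuration: there $\b x$ and $\b x'$ are incomparable, so the wall monotonicity cannot be applied to them directly, and one must interpose the ``corner'' wall point $\b w$ spanned by the two projections. The other point requiring care is the reverse implication in the first step, namely that an order preserving bijection onto $\DPoset_n$ is automatically an isomorphism; this is where \cref{prop:walls} does the essential work, and it is what turns the inequality $a_1\ge b_1,\ a_4\ge b_4$ into the full set of componentwise inequalities defining the $\TPoset_n$-order.
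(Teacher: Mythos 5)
Your proof is correct, and it reaches the conclusion by a genuinely different route than the paper. The paper's own proof is a two-line path-lifting sketch: join the two projections by an undirected path of cover relations in $\DPoset_n$, lift it through the bijection of \cref{prop:walls} back into the wall, and observe that the lifted path avoids the ``butterfly'' at $\b{x}$. Your argument replaces this geometric walk by two explicit ingredients. First, you upgrade the projection bijection of \cref{prop:walls} to an order isomorphism of the wall onto $\DPoset_n$, via the counting observation that an order-preserving bijection between finite posets that are abstractly isomorphic is automatically an isomorphism; this fact is exactly what the paper's ``lifting'' step silently uses (one needs the inverse of the projection to be monotone), so your Step 1 makes rigorous a point the paper glosses over. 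Second, instead of walking along a path, you rule out the comparable case directly from the definitions, show that incomparability forces $u=x_1-x_1'$ and $v=x_4-x_4'$ to have strictly opposite signs, and then interpose the single ``corner'' wall point over $(x_1',x_4)$, applying your Lipschitz-type inequality $0\le b_2-a_2\le(a_1-b_1)+(a_4-b_4)$ twice to trap $A=x_2'-x_2$ in $[v,u]$, which contradicts $A-u>0$. All of the computations check out: the symmetry reduction to the all-positive case, the rewriting of the four quantities as $A$, $A-u-v$, $A-u$, $A-v$, the characterization of incomparability, and both corner estimates. What your version buys is a complete, self-contained argument with explicit inequalities; what the paper's version buys is brevity and a geometric picture that transfers verbatim to the nappe analogue \cref{lem:nappe}, whose proof the paper declares to be ``similar.''
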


\begin{proof}
Consider~$\b{x} \eqdef (x_1,x_2,x_3,x_4)$ and $\b{x}' \eqdef (x'_1,x'_2,x'_3,x'_4)$ in a wall, and the corresponding projections~$\b{y} \eqdef (x_1, x_2+x_3, x_4)$ and~$\b{y}' \eqdef (x'_1, x'_2+x'_3, x'_4)$.
There is a (non-directed) path of cover relations of~$\DPoset_n$ joining~$\b{y}$ to~$\b{y'}$. 
Lifting this path to the wall, we obtain a path from~$\b{x}$ to~$\b{x}'$ which avoids the interior of the butterfly at~$\b{x}$.
\end{proof}


\subsection{Depth triangles}
\label{subsec:depthTriangles}

For any wall $\P\subset \TPoset_n$, \cref{prop:walls} ensures that the horizontal projection $(x_1,x_2,x_3,x_4) \mapsto (x_1,x_2+x_3,x_4)$ is a bijection from $\P$ to $\DPoset_n$.
We can therefore encode the wall~$\P$ by recording the depth~$x_3-x_2$ of the point~$(x_1,x_2,x_3,x_4)$ in the direction of the projection.
See \cref{fig:catalanTriangle}.
We now characterize the triangular arrays obtained this way.

\pagebreak
\begin{definition}
\label{def:depthTriangle}
A \defn{depth triangle} of order~$n$ is a map~$\delta : \DPoset_n \to \mathbb Z$ such that
\begin{itemize}
\item the bottom row is~$0 0 \cdots 0$, that is, $\delta(i, 0, n-2-i) = 0$ for all~$0 \le i \le n-2$,
\item $|\delta(y_1, y_2, y_3) - \delta(y_1, y_2-1, y_3+1)| = 1 = |\delta(y_1, y_2, y_3) - \delta(y_1+1, y_2-1, y_3)|$.
\end{itemize}
Note that a depth triangle of order~$n$ has~$n-1$ rows (or diagonals).
See \cref{fig:depthTriangles}.
\begin{figure}[H]
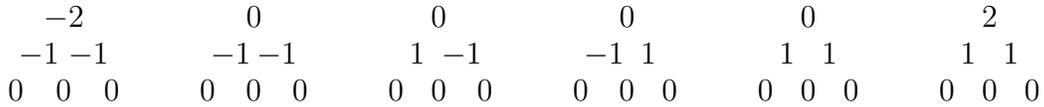

	\centering
	\setlength{\tabcolsep}{1.1pt} 
	\begin{tabular}{ccccc}
		& & $\!\!-2\!\!$ & & \\
		& $\!\!-1\!\!$ & & $\!\!-1\!\!$ & \\
		$0$ & & $0$ & & $0$
	\end{tabular}
	\qquad
	\begin{tabular}{ccccc}
		& & $0$ & & \\
		& $\!\!-1\!\!$ & & $\!\!-1\!\!$ & \\
		$0$ & & $0$ & & $0$
	\end{tabular}
	\qquad
	\begin{tabular}{ccccc}
		& & $0$ & & \\
		& $1$ & & $\!\!-1\!\!$ & \\
		$0$ & & $0$ & & $0$
	\end{tabular}
	\qquad
	\begin{tabular}{ccccc}
		& & $0$ & & \\
		& $\!\!-1\!\!$ & & $1$ & \\
		$0$ & & $0$ & & $0$
	\end{tabular}
	\qquad
	\begin{tabular}{ccccc}
		& & $0$ & & \\
		& $1$ & & $1$ & \\
		$0$ & & $0$ & & $0$
	\end{tabular}
	\qquad
	\begin{tabular}{ccccc}	
		& & $2$ & & \\
		& $1$ & & $1$ & \\
		$0$ & & $0$ & & $0$
	\end{tabular}
	\caption{The $6$ depth triangles of order $4$.}
	\label{fig:depthTriangles}
\end{figure}
\end{definition}

\begin{example}
\label{exm:excedanceQuotientDepth}
Following~\cref{exm:excedanceQuotientWall}, the depth triangle of the excedance quotient of \cref{subsec:excedanceCongruence} is given by~$\delta(y_1, y_2, y_3) = 2 \lfloor y_2/2 \rfloor - y_2 = - (y_2 \mod 2)$ (in other words, odd rows are~$0$ while even rows are~$-1$).
See \cref{fig:excedanceQuotient}\,(middle left).
\end{example}

\begin{theorem}
\label{thm:depthTriangles}
The catalan congruences of~$\ASM_n$ are in bijection with the depth triangles of order~$n$.
\end{theorem}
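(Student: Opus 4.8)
The plan is to combine \cref{thm:congruencesDistributiveLattices2}, which puts catalan congruences of $\ASM_n$ in bijection with walls (subposets of $\TPoset_n$ isomorphic to $\DPoset_n$, since $\JIrr(\ASM_n) \simeq \TPoset_n$ and $\JIrr(\Dyck_n) \simeq \DPoset_n$), with a bijection between walls and depth triangles. By \cref{prop:walls}, the horizontal projection $\pi \colon (x_1,x_2,x_3,x_4) \mapsto (x_1, x_2+x_3, x_4)$ restricts to a bijection from any wall $\P$ onto $\DPoset_n$, so I would associate to $\P$ the depth map $\delta_\P(\b{y}) \eqdef x_3 - x_2$, where $\b{x} = (x_1,x_2,x_3,x_4)$ is the unique point of $\P$ with $\pi(\b{x}) = \b{y}$. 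Conversely, since $x_2 + x_3 = y_2$ is already determined by $\b{y} = (y_1,y_2,y_3)$, any map $\delta$ recovers a candidate point $\b{x}(\b{y}) \eqdef (y_1,\, (y_2 - \delta(\b{y}))/2,\, (y_2 + \delta(\b{y}))/2,\, y_3)$, and I set $\P_\delta \eqdef \set{\b{x}(\b{y})}{\b{y} \in \DPoset_n}$. These two assignments are manifestly inverse on labels (one reads off $x_3 - x_2$, the other reconstructs $x_2,x_3$ from their sum $y_2$ and difference $\delta$), so everything reduces to the two claims that $\delta_\P$ is always a depth triangle and that $\P_\delta$ is always a wall.

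For the first claim, I would use the description of the cover relations of $\TPoset_n$ by the four vectors $b, r, o, g$. Along a cover of $\P$ the rank $x_2+x_3$ increases by $1$, while the depth $x_3 - x_2$ changes by exactly $\pm 1$: it decreases for $b$ and $o$ and increases for $r$ and $g$. Projecting, the covers of type $b,r$ induce the cover of $\DPoset_n$ adding $(-1,1,0)$, and those of type $o,g$ the cover adding $(0,1,-1)$. Translating into the coordinates of \cref{def:depthTriangle}, the two lower neighbours of $(y_1,y_2,y_3)$ are exactly $(y_1+1, y_2-1, y_3)$ (reached through the $b,r$ direction) and $(y_1, y_2-1, y_3+1)$ (reached through the $o,g$ direction), so each of the two step conditions records the $\pm 1$ depth change along the corresponding cover. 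The bottom row condition is immediate, since $\pi^{-1}(i,0,n-2-i)$ forces $x_2 = x_3 = 0$ and hence depth $0$. Thus $\delta_\P$ is a depth triangle.

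For the second claim, I would first check that $\P_\delta$ really lies in $\TPoset_n$: a straightforward induction on $y_2$ from the all-zero bottom row, using that each step changes $\delta$ by $\pm 1$, gives $\delta(\b{y}) \equiv y_2 \pmod 2$ and $|\delta(\b{y})| \le y_2$, so $x_2 = (y_2-\delta)/2$ and $x_3 = (y_2+\delta)/2$ are nonnegative integers (and the four coordinates clearly sum to $n-2$). It then remains to see that $\pi$ restricts to an order isomorphism $\P_\delta \to \DPoset_n$. One inclusion is free: the projection $\TPoset_n \to \DPoset_n$ is a poset morphism, as one sees by rewriting the two defining inequalities of $\DPoset_n$ using $x_1+x_2+x_3+x_4 = n-2$, so $\pi$ is order preserving on $\P_\delta$. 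For the converse I would lift each cover of $\DPoset_n$: writing $d = \delta(\b{y})$ and $d' = \delta(\b{y}')$ for a cover $\b{y} \lessdot \b{y}'$, a direct comparison of the four coordinates of $\b{x}(\b{y})$ and $\b{x}(\b{y}')$ shows that the relation $\b{x}(\b{y}) \le \b{x}(\b{y}')$ in $\TPoset_n$ reduces precisely to $|d' - d| \le 1$, which is exactly the relevant step condition. Since $\DPoset_n$ is the transitive closure of its covers, $\pi^{-1}$ is then order preserving, so $\pi$ is an isomorphism and $\P_\delta$ is a wall.

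The main obstacle is this last point: establishing that $\P_\delta$ is order \emph{isomorphic} to $\DPoset_n$ rather than merely in bijection with it, that is, that the step conditions of a depth triangle are precisely what is needed to lift the covers of $\DPoset_n$ to genuine cover relations of $\TPoset_n$ without introducing spurious comparabilities (the latter being ruled out automatically once $\pi$ is shown order preserving in both directions). The delicate part is the bookkeeping with barycentric coordinates, namely matching the two neighbour directions of \cref{def:depthTriangle} against the cover vectors $b,r$ versus $o,g$ and keeping the parity and magnitude bounds straight; once the covers are shown to lift, the transitive-closure argument and the mutual-inverse verification are routine.
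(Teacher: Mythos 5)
Your proposal takes the same overall route as the paper: reduce via \cref{thm:congruencesDistributiveLattices2} to a bijection between walls and depth triangles, encode a wall through the projection bijection of \cref{prop:walls} by recording the depth $x_3-x_2$, and reconstruct a wall from a depth triangle by splitting $y_2$ into $x_2=(y_2-\delta)/2$ and $x_3=(y_2+\delta)/2$. Your treatment of the direction from depth triangles to walls is complete: $\pi$ is order preserving, covers of $\DPoset_n$ lift because the step condition gives exactly $|d'-d|\le 1$, and the transitive closure argument makes $\pi|_{\P_\delta}$ an order isomorphism. This is precisely the content that the paper compresses into ``these triangles are clearly exactly the depth triangles''.

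There is, however, a gap in the opposite direction (wall $\Rightarrow$ depth triangle), and it is the mirror image of the subtlety you correctly flagged for the other direction. A wall $\P$ is, by definition, isomorphic to $\DPoset_n$ via some \emph{unspecified} isomorphism $\phi$, whereas $\delta_\P$ is defined through the projection bijection $\pi|_\P$ of \cref{prop:walls}; you implicitly treat $\pi|_\P$ as an order isomorphism. Concretely, your argument shows that covers of $\P$ are covers of $\TPoset_n$ of types $b,r,o,g$ (this already uses the rank preservation from the \emph{proof} of \cref{prop:walls}), hence project to covers of $\DPoset_n$ with depth change $\pm 1$; but the step conditions of \cref{def:depthTriangle} are indexed by all covers $\b{y}'\lessdot\b{y}$ of $\DPoset_n$, so you need the converse: the two wall points lying over any such adjacent pair must be comparable in $\TPoset_n$. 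This is not automatic from the bijection alone, and it is not vacuous: if those two points were incomparable, their middle coordinates would differ by at least $2$ in opposite directions, the depth difference would be at least $3$, and the step condition would fail. The gap is short to fill: for instance, $\pi\circ\phi$ is an order-preserving bijection from the finite poset $\DPoset_n$ to itself, hence an automorphism (it injects the finite set of order relations into itself, so its inverse is also order preserving), which makes $\pi|_\P$ an order isomorphism, so covers of $\DPoset_n$ pull back to covers of $\P$; alternatively, \cref{lem:wall} rules out exactly this incomparability. With that one observation added, your proof is complete and coincides with the paper's argument, which leaves this verification implicit.
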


\begin{proof}
We have shown in~\cref{prop:walls} that subposets $\P \subset \TPoset_n$ isomorphic to $\DPoset_n$ can be encoded by triangles of numbers.
Namely, if~$(y_1,y_2,y_3) \in \DPoset_n$ and $(x_1,x_2,x_3,x_4)$ is the only element of $\P$ such that $x_1=y_1$ and $x_4=y_3$, then we define the depth~$\delta(y_1, y_2, y_3) =x_3-x_2$.
See \cref{fig:catalanTriangle} for an illustration.
These triangles are clearly exactly the depth triangles.
\qedhere
\begin{figure}
	\centerline{
		\raisebox{-2cm}{\includegraphics[scale=.7]{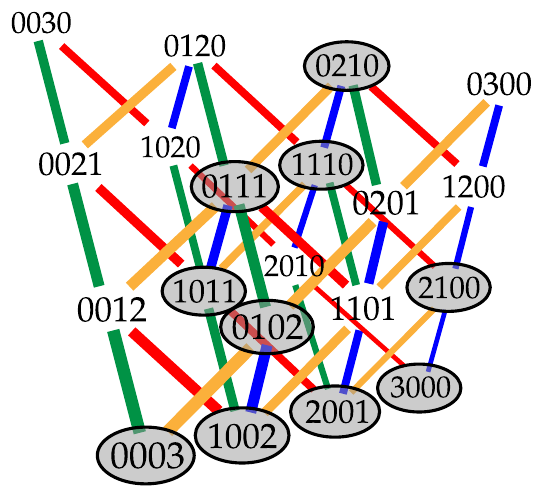}}
		\quad
		\setlength{\tabcolsep}{1.1pt}
		\begin{tabular}{ccccccc}
			& & & $\!\!-1\!\!$ & & & \\
			& & $0$ & & $0$ & & \\
			& $\!\!-1\!\!$ & & $1$ & & $\!\!-1\!\!$ & \\
			$0$ & & $0$ & & $0$ & & $0$
		\end{tabular}$$
		\qquad
		\begin{tabular}{ccccccc}
			& & & $2$ & & & \\
			& & $2$ & & $3$ & & \\
			& $1$ & & $3$ & & $3$ & \\
			$1$ & & $2$ & & $3$ & & $4$
		\end{tabular}$$
		\qquad
		\raisebox{-.6cm}{\includegraphics[scale=.5]{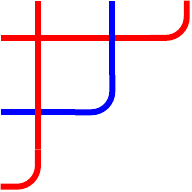}}
	}
	\caption{A subposet of $\TPoset_5$ isomorphic to $\DPoset_5$ (left) and its corresponding depth triangle (middle left), catalan triangle (middle right), and bicolored pipe dream (right).}
	\label{fig:catalanTriangle}
\end{figure}
\end{proof}

\begin{proposition}
\label{prop:depthTriangleInequalities}
For any depth triangle~$\delta$, we have
\begin{itemize}
\item $-y_2 \le \delta(y_1, y_2, y_3) \le y_2$ and $\delta(y_1, y_2, y_3) \equiv_{\mathrm{mod} \; 2} y_2$ for all~$(y_1, y_2, y_3) \in \DPoset_n$, and
\item $|\delta(z_1, z_2, z_3) - \delta(y_1, y_2, y_3)| \le |y_1 - z_1| + |y_3 - z_3|$ for all $(y_1, y_2, y_3), (z_1, z_2, z_3) \in \DPoset_n$.
\end{itemize}
\end{proposition}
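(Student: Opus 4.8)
The two defining conditions of a depth triangle say precisely that $\delta$ changes by exactly $\pm 1$ along each of the two elementary ``down'' moves $(y_1, y_2, y_3) \mapsto (y_1, y_2 - 1, y_3 + 1)$ and $(y_1, y_2, y_3) \mapsto (y_1 + 1, y_2 - 1, y_3)$ (and hence also along their inverses), and I would build both bullets on this single observation. For the first bullet I would induct on the row index $y_2$. The base case $y_2 = 0$ is the bottom row, where $\delta = 0$ by definition, so $-y_2 \le \delta \le y_2$ and $\delta \equiv y_2 \pmod 2$ hold trivially. For the step, if $y_2 \ge 1$ then $(y_1, y_2 - 1, y_3 + 1) \in \DPoset_n$ and $\delta(y_1, y_2, y_3) = \delta(y_1, y_2 - 1, y_3 + 1) \pm 1$; the inductive hypothesis bounds the latter by $y_2 - 1$ in absolute value and fixes its parity to that of $y_2 - 1$, and adding $\pm 1$ gives $|\delta(y_1, y_2, y_3)| \le y_2$ together with the correct parity. (Equivalently, each down move flips the parity of both $\delta$ and $y_2$, so $\delta - y_2 \bmod 2$ is a constant equal to its bottom-row value $0$.)

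For the second bullet I would reparametrize $\DPoset_n$ by the pair $(y_1, y_3)$, which ranges over the lattice triangle $T \eqdef \set{(a,c) \in \N^2}{a + c \le n - 2}$ (with $y_2 = n - 2 - a - c$). Under this identification the two elementary moves together with their inverses become exactly the four unit steps $(a, c) \mapsto (a \pm 1, c)$ and $(a, c) \mapsto (a, c \pm 1)$, each changing $\delta$ by $\pm 1$, while the right-hand side $|y_1 - z_1| + |y_3 - z_3|$ is the $\ell^1$ distance between the corresponding points of $T$. It then suffices to join $(y_1, y_3)$ to $(z_1, z_3)$ by a monotone staircase path inside $T$ whose length equals this $\ell^1$ distance: along such a path $\delta$ changes by $\pm 1$ at each step, so the triangle inequality yields $|\delta(\b{z}) - \delta(\b{y})| \le |y_1 - z_1| + |y_3 - z_3|$.

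The one delicate point, and the main (if mild) obstacle, is that a monotone staircase path between two points of $T$ can leave $T$: the constraint $a + c \le n - 2$ (that is, $y_2 \ge 0$) may fail at an intermediate corner if the increasing move is performed before the decreasing one. I would resolve this by always changing the coordinate that must decrease first and the coordinate that must increase second; then $a + c$ never exceeds $\max(y_1 + y_3,\, z_1 + z_3) \le n - 2$ along the path, while $a, c \ge 0$ is automatic from monotonicity, so the path stays in $T$ and has the required length. (As a side remark, the bound part of the first bullet also follows from the second by taking $\b{z} = (y_1, 0, y_2 + y_3)$, but the parity statement still needs the short induction above.)
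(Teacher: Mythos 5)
Your proof is correct and is essentially the detailed version of what the paper treats as immediate: its proof of this proposition is the single line ``Immediately follows from the two conditions of \cref{def:depthTriangle}.'' Your row-by-row induction for the bounds/parity and your lattice-path argument (including the care taken to keep the monotone path inside the triangle by performing the decreasing moves first) are exactly the routine verifications being elided there.
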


\begin{proof}
Immediately follows from the two conditions of \cref{def:depthTriangle}.
\end{proof}


\subsection{Catalan triangles}
\label{subsec:catalanTriangles}

By an appropriate shift of the entries, we now connect the depths triangles of \cref{subsec:depthTriangles} to certain Gelfand--Tsetlin patterns.
A \defn{Gelfand--Tsetlin pattern} of order~$n$ is a map~$\gamma : \DPoset_n \to \N$ subject to the interlacing conditions
\[
\gamma(y_1, y_2-1, y_3+1) \le \gamma(y_1, y_2, y_3) \le \gamma(y_1+1, y_2-1, y_3).
\]
Again, note that a Gelfand--Tsetlin pattern of order~$n$ has~$n-1$ rows (or diagonals).
We say that~$\gamma$ is \defn{doubly gapless} if moreover
\[
\gamma(y_1, y_2, y_3) - \gamma(y_1, y_2-1, y_3+1) \le 1
\quad\text{and}\quad
\gamma(y_1+1, y_2-1, y_3) - \gamma(y_1, y_2, y_3) \le 1.
\]
We note that our definition of Gelfand--Tsetlin patterns only differs from the classical definition by our use of barycentric coordinates to parametrize~$\DPoset_n$.
We also observe that our two conditions for doubly gapless patterns are usually called gapless and left-gapless.
Gapless triangles were introduced by A.~Ayyer, R.~Cori and D.~Gouyou-Beauchamps \cite{ACGB11}, who described a bijection between gapless Gog triangles and gapless Magog triangles.


\begin{definition}
\label{def:catalanTriangle}
A \defn{catalan triangle} of order~$n$ is a doubly gapless Gelfand--Tsetlin pattern with bottom row $1 2 \cdots (n-1)$.
See \cref{fig:catalanTriangles}.
\begin{figure}[H]
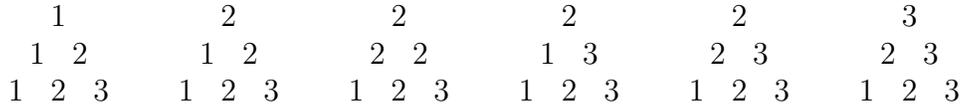

	\centering
	\setlength{\tabcolsep}{1.1pt} 
	\begin{tabular}{ccccc}
		& & $1$ & & \\
		& $1$ & & $2$ & \\
		$1$ & & $2$ & & $3$
	\end{tabular}\qquad
	\begin{tabular}{ccccc}
		& & $2$ & & \\
		& $1$ & & $2$ & \\
		$1$ & & $2$ & & $3$
	\end{tabular}\qquad
	\begin{tabular}{ccccc}
		& & $2$ & & \\
		& $2$ & & $2$ & \\
		$1$ & & $2$ & & $3$
	\end{tabular}\qquad
	\begin{tabular}{ccccc}
		& & $2$ & & \\
		& $1$ & & $3$ & \\
		$1$ & & $2$ & & $3$
	\end{tabular}\qquad
	\begin{tabular}{ccccc}
		& & $2$ & & \\
		& $2$ & & $3$ & \\
		$1$ & & $2$ & & $3$
	\end{tabular}\qquad
	\begin{tabular}{ccccc}
		& & $3$ & & \\
		& $2$ & & $3$ & \\
		$1$ & & $2$ & & $3$
	\end{tabular}
	\caption{The $6$ catalan triangles of order $4$.}
	\label{fig:catalanTriangles}
\end{figure}
\end{definition}

\begin{example}
\label{exm:excedanceQuotientCatalan}
Following~\cref{exm:excedanceQuotientWall,exm:excedanceQuotientDepth}, the catalan triangle of the excedance quotient of \cref{subsec:excedanceCongruence} is given by~$\gamma(y_1, y_2, y_3) = y_1 + \lfloor y_2/2 \rfloor + 1$.
See \cref{fig:excedanceQuotient}\,(middle right).
\end{example}

\begin{proposition}
\label{prop:depthVSCatalanTriangles}
The depth triangles of order~$n$ are in bijection with the catalan triangles of order~$n$.
\end{proposition}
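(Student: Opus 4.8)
The plan is to write down an explicit entrywise shift and verify it is a bijection by translating the two defining conditions of depth triangles directly into the interlacing and gapless conditions of catalan triangles. Recall from \cref{subsec:depthTriangles} that the depth $\delta(y_1,y_2,y_3) = x_3 - x_2$ records the signed difference of the middle coordinates of the wall element $(x_1,x_2,x_3,x_4)$ lying over $(y_1,y_2,y_3)$, where $x_2 + x_3 = y_2$; hence $x_3 = (y_2 + \delta(y_1,y_2,y_3))/2$. Guided by \cref{exm:excedanceQuotientDepth,exm:excedanceQuotientCatalan}, the map I would propose is
\[
\gamma(y_1,y_2,y_3) \eqdef y_1 + \frac{y_2 + \delta(y_1,y_2,y_3)}{2} + 1 \;=\; y_1 + x_3 + 1,
\]
with candidate inverse $\delta(y_1,y_2,y_3) = 2\gamma(y_1,y_2,y_3) - 2y_1 - y_2 - 2$. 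First I would check that the forward map is well defined: by \cref{prop:depthTriangleInequalities} we have $\delta \equiv y_2 \pmod 2$, so $\gamma$ is an integer, and $-y_2 \le \delta$, so $\gamma = y_1 + x_3 + 1 \ge 1$, giving $\gamma : \DPoset_n \to \N$. The bottom-row condition $\delta(y_1,0,y_3)=0$ translates into $\gamma(y_1,0,y_3)=y_1+1$, i.e. the bottom row reads $1\,2\cdots(n-1)$, as required by \cref{def:catalanTriangle}.

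The heart of the proof is the equivalence of the step conditions. Writing $a=(y_1,y_2,y_3)$ and $L=(y_1,y_2-1,y_3+1)$, $R=(y_1+1,y_2-1,y_3)$ for the two lower covers of $a$ in $\DPoset_n$, a direct substitution gives
\[
\gamma(a)-\gamma(L) = \frac{1 + \delta(a)-\delta(L)}{2} \qquad\text{and}\qquad \gamma(R)-\gamma(a) = \frac{1 + \delta(R)-\delta(a)}{2}.
\]
From these I read off that $\delta(a)-\delta(L)\in\{-1,+1\}$ is equivalent to $\gamma(a)-\gamma(L)\in\{0,1\}$, i.e. to the interlacing inequality $\gamma(L)\le\gamma(a)$ together with the gapless bound $\gamma(a)-\gamma(L)\le 1$; symmetrically, $\delta(R)-\delta(a)\in\{-1,+1\}$ is equivalent to $\gamma(a)\le\gamma(R)$ together with $\gamma(R)-\gamma(a)\le 1$. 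Thus the two conditions $|\delta(a)-\delta(L)|=1$ and $|\delta(a)-\delta(R)|=1$ of \cref{def:depthTriangle} correspond exactly to the four interlacing-and-gapless inequalities of \cref{def:catalanTriangle}. Since the forward and inverse formulas are clearly mutually inverse affine maps on each entry, this establishes the bijection.

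The only genuinely delicate point is conceptual rather than computational: the depth conditions are \emph{symmetric} $\pm1$ constraints on both lower covers, whereas the catalan-triangle conditions are \emph{asymmetric} (the inequality $\gamma(L)\le\gamma(a)$ on one side and $\gamma(a)\le\gamma(R)$ on the other, each with a gap bound of $1$). The $+y_1$ term in the shift is precisely what breaks the symmetry, because $y_1$ increases by $1$ passing from $a$ to $R$ but is unchanged passing from $a$ to $L$; choosing this shift correctly (rather than, say, $\gamma = x_3 + 1$) is exactly what makes both families of inequalities come out with the right orientation, and is the part I would pin down first using the excedance example. Everything else is the routine verification sketched above, requiring nothing beyond \cref{prop:depthTriangleInequalities} and the two definitions.
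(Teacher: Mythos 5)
Your proposal is correct and takes exactly the same route as the paper: the paper's proof consists of stating the same entrywise shift $\gamma(y_1,y_2,y_3) = \delta(y_1,y_2,y_3)/2 + y_1 + y_2/2 + 1$ and pointing to \cref{fig:bijections}, leaving the verification to the reader. Your detailed check (integrality and positivity via \cref{prop:depthTriangleInequalities}, the bottom-row translation, and the equivalence of the symmetric $\pm 1$ step conditions with the asymmetric interlacing-and-gapless inequalities) simply fills in what the paper omits.
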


\begin{proof}
The bijection is given by the formula~$\gamma(y_1, y_2, y_3) = \delta(y_1, y_2, y_3)/2 + y_1 + y_2/2 + 1$.
See \cref{fig:bijections}.
\end{proof}

\begin{proposition}
\label{prop:catalanTriangleInequalities}
For any catalan triangle~$\gamma$, we have
\begin{itemize}
\item $y_1+1 \le \gamma(y_1, y_2, y_3) \le y_1+y_2+1 = n-1-y_3$ for all~$(y_1, y_2, y_3) \in \DPoset_n$, and
\item $0 \le \gamma(z_1, z_2, z_3) - \gamma(y_1, y_2, y_3) \le z_1 - y_1 + y_3 - z_3$ for all~$(y_1, y_2, y_3), (z_1, z_2, z_3) \in \DPoset_n$ with~$y_1 \le z_1$ and~$y_3 \ge z_3$.
\end{itemize}
\end{proposition}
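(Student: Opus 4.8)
The plan is to transport both inequalities through the explicit bijection of \cref{prop:depthVSCatalanTriangles}, which expresses any catalan triangle~$\gamma$ in terms of the corresponding depth triangle~$\delta$ via~$\gamma(y_1,y_2,y_3) = \delta(y_1,y_2,y_3)/2 + y_1 + y_2/2 + 1$. This reduces everything to the two bounds on~$\delta$ already supplied by \cref{prop:depthTriangleInequalities}, namely~$-y_2 \le \delta(y_1,y_2,y_3) \le y_2$ and the Lipschitz-type estimate~$|\delta(z_1,z_2,z_3) - \delta(y_1,y_2,y_3)| \le |y_1-z_1| + |y_3-z_3|$.

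For the first inequality, I would simply substitute the formula for~$\gamma$. The lower bound~$\gamma(y_1,y_2,y_3) \ge y_1+1$ is equivalent to~$\delta(y_1,y_2,y_3)/2 + y_2/2 \ge 0$, that is~$\delta(y_1,y_2,y_3) \ge -y_2$, while the upper bound~$\gamma(y_1,y_2,y_3) \le y_1+y_2+1$ is equivalent to~$\delta(y_1,y_2,y_3) \le y_2$. Both hold by the first bullet of \cref{prop:depthTriangleInequalities}, and the displayed identity~$y_1+y_2+1 = n-1-y_3$ is just a restatement of~$y_1+y_2+y_3 = n-2$.

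For the second inequality, the key bookkeeping step is to eliminate the dependence on the rank coordinates~$y_2$ and~$z_2$. Since~$y_1+y_2+y_3 = z_1+z_2+z_3 = n-2$, we have~$z_2 - y_2 = (y_1-z_1) + (y_3-z_3)$. Substituting the formula for~$\gamma$ and setting~$a \eqdef z_1-y_1 \ge 0$ and~$b \eqdef y_3-z_3 \ge 0$ then yields
\[
\gamma(z_1,z_2,z_3) - \gamma(y_1,y_2,y_3) = \frac{\delta(z_1,z_2,z_3) - \delta(y_1,y_2,y_3)}{2} + \frac{a+b}{2}.
\]
Applying the Lipschitz estimate in the form~$-(a+b) \le \delta(z_1,z_2,z_3) - \delta(y_1,y_2,y_3) \le a+b$ (here~$|y_1-z_1|+|y_3-z_3| = a+b$ because~$y_1 \le z_1$ and~$y_3 \ge z_3$) sandwiches the difference between~$0$ and~$a+b = z_1-y_1+y_3-z_3$, exactly as claimed.

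The computations are routine; the only genuine subtlety is the collapse~$z_2 - y_2 = (y_1-z_1)+(y_3-z_3)$, which is precisely what makes the half-integer coefficient of~$y_2$ in the bijection formula combine with the depth bound to a clean result. I note that a self-contained route avoiding~$\delta$ is also available: the first inequality follows by telescoping the interlacing conditions of the Gelfand--Tsetlin pattern down to the bottom row~$12\cdots(n-1)$ (descending to the left gives the lower bound, to the right the upper bound), and the second follows from monotonicity together with the doubly gapless condition bounding each individual step; but the bijective route above is shorter given the estimates already in hand.
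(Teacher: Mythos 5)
Your proposal is correct and follows exactly the route the paper itself indicates: the paper's proof is a one-liner stating that the result ``immediately follows from the two conditions of \cref{def:depthTriangle}, or from the combination of \cref{prop:depthTriangleInequalities,prop:depthVSCatalanTriangles},'' and you have carried out the second of these two options in full detail (correctly handling the key cancellation~$z_2 - y_2 = (y_1-z_1)+(y_3-z_3)$), while also noting the first as an alternative. There is nothing to add; your computation is a valid expansion of the paper's intended argument.
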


\begin{proof}
Immediately follows from the two conditions of \cref{def:depthTriangle}, or from the combination of~\cref{prop:depthTriangleInequalities,prop:depthVSCatalanTriangles}.
\end{proof}


\subsection{Bicolored pipe dreams}
\label{subsec:bicoloredPipeDreams}

\enlargethispage{.1cm}
We now connect depth triangles and catalan triangles to certain pipe dreams.
A \defn{pipe dream}~$P$ is a filling of a triangular shape with crossings~\cross{} and contacts~\elbow{} so that all pipes entering on the left side exit on the top side~\cite{BergeronBilley, KnutsonMiller-GroebnerGeometry}.
The \defn{contact graph} of~$P$ is the graph with a vertex for each pipe of~$P$ and an edge between two pipes if they share a contact.
We say that~$P$ is \defn{connected} (resp.~bipartite) if its contact graph is.

\begin{definition}
A \defn{bicolored pipe dream} is a pipe dream whose pipes are colored red or blue so that each contact is bichromatic.
In other words, it is a pipe dream together with a proper bicoloration of its contact graph.
Note that there is no color restriction on the crossings: they are allowed to be both monocolor or bicolor.
See \cref{fig:bicoloredPipeDreams}.
\begin{figure}[H]
	\centering
	\includegraphics[scale=.5]{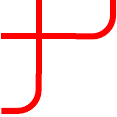} \qquad
	\includegraphics[scale=.5]{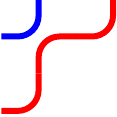} \qquad
	\includegraphics[scale=.5]{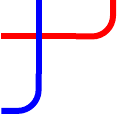} \qquad
	\includegraphics[scale=.5]{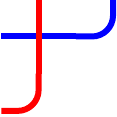} \qquad
	\includegraphics[scale=.5]{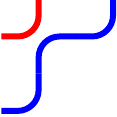} \qquad
	\includegraphics[scale=.5]{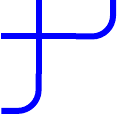}
	\caption{The $6$ bicolored pipe dreams with $2$ pipes.}
	\label{fig:bicoloredPipeDreams}
\end{figure}
\end{definition}

\begin{example}
\label{exm:excedanceQuotientPipeDream}
Following~\cref{exm:excedanceQuotientWall,exm:excedanceQuotientDepth,exm:excedanceQuotientCatalan}, the bicolored pipe dream of the excedance quotient of \cref{subsec:excedanceCongruence} is the pipe dream with no crossing (hence waving pipes alternating colors), whose longest pipe is blue.
See \cref{fig:excedanceQuotient}\,(right).
\begin{figure}
	\centerline{
	\raisebox{-1.5cm}{\includegraphics[scale=.1]{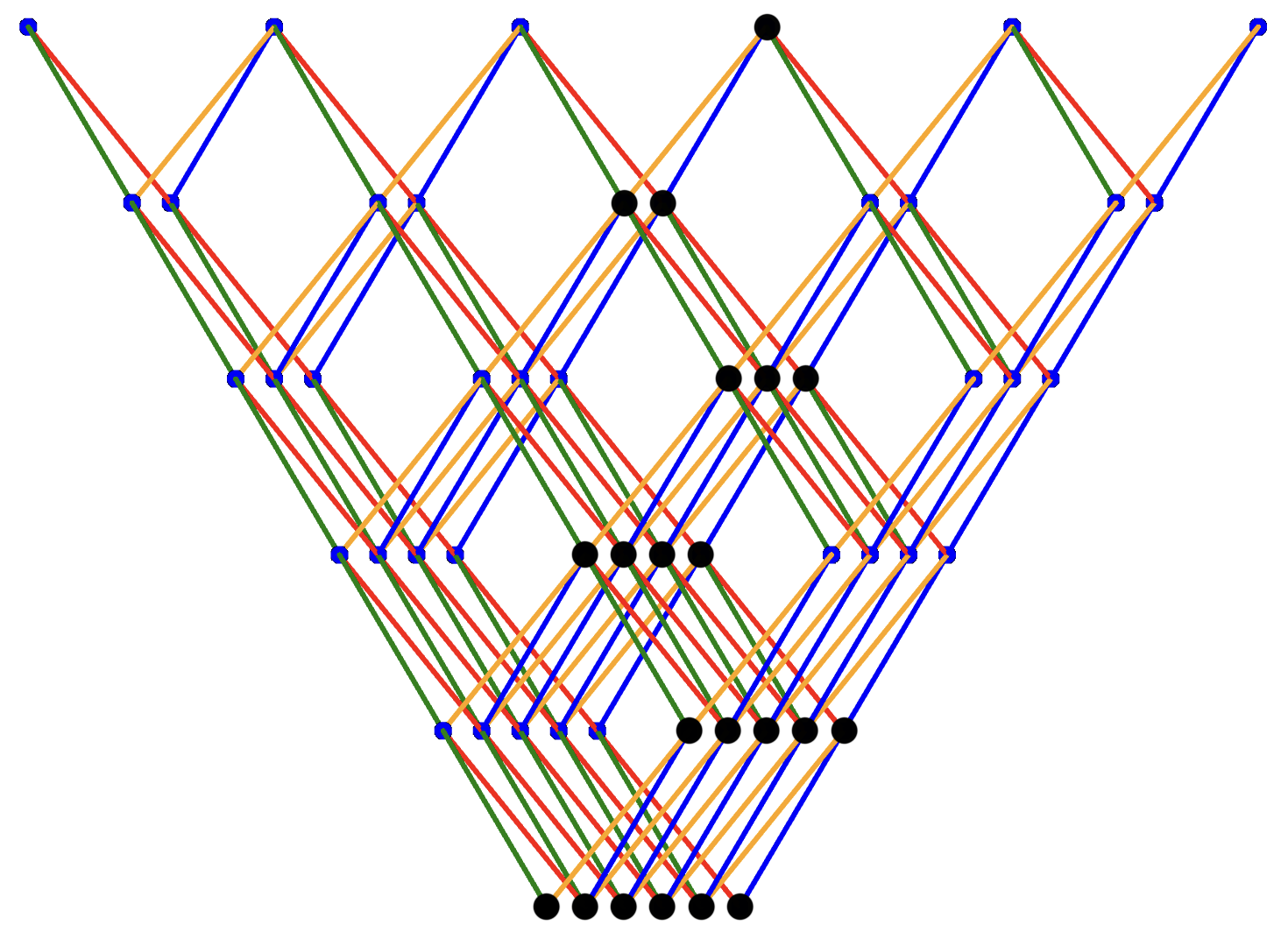}}
	\setlength{\tabcolsep}{1.1pt}
	\begin{tabular}{ccccccccccc}
		& & & & & $\!\!-1\!\!$ & & & & & \\
		& & & & $0$ & & $0$ & & & & \\
		& & & $\!\!-1\!\!$ & & $\!\!-1\!\!$ & & $\!\!-1\!\!$ & & & \\
		& & $0$ & & $0$ & & $0$ & & $0$ & & \\
		& $\!\!-1\!\!$ & & $\!\!-1\!\!$ & & $\!\!-1\!\!$ & & $\!\!-1\!\!$ & & $\!\!-1\!\!$ & \\
		$0$ & & $0$ & & $0$ & & $0$ & & $0$ & & $0$
	\end{tabular}
	\qquad
	\begin{tabular}{ccccccccccc}
		& & & & & $3$ & & & & & \\
		& & & & $3$ & & $4$ & & & & \\
		& & & $2$ & & $3$ & & $4$ & & & \\
		& & $2$ & & $3$ & & $4$ & & $5$ & & \\
		& $1$ & & $2$ & & $3$ & & $4$ & & $5$ & \\
		$1$ & & $2$ & & $3$ & & $4$ & & $5$ & & $6$
	\end{tabular}
	\qquad
	\raisebox{-1.3cm}{\includegraphics[scale=.5]{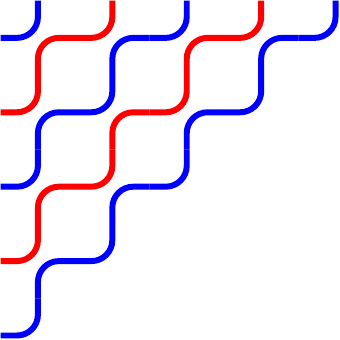}}
	}
	\caption{The wall (left), depth triangle (middle left), catalan triangle (middle right), and bicolored pipe dream (right) corresponding to the excedance quotient of~\cite{BergeronGagnon}.}
	\label{fig:excedanceQuotient}
\end{figure}
\end{example}

\begin{proposition}
\label{prop:depthVSBicoloredPipeDreams}
The depth triangles (or equivalently the catalan triangles) of order~$n$ are in bijection with the bicolored pipe dreams with $n-2$ pipes.
\end{proposition}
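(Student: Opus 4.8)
The plan is to exhibit a direct bijection between catalan triangles (equivalently depth triangles, by \cref{prop:depthVSCatalanTriangles}) of order~$n$ and bicolored pipe dreams with $n-2$ pipes, reading the pipe dream off the triangle row by row. First I would set up the geometry: a pipe dream with $n-2$ pipes lives on a staircase/triangular grid, and its tiles are naturally indexed by the interior nodes of~$\DPoset_n$, matching the $\binom{n-1}{2}$ entries of a catalan triangle (one per non-bottom position, or per cover relation, depending on the chosen indexing). The key structural observation is that each pipe in a pipe dream with $n-2$ pipes corresponds to a value in~$\{1,\dots,n-1\}$ traced by the catalan triangle: by \cref{prop:catalanTriangleInequalities}, the entry~$\gamma(y_1,y_2,y_3)$ records which pipe occupies a given position, and the doubly gapless condition forces consecutive entries along the two diagonal directions to differ by exactly~$0$ or~$1$. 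This increment-by-at-most-one behavior is precisely what lets me read a crossing~\cross{} versus a contact~\elbow{} at each tile.

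The core of the bijection is the following local dictionary, which I would state and verify tile by tile. At the tile sitting between positions~$(y_1,y_2,y_3)$ and its two upper neighbors~$(y_1,y_2-1,y_3+1)$ and~$(y_1+1,y_2-1,y_3)$, I compare the three entries of~$\gamma$. The interlacing and gapless conditions guarantee the top entry equals one of the two bottom entries or exceeds both by~$1$; this trichotomy I translate into: a crossing when the pipe passing through continues straight (the value propagates), and a contact when two pipes touch (two distinct adjacent values). The coloring is then dictated by parity: I color pipe number~$k$ red or blue according to the parity of~$k$ (equivalently, I use the bottom-row labels~$12\cdots(n-1)$ to two-color the pipes), and I check that this makes every contact bichromatic because a contact joins pipes of consecutive values, hence of opposite parity. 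Conversely, given a bicolored pipe dream, I reconstruct~$\gamma$ by labeling pipes~$1,\dots,n-1$ in the order they appear on the bottom-left boundary and recording at each node the label of the pipe passing through it; the properness of the bicoloration translates back into the gapless inequalities.

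I would then argue well-definedness and inverseness in both directions. Forward: starting from a catalan triangle, the bottom row~$12\cdots(n-1)$ fixes the pipe entry labels, the first inequality of \cref{prop:catalanTriangleInequalities} confines each entry to a valid range so no pipe escapes the triangular shape, and the second inequality (the Lipschitz-type bound) ensures labels increase weakly along each direction so the traced curves are genuinely pipes entering on the left and exiting on top. The local dictionary then produces a consistent filling with crossings and contacts, and the parity coloring yields a proper bicoloration of the contact graph. Backward: a bicolored pipe dream determines the labels uniquely, the gapless conditions are forced by the fact that adjacent positions share a pipe or lie on pipes touching at a single contact, and the proper bicoloration forces the parity pattern that encodes these unit increments. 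These two constructions are mutually inverse by inspection of the local rule.

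The main obstacle I expect is purely bookkeeping rather than conceptual: pinning down the precise indexing that matches the~$\binom{n-1}{2}$ tiles of the pipe dream to the nodes (or cover relations) of~$\DPoset_n$, and making sure the boundary conventions (which pipe is ``longest'', where pipes enter and exit, how the staircase is oriented) are consistent with the examples in \cref{fig:catalanTriangles,fig:bicoloredPipeDreams} — note in particular that \cref{exm:excedanceQuotientPipeDream} fixes the convention that the longest pipe is blue, which should drop out of the parity coloring. The genuinely substantive check is that the \emph{gapless} conditions correspond exactly to \emph{properness} of the bicoloring, i.e.\ that a bichromatic contact is equivalent to a unit jump between two consecutive pipe labels; once this equivalence is established at the level of a single tile, the global bijection follows by propagating labels from the bottom row upward.
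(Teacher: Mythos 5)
There is a genuine gap, and it is conceptual rather than bookkeeping: your parity rule for coloring the pipes cannot work. In a bicolored pipe dream the coloring is \emph{extra} data on top of the underlying pipe dream: as the paper observes, a bipartite pipe dream whose contact graph has $c$ connected components admits exactly $2^c$ proper bicolorations, and the bijection with catalan triangles must account for all of them (this is the content of the identity $\CT = \exp(2\BCPD) = \BPD^2$ and of \cref{table:CTBPD}, where the number of catalan triangles strictly exceeds the number of bipartite pipe dreams). Your coloring, by contrast, is a function of the pipe labels alone, so it selects a single bicoloration of each underlying pipe dream. Concretely, for $n=4$ there are $6$ catalan triangles but only $2$ uncolored pipe dreams with $2$ pipes (a crossing or a contact); the minimal and maximal catalan triangles of \cref{exm:minMaxCatalanTriangle} both have the two pipes crossing, and correspond in the paper's bijection to the all-red and the all-blue pipe dreams respectively. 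Under your map these two triangles receive the same parity-determined coloring, so your map is not injective; and a monochromatic pipe dream (which is a perfectly valid bicolored pipe dream whenever there are no contacts) is never in its image, so it is not surjective either. Your supporting claim that a contact always joins pipes with consecutive labels is also false in general: pipes $1$ and $3$ can form a contact once pipe $2$ has crossed both of them, which is precisely why bipartiteness of the contact graph is a nontrivial condition rather than an automatic parity phenomenon.

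The missing idea is where the color information lives in the triangle. At an entry above the bottom row, the interlacing and doubly gapless conditions leave a four-way local choice, not a trichotomy: the difference with the lower-left neighbour is $0$ or $1$, and \emph{independently} the difference with the lower-right neighbour is $0$ or $1$ (equivalently, in a depth triangle each of the two downward adjacencies carries a sign $\pm 1$). Your local dictionary collapses this into crossing versus contact and thereby discards exactly one bit per adjacency --- the bit that encodes the color. The paper's proof instead draws a red or blue line along \emph{each} adjacency of the triangle, with the color determined by whether the entries increase or decrease (depth triangle), respectively are equal or not (catalan triangle), in that direction; the pipes and their colors are then read off simultaneously, and this local rule is manifestly invertible. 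If you want to salvage your row-by-row approach, you must replace the parity coloring by such a sign-based rule, so that the color is read from the triangle itself rather than deduced from the pipe labels.
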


\begin{proof}
In a depth triangle, we draw a red (resp.~blue) line between two adjacent entries such that~$\delta(y_1, y_2, y_3) > \delta(y_1, y_2-1, y_3+1)$ or~$\delta(y_1, y_2, y_3) < \delta(y_1+1, y_2-1, y_3)$ (resp.~$\delta(y_1, y_2, y_3) < \delta(y_1, y_2-1, y_3+1)$ or~$\delta(y_1, y_2, y_3) > \delta(y_1+1, y_2-1, y_3)$).
In a catalan triangle, we draw a red (resp.~blue) line between two adjacent entries such that~$\delta(y_1, y_2, y_3) = \delta(y_1, y_2-1, y_3+1)$ or~$\delta(y_1, y_2, y_3) < \delta(y_1+1, y_2-1, y_3)$ (resp.~$\delta(y_1, y_2, y_3) < \delta(y_1, y_2-1, y_3+1)$ or~$\delta(y_1, y_2, y_3) = \delta(y_1+1, y_2-1, y_3)$).
See \cref{fig:bijections}.
%
%
%
%
\begin{figure}
	\centering
	\includegraphics[scale=.35]{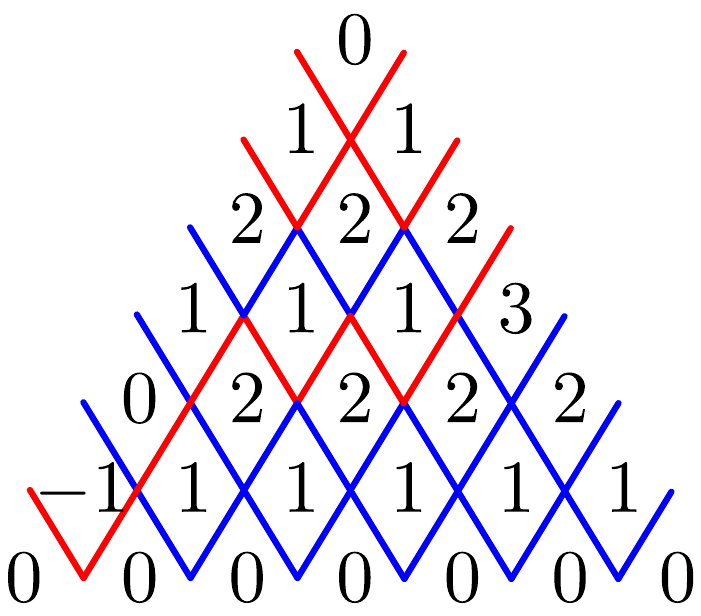} \qquad
	\includegraphics[scale=.35]{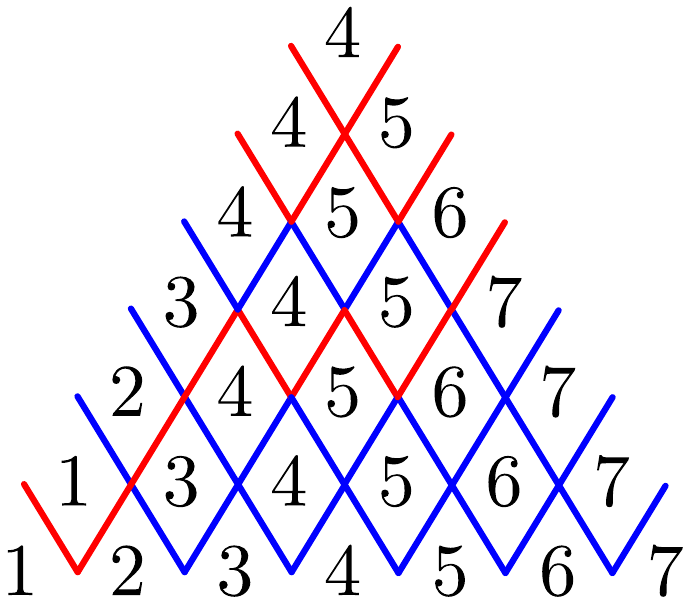} \qquad
	\includegraphics[scale=.5]{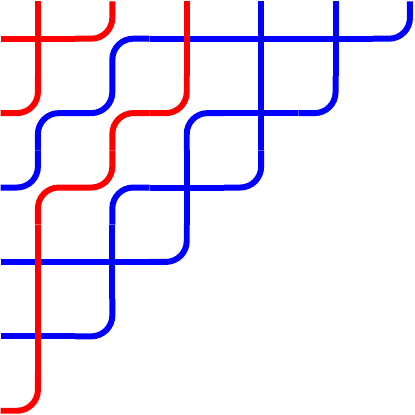}
	\caption{Bijection between depth triangles (left), catalan triangles (middle) and bicolored pipe dreams (right).}
	\label{fig:bijections}
\end{figure}
\end{proof}

By definition, a bicolored pipe dream is bipartite, but a bipartite pipe dream admits several bicolorations.
Namely, one can switch the two colors on any connected component of the contact graph and preserve a proper bicoloration.
This immediately yields the following statement connecting the numbers of \cref{table:CTBPD}.

\begin{table}[ht]
	\centering
	\begin{tabular}{c|ccccccccc}
		$n$ & $0$ & $1$ & $2$ & $3$ & $4$ & $5$ & $6$ & $7$ & $\cdots$ \\ \hline
		$\CT_n$ & $1$ & $2$ & $6$ & $28$ & $202$ & $2252$ & $38756$ & $1028964$ & $\cdots$ \\
		$\BPD_n$ & $1$ & $1$ & $2$ & $8$ & $57$ & $681$ & $12942$ & $379326$ & $\cdots$ \\
		$\BCPD_n$ & $0$ & $1$ & $1$ & $4$ & $31$ & $420$ & $8936$ & $287702$ & $\cdots$
	\end{tabular}
	\caption{Number of catalan triangles, bipartite pipe dreams and bipartite connected pipe dreams.}
	\label{table:CTBPD}
\end{table}

\begin{proposition}
Denote by
\[
\CT \eqdef \sum_{n\ge 0} \CT_n \frac{x^n}{n!},
\quad
\BPD \eqdef \sum_{n\ge 0} \BPD_n \frac{x^n}{n!}
\quad\text{and}\quad
\BCPD \eqdef \sum_{n\ge 0} \BCPD_n \frac{x^n}{n!}
\]
the exponential generating functions of the numbers of catalan triangles of order~$n$, and of the numbers of bipartite (resp.~bipartite connected) pipe dreams with $n-2$ pipes.
Then
\[
\CT = \exp \bigl( 2\BCPD \bigr) = \BPD^2.
\]
\end{proposition}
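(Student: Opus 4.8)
The plan is to derive both equalities from the exponential formula, once bipartite pipe dreams are recognized as assemblies of their connected pieces (with the number of pipes as size statistic, as in \cref{table:CTBPD}). I first record the two combinatorial ingredients. By \cref{prop:depthVSBicoloredPipeDreams}, $\CT_n$ counts the bicolored pipe dreams with $n$ pipes, and by definition such an object is a bipartite pipe dream $P$ together with a proper $2$-coloring of its contact graph. Since a bipartite graph with $c$ connected components admits exactly $2^c$ proper $2$-colorings (two per component, exchanged by swapping the colors on that component), this gives
\[
\CT_n = \sum_{P} 2^{c(P)},
\]
where the sum ranges over bipartite pipe dreams $P$ with $n$ pipes and $c(P)$ is the number of connected components of the contact graph of $P$ (with the empty pipe dream contributing the empty assembly, so that $\BCPD_0 = 0$).

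The heart of the argument is to view bipartite pipe dreams as a labelled species $\mathcal B$ whose connected objects are precisely the connected bipartite pipe dreams $\mathcal C$. Concretely, I would prove the species isomorphism $\mathcal B \cong \mathfrak E(\mathcal C)$: a bipartite pipe dream with pipe set $S$ is equivalent to the set partition of $S$ into the vertex sets of the connected components of its contact graph, together with a connected bipartite pipe dream carried by each block. Granting this, the exponential formula yields $\BPD = \exp(\BCPD)$, and squaring immediately gives $\BPD^2 = \exp(2\BCPD)$. It then remains to identify this common value with $\CT$: a term of $\exp(2\BCPD)$ is an assembly of connected bipartite pipe dreams in which each component is decorated by one of two markings, and reading each marking as the choice of which color class of that component is red turns the assembly into a proper $2$-coloring of the contact graph, i.e.\ into a bicolored pipe dream. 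Thus $\exp(2\BCPD)$ enumerates bicolored pipe dreams, matching the displayed expression for $\CT_n$, and the three series coincide.

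The main obstacle is the structural claim underpinning the exponential formula, namely that a bipartite pipe dream is freely and uniquely reconstructed from the connected bipartite pipe dreams sitting on its contact-graph components. Here the geometry of the triangular shape must be controlled: the pipes carry positions in the triangle, and one must check that the connected pieces interleave without interaction and in a manner entirely prescribed by the distribution of pipe labels, so that no two distinct pipe dreams share the same collection of labelled connected pieces. I would establish this through the bijection of \cref{prop:depthVSBicoloredPipeDreams} with depth (or catalan) triangles, where a connected component corresponds to a connected region of the red/blue line diagram and the decomposition into components is transparent; the free merging then amounts to shuffling the pipes, which is exactly what the factor $x^n/n!$ records in the exponential generating functions. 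Once $\BPD = \exp(\BCPD)$ is secured, the identities $\exp(2\BCPD) = \BPD^2$ and $\CT = \exp(2\BCPD)$ follow formally.
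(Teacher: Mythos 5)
Your proof is correct and takes essentially the same route as the paper's: the paper's (very terse) proof likewise asserts that a bipartite pipe dream is a shuffle of connected bipartite pipe dreams, giving $\BPD = \exp(\BCPD)$ by the exponential formula, and that each connected component can be colored in exactly two ways, giving $\CT = \exp(2\BCPD)$. Your extra care about the size statistic (indexing by the number of pipes, as in the table) and about justifying the shuffle decomposition only makes explicit what the paper leaves implicit.
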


\begin{proof}
Since a bipartite pipe dream is a shuffle of connected bipartite pipe dreams, we have~$\exp \bigl( \BCPD \bigr) = \BPD$.
Since each connected component of a bipartite pipe dream can be colored two ways, we have~$\CT = \exp \bigl( 2\BCPD \bigr)$.
The result follows.
\end{proof}

We also obtain the following formula, similar to the 2-enumeration of ASMs.

\begin{proposition}
\label{prop:BooleanTriangles}
We have
\(
\sum_{P} 2^{\bcc(P)}=2^{\binom{n}{2}},
\)
where the sum ranges over the bicolored pipe dreams~$P$ with $n$ pipes, and~$\bcc(P)$ denotes the number of bicolored crossings of~$P$.
\end{proposition}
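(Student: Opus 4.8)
The plan is to convert the weighted count on the left into an unweighted count of a manifestly free family of size $2^{\binom n2}$. First I would expand $2^{\bcc(P)} = \sum_{S} 1$, where $S$ ranges over all subsets of the set of bicolored crossings of~$P$. This rewrites the left-hand side as the cardinality of the set of pairs $(P,S)$, with $P$ a bicolored pipe dream with $n$ pipes and $S$ a chosen subset of its bicolored crossings. In parallel, I would pin down the $\binom n2$ cells of the triangular shape that may independently be a crossing or a contact, so that $2^{\binom n2}$ is read as the number of fillings of these cells, equivalently the number of (uncolored) pipe dreams with $n$ pipes; making this count precise — equivalently, that the maximal number of crossings is $\binom n2$ — is a small but necessary bookkeeping step. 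The goal then becomes a weight-preserving bijection between the pairs $(P,S)$ and this free family.

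The heart of the argument should be a local resolution map. Given $(P,S)$, the natural attempt is to uncross each crossing selected by~$S$ into a contact: this is admissible precisely because a bicolored crossing has its two strands in different colors, so the contact it creates is automatically bichromatic. More ambitiously, I would use $S$ together with the coloring of~$P$ to assign, cell by cell, a single bit recording ``resolved/unchanged'', thereby encoding the whole datum $(P,S)$ into one cross/contact pattern, and recover $(P,S)$ from the pattern by reconstructing a canonical proper $2$-coloring from the contact graph and reading off which contacts are genuine and which came from resolving a crossing of~$S$. The classical $2$-enumeration of alternating sign matrices, where $2^{\#(-1)}$ is expanded over subsets of the $-1$ entries and matched with a free triangular array, is the guiding template; alternatively, one may present the left-hand side as the partition function of a free-fermionic vertex model and evaluate it to $2^{\binom n2}$, mirroring the six-vertex-model proof of the alternating sign matrix $2$-enumeration.

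The hard part will be establishing that the resolution map is a genuine bijection, and here I expect a real tension. Uncrossing a crossing reconnects the two pipes running through it, so the global pipe structure — and with it the contact graph and its bipartition — changes as the resolutions are performed, and one must check that the coloring of~$P$ still induces a well-defined proper $2$-coloring of the resolved diagram. Moreover, a naive ``resolve a crossing to a contact'' operation never leaves the world of bipartite diagrams, whereas the target family of all $2^{\binom n2}$ fillings contains non-bipartite pipe dreams as well; reconciling this, so that the encoding reaches every filling exactly once, is precisely where the design of the map must be richer than the naive resolution. I expect the cleanest route is to process the cells in a fixed order (say along the diagonals of the triangle) and argue by induction that at each step the local configuration together with the partial coloring determines and is determined by the corresponding bit, so that the count telescopes to $2^{\binom n2}$; controlling the interaction between the coloring constraint and the reconnection of pipes is the crux of the proof.
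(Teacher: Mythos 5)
Your opening move---expanding $2^{\bcc(P)}=\sum_{S}1$ over subsets $S$ of the bicolored crossings, so that the left side counts pairs $(P,S)$---matches the structure of the paper's proof, whose fibers over $P$ are indexed by exactly such free choices. The genuine gap is your identification of the right-hand side. You read $2^{\binom{n}{2}}$ as the number of uncolored cross/contact fillings of the triangular shape, but the pairs $(P,S)$ outnumber these fillings by a factor of $2^{k}$, where $k$ is the number of pipes, because a bicolored pipe dream carries color information that an uncolored filling cannot retain. Concretely, with $k=2$ pipes there is one crossing/contact cell and two diagonal elbows; there are $6$ bicolored pipe dreams (four colorings of the crossing, two of the contact), so $\sum_P 2^{\bcc(P)}=1+1+2+2+1+1=8=2^{3}$, yet there are only $2$ fillings of the single cell. (This also shows how to parse the exponent: the paper's Boolean triangle with $\binom{n}{2}$ entries serves a pipe dream with $n-1$ pipes, with $n-1$ entries along the diagonal recording the \emph{colors} of the diagonal elbows and only the remaining $\binom{n-1}{2}$ entries sitting on crossing/contact cells.) Since the two sets you want to biject have different cardinalities, no such bijection exists, and in particular your proposed inverse---``reconstructing a canonical proper $2$-coloring from the contact graph''---cannot work: each bipartite filling admits $2^{c}$ proper colorings, $c$ being the number of components of its contact graph, and no canonical choice among them can absorb the missing factor.

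The repair is exactly the paper's proof, and it vindicates your resolution idea once the target is enlarged by the $k$ color bits. The paper sweeps the triangle from the diagonal toward the northwest, giving the Boolean entries context-dependent meanings: diagonal entries are elbow colors; at a cell whose two incoming strands have equal colors, the entry chooses crossing versus contact (the contact starting a new pipe of the opposite color, since a contact \emph{joins} the two incoming strands rather than keeping them separate); at a cell whose incoming strands have distinct colors, a crossing is forced and the entry is ignored. The ignored entries are exactly the bicolored crossings, so each $P$ has fiber $2^{\bcc(P)}$. Equivalently, the map $(P,S)\mapsto(\text{colors of the diagonal elbows of }P,\ \text{filling of }P\text{ with the crossings in }S\text{ resolved into contacts})$ is a bijection onto pairs (color word, arbitrary filling), with inverse given by the sweep; monochromatic contacts of the filling are simply reinterpreted during the sweep as bicolored crossings belonging to $S$, which dissolves your worry about non-bipartite fillings. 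What your plan lacked was not a cleverer canonical coloring but the extra color coordinates in the target.
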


\begin{proof}
Given a triangle of $\binom{n}{2}$ Boolean variables, we construct a bicolored pipe dream as follows (see \cref{fig:BooleanTriangles}).
We start from the diagonal and take the Boolean values along the diagonal as the colors of the elbows.
Then we move upward in the northwest direction, applying the following rule at each intersection:
\begin{itemize}
\item If the two strands arriving from the south and the east have the same color, then we use the Boolean value at this intersection to choose between placing a crossing or a contact, and we color the strands leaving to the north and the west accordingly (same color as the arriving strands if we choose a crossing, distinct color from the arriving strands if we choose a contact).
\item If the two strands arriving from the south and the east have different colors, then we ignore the Boolean value at this intersection as we must place a crossing, and each strand keeps its color.
\end{itemize}
This procedure clearly constructs a bicolored pipe dream, and every such bicolored pipe dream~$P$ arises with multiplicity $2^{\bcc(P)}$, since each bicolored crossing of~$P$ corresponds to an ignored Boolean value during this procedure.
\begin{figure}
	\centering
	\setlength{\tabcolsep}{4pt}
	\begin{tabular}[b]{ccccccccccc}
		T & F & ? & T & T & T \\[.13cm]
		F & F & F & T & T \\[.13cm]
		F & F & T & T \\[.13cm]
		? & T & T \\[.13cm]
		? & T \\[.13cm]
		F
	\end{tabular}
	\qquad
	\includegraphics[scale=.5]{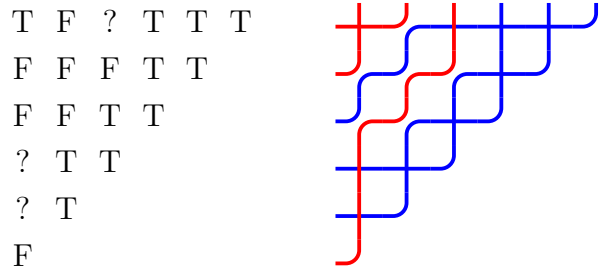}
	\caption{The map between Boolean triangles (left) and bicolored pipe dreams (right) described in the proof of \cref{prop:BooleanTriangles}. The question marks can be replaced by any Boolean values, since they are ignored during the procedure as the two strands arriving from the south and the east have different colors.}
	\label{fig:BooleanTriangles}
\end{figure}
\end{proof}


\subsection{The lattice of catalan triangles}
\label{subsec:latticeCatalanTriangles}

Finally, we note that the set of catalan triangles carries itself a natural distributive lattice structure, whose join irreducible poset is illustrated in \cref{fig:catalanLattice}.

\begin{proposition}
\label{prop:latticeCatalanTriangles}
The set of catalan triangles of order $n$ ordered by coordinatewise comparison forms a distributive lattice~$\Cat_n$, whose join-irreducibles poset is isomorphic to the poset of quadruples $(x_1,x_2,x_3,x_4)\in \N^4$ with $x_1+x_2+x_3+x_4=n-2$, ordered by $\mathbf x\le \mathbf y$ if and only if $x_2+x_3+x_4\le y_2+y_3+y_4$, $x_2+x_4\le y_2+y_4$, $x_3+x_4\le y_3+y_4$ and $x_4\le y_4$.
\end{proposition}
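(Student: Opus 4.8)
The plan is to realise $\Cat_n$ as a sublattice of a product of chains, and then read off its join irreducibles through the standard ``threshold'' description, matching them to the stated poset.

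First I would change coordinates from a catalan triangle $\gamma$ to the function $u(y) \eqdef \gamma(y_1,y_2,y_3) - y_1 - 1$ on $\DPoset_n$. By \cref{prop:catalanTriangleInequalities} and \cref{def:catalanTriangle}, the doubly gapless interlacing conditions translate, for every cover $z \lessdot y$ of $\DPoset_n$, into the single two-sided inequality $0 \le u(y) - u(z) \le 1$, together with $u \equiv 0$ on the bottom row (the minimal elements of $\DPoset_n$): indeed a cover raises $y_2$ by one while lowering exactly one of $y_1, y_3$, and substituting $\gamma = u + y_1 + 1$ into the interlacing and gapless bounds gives precisely $u(y)-u(z)\in\{0,1\}$ in both cases. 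The coordinatewise order is unchanged, so $\Cat_n$ is isomorphic to the set $\mathcal{L}$ of nonnegative, order preserving, $1$-Lipschitz (along covers) functions on $\DPoset_n$ that vanish on its minima. The set $\mathcal{L}$ is visibly closed under coordinatewise $\max$ and $\min$: for the only delicate condition, if $\max(u,u')(y)=u(y)$ then $\max(u,u')(y)-\max(u,u')(z) \le u(y)-u(z) \le 1$, and symmetrically for $\min$. Hence $\mathcal{L}$, and thus $\Cat_n$, is a sublattice of the distributive lattice $\mathbb Z^{\DPoset_n}$, and is therefore distributive. This settles the first assertion.

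For the join irreducibles I would use the general fact that, in a sublattice $\mathcal{L}$ of a product of chains, the join irreducibles are exactly the threshold-minimal elements $m_{y,k} \eqdef \bigwedge \set{u \in \mathcal{L}}{u(y) \ge k}$, one for each coordinate $y$ and each value $1 \le k \le \max_{u \in \mathcal{L}} u(y)$ (if $m_{y,k}=u\vee u'$ with $u,u'<m_{y,k}$, then $u(y),u'(y)<k$, so $(u\vee u')(y)<k$, a contradiction); moreover $m_{y,k} \le v$ if and only if $v(y) \ge k$. Here $\max u(y) = y_2$, so the join irreducibles of $\Cat_n$ are indexed by the pairs $(y,k)$ with $y \in \DPoset_n$ and $1 \le k \le y_2$, giving $\sum_{(y_1,y_2,y_3)\in\DPoset_n} y_2$ of them. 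The key computation is an explicit formula for $m_{y,k}$. Using that $\DPoset_n$ is a product of two chains (via $y \mapsto (y_1+y_2,\, y_2+y_3)$), so that $z \le y$ iff $z_1 \ge y_1$ and $z_3 \ge y_3$ and the rank is $y_2$, one obtains
\[
m_{y,k}(z) = \max\!\big(0,\; k - \max(0,\, z_1 - y_1) - \max(0,\, z_3 - y_3)\big),
\]
the point being that the right-hand side is itself in $\mathcal{L}$ (order preserving and $1$-Lipschitz, checked cover by cover) and realises the lower bound forced by monotonicity and gaplessness along a chain from $z$ up to $z \vee y$.

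It then remains to compute the induced order and repackage it. From $m_{y,k} \le m_{y',k'} \iff m_{y',k'}(y) \ge k$ together with the formula above, this inequality becomes $\max(0,\, y_1 - y'_1) + \max(0,\, y_3 - y'_3) \le k' - k$. Now the elementary identity $\max(0,a) + \max(0,b) = \max(0,\, a,\, b,\, a+b)$ splits this single inequality into the conjunction of $k \le k'$, $\; y_1 + k \le y'_1 + k'$, $\; y_3 + k \le y'_3 + k'$, and $y_1 + y_3 + k \le y'_1 + y'_3 + k'$. Under the assignment $(y,k) \mapsto (x_1,x_2,x_3,x_4) \eqdef (y_2 - k,\, y_1,\, y_3,\, k)$ these four inequalities are exactly $x_4 \le x'_4$, $\; x_2 + x_4 \le x'_2 + x'_4$, $\; x_3 + x_4 \le x'_3 + x'_4$, and $x_2+x_3+x_4 \le x'_2+x'_3+x'_4$ (that is, $x_1 \ge x'_1$), which is the order of the poset in the statement. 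I expect the main obstacle to be precisely the middle step, namely establishing the closed form for $m_{y,k}$ and verifying that it lies in $\mathcal{L}$ and is minimal; once that is in hand, the distributivity, the indexing of the join irreducibles, and the translation of the order into the four inequalities are all short.
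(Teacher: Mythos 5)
Your argument is correct in substance and is essentially a fully detailed version of the paper's own (very terse) proof: the paper likewise gets distributivity from closure under coordinatewise $\max/\min$, and likewise identifies each join irreducible as the smallest catalan triangle with a prescribed value at a prescribed entry --- exactly your threshold element $m_{y,k}$. Your normalization $u=\gamma-y_1-1$, the closed formula for $m_{y,k}$, and the identity $\max(0,a)+\max(0,b)=\max(0,a,b,a+b)$ supply the order computation that the paper compresses into ``reorienting edges of $\TPoset_n$ horizontally''. Two small points you should still make explicit: every join irreducible is of the form $m_{y,k}$ because any $v$ equals $\bigvee_y m_{y,v(y)}$; and the map $(y,k)\mapsto m_{y,k}$ is injective, which follows from your formula since $k=\max_z m_{y,k}(z)$ and $y=\min\set{z}{m_{y,k}(z)=k}$.

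There is, however, one discrepancy you did not flag, and it is not cosmetic. Your bijection $(y,k)\mapsto(y_2-k,\,y_1,\,y_3,\,k)$ with $1\le k\le y_2$ has image the quadruples with $x_1+x_2+x_3+x_4=n-2$ and $x_4\ge 1$, not all quadruples of sum $n-2$; these index sets have cardinalities $\binom{n}{3}$ and $\binom{n+1}{3}$ respectively, so your closing claim that this ``is the order of the poset in the statement'' is not literally true. In fact the statement as printed is false and your version is the correct one: for $n=4$ there are $6$ catalan triangles, and their lattice has exactly $4$ join irreducibles (forming a diamond), while there are $10$ quadruples summing to $2$. The proposition should read $x_1+x_2+x_3+x_4=n-3$; your image is isomorphic to that poset via $x_4\mapsto x_4-1$, which shifts both sides of each defining inequality by the same amount. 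The same off-by-one slip occurs in the paper's own proof, where the prescribed entry $\gamma(x_3,x_1+x_4+1,x_2)$ sits at a position whose coordinates sum to $n-1$, hence outside $\DPoset_n$. So your mathematics effectively corrects the statement, but a complete solution must point out that the poset you construct and the poset claimed in the statement differ, rather than asserting they coincide.
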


\begin{proof}
The set of catalan triangles of order $n$ is closed under coordinatewise maximum and minimum, making it a distributive lattice.
Each join irreducible catalan triangle corresponds to one entry of the depth triangle with non-minimal depth, which corresponds to elements of $\TPoset_n$ not in the minimal wall.
More precisely, to each quadruple $(x_1,x_2,x_3,x_4)$ with $x_1+x_2+x_3+x_4 = n-2$ corresponds the smallest catalan triangle with~$\gamma(x_3, x_1+x_4+1, x_2) = x_3+x_4+2$.
The order on join-irreducible catalan triangles is obtained by reorienting edges in $\TPoset_n$ horizontally.
\end{proof}

\begin{remark}
\label{rem:latticeDepthTriangles}
By \cref{prop:depthVSCatalanTriangles,prop:depthVSBicoloredPipeDreams}, \cref{prop:latticeCatalanTriangles} also translates to the depth triangles of \cref{subsec:depthTriangles} and the bicolored pipe dreams of \cref{subsec:bicoloredPipeDreams}.
\end{remark}

\begin{example}
\label{exm:minMaxCatalanTriangle}
The minimum (resp.~maximum) depth triangles, catalan triangles, and bicolored pipe dreams are given by:
\begin{itemize}
\item $\delta_{\min}(y_1,y_2,y_3) = -y_2$ (resp.~$\delta_{\max}(y_1,y_2,y_3) = y_2$),
\item $\gamma_{\min}(y_1,y_2,y_3) = y_1+1$ (resp.~$\gamma_{\max}(y_1,y_2,y_3) = y_1+y_2+1 = n-1-y_3$),
\item the completely red (resp.~blue) pipe dreams with only crossings.
\end{itemize}
See \cref{fig:minimalQuotient,fig:maximalQuotient}.
\end{example}

\begin{figure}
	\centerline{
	\raisebox{-1.5cm}{\includegraphics[scale=.1]{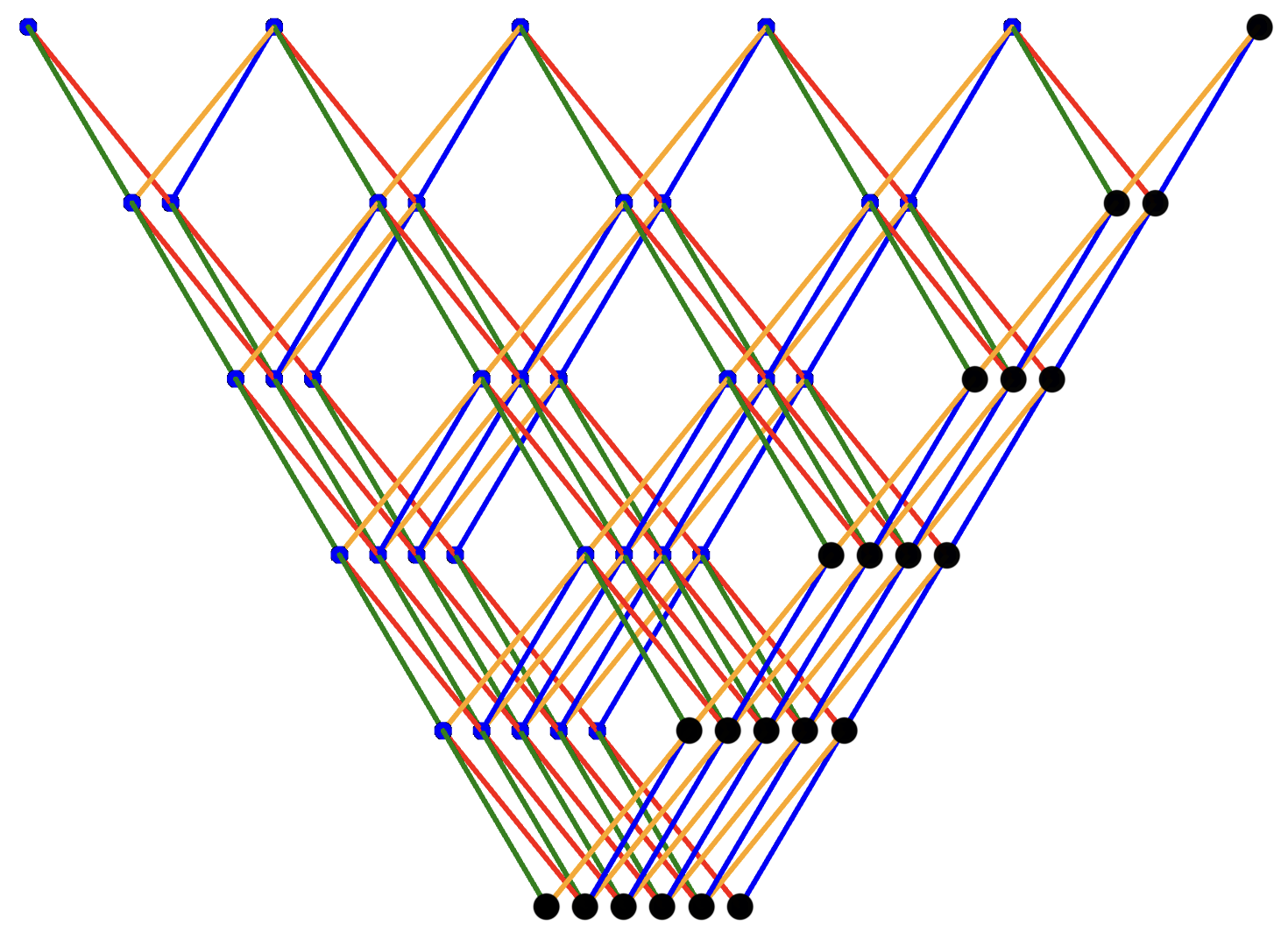}}
	\setlength{\tabcolsep}{1.1pt}
	\begin{tabular}{ccccccccccc}
		& & & & & $\!\!-5\!\!$ & & & & & \\
		& & & & $\!\!-4\!\!$ & & $\!\!-4\!\!$ & & & & \\
		& & & $\!\!-3\!\!$ & & $\!\!-3\!\!$ & & $\!\!-3\!\!$ & & & \\
		& & $\!\!-2\!\!$ & & $\!\!-2\!\!$ & & $\!\!-2\!\!$ & & $\!\!-2\!\!$ & & \\
		& $\!\!-1\!\!$ & & $\!\!-1\!\!$ & & $\!\!-1\!\!$ & & $\!\!-1\!\!$ & & $\!\!-1\!\!$ & \\
		$0$ & & $0$ & & $0$ & & $0$ & & $0$ & & $0$
	\end{tabular}
	\qquad
	\begin{tabular}{ccccccccccc}
		& & & & & $1$ & & & & & \\
		& & & & $1$ & & $2$ & & & & \\
		& & & $1$ & & $2$ & & $3$ & & & \\
		& & $1$ & & $2$ & & $3$ & & $4$ & & \\
		& $1$ & & $2$ & & $3$ & & $4$ & & $5$ & \\
		$1$ & & $2$ & & $3$ & & $4$ & & $5$ & & $6$
	\end{tabular}
	\qquad
	\raisebox{-1.3cm}{\includegraphics[scale=.5]{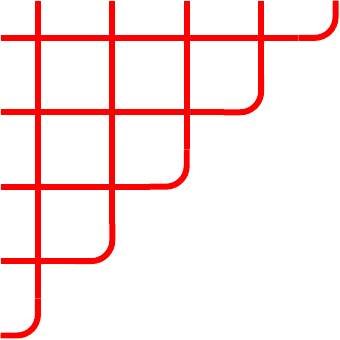}}
	}
	\caption{The minimal wall (left), depth triangle (middle left), catalan triangle (middle right), and bicolored pipe dream (right).}
	\label{fig:minimalQuotient}
\end{figure}

\begin{figure}
	\centerline{
	\raisebox{-1.5cm}{\includegraphics[scale=.1]{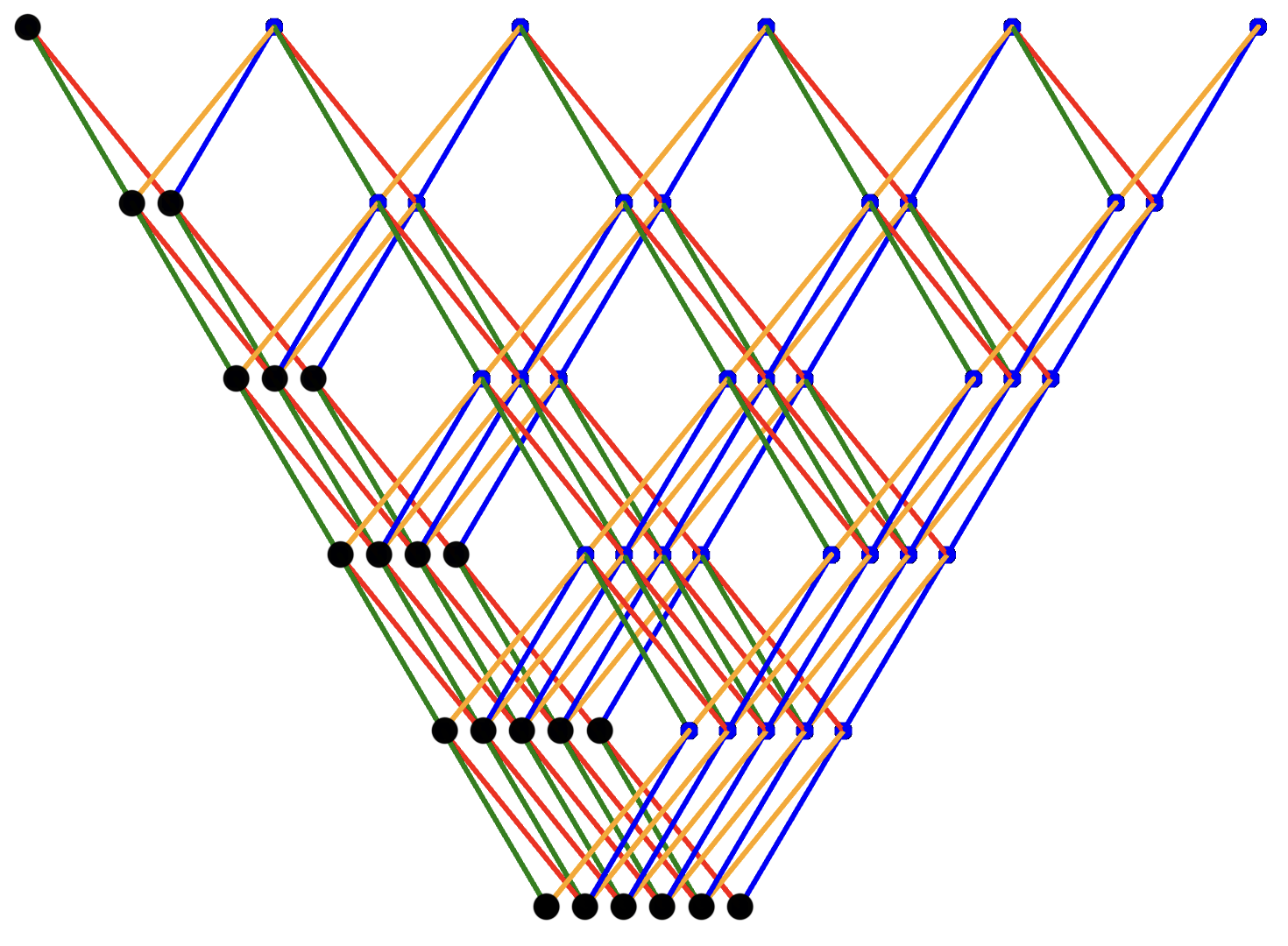}}
	\setlength{\tabcolsep}{1.1pt}
	\begin{tabular}{ccccccccccc}
		& & & & & $5$ & & & & & \\
		& & & & $4$ & & $4$ & & & & \\
		& & & $3$ & & $3$ & & $3$ & & & \\
		& & $2$ & & $2$ & & $2$ & & $2$ & & \\
		& $1$ & & $1$ & & $1$ & & $1$ & & $1$ & \\
		$0$ & & $0$ & & $0$ & & $0$ & & $0$ & & $0$
	\end{tabular}
	\qquad
	\begin{tabular}{ccccccccccc}
		& & & & & $6$ & & & & & \\
		& & & & $5$ & & $6$ & & & & \\
		& & & $4$ & & $5$ & & $6$ & & & \\
		& & $3$ & & $4$ & & $5$ & & $6$ & & \\
		& $2$ & & $3$ & & $4$ & & $5$ & & $6$ & \\
		$1$ & & $2$ & & $3$ & & $4$ & & $5$ & & $6$
	\end{tabular}
	\qquad
	\raisebox{-1.3cm}{\includegraphics[scale=.5]{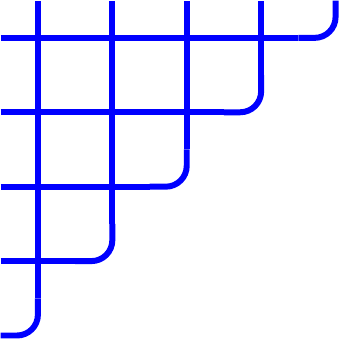}}
	}
	\caption{The maximal wall (left), depth triangle (middle left), catalan triangle (middle right), and bicolored pipe dream (right).}
	\label{fig:maximalQuotient}
\end{figure}

\begin{figure}
	\centerline{\includegraphics[scale=.8]{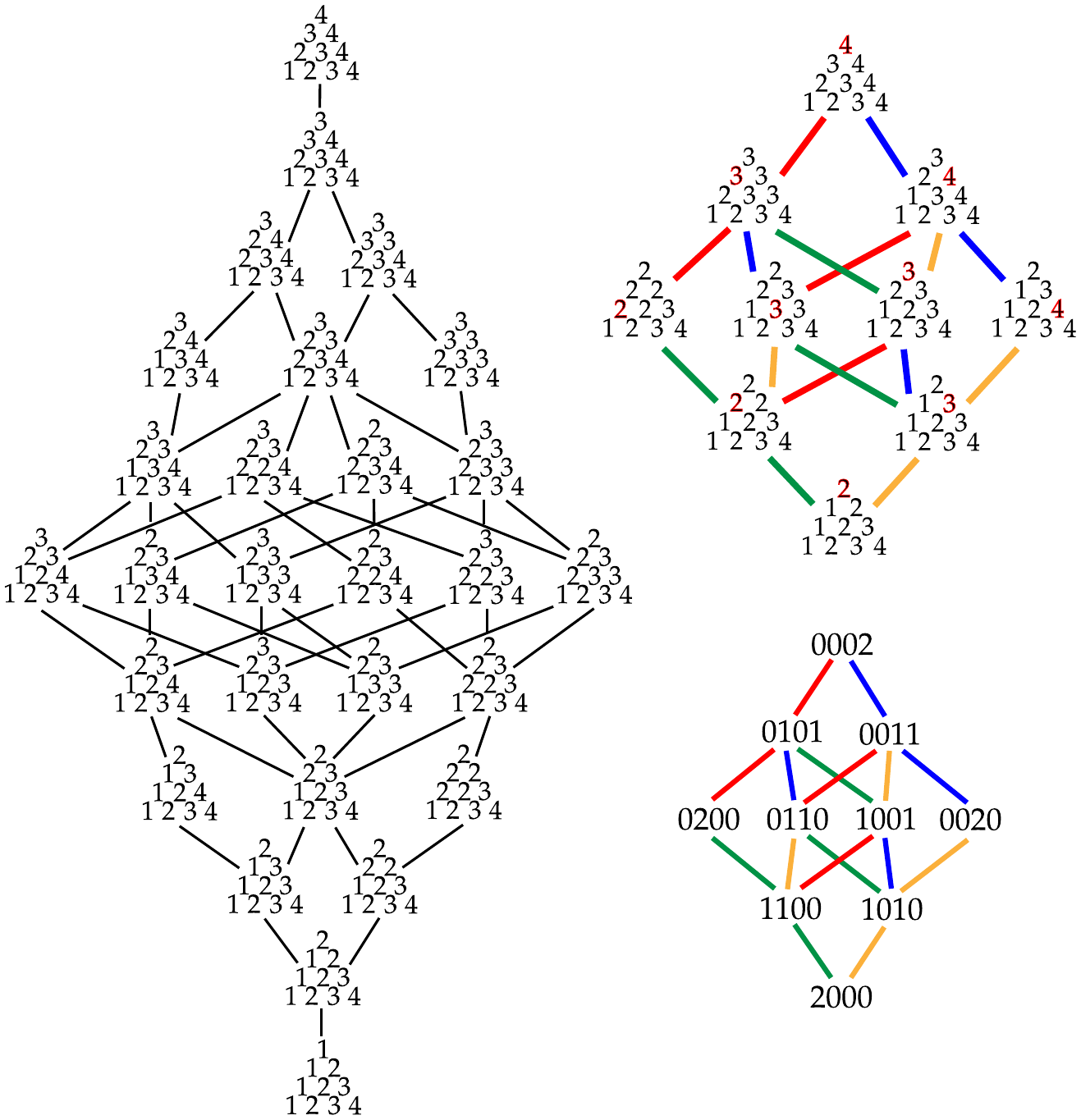}}
	\caption{The lattice of catalan triangles of order~$5$ (left), its join-irreducible poset (top right), and the corresponding barycentric coordinates (bottom right).}
	\label{fig:catalanLattice}
\end{figure}

\subsection{Symmetric and self-dual catalan triangles}
The lattice $\Cat_n$ has an automorphism given by $(X_{i,j})_{1\le j \le i \le n}\mapsto (X_{i,i-j+1}-i+2j-1)_{1\le j \le i \le n}$, corresponding to reflection of bicolored pipe dreams. We say a catalan triangle is \defn{symmetric} if is fixed by this automorphism. Symmetric catalan triangles form a sublattice of $\Cat_n$, whose poset of join-irreducible elements is the vertical half of $\JIrr(\Cat_n)$.

The lattice $\Cat_n$ has an antiautomorphism given by $(X_{i,j})_{1\le j \le i \le n}\mapsto (n+1-X_{i,i-j+1})_{1\le j \le i \le n}$, corresponding to the reflection and exchange of colors of bicolored pipe dreams. We say a catalan triangle is \defn{self-dual} if is fixed by this antiautomorphism. 

\cref{table:symmetricCT} gathers the number of all catalan triangules, of symmetric ones, and of self-dual ones.

\begin{table}[ht]
	\centering
	\begin{tabular}{c|cccccccccc|c}
		$n$ & $1$ & $2$ & $3$ & $4$ & $5$ & $6$ & $7$ & $8$ & $9$ & $\cdots$ & OEIS\\ \hline
		$|\Cat_n|$& $1$ & $2$ & $6$ & $28$ & $202$ & $2252$ & $38756$ & $1028964$ & $42127054$ & $\cdots$ & \href{https://oeis.org/A391969}{A391969} \\
		symmetric & $1$ & $2$ & $4$ & $12$ & $36$ & $168$ & $768$ & $5556$ & $38904$ &  $\cdots$ & \href{https://oeis.org/A390493}{A390493} \\
		self-dual & $1$ & $0$ & $2$ & $0$ & $10$ & $0$ & $120$ & $0$ &$3434$ & $\cdots$ & \href{https://oeis.org/A391973}{A391973}
	\end{tabular}
	\caption{Number of catalan triangles, symmetric catalan triangles and self-dual catalan triangles.}
	\label{table:symmetricCT}
\end{table}


\section{Maxima of congruence classes}
\label{sec:maxima}

This section is devoted to the maxima of the classes of catalan congruences of~$\ASM_n$.


\subsection{Depth antichains}

We first introduce the following objects, motivated by \cref{prop:catalanTriangleInequalities} and illustrated in \cref{fig:depthAntichains}.

\begin{definition}
\label{def:depthAntichain}
A \defn{depth antichain} is a map~$\varepsilon : A \to \N$ where $A$ is a (possibly empty) antichain of a triangular poset $\DPoset_n$, such that
\begin{itemize}
\item $-y_2 \le \varepsilon(y_1, y_2, y_3) \le y_2$ and~$\varepsilon(y_1, y_2, y_3) \equiv_{\mathrm{mod} \; 2} y_2$ for all~$(y_1, y_2, y_3) \in A$, and
\item $|\varepsilon(y_1, y_2, y_3) - \varepsilon(z_1, z_2, z_3)| \le |y_1 - z_1| + |y_3 - z_3|$ for all $(y_1, y_2, y_3), (z_1, z_2, z_3) \in A$.
\end{itemize}
See \cref{fig:depthAntichains}.
\begin{figure}[H]
	\setlength{\tabcolsep}{1.1pt} 
	\centerline{
	\begin{tabular}{ccccc}
		& & $\,\cdot\,$ & & \\
		& $\,\cdot\,$ & & $\,\cdot\,$ & \\
		$\,\cdot\,$ & & $\,\cdot\,$ & & $\,\cdot\,$
	\end{tabular}
	\qquad
	\begin{tabular}{ccccc}
		& & $\,\cdot\,$ & & \\
		& $\,\cdot\,$ & & $\,\cdot\,$ & \\
		$0$ & & $\,\cdot\,$ & & $\,\cdot\,$
	\end{tabular}
	\qquad
	\begin{tabular}{ccccc}
		& & $\,\cdot\,$ & & \\
		& $\,\cdot\,$ & & $\,\cdot\,$ & \\
		$\,\cdot\,$ & & $0$ & & $\,\cdot\,$
	\end{tabular}
	\qquad
	\begin{tabular}{ccccc}
		& & $\,\cdot\,$ & & \\
		& $\,\cdot\,$ & & $\,\cdot\,$ & \\
		$\,\cdot\,$ & & $\,\cdot\,$ & & $0$
	\end{tabular}
	\qquad
	\begin{tabular}{ccccc}
		& & $\,\cdot\,$ & & \\
		& $1$ & & $\,\cdot\,$ & \\
		$\,\cdot\,$ & & $\,\cdot\,$ & & $\,\cdot\,$
	\end{tabular}
	\qquad
	\begin{tabular}{ccccc}
		& & $\,\cdot\,$ & & \\
		& $\!\!-1\!\!$ & & $\,\cdot\,$ & \\
		$\,\cdot\,$ & & $\,\cdot\,$ & & $\,\cdot\,$
	\end{tabular}
	}
	\medskip
	\centerline{
	\begin{tabular}{ccccc}
		& & $\,\cdot\,$ & & \\
		& $\,\cdot\,$ & & $1$ & \\
		$\,\cdot\,$ & & $\,\cdot\,$ & & $\,\cdot\,$
	\end{tabular}
	\qquad
	\begin{tabular}{ccccc}
		& & $\,\cdot\,$ & & \\
		& $\,\cdot\,$ & & $\!\!-1\!\!$ & \\
		$\,\cdot\,$ & & $\,\cdot\,$ & & $\,\cdot\,$
	\end{tabular}
	\qquad
	\begin{tabular}{ccccc}
		& & $2$ & & \\
		& $\,\cdot\,$ & & $\,\cdot\,$ & \\
		$\,\cdot\,$ & & $\,\cdot\,$ & & $\,\cdot\,$
	\end{tabular}
	\qquad
	\begin{tabular}{ccccc}
		& & $0$ & & \\
		& $\,\cdot\,$ & & $\,\cdot\,$ & \\
		$\,\cdot\,$ & & $\,\cdot\,$ & & $\,\cdot\,$
	\end{tabular}
	\qquad
	\begin{tabular}{ccccc}
		& & $\!\!-2\!\!$ & & \\
		& $\,\cdot\,$ & & $\,\cdot\,$ & \\
		$\,\cdot\,$ & & $\,\cdot\,$ & & $\,\cdot\,$
	\end{tabular}
	\qquad
	\begin{tabular}{ccccc}
		& & $\,\cdot\,$ & & \\
		& $\,\cdot\,$ & & $\,\cdot\,$ & \\
		$0$ & & $0$ & & $\,\cdot\,$
	\end{tabular}
	}
	\medskip
	\centerline{
	\begin{tabular}{ccccc}
		& & $\,\cdot\,$ & & \\
		& $\,\cdot\,$ & & $1$ & \\
		$0$ & & $\,\cdot\,$ & & $\,\cdot\,$
	\end{tabular}
	\qquad
	\begin{tabular}{ccccc}
		& & $\,\cdot\,$ & & \\
		& $\,\cdot\,$ & & $\!\!-1\!\!$ & \\
		$0$ & & $\,\cdot\,$ & & $\,\cdot\,$
	\end{tabular}
	\qquad
	\begin{tabular}{ccccc}
		& & $\,\cdot\,$ & & \\
		& $\,\cdot\,$ & & $\,\cdot\,$ & \\
		$0$ & & $\,\cdot\,$ & & $0$
	\end{tabular}
	\qquad
	\begin{tabular}{ccccc}
		& & $\,\cdot\,$ & & \\
		& $\,\cdot\,$ & & $\,\cdot\,$ & \\
		$0$ & & $0$ & & $0$
	\end{tabular}
	\qquad
	\begin{tabular}{ccccc}
		& & $\,\cdot\,$ & & \\
		& $\,\cdot\,$ & & $\,\cdot\,$ & \\
		$\,\cdot\,$ & & $0$ & & $0$
	\end{tabular}
	\qquad
	\begin{tabular}{ccccc}
		& & $\,\cdot\,$ & & \\
		& $1$ & & $\,\cdot\,$ & \\
		$\,\cdot\,$ & & $\,\cdot\,$ & & $0$
	\end{tabular}
	}
	\medskip
	\centerline{
	\begin{tabular}{ccccc}
		& & $\,\cdot\,$ & & \\
		& $-1$ & & $\,\cdot\,$ & \\
		$\,\cdot\,$ & & $\,\cdot\,$ & & $0$
	\end{tabular}
	\qquad
	\begin{tabular}{ccccc}
		& & $\,\cdot\,$ & & \\
		& $1$ & & $1$ & \\
		$\,\cdot\,$ & & $\,\cdot\,$ & & $\,\cdot\,$
	\end{tabular}
	\qquad
	\begin{tabular}{ccccc}
		& & $\,\cdot\,$ & & \\
		& $1$ & & $\!\!-1\!\!$ & \\
		$\,\cdot\,$ & & $\,\cdot\,$ & & $\,\cdot\,$
	\end{tabular}
	\qquad
	\begin{tabular}{ccccc}
		& & $\,\cdot\,$ & & \\
		& $\!\!-1\!\!$ & & $1$ & \\
		$\,\cdot\,$ & & $\,\cdot\,$ & & $\,\cdot\,$
	\end{tabular}
	\qquad
	\begin{tabular}{ccccc}
		& & $\,\cdot\,$ & & \\
		& $\!\!-1\!\!$ & & $\!\!-1\!\!$ & \\
		$\,\cdot\,$ & & $\,\cdot\,$ & & $\,\cdot\,$
	\end{tabular}
	}
	\caption{The $23$ depth antichains of ~$\DPoset_4$.}
	\label{fig:depthAntichains}
\end{figure}
\end{definition}

It follows from \cref{prop:depthTriangleInequalities} that the restriction of any depth triangle to any antichain of $\DPoset_n$ is a depth antichain.
Conversely, we prove that any depth antichain can be extended to a depth triangle.

\begin{proposition}
\label{prop:extendDepthAntichain}
Any depth antichain $\varepsilon : A \to \N$ can be completed into a depth triangle, \ie there exists a depth triangle $\delta : \DPoset_n \to \N$ which restricts to~$\varepsilon$ on~$A$.
\end{proposition}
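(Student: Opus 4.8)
The plan is to write down an explicit extension by the standard Lipschitz (McShane--Whitney) extension formula, truncated from below so as to respect the bottom row. Concretely, for $(y_1,y_2,y_3) \in \DPoset_n$ I would set
\[
\delta(y_1,y_2,y_3) \eqdef \max\Bigl(\, -y_2, \ \max_{(a_1,a_2,a_3) \in A} \bigl[ \varepsilon(a_1,a_2,a_3) - |y_1 - a_1| - |y_3 - a_3| \bigr] \Bigr),
\]
with the convention that the inner maximum is $-\infty$ (hence inactive) when $A = \emptyset$, so that $\delta = \delta_{\min}$ in that case, consistently with \cref{exm:minMaxCatalanTriangle}. Here I identify each element of $\DPoset_n$ with the pair $(y_1,y_3)$ (recall $y_2 = n-2-y_1-y_3$), so that $|y_1 - a_1| + |y_3 - a_3|$ is exactly the grid metric appearing in \cref{def:depthAntichain,prop:depthTriangleInequalities}.

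I would then verify the two defining conditions of \cref{def:depthTriangle} together with the fact that $\delta$ restricts to $\varepsilon$ on $A$. The first ingredient is that each function $(y_1,y_3) \mapsto \varepsilon(a_1,a_2,a_3) - |y_1-a_1| - |y_3-a_3|$, as well as $(y_1,y_3) \mapsto -y_2 = y_1+y_3-(n-2)$, is $1$-Lipschitz for the metric $|y_1-z_1|+|y_3-z_3|$; since a maximum of $1$-Lipschitz functions is $1$-Lipschitz, any two entries satisfy $|\delta(y_1,y_2,y_3) - \delta(z_1,z_2,z_3)| \le |y_1-z_1| + |y_3-z_3|$, and in particular two adjacent entries differ by at most $1$. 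The second ingredient is a parity count: since $a_2 - y_2 = (y_1-a_1)+(y_3-a_3) \equiv |y_1-a_1|+|y_3-a_3| \pmod 2$ and $\varepsilon(a_1,a_2,a_3) \equiv a_2 \pmod 2$ by \cref{def:depthAntichain}, every term of the maximum, including $-y_2$, is congruent to $y_2$ modulo $2$, whence $\delta(y_1,y_2,y_3) \equiv y_2 \pmod 2$. Combining the two, adjacent entries have opposite parities and differ by at most $1$, so they differ by exactly $1$, which is precisely the second condition of \cref{def:depthTriangle}.

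For the bottom row, at a point $(i,0,n-2-i)$ the triangle inequality gives $|i-a_1| + |(n-2-i)-a_3| \ge (i-a_1)+(n-2-i-a_3) = a_2 \ge \varepsilon(a_1,a_2,a_3)$, so each inner term is $\le 0$ while the floor contributes $-y_2 = 0$; hence $\delta$ vanishes on the bottom row. Finally, evaluating at $a = (a_1,a_2,a_3) \in A$, the term indexed by $a$ itself equals $\varepsilon(a)$, giving $\delta(a) \ge \varepsilon(a)$; the lower bound $-a_2 \le \varepsilon(a)$ from \cref{def:depthAntichain} shows the floor $-a_2$ does not exceed $\varepsilon(a)$, and the Lipschitz condition $\varepsilon(a') - \varepsilon(a) \le |a_1-a_1'| + |a_3-a_3'|$ of \cref{def:depthAntichain} shows no other term does either, so $\delta(a) = \varepsilon(a)$.

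The only genuinely delicate point, and the one I would emphasise, is the interplay between the $1$-Lipschitz bound and the parity condition: it is this rigidity that upgrades the soft inequality $|\delta(y_1,y_2,y_3)-\delta(z_1,z_2,z_3)| \le 1$ on neighbours to the exact equality a depth triangle demands, so that the Lipschitz envelope lands on the rigid $\pm1$-lattice of depth triangles rather than merely near it. A secondary point is the boundary truncation by $-y_2$, which is needed precisely to pin the bottom row to $0$ even when the data on $A$ would otherwise pull the envelope strictly below it. I note that the argument never uses that $A$ is an antichain, only the two inequalities of \cref{def:depthAntichain}; the antichain hypothesis is what guarantees (via the restriction direction of \cref{prop:depthTriangleInequalities}) that such $\varepsilon$ genuinely arise as restrictions of depth triangles.
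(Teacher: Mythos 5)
Your proof is correct and is essentially the paper's argument in order-dual form: the paper constructs the \emph{maximal} extension $\delta_\varepsilon(y) = \min\bigl(y_2,\, \min_{a \in A}[\varepsilon(a) + |y_1-a_1| + |y_3-a_3|]\bigr)$ as a meet of single-point extensions (\cref{lem:extendDepthSingleton,lem:extendDepthAntichain}), whereas you construct the \emph{minimal} extension as a max of lower cones floored at $-y_2$. The substantive ingredients — the $1$-Lipschitz envelope, the truncation to pin down the bottom row, and the parity argument upgrading the bound on adjacent entries to an exact $\pm 1$ difference — are the same in both proofs.
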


To prove \cref{prop:extendDepthAntichain}, we start with depth antichains of size one.

\begin{lemma}
\label{lem:extendDepthSingleton}
For any~$\b{z} \eqdef (z_1, z_2, z_3) \in \DPoset_n$ and~$-z_2 \le v \le z_2$ with~$v \equiv_{\mathrm{mod} \; 2} z_2$, the map ${\delta_{\b{z}, v} : \DPoset_n \to \N}$ defined by~$\delta_{\b{z}, v}(y_1, y_2, y_3) \eqdef \min(y_2, v + |y_1-z_1| + |y_3-z_3|)$ is a depth triangle with~$\delta_{\b{z}, v}(\b{z}) = v$.
\end{lemma}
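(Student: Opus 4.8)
The plan is to verify directly the two defining properties of \cref{def:depthTriangle}, together with the normalization $\delta_{\b z, v}(\b z) = v$. Throughout I would abbreviate $f(\b y) \eqdef v + |y_1 - z_1| + |y_3 - z_3|$, so that $\delta_{\b z, v}(\b y) = \min(y_2, f(\b y))$. The normalization is immediate: at $\b y = \b z$ both absolute values vanish, so $f(\b z) = v$, and since $v \le z_2$ by hypothesis, $\delta_{\b z, v}(\b z) = \min(z_2, v) = v$. The whole proof then rests on two elementary facts about $f$ that I would record first. The \emph{parity fact}: since $v \equiv z_2 \pmod 2$, since $|k| \equiv k \pmod 2$, and since $(y_1 - z_1) + (y_3 - z_3) = z_2 - y_2$ (using $y_1 + y_2 + y_3 = n-2 = z_1 + z_2 + z_3$), one computes $f(\b y) \equiv v + (y_1 + z_1) + (y_3 + z_3) \equiv (n-2) + (y_1 + y_3) \equiv y_2 \pmod 2$. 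The \emph{unit-step fact}: passing from $\b y$ to either lower cover $(y_1, y_2 - 1, y_3 + 1)$ or $(y_1 + 1, y_2 - 1, y_3)$ changes exactly one of $|y_3 - z_3|$, $|y_1 - z_1|$, and changes it by $\pm 1$ (as $\big| |k+1| - |k| \big| = 1$ for every integer $k$); hence $f$ changes by exactly $\pm 1$ along every cover of $\DPoset_n$.

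For the bottom-row condition $\delta_{\b z, v}(i, 0, n - 2 - i) = 0$, I would prove the lower bound $f(\b y) \ge -y_2$ for all $\b y \in \DPoset_n$. The triangle inequality gives $|y_1 - z_1| + |y_3 - z_3| \ge |(y_1 - z_1) + (y_3 - z_3)| = |z_2 - y_2|$, and with $v \ge -z_2$ this yields $f(\b y) \ge -z_2 + |z_2 - y_2| \ge -y_2$, checking the two cases $y_2 \le z_2$ and $y_2 > z_2$ separately. In particular $f \ge 0$ on the bottom row $y_2 = 0$, so $\delta_{\b z, v} = \min(0, f) = 0$ there.

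The remaining step, the unit-difference condition along covers, is the crux and the part I expect to require the most care. Fix $\b y$ of rank $r \eqdef y_2$ and a lower cover $\b y'$ of rank $r - 1$, and set $a \eqdef f(\b y)$ and $b \eqdef f(\b y')$; by the unit-step fact $b = a \pm 1$, and by the parity fact $a \equiv r$ and $b \equiv r - 1 \pmod 2$. I then have to show $\big| \min(r, a) - \min(r - 1, b) \big| = 1$, which follows from a short case analysis. If $a > r$, parity forces $a \ge r + 2$, hence $b \ge r + 1$ and both minima are clipped, giving $r - (r - 1) = 1$. If $a = r$, then $\min(r, a) = r$, while $b = r \pm 1$ forces $\min(r - 1, b) = r - 1$ in either case. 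If $a < r$, parity forces $a \le r - 2$, hence $b \le r - 1$, so neither minimum is clipped and the difference is $a - b = \mp 1$. In every case the absolute difference equals $1$, which is exactly the second condition of \cref{def:depthTriangle}. The delicate point is precisely the use of the parity fact to rule out $a = r - 1$: this is what prevents the clipping by $y_2$ from landing strictly between the two candidate values and is what makes the $\min$ interact cleanly with the $\pm 1$ steps of $f$.
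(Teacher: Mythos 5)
Your proof is correct and follows essentially the same route as the paper's: the lower bound $v + |y_1-z_1| + |y_3-z_3| \ge -y_2$ (which handles the bottom row) and the parity observation $v + |y_1-z_1| + |y_3-z_3| \equiv y_2 \pmod 2$ are exactly the paper's two ingredients. Your explicit case analysis on $a$ versus $r$ simply fills in the step that the paper leaves terse, namely that the minimum of two quantities of equal parity, each moving by at most one along a cover, must move by exactly one.
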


\begin{proof}
Since~$y_1+y_2+y_3 = z_1+z_2+z_3$, we have
\[
v + |y_1-z_1| + |y_3-z_3| \ge v + y_1 - z_1 + y_3 - z_3 \ge -z_2 + y_1 - z_1 + y_3 - z_3 = -y_2.
\]
Therefore, we have~$-y_2 \le \delta_{\b{z}, v}(y_1, y_2, y_3) \le y_2$.
Hence, the bottom row is indeed~${0 0 \cdots 0}$.
Moreover, has~$v \equiv_{\mathrm{mod } 2} z_2$, we have~$y_2 \equiv_{\mathrm{mod} \; 2} v + |y_1-z_1| + |y_3-z_3|$, so that we get that
\[
|\delta_{\b{z}, v}(y_1, y_2, y_3) - \delta_{\b{z}, v}(y_1, y_2-1, y_3+1)| = 1 = |\delta_{\b{z}, v}(y_1, y_2, y_3) - \delta_{\b{z}, v}(y_1+1, y_2-1, y_3)|.
\qedhere
\]
\end{proof}

\begin{lemma}
\label{lem:extendDepthAntichain}
For any depth antichain~$\varepsilon : A \to \N$, the depth triangle~$\delta_\varepsilon : \DPoset_n \to \N$ obtained as the meet (\ie coordinate-wise minimum) of the depth triangles~$\delta_{\b{z}, \varepsilon(\b{z})}$ of \cref{lem:extendDepthSingleton} for~$\b{z} \in A$ restricts to~$\varepsilon$ on~$A$.
\end{lemma}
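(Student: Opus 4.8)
The plan is to verify the two things packaged into the statement: that $\delta_\varepsilon$ is genuinely a depth triangle, and that it agrees with $\varepsilon$ on $A$. For the first point, each $\delta_{\b{z}, \varepsilon(\b{z})}$ is a depth triangle by \cref{lem:extendDepthSingleton}, so it suffices to know that the coordinatewise minimum of depth triangles is again a depth triangle. I would not check the adjacency condition $|\delta(y_1, y_2, y_3) - \delta(y_1, y_2-1, y_3+1)| = 1$ directly on the minimum, since this identity is delicate to preserve under $\min$ by hand; instead I would transport the problem through the bijection of \cref{prop:depthVSCatalanTriangles}. At each fixed coordinate $(y_1, y_2, y_3)$ that bijection is the increasing affine map $\delta \mapsto \delta/2 + y_1 + y_2/2 + 1$, hence it commutes with coordinatewise minimum. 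Since the catalan triangles are closed under coordinatewise minimum (the lattice structure of \cref{prop:latticeCatalanTriangles}), so are the depth triangles, and therefore $\delta_\varepsilon$ is a depth triangle.

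The second point is the substance, and it amounts to a two-sided inequality at each $\b{w} \eqdef (w_1, w_2, w_3) \in A$. For the upper bound, I would isolate the term $\b{z} = \b{w}$ in the minimum defining $\delta_\varepsilon(\b{w})$: there $|w_1 - w_1| + |w_3 - w_3| = 0$, so this term equals $\min(w_2, \varepsilon(\b{w}))$, which is $\varepsilon(\b{w})$ because the first condition of \cref{def:depthAntichain} gives $\varepsilon(\b{w}) \le w_2$. Hence $\delta_\varepsilon(\b{w}) \le \varepsilon(\b{w})$.

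For the lower bound, I would show that every term in the minimum is already at least $\varepsilon(\b{w})$. Fix $\b{z} \eqdef (z_1, z_2, z_3) \in A$. The term $\delta_{\b{z}, \varepsilon(\b{z})}(\b{w}) = \min\bigl(w_2, \varepsilon(\b{z}) + |w_1 - z_1| + |w_3 - z_3|\bigr)$ has two arguments: the first, $w_2$, is $\ge \varepsilon(\b{w})$ again by the first condition of \cref{def:depthAntichain}; the second is $\ge \varepsilon(\b{w})$ precisely by the Lipschitz-type second condition of \cref{def:depthAntichain}, which yields $\varepsilon(\b{w}) - \varepsilon(\b{z}) \le |w_1 - z_1| + |w_3 - z_3|$. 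Taking the minimum over all $\b{z} \in A$ then gives $\delta_\varepsilon(\b{w}) \ge \varepsilon(\b{w})$, so $\delta_\varepsilon(\b{w}) = \varepsilon(\b{w})$, as desired.

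The only genuinely non-routine step is the closure of depth triangles under $\min$, which is why I would route it through the lattice isomorphism rather than a direct parity-and-sign analysis of the adjacency condition; once that is in place, the equality on $A$ is exactly a repackaging of the two defining inequalities of a depth antichain. Combined with the observation preceding the statement (that restricting a depth triangle to an antichain yields a depth antichain), this \cref{lem:extendDepthAntichain} will immediately give \cref{prop:extendDepthAntichain}.
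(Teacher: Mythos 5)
Your proposal is correct and follows essentially the same route as the paper: the closure of depth triangles under coordinatewise minimum is obtained exactly as in \cref{rem:latticeDepthTriangles} (via the affine bijection with catalan triangles and \cref{prop:latticeCatalanTriangles}), and the equality on~$A$ uses the same two ingredients, namely $\delta_{\b{w},\varepsilon(\b{w})}(\b{w}) = \varepsilon(\b{w})$ together with the Lipschitz condition of \cref{def:depthAntichain} applied to the remaining terms. The only cosmetic difference is that you establish the equality by a direct two-sided inequality whereas the paper argues by contradiction; the underlying computation is identical.
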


\begin{proof}
We have already seen in \cref{rem:latticeDepthTriangles} that the coordinate-wise minimum of depth triangles is a depth triangle.
Assume by means of contradiction that there is~$\b{y} \in A$ such that~$\delta_\varepsilon(\b{y}) \ne \varepsilon(\b{y})$.
By definition, we have~$\delta_{\b{y}, \varepsilon(\b{y})}(\b{y}) = \varepsilon(\b{y})$.
Hence, there is~$\b{z} \in A$ such that~$\delta_{\b{z}, \varepsilon(\b{z})}(\b{y}) < \varepsilon(\b{y}) \le y_2$.
We then get~$\varepsilon(\b{y}) - \varepsilon(\b{z}) > \delta_{\b{z}, \varepsilon(\b{z})}(\b{y}) - \varepsilon(\b{z}) = |y_1-z_1| + |y_3-z_3|$, contradicting the second condition of \cref{def:depthAntichain}.
\end{proof}

\begin{proof}[Proof of \cref{prop:extendDepthAntichain}]
The depth triangle~$\delta_\varepsilon$ of \cref{lem:extendDepthAntichain} restricts to~$\varepsilon$ on~$A$.
\end{proof}


\subsection{Covexillary permutations}

Recall that a permutation is vexillary if it avoids the pattern~$2143$.
Here, we are interested in the following variation.

\begin{definition}
A permutation is \defn{covexillary} if it avoids the pattern~$3412$.
\end{definition}

Recall that~$(i,j)$ is an ascent of an ASM~$A$ if there is another ASM~$A'$ such that the corresponding HFMs~$H,H'$ satisfy~$H'_{i,j} = H_{i,j} + 2$ and~$H'_{k,\ell} = H_{k,\ell}$ for all~${0 \le k, \ell \le n}$ with~$(i,j) \ne (k,\ell)$.
The following statements are standard.
\cref{lem:ascentsDescentsASMs} should be intuitively clear from the fact that a $-1$ in the ASM corresponds to a saddle point in the corresponding surface.

\begin{lemma}
\label{lem:covexillary}
A permutation is covexillary if and only if it does not have two ascents $(i,j)$ and $(k,\ell)$ such that $i<k$ and $j>\ell$. 
\end{lemma}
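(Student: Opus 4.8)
The plan is to run everything through the combinatorial description of the ascents of a permutation matrix. From \cref{lem:ascentsDescentsASMs} (equivalently, from the local-minimum description of ascents of the height function $H_{i,j}=i+j-2C_{i,j}$), a pair $(i,j)$ is an ascent of $P_\sigma$ if and only if
\[
\sigma_i\le j<\sigma_{i+1} \qquad\text{and}\qquad \sigma^{-1}(j)\le i<\sigma^{-1}(j+1).
\]
Writing $S_i\eqdef\{\sigma_1,\dots,\sigma_i\}$, the second condition just says $j\in S_i$ and $j+1\notin S_i$. I will prove the two implications of the lemma as contrapositives, \ie ``two conflicting ascents force a $3412$'' and ``a $3412$ forces two conflicting ascents''. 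One auxiliary fact drives the second implication: \emph{every ascent-position $i$ of $\sigma$ (meaning $\sigma_i<\sigma_{i+1}$) carries a matrix ascent $(i,j^\ast)$, where $j^\ast\eqdef\max\{v\in S_i : v<\sigma_{i+1}\}$, and moreover $j^\ast\ge\sigma_i$.} Indeed $\sigma_i\in S_i$ gives $\sigma_i\le j^\ast<\sigma_{i+1}$, and maximality of $j^\ast$ forces $j^\ast+1\notin S_i$ (otherwise $j^\ast+1\ge\sigma_{i+1}$, hence $j^\ast\ge\sigma_{i+1}$), so both displayed conditions hold.

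For ``conflicting ascents $\Rightarrow 3412$'', suppose $(i,j)$ and $(k,\ell)$ are ascents with $i<k$ and $\ell<j$. First I note $\ell\le j-2$: if $\ell=j-1$ then $\sigma^{-1}(j)\le i<k<\sigma^{-1}(\ell+1)=\sigma^{-1}(j)$, a contradiction. Likewise $i+1<k$, since $i+1=k$ would force $\sigma_{i+1}=\sigma_k$ while $\sigma_{i+1}\ge j+1>\ell\ge\sigma_k$. Then the four cells
\[
(\sigma^{-1}(j),\,j),\quad (i+1,\,\sigma_{i+1}),\quad (k,\,\sigma_k),\quad (\sigma^{-1}(\ell+1),\,\ell+1)
\]
lie in four strictly increasing rows $\sigma^{-1}(j)\le i<i+1<k<k+1\le\sigma^{-1}(\ell+1)$, and their values satisfy $\sigma_k\le\ell<\ell+1<j<\sigma_{i+1}$; read in row order these values have relative ranks $3,4,1,2$, so they realize the pattern $3412$. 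This is the cleaner direction once the right quadruple is spotted.

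For the converse ``$3412\Rightarrow$ conflicting ascents'', fix an occurrence $a<b<c<d$ with $\sigma_c<\sigma_d<\sigma_a<\sigma_b$, and produce one ``high-and-early'' ascent and one ``low-and-late'' ascent. For the first, set $i_0\eqdef\max\{t\in[a,b-1] : \sigma_t\le\sigma_a\}$, which is nonempty as $t=a$ qualifies; maximality together with $\sigma_b>\sigma_a$ gives $\sigma_{i_0}\le\sigma_a<\sigma_{i_0+1}$, so $i_0$ is an ascent-position, and the auxiliary fact yields an ascent $(i_0,j^\ast)$ with $i_0\le b-1<c$ and $j^\ast\ge\sigma_a$ (because $\sigma_a\in S_{i_0}$ and $\sigma_a<\sigma_{i_0+1}$). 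The ``low-and-late'' ascent $(k,\ell)$ with $k\ge c$ and $\ell\le\sigma_d-1$ then follows by applying this same result to the reverse--complement $w_0\sigma w_0$, under which both the pattern $3412$ and the ascent conditions are preserved via $(i,j)\mapsto(n-i,n-j)$, and the two bounds transform into exactly those required. Combining, $i_0<c\le k$ and $j^\ast\ge\sigma_a>\sigma_d>\ell$, so $(i_0,j^\ast)$ and $(k,\ell)$ are conflicting ascents.

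The main obstacle is this converse: ascents are local-minimum conditions on the surface and are not directly visible inside a $3412$ pattern, so the crux is the auxiliary fact turning each ascent-position into an honest matrix ascent, together with the extremal choice of $i_0$ that simultaneously keeps the ascent early enough ($i_0<c$) and high enough ($j^\ast\ge\sigma_a$). The reverse--complement symmetry then spares us repeating a near-identical argument for the low-and-late ascent, and I would double-check only that the symmetry really preserves the ascent conditions coordinatewise before invoking it.
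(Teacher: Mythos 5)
Your proof is correct, but it cannot ``match the paper's approach'' for a simple reason: the paper gives no proof of \cref{lem:covexillary} at all, declaring it standard (it is the classical characterization of $3412$-avoidance via Fulton's essential set, the ascents $(i,j)$ of $P_\sigma$ being precisely the coessential boxes). What you supply is a self-contained, elementary replacement for that citation, and all of its steps check out: the explicit description of ascents of a permutation matrix ($\sigma_i\le j<\sigma_{i+1}$, $j\in S_i$, $j+1\notin S_i$) follows from the local-minimum description of ascents of the height function; the quadruple $\bigl(\sigma^{-1}(j),j\bigr)$, $\bigl(i+1,\sigma_{i+1}\bigr)$, $\bigl(k,\sigma_k\bigr)$, $\bigl(\sigma^{-1}(\ell+1),\ell+1\bigr)$ does realize a $3412$ pattern once you have verified $\ell\le j-2$ and $i+1<k$; the extremal choice $i_0=\max\set{t\in[a,b-1]}{\sigma_t\le\sigma_a}$ combined with your auxiliary fact produces the high-and-early ascent; and the reverse--complement symmetry does send ascents of $P_\sigma$ at $(i,j)$ to ascents of $P_{w_0\sigma w_0}$ at $(n-i,n-j)$ (I verified this coordinatewise, the point you flagged for double-checking), with the bounds transforming as claimed. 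Two small repairs: first, your opening attribution is off --- the ascent characterization does not follow from \cref{lem:ascentsDescentsASMs}, which concerns $-1$ entries of ASMs; only your parenthetical justification (the local-minimum description of the HFM) is the correct one, so drop the other. Second, in the auxiliary fact, the step ``otherwise $j^\ast+1\ge\sigma_{i+1}$, hence $j^\ast\ge\sigma_{i+1}$'' silently uses $j^\ast+1\ne\sigma_{i+1}$; this holds because $j^\ast+1\in S_i$ while $\sigma_{i+1}\notin S_i$, and should be said. What your route buys over the paper's is self-containedness and an explicit bridge between matrix ascents and one-line notation, which is in the same spirit as (and would sit well next to) the coordinate computations in the proof of \cref{thm:covexillary}; what the paper's choice buys is brevity, at the cost of sending the reader to the essential-set literature.
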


\begin{lemma}
\label{lem:ascentsDescentsASMs}
If $A_{u,v} = -1$, then 
\begin{itemize}
\item there are $i < u \le k$ and $j < v \le \ell$ such that $(i,j)$ and $(k,\ell)$ are descents of~$A$,
\item there are $i < u \le k$ and $j \ge v > \ell$ such that $(i,j)$ and $(k,\ell)$ are ascents of~$A$.
\end{itemize}
\end{lemma}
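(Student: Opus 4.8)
The plan is to work entirely with the height function matrix $H$ associated to $A$, where a $-1$ in the ASM is literally a saddle. First I would record what $A_{u,v}=-1$ means locally: the four entries of $H$ at $(u-1,v-1),(u-1,v),(u,v-1),(u,v)$ are $h,\,h-1,\,h-1,\,h$ for some $h$, and since $H\ge 0$ forces $h\ge 1$ while the alternating sign condition forces $2\le u,v\le n-1$, all four of these vertices are interior. I would also restate the two notions in play: an interior vertex $(i,j)$ is a descent of $A$ exactly when it is a strict local maximum of $H$ (all four neighbours equal $H_{i,j}-1$) and an ascent exactly when it is a strict local minimum (all four neighbours equal $H_{i,j}+1$). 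The one structural fact I would lean on throughout is the \emph{no-plateau property}: in an HFM two adjacent entries always differ by exactly $1$, so a vertex whose neighbours are all $\le H_{i,j}$ is automatically a \emph{strict} local maximum, and dually for minima.

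With this dictionary the statement becomes: the saddle has a strict local maximum of $H$ strictly to its north-west and (weakly) to its south-east, and a strict local minimum strictly to its north-east and (weakly) to its south-west. I would prove the four assertions by the same device, illustrated on the north-west descent. Consider the closed quadrant $Q=\{(i,j): 0\le i\le u-1,\ 0\le j\le v-1\}$ and let $M=\max_Q H$; note $M\ge H_{u-1,v-1}=h\ge 1$. Pick an argmax $p^*$. If $p^*$ is interior to $Q$ then all its neighbours lie in $Q$, hence are $\le M$, hence (no-plateau) equal $M-1$, so $p^*$ is a strict local maximum sitting in the open region $\{i<u,\ j<v\}$, which is the desired descent. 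The south-east descent is obtained symmetrically by maximising over $\{i\ge u,\ j\ge v\}$ anchored at $(u,v)$ with value $h$; the north-east and south-west ascents by \emph{minimising} $H$ over $\{i\le u-1,\ j\ge v\}$ and $\{i\ge u,\ j\le v-1\}$, anchored respectively at the low saddle corners $(u-1,v)$ and $(u,v-1)$ of value $h-1$.

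The main obstacle is exactly the localisation: showing that the extremum can be taken in the correct \emph{open} quadrant, that is, ruling out that the chosen argmax sits on the grid boundary or on the inner edge $\{i=u-1\}\cup\{j=v-1\}$, where a neighbour just across row $u$ or column $v$ could be strictly higher (an \emph{escape} across the saddle). Two ingredients close this gap. First, the outer boundary of $Q$ carries only the frozen values $H_{0,j}=j$ and $H_{i,0}=i$, all $\le\max(u-1,v-1)$, so when $M$ exceeds this bound the argmax avoids it; and when $M=h$ the no-plateau property forces the corner $(u-1,v-1)$ itself (whose east and south neighbours are $h-1$) to be the strict local maximum. Second, for the inner edge I would use the anti-diagonal low corners of the saddle as a barrier: an argmax that escaped across row $u$ or column $v$ would have to stay at height $\ge h$ all the way back to $(u-1,v-1)$, and the $\pm1$ walk structure of the rows forces such a height-$\ge h$ connection to oscillate and produce an interior peak, which, once pressed against the frozen low values of the top and left boundary, must be a genuine strict local maximum inside the open quadrant. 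Making this barrier-plus-boundary argument precise (most cleanly by choosing the argmax closest to the saddle corner and following the forced oscillation to a contradiction) is the delicate step; the other three quadrants follow by the analogous arguments, minimising $H$ in the two ascent cases.
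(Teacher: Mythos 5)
Your dictionary is correct (the $2\times 2$ saddle block of $H$, descents $=$ strict local maxima, ascents $=$ strict local minima, the no‑plateau property, and the reduction of the four claims to one by the symmetries $H_{i,j}\mapsto H_{n-i,n-j}$ and $H_{i,j}\mapsto n-H_{n-i,j}$), and since the paper offers no proof of this lemma (it is declared standard and left to the saddle‑point intuition), your argument must stand on its own. It does not: the core device --- taking an argmax of $H$ over the closed quadrant $Q=[0,u-1]\times[0,v-1]$ --- fails, and your two boundary cases do not cover what actually happens. You treat only the cases $M>\max(u-1,v-1)$ and $M=h$, leaving open $h<M\le\max(u-1,v-1)$, and in that case every argmax can sit on the frozen outer boundary. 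Concretely, for
\[
A=\begin{pmatrix}1&0&0&0&0\\0&1&0&0&0\\0&0&0&1&0\\0&0&1&-1&1\\0&0&0&1&0\end{pmatrix},
\qquad
H=\begin{pmatrix}0&1&2&3&4&5\\1&0&1&2&3&4\\2&1&0&1&2&3\\3&2&1&2&1&2\\4&3&2&1&2&1\\5&4&3&2&1&0\end{pmatrix},
\]
the $-1$ is at $(u,v)=(4,4)$ with $h=2$, so $M=3=\max(u-1,v-1)>h$, and the maximum of $H$ over $Q=[0,3]\times[0,3]$ is attained only at the frozen vertices $(0,3)$ and $(3,0)$, which are never descents; the descent promised by the lemma is at $(3,3)$, of height $2<M$, and it is the only one in the open quadrant. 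This shows the flaw is structural, not a matter of tie‑breaking among argmaxes: the frozen boundary values can strictly dominate the height of every descent in the quadrant, so no maximization over $Q$ can locate the peak --- the search has to be anchored at the saddle, not at the maximum.

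A second, independent gap is the inner edge even in the case you do treat: when $M>\max(u-1,v-1)$, all argmaxes may lie on row $u-1$ or column $v-1$ with a strictly higher neighbour across the saddle line (this already happens in small examples, e.g.\ ASMs with two $-1$'s), and your ``barrier'' paragraph, which is supposed to rule this out, is a sketch of an oscillation argument rather than a proof; as stated (``an argmax that escaped \dots would have to stay at height $\ge h$ all the way back'') it is not even a well-defined claim about a specific path. If you want to salvage a proof in this spirit, replace maximization by hill‑climbing anchored at the corner $(u-1,v-1)$ together with an invariant preventing escape, which is delicate; a cleaner route is the order‑ideal picture used elsewhere in the paper: by the dictionary in the proof of \cref{prop:subCanonicalJoinRepresentation}, the saddle at $(u,v)$ means $\JIdeal(A)$ contains $j_1$ and $j_4$ but neither $j_2$ nor $j_3$; any maximal element $J$ of $\JIdeal(A)$ above $j_4$ is a descent, and the inequalities $J\ge j_4$, $J\not\ge j_2$, $J\not\ge j_3$ (the last two being automatic since $\JIdeal(A)$ is a lower set) force its position strictly north‑west of $(u,v)$, with the other three quadrants following by the symmetries above.
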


For~$A \in \ASM_n$, we denote by~$\mAnti(A)$ the antichain of~$\TPoset_n$ generating the upper set of~$\TPoset_n$ corresponding to~$A$, and by~$\bar\mAnti(A)$ the image of~$\mAnti(A)$ under the projection~$(x_1,x_2,x_3,x_4) \mapsto (x_1,x_2+x_3,x_4)$, and by~$\varepsilon_A : \bar\mAnti(A) \to \N$ the map defined by~$\varepsilon_A(x_1, x_2+x_3, x_4) = x_3-x_2$ for all~$(x_1,x_2,x_3,x_4) \in \mAnti(A)$.
Note that $\varepsilon_A$ may not be well-defined if the the projection~$(x_1,x_2,x_3,x_4) \mapsto (x_1,x_2+x_3,x_4)$ is not injective from~$\mAnti(A)$ to~$\bar\mAnti(A)$.
We are now ready to characterize the maxima of the classes of catalan congruences of~$\ASM_n$.

\begin{theorem}
\label{thm:covexillary}
The following assertions are equivalent for an ASM~$A$ of order~$n$:
\begin{enumerate}[(i)]
\item $A$ is a covexillary permutation matrix,
\item $A$ does not have two ascents $(i,j)$ and $(k,\ell)$ such that $i<k$ and $j>\ell$,
\item $\varepsilon_A$ is well-defined and forms a depth antichain of~$\DPoset_n$,
\item $\varepsilon_A$ is well-defined and extends to a depth triangle of order~$n$,
\item there exists a wall of order~$n$ containing~$\mAnti(A)$,
\item there exists a catalan congruence of~$\ASM_n$ such that~$A$ is maximal in its class.
\end{enumerate}
\end{theorem}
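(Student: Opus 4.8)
The plan is to prove the chain of equivalences (i)$\Leftrightarrow$(ii)$\Leftrightarrow$(iii)$\Leftrightarrow$(iv)$\Leftrightarrow$(v)$\Leftrightarrow$(vi), handling each link in turn. Three of the links are immediate from results already in place. For (i)$\Leftrightarrow$(ii), note that if $A$ has a $-1$ entry then \cref{lem:ascentsDescentsASMs} exhibits two ascents $(i,j)$ and $(k,\ell)$ with $i<k$ and $j>\ell$; hence (ii) forces $A$ to be a permutation matrix, and for permutation matrices \cref{lem:covexillary} identifies (ii) with the avoidance of $3412$, that is, with covexillarity (the converse being the same lemma). For (iii)$\Leftrightarrow$(iv), the forward direction is exactly \cref{prop:extendDepthAntichain}, and the backward direction is the remark, following \cref{prop:depthTriangleInequalities}, that the restriction of a depth triangle to an antichain is a depth antichain. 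For (iv)$\Leftrightarrow$(v) I would invoke \cref{thm:depthTriangles}: a depth triangle $\delta$ extending $\varepsilon_A$ is the same data as the wall $\set{\b x\in\TPoset_n}{x_3-x_2=\delta(x_1,x_2+x_3,x_4)}$, which contains $\mAnti(A)$ precisely because $\delta$ agrees with $\varepsilon_A$ there; conversely a wall $W\supseteq\mAnti(A)$ has injective horizontal projection by \cref{prop:walls}, so $\varepsilon_A$ is well defined and the depth triangle of $W$ extends it.

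For (v)$\Leftrightarrow$(vi) I would use the trace description of the quotient coming from \cref{thm:congruencesDistributiveLattices1}: a catalan congruence $\equiv$ corresponds to a wall $W=\JIrr(\equiv)\subseteq\TPoset_n$, and the projection $\ASM_n\to\ASM_n/{\equiv}\cong\Lower(W)$ sends $A$ to $\JIdeal(A)\cap W$. The covers of $A$ in $\ASM_n$ are obtained by adjoining to the lower set $\JIdeal(A)$ a single minimal element $\b x$ of its complement, that is, an element of $\mAnti(A)$; such a cover lies in the class of $A$ if and only if $\JIdeal(A)\cap W$ is unchanged, i.e.\ if and only if $\b x\notin W$. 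Therefore $A$ is maximal in its class exactly when every element of $\mAnti(A)$ lies in $W$, which is precisely the assertion that some wall contains $\mAnti(A)$.

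The crux, and the step I expect to be the main obstacle, is (ii)$\Leftrightarrow$(iii). Everything hinges on the dictionary between the ascents of $A$ and the points of $\mAnti(A)$: the minimal elements of the complement of $\JIdeal(A)$ are exactly the join irreducibles one may adjoin in a cover, hence correspond to the ascents, and reading the stacking from above shows that the ascent position of $\b x=(x_1,x_2,x_3,x_4)$ is its ground projection $(x_1+x_2,\,x_1+x_3)$ up to a uniform index shift. Writing $a=x_1+x_2$ and $b=x_1+x_3$, condition (ii) becomes the statement that the pairs $(a,b)$ attached to the ascents are \emph{comonotone}, i.e.\ $(a-a')(b-b')\ge0$ for any two of them. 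I would then match the three defining requirements of a depth antichain for $\varepsilon_A$ against this, using the identity $\varepsilon_A=x_3-x_2=b-a$, the fact that the horizontal projection turns the order of $\DPoset_n$ into ``$x_1\ge x_1'$ and $x_4\ge x_4'$'', and the observation that the Lipschitz bound of \cref{def:depthAntichain} is controlled by $|x_1-x_1'|+|x_4-x_4'|$.

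With this dictionary, each implication reduces to elementary sign bookkeeping for two points $\b x,\b x'\in\mAnti(A)$. Well-definedness of $\varepsilon_A$ fails exactly when two ascents share $(x_1,x_2+x_3,x_4)$, which forces $a-a'=-(b-b')$ and so $(a-a')(b-b')<0$, against comonotonicity; and the bounds $-y_2\le\varepsilon_A\le y_2$ together with the parity condition are automatic from $\varepsilon_A=x_3-x_2$ and $y_2=x_2+x_3$. For the Lipschitz bound, set $s=a-a'=u+p$ and $t=b-b'=u+q$, where $u=x_1-x_1'$, $w=x_4-x_4'$, $p=x_2-x_2'$, $q=x_3-x_3'$, so that $p+q=-(u+w)$ and $s+t=u-w$; comonotonicity reads $st\ge0$, whence $|\varepsilon_A(\bar{\b x})-\varepsilon_A(\bar{\b x}')|=|t-s|\le|s+t|=|u-w|\le|u|+|w|$, exactly the required inequality. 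The antichain property follows from the same relations: a comparison in $\bar\mAnti(A)$ would give $u,w\ge0$, while incomparability of $\b x,\b x'$ in $\TPoset_n$ forces one of $s,t$ positive and the other negative, contradicting $st\ge0$. Running this backwards proves (iii)$\Rightarrow$(ii): a violating pair of ascents has $s$ and $t$ of opposite signs, and since a comparison in $\DPoset_n$ is excluded, the differences $u,w$ have strictly opposite signs, so $|t-s|=|s|+|t|>|s+t|=|u-w|=|u|+|w|$, contradicting the Lipschitz bound. The delicate work throughout is pinning down the ascent-to-point dictionary precisely and keeping the four sign patterns straight.
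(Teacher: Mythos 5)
Your proof is correct and follows essentially the same route as the paper: the same decomposition into five links, with (i)$\Leftrightarrow$(ii), (iii)$\Leftrightarrow$(iv), (iv)$\Leftrightarrow$(v), and (v)$\Leftrightarrow$(vi) discharged by the same cited results (\cref{lem:covexillary}, \cref{lem:ascentsDescentsASMs}, \cref{prop:extendDepthAntichain}, \cref{thm:depthTriangles}, and the meet version of \cref{thm:congruencesDistributiveLattices1}), and the crux (ii)$\Leftrightarrow$(iii) handled by the same dictionary between ascents of $A$ and ground projections of $\mAnti(A)$ followed by sign bookkeeping on the projected coordinates. If anything, your treatment is slightly more careful than the paper's: where the paper dismisses comparability of two elements of $\bar\mAnti(A)$ in $\DPoset_n$ with a parenthetical, you show explicitly that such a comparability (combined with incomparability in $\TPoset_n$) already forces a violating pair of ascents, which is exactly the case analysis needed to make the equivalence airtight.
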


\pagebreak
\begin{proof}
The assertions~(i) and~(ii) are equivalent by \cref{lem:covexillary,lem:ascentsDescentsASMs}.
The assertions~(iii) and~(iv) are equivalent by \cref{prop:extendDepthAntichain}.
The assertions~(iv) and~(v) are equivalent by \cref{thm:depthTriangles}.
The assertions~(v) and~(vi) are equivalent by (the meet version of) \cref{thm:congruencesDistributiveLattices1} and the definition of walls in connection with \cref{thm:congruencesDistributiveLattices2}.
We thus just need to prove that the assertions~(ii) and~(iii) are equivalent.

It follows from the considerations in \cref{subsec:ASMs,subsec:depthTriangles} that, when~$\varepsilon_A$ is well-defined, the following assertions are equivalent for an ASM~$A$ and~$0 \le i , j \le n$:
\begin{itemize}
\item $A$ has an ascent at~$(i,j)$,
\item there is~$(x_1, x_2, x_3, x_4) \in \mAnti(A)$ with~$i = x_1+x_2$ and~$j = x_1+x_3$.
\item there is~$(y_1, y_2, y_3) \in \bar\mAnti(A)$ with~$2i = y_1 - y_3 + n - 2 - v$ and~$2j = y_1 - y_3 + n - 2 + v$ where~$v = \varepsilon_A(y_1, y_2, y_3)$.
\end{itemize}

Therefore, $A$ has two ascents $(i,j)$ and $(k,\ell)$ such that $i<k$ and $j>\ell$, if and only if there are~$\b{y}, \b{z} \in \bar\mAnti(A)$ with~$y_1 - y_3 - \varepsilon_A(\b{y}) < z_1 - z_3 - \varepsilon_A(\b{z})$ and~$y_1 - y_3 + \varepsilon_A(\b{y}) > z_1 - z_3 + \varepsilon_A(\b{z})$.
These two inequalities are equivalent to~$\varepsilon_A(\b{y})-\varepsilon_A(\b{z}) > |y_1-z_1+z_3-y_3| = |y_1-z_1| + |y_3-z_3|$, where the last equality holds since~$y_1 < z_1 \iff y_3 > z_3$ as otherwise~$\b{y}$ and~$\b{z}$ would be comparable in~$\DPoset$.
We conclude that the assertions~(ii) and~(iii) are equivalent.
\end{proof}

Note that conversely, each depth antichain~$\varepsilon : A \to \N$ of~$\DPoset_n$ defines an ASM~$A_\varepsilon$ of order~$[n]$ obtained as the meet of the meet-irreducible ASMs~$m(y_1, (y_2 - v)/2, (y_2 + v)/2, y_3)$ for each~$(y_1, y_2, y_3) \in A$ where~$v = \varepsilon(y_1,y_2,y_3)$.
We thus obtain the following statement.

\begin{corollary}
The following families are in bijection:
\begin{itemize}
\item the covexillary permutations of~$[n]$,
\item the depth antichains of~$\DPoset_n$,
\item the ASMs which appear as a maximum of a class in a catalan congruence of~$\ASM_n$.
\end{itemize}
\end{corollary}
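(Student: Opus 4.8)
The plan is to read off the two outer bijections directly from \cref{thm:covexillary} and then to verify that the two natural maps relating covexillary ASMs to depth antichains are mutually inverse. First I would observe that the equivalence of assertions~(i) and~(vi) in \cref{thm:covexillary} identifies, as sets of ASMs, the covexillary permutation matrices with the ASMs arising as a maximum of a class in a catalan congruence of~$\ASM_n$. Composing with the obvious bijection $\sigma \mapsto P_\sigma$ between covexillary permutations of~$[n]$ and covexillary permutation matrices yields a bijection between the first and third families. It thus remains to put the depth antichains in bijection with (say) the covexillary ASMs.

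For this, I would use the equivalence of~(i) and~(iii): sending a covexillary ASM~$A$ to~$\varepsilon_A$ is a well-defined map into the depth antichains of~$\DPoset_n$. The inverse is the construction stated just before the corollary, namely $A_\varepsilon \eqdef \bigwedge_{(y_1,y_2,y_3) \in A} m\bigl(y_1, (y_2-v)/2, (y_2+v)/2, y_3\bigr)$ with $v \eqdef \varepsilon(y_1,y_2,y_3)$. The defining conditions of a depth antichain, namely $-y_2 \le v \le y_2$ and $v \equiv y_2 \pmod 2$, guarantee that each quadruple $(y_1, (y_2-v)/2, (y_2+v)/2, y_3)$ is a genuine point of~$\TPoset_n$ (nonnegative integer coordinates summing to $n-2$), so that $A_\varepsilon$ is a well-defined ASM and the two maps at least have the right source and target.

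The crux is to check that these two maps are inverse to each other. The key point I would establish is that the lifted points $\{(y_1,(y_2-v)/2,(y_2+v)/2,y_3)\}_{(y_1,y_2,y_3)\in A}$ form an antichain of~$\TPoset_n$, so that they are exactly the minimal generators $\mAnti(A_\varepsilon)$ of the upper set of~$\TPoset_n\simeq\MIrr(\ASM_n)$ defining~$A_\varepsilon$. This follows because the horizontal projection $(x_1,x_2,x_3,x_4)\mapsto(x_1,x_2+x_3,x_4)$ is order-preserving from~$\TPoset_n$ to~$\DPoset_n$: any two comparable lifts would project to two comparable and distinct elements of~$A$, contradicting that~$A$ is an antichain. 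Once $\mAnti(A_\varepsilon)$ equals the set of lifts, its projection $\bar\mAnti(A_\varepsilon)$ is~$A$ and the recorded depth $x_3-x_2$ is~$v$, so $\varepsilon_{A_\varepsilon} = \varepsilon$. In the other direction, for covexillary~$A$ one has $A = \bigwedge_{m \in \mAnti(A)} m$ by the meet-representation in the distributive lattice~$\ASM_n$ (\cref{thm:ftfdl}), and the elements of~$\mAnti(A)$ are by construction precisely the points with projection in~$\bar\mAnti(A)$ and depth~$\varepsilon_A$, \ie the meet-irreducibles appearing in~$A_{\varepsilon_A}$; hence $A_{\varepsilon_A} = A$.

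The hard part will be exactly this antichain/irredundancy verification: I must make sure that the projection argument forbids any two lifted meet-irreducibles from being comparable, so that no collapse occurs in the meet and $\mAnti(A_\varepsilon)$ is genuinely the full set of lifts rather than a proper subset. Everything else is formal, following from \cref{thm:covexillary} together with the dual Birkhoff correspondence between elements of~$\ASM_n$ and upper sets of~$\MIrr(\ASM_n)$ recorded in \cref{thm:ftfdl}.
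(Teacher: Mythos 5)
Your proposal is correct and takes essentially the same approach as the paper: the paper obtains this corollary directly from \cref{thm:covexillary} (the equivalences (i)$\Leftrightarrow$(iii)$\Leftrightarrow$(vi)) combined with the inverse construction $A_\varepsilon$ noted just before the statement, leaving implicit the verification that $A \mapsto \varepsilon_A$ and $\varepsilon \mapsto A_\varepsilon$ are mutually inverse. Your antichain-of-lifts argument (distinct comparable lifts would project to distinct comparable elements of the antichain, so the lifts are exactly $\mAnti(A_\varepsilon)$) is precisely the right way to fill in that implicit step.
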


\begin{example}
For instance, the $23$ depth antichains of \cref{fig:depthAntichains} correspond to the $23$ covexillary permutations of~$[4]$ (all except~$3412$).
\end{example}

\begin{example}
\label{exm:dominant}
Following on \cref{exm:minMaxCatalanTriangle}, consider the congruence corresponding to the minimal (resp.~maximum) catalan triangle.
Then the maxima of the congruence classes are precisely the permutations avoiding the pattern~$231$ (resp.~$312$).
Their mirror permutations are usually called \defn{dominant} (resp.~\defn{codominant}).
\end{example}

\begin{remark}
We conclude by a brief combinatorial description of meet representations of covexillary permutations as these representations are at the heart of this section.
Consider a permutation~$\sigma \in \Ss_n$.
Draw the permutation matrix~$P_\sigma$ (where~$(P_\sigma)_{i,j} = \delta_{\sigma_i,j}$ for all~${i,j \in [n]}$) and color in red all entries above or to the right of a~$1$, and in green the remaining entries.
The green entrees immediately below and to the left of red entries are called the \defn{essential points} of~$\sigma$.
The permutation~$\sigma$ is covexillary if and only if no essential point is above and to the right of another essential point (said differently, the green entries form a (broken) Ferrer's shape).
Moreover, the canonical meet representation of a covexillary permutation~$\sigma$ is given by~$\sigma = \bigwedge_{e} m(\b{x}^e)$ where $m(\b{x})$ denotes the anti-bigrassmannian permutation associated to~$\b{x} \in \TPoset_n$, the sum ranges over all essential points~$e$ of~$\sigma$, and~$\b{x}^e \eqdef (x_1^e, x_2^e, x_3^e, x_4^e)$ is the point where
\begin{itemize}
\item $x_1^e$ denotes the number of green entries below~$e$,
\item $x_2^e$ denotes the number of red entries upper right of ~$e$,
\item $x_3^e$ denotes the number of red entries to the left of (or equivalently below)~$e$,
\item $x_4^e$ denotes the number of green entries to the left of~$e$.
\end{itemize}
For instance, \cref{fig:covexillary} illustrates that
\[
3\mathrm{X}26479851 = m(6,0,2,0) \wedge m(5,1,1,1) \wedge m(3,2,1,2) \wedge m(2,1,2,3) \wedge m(1,1,2,4).
\]
\begin{figure}[t]
	\centerline{
		\includegraphics[scale=1.4]{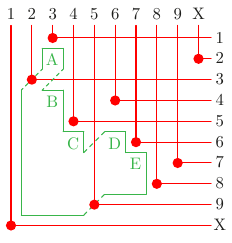}
		\qquad
		\raisebox{2.5cm}{\begin{tabular}{c|cccccc}
			e & A & B & C & D & E \\
			\hline
			$x_1^e$ & 6 & 5 & 3 & 2 & 1 \\
			$x_2^e$ & 0 & 1 & 2 & 1 & 1 \\
			$x_3^e$ & 2 & 1 & 1 & 2 & 2 \\
			$x_4^e$ & 0 & 1 & 2 & 3 & 4
		\end{tabular}}
	}
	\caption{The matrix of the covexillary permutation~$\sigma = 3\mathrm{X}26479851$ (left) and the parameters of the anti-bigrassmannian permutations in its canonical meet decomposition (right).}
	\label{fig:covexillary}
\end{figure}
As another illustration, note that the permutations avoiding the pattern~$231$ from \cref{exm:dominant} are precisely those for which the green entries really form a Ferrer's shape in the bottom-left corner of the matrix, and thus whose canonical meetands are of the form~$m(\b{x})$ with~$x_3 = 0$.
A similar characterization of course holds for the permutations avoiding~$312$ by symmetry through the main diagonal, and we obtain  canonical meetands are of the form~$m(\b{x})$ with~$x_2 = 0$.
\end{remark}


\section{Minimal permutations of congruence classes}
\label{sec:minimalPermutations}

This section is devoted to the minimal permutations in classes of catalan congruences of~$\ASM_n$.
In contrast to~\cref{sec:maxima}, the minimum of the congruence classes are not necessarily permutations (see \cref{sec:minimalNotPermutations}).
However, it turns out that each congruence class contains a minimal permutation.


\subsection{Nappes}

Recall that for an ASM~$A$, we have denoted by~$\JIdeal(A)$ the corresponding lower set of the tetrahedron poset~$\TPoset_n$.

\begin{definition}
The \defn{nappe} of an ASM~$A$ of order~$n$ is the upper enveloppe of~$\JIdeal(A)$, that is, the set~$\Nappe(A)$ containing for each $1\le i,j\le n$ the biggest $(x_1, x_2, x_3, x_4)\in \JIdeal(A)$ such that $i = x_1+x_2$ and~$j = x_1+x_3$ (if it exists).
\end{definition}

The following statement is the counterpart of \cref{lem:wall}.
Visually, it states that a nappe cannot enter the interior of the black hourglass illustrated in \cref{fig:butterflyHourglass}\,(middle) at each of its points.

\begin{lemma}
\label{lem:nappe}
If $(x_1,x_2,x_3,x_4)$ and $(x'_1,x'_2,x'_3,x'_4)$ are in a nappe, then $x_4-x'_4$, $x_1-x'_1$, $x'_3-x_3$ and $x'_2-x_2$ cannot be all strictly positive or all strictly negative.
\end{lemma}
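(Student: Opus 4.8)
The plan is to argue by contradiction and exploit the symmetry of the statement. First I would note that exchanging the roles of $(x_1,x_2,x_3,x_4)$ and $(x'_1,x'_2,x'_3,x'_4)$ negates all four quantities $x_4-x'_4$, $x_1-x'_1$, $x'_3-x_3$ and $x'_2-x_2$ simultaneously, so the ``all strictly negative'' case reduces to the ``all strictly positive'' case, and it suffices to rule out the latter. So assume, for contradiction, that $x_4-x'_4$, $x_1-x'_1$, $x'_3-x_3$ and $x'_2-x_2$ are all strictly positive; since these are integers, each is then at least~$1$.

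The key is to recall how a nappe sits inside~$\TPoset_n$. Write $\b{x} \eqdef (x_1,x_2,x_3,x_4)$ and $\b{x}' \eqdef (x'_1,x'_2,x'_3,x'_4)$ for the two nappe points, and $(i,j) \eqdef (x_1+x_2, x_1+x_3)$ for the projection of~$\b{x}$. The fiber of the projection through~$\b{x}$ is a chain of~$\TPoset_n$ in the direction $(1,-1,-1,1)$, along which \emph{larger} elements of~$\TPoset_n$ have \emph{smaller} first and fourth coordinates; and by definition $\b{x}$ is the largest element of~$\JIdeal(A)$ lying in this fiber. My plan is to produce a strictly larger element of~$\JIdeal(A)$ in this very same fiber, which is the desired contradiction.

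Concretely, I would consider the candidate $\b{x}^+ \eqdef \b{x} + (-1,1,1,-1)$. It has the same projection $(i,j)$ as~$\b{x}$ and lies strictly above~$\b{x}$ in~$\TPoset_n$. It is a genuine point of~$\TPoset_n$: since $x_1-x'_1 \ge 1$ and $x_4-x'_4 \ge 1$ while $x'_1, x'_4 \ge 0$, we get $x_1 \ge 1$ and $x_4 \ge 1$, so decreasing these coordinates by~$1$ keeps them nonnegative. Finally, the four hypotheses $x_1-x'_1 \ge 1$, $x'_2-x_2 \ge 1$, $x'_3-x_3 \ge 1$ and $x_4-x'_4 \ge 1$ are \emph{exactly} the four coordinatewise comparisons encoding $\b{x}^+ \le \b{x}'$ in~$\TPoset_n$. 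Since $\b{x}' \in \JIdeal(A)$ and $\JIdeal(A)$ is a lower set, this forces $\b{x}^+ \in \JIdeal(A)$, contradicting the maximality of~$\b{x}$ in its fiber.

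The argument has no serious obstacle once the order conventions are pinned down; the only point requiring care is the bookkeeping of directions in~$\TPoset_n$ (larger elements have smaller $x_1$ and~$x_4$, so moving up a fiber is the step $(-1,1,1,-1)$, and the nappe records the top of each fiber). I would also remark, as a consistency check, that the identity $(x_4-x'_4) + (x_1-x'_1) = (x'_3-x_3) + (x'_2-x_2)$ holds automatically because $\b{x}$ and~$\b{x}'$ both have coordinate sum $n-2$; this shows the four quantities are linked and is precisely why the statement is nonvacuous. This combinatorial argument is the nappe counterpart of the path-lifting proof of~\cref{lem:wall} and bypasses the geometric ``hourglass'' picture of~\cref{fig:butterflyHourglass}.
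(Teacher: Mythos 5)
Your proof is correct, and it takes a genuinely different route from the paper's. The paper proves this lemma by invoking the same path-lifting argument as for \cref{lem:wall}: project both nappe points (here under $(x_1,x_2,x_3,x_4) \mapsto (x_1+x_2,x_1+x_3)$), join the projections by a path of elementary steps, and lift that path back to the nappe, each lifted step being too constrained for the path ever to enter the hourglass at~$\b{x}$. You instead exploit what a nappe actually \emph{is} --- the upper envelope of the lower set~$\JIdeal(A)$ --- and produce a one-step contradiction: under the all-positive hypothesis, the point $\b{x} + (-1,1,1,-1)$ is a genuine point of~$\TPoset_n$, lies below~$\b{x}'$ (your observation that the four strict inequalities are exactly the four coordinate comparisons defining $\b{x}+(-1,1,1,-1) \le \b{x}'$ is the crux, and it checks out), hence lies in the lower set~$\JIdeal(A)$, yet sits strictly above~$\b{x}$ in the same fiber, contradicting the maximality that defines nappe membership. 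Your bookkeeping of the order conventions (the fiber is a chain along $(1,-1,-1,1)$, with larger poset elements having smaller first and fourth coordinates) is also accurate. As for what each approach buys: the paper's argument is uniform --- one template serves both \cref{lem:wall} and \cref{lem:nappe}, which is why its proof there is a single line --- and note that a wall, being an abstract subposet isomorphic to~$\DPoset_n$ rather than an envelope of an ideal, admits no maximality argument, so your technique could not replace the paper's for \cref{lem:wall} itself. Conversely, your proof is more self-contained for nappes: it avoids setting up the lifting machinery (in particular the check, left implicit by the ``if it exists'' clause in the definition of nappes, that the nappe is defined along the whole projected path), it exhibits an explicit witness rather than relying on connectivity, and it proves slightly more, since in the all-positive case only~$\b{x}$ needs to be a nappe point while~$\b{x}'$ may be any element of~$\JIdeal(A)$.
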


\begin{proof}
Similar to that of \cref{lem:wall}.
\end{proof}

The following statement asserts that if two permutations $\tau\le \sigma$ differ by a transposition, the lower set~$\JIdeal(P_\tau)$ can be obtained from the lower set~$\JIdeal(P_\sigma)$ by removing a rectangular portion of its nappe.

\begin{proposition}
\label{prop:nappe}
Let $\sigma,\tau \in \Ss_n$ such that $\sigma = \tau \circ (i,j)$ and $\tau < \sigma$.
Then
\[
\JIdeal(P_\sigma) \ssm \JIdeal(P_\tau) = \set{(x_1, x_2, x_3, x_4)\in \Nappe(P_\sigma)}{i \le x_1 + x_2 < j \text{ and } \sigma(j) \le x_1 + x_3 < \sigma(i)}.
\]
We denote this set by~$\Nappe_{\sigma,i,j}$.
\end{proposition}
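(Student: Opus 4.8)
The plan is to translate everything into the corner sum matrices (CSMs) of $P_\sigma$ and $P_\tau$, compute how they differ, and read the answer off the surface/nappe dictionary of \cref{subsec:ASMs}. Write $C^\sigma$ and $C^\tau$ for the two CSMs, so $C^\sigma_{k,\ell}=\#\set{r\le k}{\sigma(r)\le \ell}$ and similarly for $\tau$. The backbone is the single--rank criterion for bigrassmannians: since $\JIdeal(P_\pi)=\set{\b x\in\TPoset_n}{j(\b x)\le P_\pi}$, and the explicit matrix for $j(\b x)$ has its unique essential box at the cell $(x_1+x_2+1,\,x_1+x_3+1)$ with rank $x_1$, one obtains
\[
\b x\in \JIdeal(P_\pi)\iff C^\pi_{x_1+x_2+1,\,x_1+x_3+1}\le x_1 .
\]
Hence, in the fibre of the projection $(x_1,x_2,x_3,x_4)\mapsto(x_1+x_2,x_1+x_3)$ over a fixed cell, the elements of $\JIdeal(P_\pi)$ are exactly those with $x_1$ at least this CSM value, so the \emph{smallest} such $x_1$ — that is, the \emph{largest} point of the fibre in $\TPoset_n$, which is the nappe point — is the one whose $x_1$ equals the CSM entry. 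This is the link between $\Nappe(P_\pi)$ and $C^\pi$ that the statement is really about.

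Next I would compute $C^\tau-C^\sigma$. Because $\sigma=\tau\circ(i,j)$ only swaps rows $i<j$, isolating their contributions to the corner sums gives the product formula
\[
C^\sigma_{k,\ell}-C^\tau_{k,\ell}=\big([\,k\ge i\,]-[\,k\ge j\,]\big)\big([\,\ell\ge \sigma(i)\,]-[\,\ell\ge\sigma(j)\,]\big).
\]
Since $\tau<\sigma$ forces $i<j$ and $\sigma(i)>\sigma(j)$, the first factor is the indicator of $i\le k<j$ and the second is minus the indicator of $\sigma(j)\le\ell<\sigma(i)$; thus $C^\sigma-C^\tau$ equals $-1$ on the rectangle $\set{(k,\ell)}{i\le k<j,\ \sigma(j)\le\ell<\sigma(i)}$ and $0$ off it, i.e. $C^\tau=C^\sigma+1$ exactly on that rectangle. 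Equivalently, through $H_{k,\ell}=k+\ell-2C_{k,\ell}$, the height function drops by $2$ precisely there.

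Combining the two steps finishes the argument. A join irreducible $\b x$ lies in $\JIdeal(P_\sigma)\ssm\JIdeal(P_\tau)$ iff $C^\sigma_{k,\ell}\le x_1<C^\tau_{k,\ell}$ at its essential cell $(k,\ell)=(x_1+x_2+1,\,x_1+x_3+1)$; since $C^\tau-C^\sigma\in\{0,1\}$, this forces $(k,\ell)$ into the rectangle and forces $x_1=C^\sigma_{k,\ell}$ there. The second equality says $x_1$ is minimal in its fibre, i.e. $\b x\in\Nappe(P_\sigma)$; the first says, under the dictionary of the opening paragraph, that $(x_1+x_2,x_1+x_3)$ obeys the bounds $i\le x_1+x_2<j$ and $\sigma(j)\le x_1+x_3<\sigma(i)$ of the statement. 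Geometrically, descending from $P_\sigma$ to $P_\tau$ peels off the single top rhombic dodecahedron over each cell of the rectangle, which is exactly the slice $\Nappe_{\sigma,i,j}$.

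The part requiring the most care is the index bookkeeping in that dictionary: one must match the projection coordinate $x_1+x_2$ (which runs in $\{0,\dots,n-2\}$) with the CSM/HFM cell index (which runs in $\{0,\dots,n\}$), tracking the unit shift coming from the fact that the dodecahedron centred at $\b x$ is recorded at the cell $(x_1+x_2+1,\,x_1+x_3+1)$; this is what aligns the rectangle corners with $i,j,\sigma(i),\sigma(j)$ as written, and also pins the removed elements onto $\Nappe(P_\sigma)$ rather than the interior of $\JIdeal(P_\sigma)$. I would also note explicitly that only $\tau<\sigma$ is used, not that $\tau\lessdot\sigma$ is a cover: the product formula for $C^\sigma-C^\tau$ holds for any such transposition, and since its values stay in $\{0,1\}$ exactly one layer is stripped over each rectangle cell regardless. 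The more geometric route via \cref{lem:nappe} and the hourglass picture would reach the same rectangle, but the CSM computation is the shortest path to the stated corners.
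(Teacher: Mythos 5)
You take the same route as the paper: the paper's entire proof consists of your corner--sum computation (the CSMs of $\sigma$ and $\tau$ differ by $1$ exactly on the rectangle $i \le k < j$, $\sigma(j) \le \ell < \sigma(i)$), followed by the assertion that consequently one join irreducible is removed per fibre of the projection, necessarily the biggest one. Your added value is making the hidden step explicit via the bigrassmannian criterion $\b{x} \in \JIdeal(P_\pi) \iff C^\pi_{x_1+x_2+1,\,x_1+x_3+1} \le x_1$, and your identification of the nappe point over a cell as the fibre element whose $x_1$ equals that corner sum is correct, as is your remark that only $\tau < \sigma$ (not a cover relation) is used.

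The flaw is in your final reconciliation, where you assert that the unit shift ``aligns the rectangle corners with $i,j,\sigma(i),\sigma(j)$ as written''. It does not: with your dictionary, the removed join irreducibles are exactly the nappe points satisfying $i \le x_1+x_2+1 < j$ and $\sigma(j) \le x_1+x_3+1 < \sigma(i)$, that is $i-1 \le x_1+x_2 < j-1$ and $\sigma(j)-1 \le x_1+x_3 < \sigma(i)-1$, which is \emph{not} the set displayed in \cref{prop:nappe}. You cannot simultaneously have the essential cell at $(x_1+x_2+1, x_1+x_3+1)$ and the corners as printed; in fact your computation exposes an off-by-one in the printed statement itself, relative to the paper's own conventions (projection $(x_1,x_2,x_3,x_4) \mapsto (x_1+x_2,x_1+x_3)$ and CSM/HFM cells indexed $0,\dots,n$). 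Concretely, for $n=2$, $\sigma = 21 = \tau \circ (1,2)$ with $\tau = 12$, the difference of ideals is the single point $(0,0,0,0)$, which has $x_1+x_2 = 0$ and hence violates $1 = i \le x_1+x_2$, while the shifted inequalities accept it. So the honest conclusion of your argument is the corrected statement with $x_1+x_2+1$ and $x_1+x_3+1$ (equivalently, with corners $i-1$, $j-1$, $\sigma(j)-1$, $\sigma(i)-1$); you should state that version, or explicitly flag the discrepancy, rather than claim agreement with the printed bounds. Everything before that point is sound.
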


\begin{proof}
We have $\sigma = \tau \circ (i,j)$ with $\tau< \sigma$ if and only if the corresponding corner sum matrices~$C_\sigma$ and~$C_\tau$ satisfy~$(C_\sigma)_{k,\ell} = (C_\tau)_{k,\ell}-1$ if~$i \le k < j$ and $\tau(i) \le \ell < \tau(j)$ and~$(C_\sigma)_{k,\ell} = (C_\tau)_{k,\ell}$ otherwise. It follows that $\JIdeal(P_\tau)$ can be obtained from $\JIdeal(P_\sigma)$ by removing only one join irreducible~$(x_1, x_2, x_3, x_4)$ for each value $(x_1+x_2, x_1+x_3)$, which must be the biggest.
\end{proof}

\begin{definition}
Let $\sigma\in \Ss_n$ and $i<j$ such that $\sigma\circ (i,j)<\sigma$. We define $P_{\sigma,i,j}$ as the matrix obtained from $P_\sigma$ by summing rows $1$ to $i-1$, rows $j+1$ to $n$, columns $1$ to $\sigma(j)-1$ and columns $\sigma(i)+1$ to $n$.
See \cref{fig:nappe}.
\begin{figure}[ht]
	\centering
	\includegraphics[scale=.7]{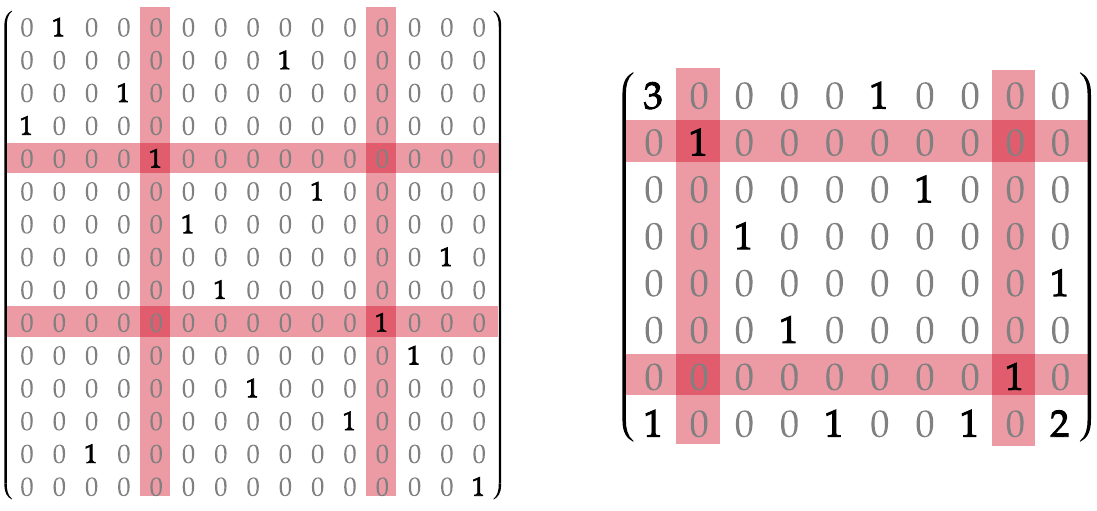}
	\caption{\centering Matrices $P_\sigma$ and $P_{\sigma,i,j}$ with $\sigma = 2~9~4~1~5~10~6~14~7~12~13~8~11~3~15$, $i=5$ and $j=10$.}
	\label{fig:nappe}
\end{figure}
\end{definition}

\begin{remark}
When seeing~$\Ss_n$ as a Coxeter group $W$ generated by the transpositions $s_1=(1,2), ..., s_{n-1}=(n-1,n)$, the matrices $P_{\sigma,i,j}$ are the contingency matrices corresponding to double cosets in the parabolic quotient~$W_I\backslash W/ W_J$ where~$I = \{s_1,...,s_{i-2}, s_{j+1},...,s_{n-1}\}$ and $J = \{s_1,...,s_{\sigma(j)-2},s_{\sigma(i)+1},...,s_{n-1}\}$.
\end{remark}

We now show that the matrices~$P_{\sigma,i,j}$ where $\sigma\circ (i,j)<\sigma$ are in bijection with the sets of join-irreducibles that are removed from some ideal $\JIdeal(P_{\sigma})$ when applying a transposition~$(i,j)$ to $P_\sigma$.

\begin{proposition}
\label{prop:bijectionPsij}
Let $\sigma_1 \in \Ss_n$ and $i_1<j_1$ such that $\sigma_1\circ(i_1,j_1) < \sigma_1$, and~$\sigma_2 \in \Ss_n$ and~$i_2<j_2$ such that $\sigma_2\circ(i_2,j_2) <\sigma_2$.
Then $\Nappe_{\sigma_1,i_1,j_1} = \Nappe_{\sigma_2,i_2,j_2}$ if and only if $i_1 = i_2$, $j_1 = j_2$ and~$P_{\sigma_1,i_1,j_1}=P_{\sigma_2,i_2,j_2}$. 
\end{proposition}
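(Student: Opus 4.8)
The plan is to show that each of $\Nappe_{\sigma,i,j}$ and the pair $(i,j,P_{\sigma,i,j})$ is an equivalent encoding of one and the same intermediate datum, namely the four integers $i,j,\sigma(i),\sigma(j)$ together with the restriction of the corner sum matrix $C_\sigma$ to the rectangle $R \eqdef \set{(k,\ell)}{i \le k < j \text{ and } \sigma(j) \le \ell < \sigma(i)}$. Granting this, the equivalence is purely formal: $\Nappe_{\sigma_1,i_1,j_1} = \Nappe_{\sigma_2,i_2,j_2}$ holds if and only if the two triples encode the same intermediate datum, which in turn holds if and only if $i_1=i_2$, $j_1=j_2$ and $P_{\sigma_1,i_1,j_1} = P_{\sigma_2,i_2,j_2}$.

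First I would read the intermediate datum off the nappe. By \cref{prop:nappe}, the set $\Nappe_{\sigma,i,j}$ contains exactly one point of $\JIdeal(P_\sigma)$ above each cell $(k,\ell) \in R$ (the corner sum drops by exactly one at each such cell, as in the proof of \cref{prop:nappe}); hence the cells covered by $\Nappe_{\sigma,i,j}$ form the full rectangle $R$, whose extreme cells recover $i,j,\sigma(i),\sigma(j)$. Over a fixed cell $(k,\ell)$, the fiber of the projection of \cref{subsec:ASMs} is a chain of $\TPoset_n$, the elements of $\JIdeal(P_\sigma)$ in it form an initial segment, and the nappe point is the top of that segment. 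Since the number of these elements equals $\tfrac12\bigl(H_\sigma(k,\ell) - |k-\ell|\bigr)$ and $H_\sigma(k,\ell) = k + \ell - 2 C_\sigma(k,\ell)$, the nappe point over $(k,\ell)$ is an affine-bijective function of $C_\sigma(k,\ell)$. Therefore $\Nappe_{\sigma,i,j}$ records precisely $(i,j,\sigma(i),\sigma(j),C_\sigma|_R)$.

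Next I would relate this datum to $P_{\sigma,i,j}$. Its entries are the sums of $P_\sigma$ over the blocks cut out by the boundary rows $\{0,i-1,i,\dots,j,n\}$ and boundary columns $\{0,\sigma(j)-1,\sigma(j),\dots,\sigma(i),n\}$, so $P_{\sigma,i,j}$ is equivalent to the values of $C_\sigma$ at every pair of a boundary row and a boundary column (and its margins reveal $i,j,\sigma(i),\sigma(j)$). The corner sums with an outer coordinate $0$ or $n$ are universal, and those on $R$ are part of the datum; the one point requiring an argument is the recovery of the remaining boundary values, on rows $i-1,j$ and columns $\sigma(j)-1,\sigma(i)$. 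This uses the recurrence $C_\sigma(k,\ell) - C_\sigma(k-1,\ell) = [\sigma(k)\le\ell]$ and its column analogue together with the fact that the unique preimages of $\sigma(i)$ and $\sigma(j)$ are $i$ and $j$: for $i \le k < j$ one gets $C_\sigma(k,\sigma(i)) = C_\sigma(k,\sigma(i)-1)+1$ and $C_\sigma(k,\sigma(j)-1) = C_\sigma(k,\sigma(j))$, after which rows $i-1$ and $j$ are filled in the same way. Hence $P_{\sigma,i,j}$ is also equivalent to $(i,j,\sigma(i),\sigma(j),C_\sigma|_R)$, which closes the reduction.

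I expect the geometric dictionary of the second step to be the main obstacle: one must verify carefully that the single nappe point above a cell faithfully records the corresponding corner sum and nothing more, keeping the index shift between the projection of \cref{subsec:ASMs} and corner sum positions consistent throughout. The boundary extension of the third step is the other delicate point, but once the permutation property is invoked at the two distinguished rows and columns it is a short finite check; the remaining passages between $\Nappe_{\sigma,i,j}$, the datum, and $P_{\sigma,i,j}$ are formal.
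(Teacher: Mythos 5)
Your proposal is correct and takes essentially the same route as the paper's proof: both identify the elements of $\Nappe_{\sigma,i,j}$ with the positions and values of the height function (equivalently, the corner sums) of $P_\sigma$ over the rectangle determined by $i$, $j$, $\sigma(i)$, $\sigma(j)$, and then identify that rectangle of data with the pair $(i,j,P_{\sigma,i,j})$. The only difference is one of detail: you explicitly supply the fiber-chain argument showing a nappe point faithfully records the corner sum, and the boundary-recovery computation linking $C_\sigma|_R$ to the block sums of $P_{\sigma,i,j}$, both of which the paper asserts as immediate.
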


\begin{proof}
By \cref{prop:nappe}, $\Nappe_{\sigma,i,j}$ is the set of join-irreducible elements in $\JIdeal(P_\sigma) \ssm \JIdeal(P_\tau)$ where $\sigma=\tau\circ (i,j)$.
The elements of the nappe of an ASM correspond to the values and positions of the entries of its height function.
Hence $\Nappe_{\sigma_1,i_1,j_1} = \Nappe_{\sigma_2,i_2,j_2}$ if and only if the height functions of $P_{\sigma_1}$ and $P_{\sigma_2}$ have the same values in the rectangle between rows $i_1=i_2$ to $j_1=j_2$ and columns $\sigma(j_1)$ to $\sigma(i_1)$, which is equivalent to $P_{\sigma_1,i_1,j_1}=P_{\sigma_2,i_2,j_2}$.
\end{proof}

Finally, we exploit \cref{prop:bijectionPsij} to count the distinct sets of the form~$N_{\sigma,i,j}$, and those corresponding to cover relations in the Bruhat order.
The first few values of these numbers are gathered in \cref{table:NnOn}.

\begin{table}[h]
	\centering
	\begin{tabular}{l|ccccccccccc|c}
		$n$ & $1$ & $2$ & $3$ & $4$ & $5$ & $6$ & $7$ & $8$ & $9$ & $10$ & $\cdots$ & OEIS \\ \hline
		$N_n$ & $0$ & $1$ & $9$ & $52$ & $260$ & $1291$ & $6915$ & $41814$ & $289758$ & $2291381$ & $\cdots$ & \href{https://oeis.org/A391692}{A391692} \\
		$O_n$ & $0$ & $1$ & $8$ & $38$ & $140$ & $443$ & $1268$ & $3384$ & $8584$ & $20965$ & $\cdots$ & \href{https://oeis.org/A034009}{A034009}
	\end{tabular}
	\caption{First values of~$N_n$ and~$O_n$ from~\cref{prop:NnOn}.}
	\label{table:NnOn}
\end{table}

\begin{proposition}
\label{prop:NnOn}
The ordinary generating function of the number~$N_n$ of distinct sets of the form~$N_{\sigma,i,j}$ with $\sigma \in \Ss_n$ is given by
\[
\sum_{n\ge 1} N_n x^n = \frac{x^2}{(1-x)^4}\sum_{a,b,c,d,e\ge 0}\frac{(a+b+c)!~(a+d+e)!}{a!~b!~c!~d!~e!}x^{a+b+c+d+e}.
\]
The ordinary generating function of the number~$O_n$ of these sets which correspond to cover relations in the Bruhat order is given by
\[
\sum_{n\ge 1} O_n x^n = \frac{x^2}{(1-x)^4(1-2x)^2}.
\]
\end{proposition}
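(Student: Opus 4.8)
The plan is to count the distinct matrices $P_{\sigma,i,j}$ directly, since by \cref{prop:bijectionPsij} they are in bijection with the sets $\Nappe_{\sigma,i,j}$ (remembering $i$ and $j$), so that $N_n$ is the number of such matrices of order $n$ and $O_n$ the number of those coming from cover relations. First I would read off the block structure of $P_{\sigma,i,j}$: summing rows $1,\dots,i-1$ and $j+1,\dots,n$ together with columns $1,\dots,\sigma(j)-1$ and $\sigma(i)+1,\dots,n$ cuts $P_\sigma$ into a $3\times 3$ array of blocks, with a top/bottom super-row $T,B$, a left/right super-column $L,R$, and a central band consisting of the $j-i+1$ individual rows $i,\dots,j$ and the $\sigma(i)-\sigma(j)+1$ individual columns $\sigma(j),\dots,\sigma(i)$. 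The four corner blocks $T\times L,\ T\times R,\ B\times L,\ B\times R$ are single nonnegative integers $w_{TL},w_{TR},w_{BL},w_{BR}$, while the central block is a partial permutation matrix whose top-right and bottom-left corners are forced to carry the entries $(i,\sigma(i))$ and $(j,\sigma(j))$, and the remaining border blocks record only $0/1$ marginals.

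Second, I would parametrize these matrices. Each middle row $k\in\{i+1,\dots,j-1\}$ sends its $1$ to $L$, to the central band, or to $R$; writing $b,a,c$ for these counts gives $j-i-1=a+b+c$. Dually, each interior central column $\{\sigma(j)+1,\dots,\sigma(i)-1\}$ receives its $1$ from $T$, from the central band, or from $B$; writing $d,a,e$ for these gives $\sigma(i)-\sigma(j)-1=a+d+e$, where the common value $a$ is the number of interior central $1$'s. The matrix is then determined by the word of $L/C/R$ labels on the middle rows, counted by $\binom{a+b+c}{a,b,c}$, the word of $T/C/B$ labels on the interior central columns, counted by $\binom{a+d+e}{a,d,e}$, the bijection matching the $a$ central rows to the $a$ central columns, counted by $a!$, and the four free corner values $w_{TL},w_{TR},w_{BL},w_{BR}\ge 0$. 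Counting rows (equivalently columns) gives
\[
n-2=a+b+c+d+e+w_{TL}+w_{TR}+w_{BL}+w_{BR},
\]
the $+2$ being the two forced corner $1$'s. Since each corner value ranges freely over $\N$, contributing a factor $\sum_{w\ge0}x^{w}=(1-x)^{-1}$, and since $\frac{(a+b+c)!}{a!b!c!}\cdot\frac{(a+d+e)!}{a!d!e!}\cdot a!=\frac{(a+b+c)!\,(a+d+e)!}{a!\,b!\,c!\,d!\,e!}$, summing $x^{n}$ over all choices yields exactly the claimed generating function for $N_n$.

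For $O_n$, I would use the standard description of Bruhat covers: $\sigma\circ(i,j)\lessdot\sigma$ holds precisely when no $k\in(i,j)$ satisfies $\sigma(j)<\sigma(k)<\sigma(i)$, which in the parametrization above is exactly the condition $a=0$. Setting $a=0$ collapses the middle weight to $\binom{b+c}{b}\binom{d+e}{d}$, and since $\sum_{b,c\ge0}\binom{b+c}{b}x^{b+c}=\sum_{m\ge0}(2x)^{m}=(1-2x)^{-1}$ (and likewise for $d,e$), the factor $(1-x)^{-4}$ survives from the corners and I obtain $\frac{x^{2}}{(1-x)^{4}(1-2x)^{2}}$.

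The main obstacle is proving that the parametrization is a genuine bijection rather than just a bookkeeping device. On one side I must show every tuple $(a,b,c,d,e,w_{TL},w_{TR},w_{BL},w_{BR})$ together with a matching is realized by some admissible $(\sigma,i,j)$: this is a feasibility question for completing the four corner blocks into a permutation with the prescribed band marginals, which is consistent by construction, and one checks $\sigma(i)-\sigma(j)=a+d+e+1\ge1$ so that $i<j$ and $\sigma(i)>\sigma(j)$ automatically. On the other side I must verify that distinct tuples give distinct matrices $P_{\sigma,i,j}$, hence by \cref{prop:bijectionPsij} distinct sets, recovering $i$ and $j$ from the top super-row sum $i-1=w_{TL}+d+w_{TR}$ and the number $a+b+c$ of individual middle rows. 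Handling the degenerate cases where a band is empty (so part of the block structure is absent) carefully is where the real work lies; once the bijection is established, the two summation identities are routine.
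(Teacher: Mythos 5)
Your proposal is correct and takes essentially the same route as the paper: the paper's proof enumerates the matrices $P_{\sigma,i,j}$ by exactly your decomposition --- a central permutation matrix (your $a$ matched pairs), the $1$ entries on the four sides (your $b,c,d,e$), the two forced entries of the transposition, and four nonnegative integers in the corners --- and identifies the cover relations with the case where the central permutation matrix is empty ($a=0$). Your write-up just makes explicit the multinomial bookkeeping, the recovery of $i,j$ from the matrix, and the realizability and degenerate-case checks that the paper's terse proof leaves implicit.
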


\begin{proof}
Matrix of the form $P_{\sigma,i,j}$ are made of a permutation matrix shuffled with entries equal to $1$ on its four sides, the two entries exchanged by the transposition $(i,j)$, and four integers in the corners (see Figure \ref{fig:nappe} for an illustration). These matrices can be enumerated by distinguishing them according to the size of the permutation matrix in the middle and the number of $1$ entries on each side. The matrices corresponding to cover relations are those with an empty permutation matrix in the middle.
\end{proof}


\subsection{Minimal permutations}

We are now ready to describe the minimal permutations in the classes of a catalan congruence of~$\ASM_n$.
The following statement follows from \cref{lem:wall,lem:nappe}.
Visually, it states that the intersection of a wall and a nappe must remain in the complement of the butterfly and of the hourglass at each of its points, which is illustrated in green in \cref{fig:butterflyHourglass}\,(right).

\begin{lemma}
\label{lem:wallnappe} 
If $\Nappe$ is the nappe of an ASM, and $\P$ is a wall of a congruence, and if~$(x_1,x_2,x_3,x_4)$ and~$(x'_1,x'_2,x'_3,x'_4)$ are both in ${\Nappe} \cap {\P}$, then $x_1+x_2-x'_1-x'_2$, $x_1+x_3-x'_1-x'_3$, $x'_4-x_4$ and $x_1-x'_1$ are all positive or all negative.
\end{lemma}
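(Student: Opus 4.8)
The plan is to reduce everything to the signs of three integer differences and then feed the two preceding lemmas into a short case analysis. Write $\mathbf x = (x_1,x_2,x_3,x_4)$ and $\mathbf x' = (x'_1,x'_2,x'_3,x'_4)$, and set $s \eqdef x_2 - x'_2$, $t \eqdef x_3 - x'_3$ and $d \eqdef x_1 - x'_1$. Since both points have coordinate sum $n-2$, the four quantities in the statement rewrite as
\[
x_1+x_2-x'_1-x'_2 = d+s, \qquad x_1+x_3-x'_1-x'_3 = d+t,
\]
\[
x'_4-x_4 = d+s+t, \qquad x_1-x'_1 = d.
\]
So the four target quantities are exactly the four ``corner sums'' $d$, $d+s$, $d+t$, $d+s+t$, whose minimum is $d+\min(0,s)+\min(0,t)$ and whose maximum is $d+\max(0,s)+\max(0,t)$. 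Hence the conclusion ``all nonnegative or all nonpositive'' is equivalent to the statement that no two of these four corners have strictly opposite signs.

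Next I would rewrite the two hypotheses in the same language. Applying \cref{lem:wall} to $\mathbf x,\mathbf x'$ says that $-s$, $t$, $-(d+s)$ and $d+t$ are not all strictly positive and not all strictly negative, and applying \cref{lem:nappe} says that $-(d+s+t)$, $d$, $-t$ and $-s$ are not all strictly positive and not all strictly negative; these are precisely the statements that $\mathbf x'$ avoids the open butterfly and the open hourglass based at $\mathbf x$.

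I would then argue by contradiction, assuming two corners have strictly opposite signs, and split on the signs of $s$ and $t$. If $s$ and $t$ are both nonnegative (resp.\ both nonpositive), the extreme corners are $d+s+t$ and $d$; a strict sign change makes them strictly positive and strictly negative (resp.\ the reverse), and combined with $s,t>0$ (resp.\ $s,t<0$) this forces all four quantities of \cref{lem:nappe} to be strictly negative (resp.\ strictly positive), a contradiction. If instead $s$ and $t$ have opposite signs, the extreme corners are $d+s$ and $d+t$, and the analogous strict sign change forces all four quantities of \cref{lem:wall} to share one strict sign, again a contradiction. In each case the desired sign pattern follows.

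The step I expect to be the main obstacle is the boundary of this case analysis, namely the degenerate configurations in which $x_2 = x'_2$ or $x_3 = x'_3$ (that is $s=0$ or $t=0$). There the relevant sign vector of \cref{lem:wall} or \cref{lem:nappe} picks up a zero entry, so that lemma alone no longer produces a contradiction, and one must exploit the joint membership $\mathbf x,\mathbf x' \in \Nappe \cap \P$ more fully: I would insert an intermediate point of the wall or of the nappe and apply the two lemmas along a short path, in the same spirit as the proofs of \cref{lem:wall,lem:nappe} themselves. Once these degenerate cases are handled, the statement falls out of the sign bookkeeping above.
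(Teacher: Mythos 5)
Your sign bookkeeping is correct, and it is exactly the argument the paper itself intends: the paper's entire justification of \cref{lem:wallnappe} is the sentence that it ``follows from'' \cref{lem:wall,lem:nappe} (the butterfly/hourglass picture), and your computation makes precise how far that gets. Writing $s \eqdef x_2-x'_2$, $t \eqdef x_3-x'_3$, $d \eqdef x_1-x'_1$, the four quantities are the corner sums $d+s$, $d+t$, $d+s+t$, $d$; \cref{lem:wall} forbids the strict sign patterns of $(-s,\,t,\,-(d+s),\,d+t)$ and \cref{lem:nappe} those of $(-(d+s+t),\,d,\,-t,\,-s)$; and your case analysis closes whenever $s\ne 0$ and $t\ne 0$. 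So your diagnosis that everything hinges on the degenerate configurations $x_2=x'_2$ or $x_3=x'_3$ is accurate.

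However, that gap cannot be closed by intermediate points or by any other argument, because the statement is \emph{false} precisely in those degenerate cases. Take $n=6$ and the depth triangle of order $6$ with $\delta(y_1,y_2,y_3)=0$ for $y_2$ even and $\delta(y_1,y_2,y_3)=1$ for $y_2$ odd, except $\delta(2,1,1)=-1$; the conditions of \cref{def:depthTriangle} are immediate, so by \cref{thm:depthTriangles} this encodes a wall $\P$ (hence a catalan congruence). This wall contains $\b{x} \eqdef (1,1,2,0)$ (the point over $(1,3,0)$ of depth $1$) and $\b{x}' \eqdef (2,1,0,1)$ (the point over $(2,1,1)$ of depth $-1$). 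Now let $\Nappe$ be the nappe of $P_\sigma$ for $\sigma = 651234$. Its HFM has $H_{3,4}=3+4-2C_{3,4}=5$ and $H_{4,3}=4+3-2C_{4,3}=3$, and by the standard dictionary between HFMs and lower sets of $\TPoset_6$ (a point $\b{y}$ lies in $\JIdeal(A)$ if and only if $H_{y_1+y_2+1,\,y_1+y_3+1}\ge y_2+y_3+2$, the height counting the stacked boxes in its fiber), one gets: $(1,1,2,0)$ and $(2,1,0,1)$ lie in $\JIdeal(P_\sigma)$, while $(1,2,1,0)$ does not (it would need $H_{4,3}\ge 5$); equivalently, $j(1,1,2,0)=156234\le\sigma$ and $j(2,1,0,1)=124536\le\sigma$ in Bruhat order but $j(1,2,1,0)=145623\not\le\sigma$. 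Since the fiber of the projection $(x_1+x_2,x_1+x_3)$ through $\b{x}$ is the chain $(2,0,1,1)<(1,1,2,0)$ and the fiber through $\b{x}'$ is the chain $(2,1,0,1)<(1,2,1,0)$, both $\b{x}$ and $\b{x}'$ are the largest elements of $\JIdeal(P_\sigma)$ in their fibers, i.e. $\b{x},\b{x}'\in\Nappe$. So $\b{x},\b{x}'\in \Nappe\cap\P$, yet the four quantities of the statement equal $-1,+1,+1,-1$. Note that this pair has $s=0$: the \cref{lem:wall} vector is $(0,2,1,1)$ and the \cref{lem:nappe} vector is $(-1,-1,-2,0)$, each containing a zero entry, which is exactly why neither lemma is violated. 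This is your degenerate case, realized.

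The correct conclusion of your analysis is therefore not ``the statement falls out once the degenerate cases are handled'', but that \cref{lem:wallnappe} as stated (and with it the paper's one-line proof) is wrong, and any repair must use more than mere membership in $\Nappe\cap\P$. Where the lemma is invoked, in \cref{prop:minimalPermutation}, the two points additionally lie in $\Nappe_{\sigma,i,j}$ and $\Nappe_{\sigma,j,k}$ for a common $321$-pattern $i<j<k$; these two rectangles meet diagonally across the entry $1$ of $P_\sigma$ at $(j,\sigma(j))$, a saddle of the nappe, and it is this extra structure that must be exploited to forbid the mixed sign pattern (the example above does not contradict \cref{prop:minimalPermutation} itself: there the two points do not sit in such a pair of sub-nappes). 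A proof along your lines would thus have to be restated and carried out for that restricted situation, not for arbitrary pairs in $\Nappe\cap\P$.
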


\begin{figure}[ht]
	\centerline{\includegraphics[scale=1]{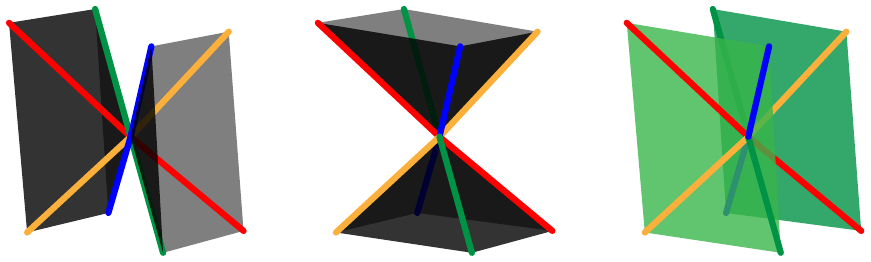}}
	\caption{The butterfly (left), the hourglass (middle), and the complement of their union (right).}
	\label{fig:butterflyHourglass}
\end{figure}

\begin{proposition}
\label{prop:minimalPermutation}
Let~$\equiv$ be a catalan congruence of~$\ASM_n$.
Let $\sigma\in \Ss_n$ containing the pattern $321$, and let~$i<j<k$ be such that~${\sigma(i) > \sigma(j) > \sigma(k)}$.
Then at least one of the permutations $\sigma \circ (i,j)$ and $\sigma \circ (j,k)$ is congruent to $\sigma$.
Moreover, if both are, then $\sigma\circ (i,k)$ is also congruent to $\sigma$.
\end{proposition}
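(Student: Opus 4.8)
The plan is to translate everything into the wall $\P\subseteq\TPoset_n$ attached to the congruence by \cref{thm:depthTriangles} and the distributive-lattice dictionary of \cref{thm:congruencesDistributiveLattices1}, under which the projection onto the quotient sends an ASM $A$ to the lower set $\JIdeal(A)\cap\P$ of $\P$; thus $A\equiv A'$ if and only if $\JIdeal(A)\cap\P=\JIdeal(A')\cap\P$. Since $\sigma(i)>\sigma(j)>\sigma(k)$, each of $\sigma\circ(i,j)$, $\sigma\circ(j,k)$, $\sigma\circ(i,k)$ lies below $\sigma$ in Bruhat order, and \cref{prop:nappe} identifies the removed join irreducibles $\JIdeal(P_\sigma)\ssm\JIdeal(P_{\sigma\circ(i,j)})$ with $\Nappe_{\sigma,i,j}$, the nappe points of $P_\sigma$ whose projection $(x_1+x_2,x_1+x_3)$ has first coordinate in $[i,j)$ and second in $[\sigma(j),\sigma(i))$ (and similarly for the other two transpositions). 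As $\JIdeal(P_{\sigma\circ(i,j)})=\JIdeal(P_\sigma)\ssm\Nappe_{\sigma,i,j}$, removing $\Nappe_{\sigma,i,j}$ changes the trace on $\P$ exactly when it meets $\P$, so I obtain the reformulation
\[
\sigma\equiv\sigma\circ(i,j)\iff\Nappe_{\sigma,i,j}\cap\P=\emptyset,
\]
and likewise for $(j,k)$ and $(i,k)$. The rectangle for $(i,k)$ is the disjoint union of the rectangles for $(i,j)$ and $(j,k)$ together with two further corners, one in low rows/low columns and one in high rows/high columns.

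For the first assertion I argue by contradiction using \cref{lem:wallnappe}. Suppose $\b{x}\in\Nappe_{\sigma,i,j}\cap\P$ and $\b{x}'\in\Nappe_{\sigma,j,k}\cap\P$. Then $x_1+x_2<j\le x_1'+x_2'$ while $x_1+x_3\ge\sigma(j)>x_1'+x_3'$, so $(x_1+x_2)-(x_1'+x_2')$ and $(x_1+x_3)-(x_1'+x_3')$ have strictly opposite signs, contradicting \cref{lem:wallnappe}, which forces all four listed quantities to share one sign. Hence at least one of $\Nappe_{\sigma,i,j}\cap\P$ and $\Nappe_{\sigma,j,k}\cap\P$ is empty, that is, $\sigma\equiv\sigma\circ(i,j)$ or $\sigma\equiv\sigma\circ(j,k)$. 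Geometrically, $\Nappe(P_\sigma)\cap\P$ is a chain monotone in the projection coordinates, and such a chain cannot meet both of the two opposite (low rows/high columns and high rows/low columns) rectangles.

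For the second assertion, assume both transpositions are congruent to $\sigma$. Since a congruence respects meets, $M\eqdef(\sigma\circ(i,j))\wedge(\sigma\circ(j,k))\equiv\sigma\wedge\sigma=\sigma$, and $\sigma\circ(i,k)\le M$ because $\sigma\circ(i,k)$ lies below both $\sigma\circ(i,j)$ and $\sigma\circ(j,k)$. So it suffices to prove $\sigma\circ(i,k)\equiv M$. As meets of ASMs correspond to intersections of ideals, $\JIdeal(M)=\JIdeal(P_\sigma)\ssm(\Nappe_{\sigma,i,j}\cup\Nappe_{\sigma,j,k})$ and $\JIdeal(P_{\sigma\circ(i,k)})=\JIdeal(P_\sigma)\ssm\Nappe_{\sigma,i,k}$, whence $\JIdeal(M)\ssm\JIdeal(P_{\sigma\circ(i,k)})=\Nappe_{\sigma,i,k}\ssm(\Nappe_{\sigma,i,j}\cup\Nappe_{\sigma,j,k})$, precisely the nappe points of $P_\sigma$ over the two extra corners. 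Thus the statement reduces to showing that, under $\Nappe_{\sigma,i,j}\cap\P=\Nappe_{\sigma,j,k}\cap\P=\emptyset$, no nappe point over one of these two corners lies in $\P$.

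This last step is the main obstacle, and here \cref{lem:wallnappe} alone does not suffice: a monotone chain in $\Nappe(P_\sigma)\cap\P$ runs exactly between the low/low and high/high corners, so these are precisely where such a chain naturally sits, and monotonicity does not exclude them. The extra leverage must come from the depth data. Using that the nappe point over a projection $(p,q)$ has depth $q-p$, and that it belongs to $\P$ exactly when this depth equals the wall's value $\delta$ at the corresponding point of $\DPoset_n$, I would show that a corner point lying in $\P$ propagates along its row or along its column into $\Nappe_{\sigma,i,j}$ or $\Nappe_{\sigma,j,k}$, contradicting the hypothesis. The force driving this propagation should be the unit jump of the corner sum across the middle dot $(j,\sigma(j))$ of the pattern (equivalently of the $x_1$-coordinate of the nappe points, which by \cref{lem:wallnappe} also increases monotonically along the chain), together with the saturation of the distance inequality of \cref{prop:depthTriangleInequalities} along a row or column of nappe points. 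Making this propagation precise, with the correct index bookkeeping for the projection coordinates, is where the real work lies.
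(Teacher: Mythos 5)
Your proof of the first assertion is correct and coincides with the paper's: both reduce congruence of $\sigma\circ(i,j)$ with $\sigma$ to emptiness of $\Nappe_{\sigma,i,j}\cap\P$ via \cref{prop:nappe}, and both derive the contradiction from the monotonicity of \cref{lem:wallnappe} when the wall meets both $\Nappe_{\sigma,i,j}$ and $\Nappe_{\sigma,j,k}$. Your setup for the second assertion is also sound and matches the paper's reduction: since $\Nappe_{\sigma,i,j}\cup\Nappe_{\sigma,j,k}\subseteq\Nappe_{\sigma,i,k}$, everything hinges on showing that $\P$ avoids the two corner rectangles $\Nappe_{\sigma,i,k}\ssm(\Nappe_{\sigma,i,j}\cup\Nappe_{\sigma,j,k})$. (Your detour through the meet $M=(\sigma\circ(i,j))\wedge(\sigma\circ(j,k))$ is harmless but unnecessary: \cref{prop:nappe} applies directly to the transposition $(i,k)$, so $\sigma\circ(i,k)\equiv\sigma$ if and only if $\Nappe_{\sigma,i,k}\cap\P=\emptyset$.)

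However, the second assertion is not actually proved: you explicitly leave the corner-exclusion step as a sketch, and that step is precisely the mathematical content of the paper's proof. The paper fills it with a concrete local-move argument on $\Nappe_{\sigma,i,k}\cap\P$. Starting from a corner point $\b{x}=(x_1,x_2,x_3,x_4)\in\Nappe_{\sigma,i,k}\cap\P$ (say in the bottom-left corner), either one of the four neighbors $(x_1,x_2,x_3+1,x_4-1)$, $(x_1,x_2+1,x_3,x_4-1)$, $(x_1+1,x_2,x_3-1,x_4)$, $(x_1+1,x_2-1,x_3,x_4)$ again lies in $\Nappe_{\sigma,i,k}\cap\P$, moving the intersection point closer to $\Nappe_{\sigma,i,j}$ or $\Nappe_{\sigma,j,k}$, or else $(x_1+1,x_2,x_3,x_4-1)$ does, and this degenerate case forces the two nappe elements to the right of and above $\b{x}$ to sit at different heights. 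Iterating, the wall is dragged across the corner and must eventually hit $\Nappe_{\sigma,i,j}$ or $\Nappe_{\sigma,j,k}$, because the only passage between them sits at the entry $1$ of $P_\sigma$ in position $(j,\sigma(j))$, which creates a saddle point of the nappe blocking the escape; this contradicts the hypothesis that both intersections are empty. Note moreover that the mechanism you gesture at --- saturation of the Lipschitz inequality of \cref{prop:depthTriangleInequalities} for the depth triangle --- is not what the paper uses and remains a guess: the paper's propagation rests on the local structure of walls and on the height function of the nappe, not on the depth-triangle inequalities, so even your proposed route would still have to be constructed and verified from scratch. As it stands, the proposal establishes the first assertion but leaves the second one open.
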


\begin{proof}
We show that at least one of the permutations $\sigma \circ (i,j)$ and $\sigma \circ (j,k)$ is congruent to $\sigma$.
Let $\P$ be the wall inducing $\equiv$, \ie the corresponding subposet of $\TPoset_n$ isomorphic to $\DPoset_n$. Suppose $\sigma \circ (i,j)$ is not congruent to $\sigma$. By Proposition \ref{prop:nappe}, this is equivalent to $\P$ intersecting $\Nappe_{\sigma,i,j}$.
By \cref{lem:wallnappe}, if $(x_1,x_2,x_3,x_4)$ and $(y_1,y_2,y_3,y_4)$ are in $\Nappe_{\sigma,i,j}\cap\P$, then $x_1+x_2-y_1-y_2$, $x_1+x_3-y_1-y_3$ are both positive or both negative. This implies that if $(x_1,x_2,x_3,x_4)\in \Nappe_{\sigma,i,j}$, $\P$ cannot intersect $\Nappe_{\sigma,j,k}$ (since $(y_1,y_2,y_3,y_4)$ must be in the shaded area of \cref{fig:minfigure}\,(a)), hence $\sigma \circ (j,k)$ is congruent to $\sigma$.

We now prove that if both permutations $\sigma \circ (i,j)$ and $\sigma \circ (j,k)$ are congruent to $\sigma$, then $\sigma\circ (i,k)$ is also congruent to $\sigma$. For this, suppose that $(x_1,x_2,x_3,x_4)\in \Nappe_{\sigma,i,k}\cap\P$. We can assume without loss of generality that $(x_1,x_2,x_3,x_4)$ is in the bottom left corner, \emph{i.e.} $x_1+x_3+1\ge j$ and $x_1+x_2+1<\sigma(j)$. It is possible to find elements of $\Nappe_{\sigma,i,k}\cap\P$ closer to $\Nappe_{\sigma,i,j}$ or $\Nappe_{\sigma,j,k}$, until we eventually reach it: if $(x_1,x_2,x_3+1,x_4-1)$, $(x_1,x_2+1,x_3,x_4-1)$, $(x_1+1,x_2,x_3-1,x_4)$ and $(x_1+1,x_2-1,x_3,x_4)$ are not in $\Nappe_{\sigma,i,k}\cap\P$, then $(x_1+1,x_2,x_3,x_4-1)$ is in $\Nappe_{\sigma,i,k}\cap\P$, and this can only be the case if the two elements of the nappe to the right and above $(x_1,x_2,x_3,x_4)$ have different heights (value of $x_2+x_3$). Hence we cannot go between $\Nappe_{\sigma,i,j}$ and $\Nappe_{\sigma,j,k}$ without intersecting one of them, since there is a entry equal to $1$ there which corresponds to a saddle point in the nappe (cf. \cref{fig:minfigure}\,(b)).
\end{proof}

\begin{figure}[ht]
	\centerline{
			\begin{tabular}{cc}
			\includegraphics[width=0.4\linewidth]{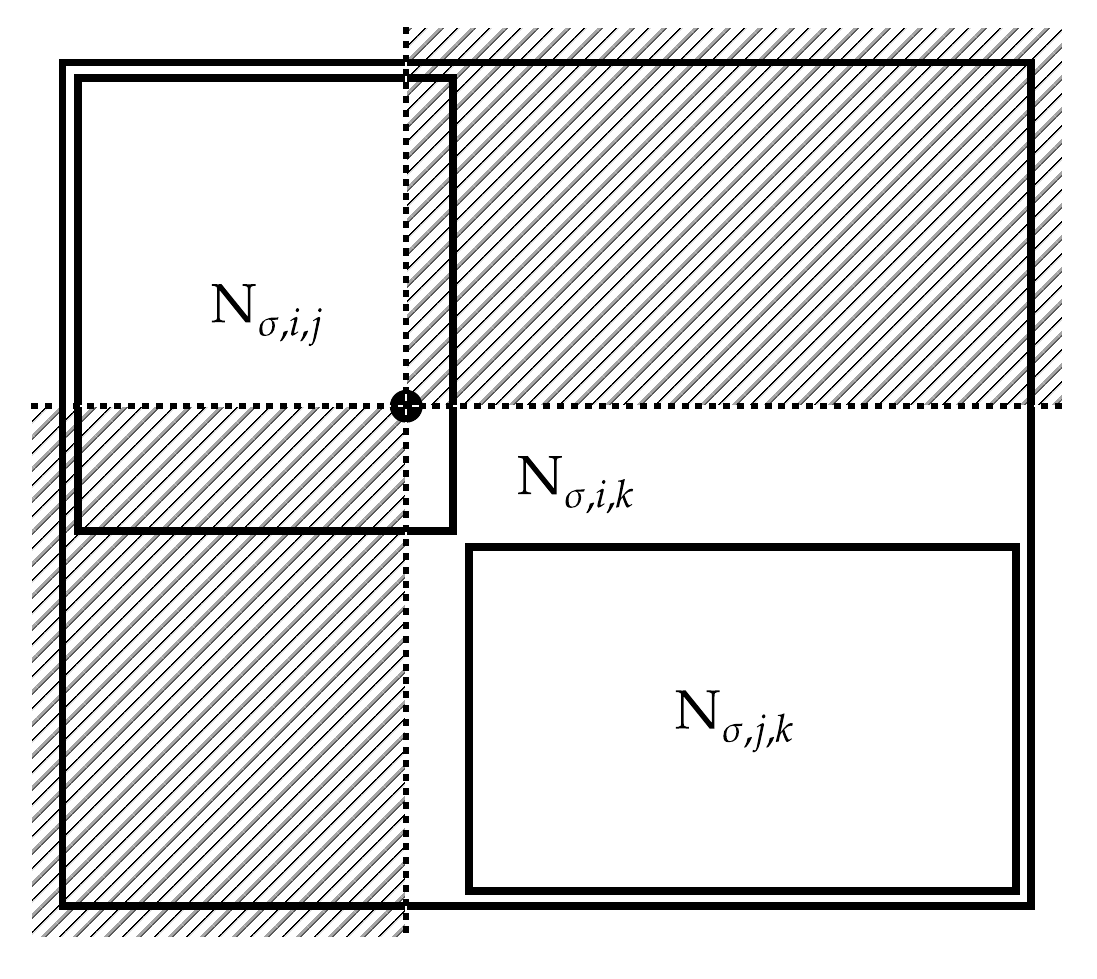} &
			\includegraphics[width=0.4\linewidth]{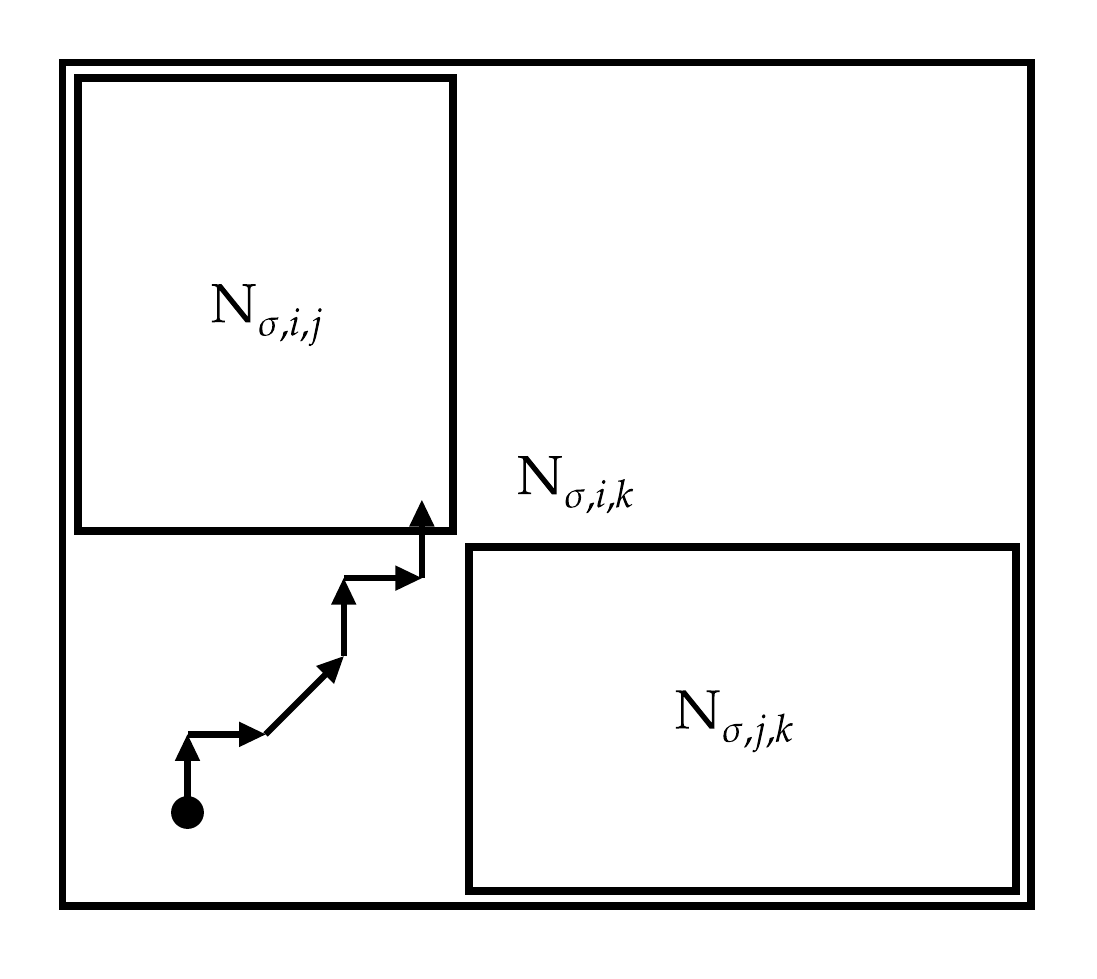} \\
			(a) & (b) 
		\end{tabular}
  }
    \caption{The nappes $\Nappe_{\sigma,i,j}$, $\Nappe_{\sigma,j,k}$ and $\Nappe_{\sigma,i,k}$ of \cref{prop:minimalPermutation}.}
	\label{fig:minfigure}
\end{figure}

\begin{theorem}
\label{thm:minimalPermutation}
For a catalan congruence of~$\ASM_n$,
\begin{itemize}
\item each congruence class contains a unique minimal permutation,
\item the minimal permutations of the congruence classes are precisely the $321$-avoiding permutations.
\end{itemize}
\end{theorem}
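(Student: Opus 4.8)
The plan is to read off almost everything from \cref{prop:minimalPermutation}, using \cref{sec:maxima} to guarantee that each class actually contains a permutation, and then closing the argument with a cardinality count rather than proving injectivity by hand. Concretely, I would prove three things: that every minimal permutation of a class avoids $321$, that every class contains at least one $321$-avoiding permutation, and that the number of classes equals the number of $321$-avoiding permutations; the conclusion then follows formally.

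First I would show that a minimal permutation of a class must be $321$-avoiding. Suppose $\sigma$ lies in a class and contains the pattern $321$, witnessed by positions $i<j<k$ with $\sigma(i)>\sigma(j)>\sigma(k)$. By \cref{prop:minimalPermutation} at least one of $\sigma\circ(i,j)$ and $\sigma\circ(j,k)$ is congruent to $\sigma$. Since $i<j$ with $\sigma(i)>\sigma(j)$ (respectively $j<k$ with $\sigma(j)>\sigma(k)$), each of these transpositions strictly decreases the length, so $\sigma\circ(i,j)<\sigma$ and $\sigma\circ(j,k)<\sigma$ in the Bruhat order. Thus $\sigma$ admits a strictly smaller permutation in its own class, and cannot be minimal among the permutations of that class. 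Hence every minimal permutation avoids~$321$.

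Next I would establish surjectivity of the map $\sigma\mapsto[\sigma]$ from $321$-avoiding permutations of $[n]$ onto the classes. By \cref{thm:covexillary}, the maximum of every class of a catalan congruence is a (covexillary) permutation matrix, so in particular every class contains a permutation, and therefore has at least one minimal permutation. Starting from this maximal permutation and repeatedly applying the descent provided by \cref{prop:minimalPermutation}, each step stays inside the class, stays among permutations (we only compose with transpositions), and strictly decreases the Bruhat length; since $\Ss_n$ is finite this terminates at a permutation of the class with no $321$ pattern. Consequently every class contains a $321$-avoiding permutation.

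Finally I would invoke counting. The quotient of a catalan congruence is isomorphic to $\Dyck_n$, so there are exactly $C_n$ classes, while the number of permutations of $[n]$ avoiding $321$ is also $C_n$. A surjection between two finite sets of equal cardinality is a bijection, so each class contains \emph{exactly one} $321$-avoiding permutation. Combining the three steps: each class contains a permutation and hence a minimal one; every minimal permutation avoids $321$; and there is a single $321$-avoiding permutation available in the class. Therefore the minimal permutation of each class is unique and equals this $321$-avoiding permutation, and as the class varies the minimal permutations run over precisely the $C_n$ permutations avoiding~$321$. The one delicate point worth highlighting is that I deliberately avoid proving directly that two distinct $321$-avoiding permutations are never congruent (which would require controlling exactly how a given wall slices the ideals $\JIdeal(P_\sigma)$); instead injectivity is obtained for free from the equality of the two Catalan counts, and the essential external input is the guarantee from \cref{sec:maxima} that every class maximum is a genuine permutation, which is what lets the descent begin.
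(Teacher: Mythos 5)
Your proof is correct and follows essentially the same route as the paper: it uses \cref{prop:minimalPermutation} to rule out $321$ patterns in minimal permutations, \cref{thm:covexillary} to guarantee each class contains a permutation, and the equality of the two Catalan counts (classes of the quotient $\simeq \Dyck_n$ versus $321$-avoiding permutations) to obtain uniqueness and surjectivity without a direct injectivity argument. Your explicit descent from the class maximum is just a spelled-out version of the paper's appeal to the existence of a minimal permutation, so there is no substantive difference.
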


\begin{proof}
By \cref{prop:minimalPermutation}, a permutation containing the pattern $321$ cannot be minimal in its congruence class. By \cref{thm:covexillary}, each class contains a permutation, hence at least one minimal permutation, which must avoid~$321$ by \cref{prop:minimalPermutation}.
The result follows since the number of $321$-avoiding permutations is the same as the number of elements of~$\Dyck_n$.
\end{proof}


\subsection{Minimal elements versus minimal permutations}
\label{sec:minimalNotPermutations}

We say that an ASM is \defn{permutational} if it is the matrix~$P_\sigma$ of a permutation~$\sigma$, and \defn{non-permutational} otherwise.
As already mentioned, the minimal (and maximal) ASMs of the classes of a congruence of~$\ASM_n$ are not necessarily permutational.
Surprisingly, we have seen in \cref{thm:covexillary} that the maxima are permutational for the specific congruences of~$\ASM_n$ considered in this paper.
We will now observe that the minima for these specific congruences are not all permutational as soon as~$n \ge 3$ (as illustrated in \cref{fig:ASMQuotients}).

Recall that for an ASM~$A$, we denote by~$\JIdeal(A)$ the corresponding lower set of~$\TPoset_n$, and by~$\jAnti(A)$ the antichain of~$\TPoset_n$ generating~$\JIdeal(A)$.
Recall also from \cref{thm:congruencesDistributiveLattices1} that for a congruence~$\equiv$ of~$\ASM_n$ with non-contracted join irreducibles~$\JIrr(\equiv) \subseteq \TPoset_n$, the minima of the congruence classes are precisely the ASMs~$A$ such that~$\jAnti(A) \subseteq \JIrr(\equiv)$.
This directly yields the following statement.

\pagebreak
\begin{corollary}
\label{coro:minimaPermutahedralLong}
The following conditions are equivalent for a lattice congruence~$\equiv$ of~$\ASM_n$:
\begin{itemize}
\item the minima of the congruence classes of~$\equiv$ are all permutational,
\item $\jAnti(A) \not\subseteq \JIrr(\equiv)$ for each non-permutational ASM~$A$,
\item $\JIrr(\ASM_n) \ssm \JIrr(\equiv)$ is a transversal of the set~$\set{\jAnti(A)}{A \text{ non-permutational ASM}}$.
\end{itemize}
\end{corollary}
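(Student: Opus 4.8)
The plan is to deduce both equivalences directly from the characterization of minima recalled immediately above, namely that an ASM~$A$ is minimal in its congruence class if and only if~$\jAnti(A) \subseteq \JIrr(\equiv)$ (the meet version of \cref{thm:congruencesDistributiveLattices1}). With that characterization in hand there is essentially no computation left; the work is purely in unwinding definitions, so I would organize the argument as two short logical reformulations and keep the proof to a few lines.

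For the equivalence of the first two conditions I would argue by contraposition. Condition~(1) asserts that no minimum of a congruence class is non-permutational. Since the minima are exactly the ASMs~$A$ with~$\jAnti(A) \subseteq \JIrr(\equiv)$, this is the same as saying that every non-permutational ASM~$A$ fails to be a minimum, i.e.~$\jAnti(A) \not\subseteq \JIrr(\equiv)$, which is precisely condition~(2). This step is a one-line rephrasing once the characterization is invoked.

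For the equivalence of the last two conditions I would first pin down the meaning of transversal as a hitting set, that is, a subset of~$\JIrr(\ASM_n)$ meeting every member of the family~$\set{\jAnti(A)}{A \text{ non-permutational ASM}}$. Writing~$T \eqdef \JIrr(\ASM_n) \ssm \JIrr(\equiv)$ and using that~$\jAnti(A)$ is an antichain of join irreducibles, hence contained in~$\JIrr(\ASM_n)$, the statement~$\jAnti(A) \not\subseteq \JIrr(\equiv)$ is equivalent to the existence of some~$j \in \jAnti(A)$ with~$j \notin \JIrr(\equiv)$, that is~$j \in T$, that is~$T \cap \jAnti(A) \neq \emptyset$. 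Quantifying over all non-permutational~$A$ shows that~(2) holds if and only if~$T$ meets every~$\jAnti(A)$, which is exactly~(3).

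The only point needing care — and the nearest thing to an obstacle — is fixing the intended notion of transversal; I would state explicitly that it means a set meeting each~$\jAnti(A)$ (rather than a system of distinct representatives), after which the set-theoretic equivalence with~(2) is unambiguous and both directions follow formally.
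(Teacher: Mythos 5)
Your proposal is correct and matches the paper exactly: the paper gives no written proof, stating only that the characterization of minima from \cref{thm:congruencesDistributiveLattices1} (an ASM~$A$ is minimal in its class if and only if~$\jAnti(A) \subseteq \JIrr(\equiv)$) ``directly yields'' the corollary, which is precisely the unwinding you carry out. Your explicit clarification that ``transversal'' means a hitting set (a subset meeting each~$\jAnti(A)$) is the intended reading, so both equivalences go through as you describe.
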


This immediately yields the following.

\begin{corollary}
For~$n \ge 3$ and any catalan congruence~$\equiv$ of~$\ASM_n$, there is at least one congruence class of~$\equiv$ whose minima is non-permutational.
\end{corollary}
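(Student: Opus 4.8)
The plan is to exhibit, for an arbitrary catalan congruence $\equiv$ of $\ASM_n$ with $n \ge 3$, one explicit non-permutational ASM that is minimal in its class. By \cref{thm:congruencesDistributiveLattices1} (equivalently, by the second bullet of \cref{coro:minimaPermutahedralLong}), an ASM $A$ is the minimum of its class precisely when $\jAnti(A) \subseteq \JIrr(\equiv)$, and we have identified $\JIrr(\equiv)$ with the wall $\P \subseteq \TPoset_n$ inducing $\equiv$. So it suffices to produce a non-permutational $A$ whose generating antichain $\jAnti(A)$ lies inside $\P$.

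First I would record that every wall contains all rank-$0$ elements of $\TPoset_n$, that is, all points $(x_1,0,0,x_4)$ with $x_1+x_4=n-2$. Indeed, such a point is the unique preimage of the minimal element $(x_1,0,x_4)$ of $\DPoset_n$ under the horizontal projection $(x_1,x_2,x_3,x_4)\mapsto(x_1,x_2+x_3,x_4)$, so \cref{prop:walls} forces it to lie in $\P$. Consequently, any ASM whose join-irreducible antichain consists of rank-$0$ points is minimal in its class for \emph{every} catalan congruence at once, and the problem reduces to finding one such ASM that is non-permutational.

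To this end I would take, for any $1 \le i \le n-2$ (such $i$ exists exactly because $n \ge 3$), the two consecutive rank-$0$ points $\b u \eqdef (i-1,0,0,n-1-i)$ and $\b v \eqdef (i,0,0,n-2-i)$, whose associated join-irreducible (bigrassmannian) permutations $j(\b u)$ and $j(\b v)$ are the adjacent simple transpositions $s_i$ and $s_{i+1}$, and set $A \eqdef j(\b u)\vee j(\b v)$. Since $\b u$ and $\b v$ are minimal and incomparable in $\TPoset_n$, we get $\JIdeal(A)=\JIdeal(j(\b u))\cup\JIdeal(j(\b v))=\{\b u,\b v\}$, whence $\jAnti(A)=\{\b u,\b v\}$ is a rank-$0$ antichain, contained in $\P$ by the previous paragraph. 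Thus $A$ is the minimum of its class for every catalan congruence.

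The one point that genuinely requires verification---and the main obstacle, since $\ASM_n$ is the MacNeille completion of the Bruhat order rather than the weak order (in which $s_i \vee s_{i+1}$ would be the \emph{permutation} $s_i s_{i+1} s_i$)---is that $A$ is non-permutational. This is a local computation: the HFMs of $s_i$ and $s_{i+1}$ agree with the HFM of the identity outside the rows and columns indexed $i,i+1,i+2$, so their join, being the entrywise maximum of HFMs, also agrees with the identity outside this $3\times 3$ window, while inside the window it reproduces exactly the $n=3$ computation and yields a $-1$ in position $(i+1,i+1)$ of $A$. Hence $A$ is non-permutational, and the congruence class of $A$ witnesses the claim.
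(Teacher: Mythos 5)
Your proof is correct and takes essentially the same route as the paper: its one-line argument likewise invokes \cref{prop:walls} to force the rank-$0$ join irreducibles $j(0,0,0,n-2)$ and $j(1,0,0,n-3)$ (your case $i=1$, i.e.\ $s_1$ and $s_2$) into $\JIrr(\equiv)$, and then observes that their join has a $-1$ in position $(2,2)$, hence is a non-permutational ASM that is minimal in its class. Your write-up simply makes explicit the details the paper leaves implicit (uniqueness of the rank-$0$ fiber under the projection, the identification $\jAnti(A)=\{\b{u},\b{v}\}$, and the HFM computation producing the $-1$).
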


\begin{proof}
As~$\equiv$ is a catalan congruence, both~$j(0,0,0,n-2)$ and~$j(1,0,0,n-3)$ belong to~$\JIrr(\equiv)$ by \cref{prop:walls}, and~$j(0,0,0,n-2) \vee j(1,0,0,n-3)$ has a~$-1$ in position~$(2,2)$.
\end{proof}

We now show that the condition of \cref{coro:minimaPermutahedralLong} can be largely simplified using the following lemma.

\begin{lemma}
\label{prop:subCanonicalJoinRepresentation}
For any non-permutational ASM~$A$, there exists a non-permutational ASM~$A'$ such that $|\jAnti(A')| = 2$ and $\jAnti(A') \subseteq \jAnti(A)$.
\end{lemma}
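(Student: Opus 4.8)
The plan is to understand what makes an ASM non-permutational in terms of its join irreducible antichain, and then to extract a small non-permutational ``witness'' from any such antichain. Recall that a non-permutational ASM is precisely one whose HFM has a $-1$ entry in the corresponding ASM, and by \cref{lem:ascentsDescentsASMs} such a $-1$ at position~$(u,v)$ forces the existence of two descents (and two ascents) straddling it. Translated into the tetrahedron poset~$\TPoset_n$, the presence of a~$-1$ corresponds to the generating antichain~$\jAnti(A)$ containing two elements whose projections under $(x_1,x_2,x_3,x_4)\mapsto(x_1+x_2,x_1+x_3)$ are incomparable in the relevant sense—geometrically, two points of~$\jAnti(A)$ that together create a saddle. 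So the first step is to show: \emph{an ASM~$A$ is non-permutational if and only if $\jAnti(A)$ contains two elements~$\b{x},\b{x}'$ that are ``crossing'' in the sense that their join $\b{x}\vee\b{x}'$ (taken inside the lattice~$\ASM_n$) is already non-permutational.} This should follow by unwinding the bijection between join irreducibles and bigrassmannian permutations together with \cref{lem:ascentsDescentsASMs}.

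First I would make precise which pairs of join irreducibles~$j(\b{x}), j(\b{x}')$ have a non-permutational join. Two bigrassmannian permutation matrices join to a non-permutational ASM exactly when their ``active rectangles'' overlap in the crossing configuration that produces a~$-1$; concretely one checks this on the height functions, where the join is the entrywise maximum, and a~$-1$ appears precisely when two rising steps of the two HFMs are nested the wrong way. I would phrase this as a simple combinatorial condition on the coordinates of~$\b{x}$ and~$\b{x}'$ (for instance, that the projections to $(x_1+x_2,x_1+x_3)$ are incomparable coordinatewise while the fourth coordinates order the opposite way), and verify it by direct computation on two-by-two submatrices of the HFM, as in the proof of \cref{lem:excedanceRelationVSexcedanceCongruence}.

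Granting that characterization, the main step is the extraction. Let~$A$ be non-permutational, so its HFM has a~$-1$ at some position~$(u,v)$. By \cref{lem:ascentsDescentsASMs} there are descents $(i,j)\lessdot$ and $(k,\ell)$ of~$A$ with $i<u\le k$ and $j<v\le \ell$; these descents correspond to two meet irreducibles, and dually the two relevant ascents give two join irreducibles~$j(\b{x}),j(\b{x}')$ lying below~$A$ in~$\TPoset_n$. The key point is that \emph{these two particular join irreducibles already sit in~$\jAnti(A)$}: each is a maximal element of~$\JIdeal(A)$ because it records a genuine corner of the nappe at the saddle point~$(u,v)$. Setting~$A' \eqdef j(\b{x}) \vee j(\b{x}')$, the pair~$\{\b{x},\b{x}'\}$ is then an antichain of size~$2$ contained in~$\jAnti(A)$, and by the characterization of the previous step $A'$ is non-permutational since the two ascents cross. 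Thus~$|\jAnti(A')|\le 2$, and it is exactly~$2$ because a single join irreducible (a bigrassmannian permutation) is always permutational.

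The hard part will be verifying that the two join irreducibles forced by the saddle point are actually \emph{maximal} in~$\JIdeal(A)$, i.e.\ genuinely members of the generating antichain~$\jAnti(A)$, rather than merely lying below~$A$. This requires arguing that the saddle point at~$(u,v)$ is realized by a locally maximal pair of tetrahedron points—equivalently, that the nappe of~$A$ has a corner there—which is precisely the geometric content of the nappe picture developed around \cref{lem:nappe,prop:nappe}. I expect the cleanest route is to choose~$(u,v)$ to be a~$-1$ whose straddling ascents are as close as possible (so that no further join irreducible of~$A$ separates~$\b{x}$ from~$\b{x}'$), and then to invoke \cref{lem:ascentsDescentsASMs} together with the saddle-point interpretation of~$-1$ entries to conclude maximality. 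Once maximality is secured, the rest is the short verification that $j(\b{x})\vee j(\b{x}')$ retains the~$-1$, which is the two-by-two HFM computation from the first step.
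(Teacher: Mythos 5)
Your skeleton (characterize the $-1$ as a saddle condition on four join irreducibles, then extract a two-element antichain whose join keeps the $-1$) is the right one and matches the paper, but your extraction step rests on a claim that is false: the two join irreducibles forced by the saddle point are \emph{not} in general members of $\jAnti(A)$ — the saddle condition only puts them in $\JIdeal(A)$, and maximality can genuinely fail. Concretely, for $n=5$ consider the ASM $A$ with $\JIdeal(A) = \{(2,0,0,1),\, (1,1,0,1),\, (1,0,0,2),\, (0,0,0,3)\}$, i.e.\ the lower set of $\TPoset_5$ generated by $K \eqdef (1,1,0,1)$ and $(0,0,0,3)$, so that $\jAnti(A) = \{K, (0,0,0,3)\}$. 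One checks that $A$ has exactly one $-1$, the one determined by $\b{x} = (0,0,0,2)$: indeed $j_1 \eqdef (1,0,0,2)$ and $j_4 \eqdef (0,0,0,3)$ lie in $\JIdeal(A)$ while $j_2 \eqdef (0,1,0,2)$ and $j_3 \eqdef (0,0,1,2)$ do not. But $j_1 < K$, so $j_1 \notin \jAnti(A)$. Thus there is no choice of $-1$ whose two defining join irreducibles both sit in the generating antichain, and your proposed repair (picking the $-1$ ``whose straddling ascents are as close as possible'') cannot get off the ground: here there is only one $-1$, and its $j_1$ is simply not maximal. The geometric intuition that the saddle point forces a ``corner of the nappe'' at $j_1$ and $j_4$ is exactly what breaks down.

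The fix is to abandon maximality of $j_1, j_4$ altogether, which is what the paper does. Since $\jAnti(A) = \max \JIdeal(A)$ generates $\JIdeal(A)$, there exist $J, J' \in \jAnti(A)$ with $j_1 \le J$ and $j_4 \le J'$; and since $j_2, j_3 \notin \JIdeal(A)$, neither $J$ nor $J'$ can dominate $j_2$ or $j_3$. Set $A' \eqdef J \vee J'$. Then $\JIdeal(A') = \JIdeal(J) \cup \JIdeal(J')$ contains $j_1$ and $j_4$, and, being a subset of $\JIdeal(A)$, contains neither $j_2$ nor $j_3$; so $A'$ has a $-1$ at the same position and is non-permutational. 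Finally $J \ne J'$ (a single join irreducible is a bigrassmannian permutation matrix, hence permutational — or directly: any common upper bound of $j_1$ and $j_4$ in $\TPoset_n$ dominates $j_2$ or $j_3$), so $\jAnti(A') = \{J, J'\}$ has size exactly $2$ and is contained in $\jAnti(A)$. In my example this produces $A' = A$ with $\{J,J'\} = \{K,(0,0,0,3)\}$, which your method cannot reach since it insists on using $j_1$. Note that this corrected argument needs no appeal to nappes, no two-by-two HFM computation beyond the saddle characterization itself, and no special choice of the $-1$.
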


\begin{proof}
We have seen in~\cref{subsec:ASMs} that the~$1$ and $-1$ entries in an ASM corresponds to the saddle points in the corresponding surface.
More precisely, for any~$x_1, x_2, x_3, x_4 \in \N$, the ASM~$A$ has a $-1$ in position~$(x_1+x_2+1, x_1+x_3+1)$ if and only if~$\JIdeal(A)$ contains both~$j_1 \eqdef j(x_1+1, x_2, x_3, x_4)$ and~$j_4 \eqdef j(x_1, x_2, x_3, x_4+1)$ but neither~$j_2 \eqdef j(x_1, x_2+1, x_3, x_4)$ nor~$j_3 \eqdef j(x_1, x_2, x_3+1, x_4)$.
The latter happens if and only if~$\jAnti(A)$ contains two join irreducibles~$J, J'$ such that~$j_1 \le J$ and~$j_4 \le J'$ but neither~$J$ nor~$J'$ is larger than~$j_2$ nor~$j_3$.
We conclude that~$A' = J \vee J'$ also contains a~$-1$ at the same position as~$A$.
\end{proof}

\begin{corollary}
\label{coro:minimaPermutahedral}
The following conditions are equivalent for a lattice congruence~$\equiv$ of~$\ASM_n$:
\begin{itemize}
\item the minima of the congruence classes of~$\equiv$ are all permutational,
\item $\jAnti(A) \not\subseteq \JIrr(\equiv)$ for each non-permutational ASM~$A$ with~$|\jAnti(A)| = 2$,
\item $\JIrr(\ASM_n) \ssm \JIrr(\equiv)$ is a vertex cover of the graph on $\JIrr(\ASM_n)$ with an edge~$\{J,J'\}$ if and only if~$J \vee J'$ is non-permutational.
\end{itemize}
\end{corollary}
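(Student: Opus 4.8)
The plan is to deduce this refinement of \cref{coro:minimaPermutahedralLong} by shrinking the family of ASMs that must be tested, using \cref{prop:subCanonicalJoinRepresentation}, and then rewriting the resulting condition as a covering property of the graph.

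First I would recall that, by \cref{coro:minimaPermutahedralLong}, the first condition is equivalent to demanding that $\jAnti(A) \not\subseteq \JIrr(\equiv)$ for \emph{every} non-permutational ASM~$A$. The second condition restricts this demand to the non-permutational ASMs~$A$ with~$|\jAnti(A)| = 2$, so it is obviously implied by the first. For the converse I would feed an arbitrary non-permutational ASM~$A$ into \cref{prop:subCanonicalJoinRepresentation}, which returns a non-permutational ASM~$A'$ with~$|\jAnti(A')| = 2$ and~$\jAnti(A') \subseteq \jAnti(A)$; assuming the second condition gives~$\jAnti(A') \not\subseteq \JIrr(\equiv)$, so some join irreducible of~$\jAnti(A') \subseteq \jAnti(A)$ avoids~$\JIrr(\equiv)$, whence $\jAnti(A) \not\subseteq \JIrr(\equiv)$. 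This settles the equivalence of the first two conditions.

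The remaining step is to identify the second condition with the vertex cover condition. Set $S \eqdef \JIrr(\ASM_n) \ssm \JIrr(\equiv)$. The crux is a bijection between the edges $\{J, J'\}$ of the graph and the non-permutational ASMs~$A$ with $|\jAnti(A)| = 2$. If $J$ and $J'$ are comparable, then $J \vee J'$ is join irreducible, hence a bigrassmannian permutation matrix and in particular permutational, so it contributes no edge; thus every edge joins incomparable join irreducibles. For incomparable $J, J'$, one has $\JIdeal(J \vee J') = \JIdeal(J) \cup \JIdeal(J')$, and both $J$ and $J'$ are maximal in this union (everything in $\JIdeal(J)$ lies below~$J$, everything in $\JIdeal(J')$ below~$J'$, and $J, J'$ are incomparable), so $\jAnti(J \vee J') = \{J, J'\}$ has size~$2$. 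Conversely, $\jAnti(A) = \{J, J'\}$ forces $A = J \vee J'$ (since $A = \bigvee \jAnti(A)$ in the distributive lattice $\ASM_n$), producing an edge exactly when $A$ is non-permutational.

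To close, since $S$ is the complement of $\JIrr(\equiv)$ inside $\JIrr(\ASM_n)$, an edge $\{J, J'\}$ meets $S$ precisely when $\{J, J'\} \not\subseteq \JIrr(\equiv)$; hence $S$ is a vertex cover of the graph if and only if $\jAnti(A) \not\subseteq \JIrr(\equiv)$ for every non-permutational ASM~$A$ with $|\jAnti(A)| = 2$, which is the second condition. I expect the only delicate point to be the identity $\jAnti(J \vee J') = \{J, J'\}$ for incomparable join irreducibles (together with the exclusion of comparable pairs), as this is what makes the edges of the graph correspond exactly to the two-element join representations being tested; the rest is a direct unwinding of \cref{coro:minimaPermutahedralLong} and \cref{prop:subCanonicalJoinRepresentation}.
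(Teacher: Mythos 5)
Your proposal is correct and follows exactly the route the paper intends: the corollary is stated there without an explicit proof precisely because it is the combination of \cref{coro:minimaPermutahedralLong} with \cref{prop:subCanonicalJoinRepresentation}, which is what you carry out. Your additional verification that $\jAnti(J \vee J') = \{J,J'\}$ for incomparable join irreducibles (using that join irreducibles are join prime in a distributive lattice, so $\JIdeal(J \vee J') = \JIdeal(J) \cup \JIdeal(J')$) and that comparable pairs never form edges is a valid and welcome unwinding of the detail the paper leaves implicit.
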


Of course, there is a dual statement for all maxima to be permutational.
For completeness, we note that the same arguments enable to prove the following version, considering simultaneously minima and maxima.

\begin{corollary}
\label{coro:minimaMaximaPermutahedral}
The following conditions are equivalent for a lattice congruence~$\equiv$ of~$\ASM_n$:
\begin{itemize}
\item the minima and maxima of the congruence classes of~$\equiv$ are all permutational,
\item $\JIrr(\ASM_n) \ssm \JIrr(\equiv)$ is a vertex cover of the graph on $\JIrr(\ASM_n)$ with an edge~$\{J,J'\}$ if and only if~$J \vee J'$ or~$\kappa(J) \wedge \kappa(J')$ is non-permutational.
\end{itemize}
\end{corollary}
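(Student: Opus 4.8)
The plan is to deduce this from \cref{coro:minimaPermutahedral} together with its meet-analogue, and then transport the meet side to the join side through the canonical poset isomorphism $\kappa \colon \JIrr(\ASM_n) \to \MIrr(\ASM_n)$ from \cref{subsec:meetJoinRepresentationsDistributiveLattices}. First I would record the dual of \cref{coro:minimaPermutahedral}: since the maxima of congruence classes are governed by meet irreducibles exactly as the minima are governed by join irreducibles (\cref{thm:congruencesDistributiveLattices1}), the maxima of $\equiv$ are all permutational if and only if $\MIrr(\ASM_n) \ssm \MIrr(\equiv)$ is a vertex cover of the graph on $\MIrr(\ASM_n)$ with an edge $\{M,M'\}$ if and only if $M \wedge M'$ is non-permutational. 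This relies on the dual of \cref{prop:subCanonicalJoinRepresentation}, namely that every non-permutational ASM $A$ admits a non-permutational ASM $A'$ with $|\mAnti(A')| = 2$ and $\mAnti(A') \subseteq \mAnti(A)$; the proof is the saddle-point argument of \cref{prop:subCanonicalJoinRepresentation} read through the complementary pair of meet irreducibles.

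Next I would translate this meet-side condition to the join side. The key identity is $\MIrr(\equiv) = \kappa(\JIrr(\equiv))$, which holds because (in a distributive lattice) a congruence contracts a join irreducible $j$ exactly when it contracts the meet irreducible $\kappa(j)$, so $\kappa$ restricts to a bijection between contracted join irreducibles and contracted meet irreducibles. Writing $V \eqdef \JIrr(\ASM_n) \ssm \JIrr(\equiv)$ and using that $\kappa$ is a bijection on the full sets, this gives $\MIrr(\ASM_n) \ssm \MIrr(\equiv) = \kappa(V)$. Because $\kappa$ is a poset isomorphism, it carries the graph $G_2$ on $\JIrr(\ASM_n)$ with edge $\{J,J'\}$ whenever $\kappa(J) \wedge \kappa(J')$ is non-permutational isomorphically onto the meet-side graph above, so $\kappa(V)$ is a vertex cover of the latter if and only if $V$ is a vertex cover of $G_2$. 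Hence the maxima of $\equiv$ are all permutational if and only if $V$ is a vertex cover of $G_2$.

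Finally I would combine the two halves. By \cref{coro:minimaPermutahedral}, the minima of $\equiv$ are all permutational exactly when $V$ is a vertex cover of the graph $G_1$ on $\JIrr(\ASM_n)$ with edge $\{J,J'\}$ whenever $J \vee J'$ is non-permutational. A single set is simultaneously a vertex cover of $G_1$ and of $G_2$ if and only if it is a vertex cover of their union $G_1 \cup G_2$, whose edges are the pairs $\{J,J'\}$ such that $J \vee J'$ or $\kappa(J) \wedge \kappa(J')$ is non-permutational. This is precisely the graph appearing in the statement, so the minima and maxima of $\equiv$ are all permutational if and only if $V$ is a vertex cover of this union, yielding the claimed equivalence. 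The main obstacle is the bookkeeping of the $\kappa$-translation, in particular pinning down $\MIrr(\equiv) = \kappa(\JIrr(\equiv))$ and verifying the dual of \cref{prop:subCanonicalJoinRepresentation} with the same size-two reduction; once these dualities are secured, the merger of the two vertex-cover conditions into a single union graph is routine.
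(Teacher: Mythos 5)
Your proposal is correct and follows exactly the route the paper intends: the paper offers no separate proof of \cref{coro:minimaMaximaPermutahedral}, stating only that ``the same arguments'' as for \cref{coro:minimaPermutahedral} apply, and your argument is precisely that---the dual (meet-irreducible) version of \cref{prop:subCanonicalJoinRepresentation} and \cref{coro:minimaPermutahedral}, transported to the join side via $\kappa$ using $\MIrr(\equiv) = \kappa(\JIrr(\equiv))$, then merged with the join-side condition by taking the union of the two graphs. Your write-up in fact supplies the bookkeeping the paper leaves implicit, and all of its steps (perspectivity of $[j_\star, j]$ and $[\kappa(j), \kappa(j)^\star]$, the size-two meet reduction, and the union-of-graphs observation) are sound.
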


\cref{coro:minimaPermutahedral,coro:minimaMaximaPermutahedral} turns the enumeration of lattice congruences of $\ASM_n$ where minima and/or maxima are required to be permutational into an enumeration of vertex covers.
\cref{table:numberCongruences} gathers the first few values.

\begin{table}
	\centerline{
	\begin{tabular}{c|ccccccc|c}
		$n$ & $2$ & $3$ & $4$ & $5$ & $6$ & $7$ & $\cdots$ & OEIS \\ \hline
	 	all congruences & $2$ & $16$ & $1024$ & $1048576$ & $34359738368$ & $72057594037927936$ & $\cdots$ & \href{https://oeis.org/A125791}{A125791} \\ 
		min permutational & $2$ & $12$ & $216$ & $10480$ & $1344096$ & $465473984$ & $\cdots$ & \href{https://oeis.org/A391690}{A391690} \\
		min \& max permutat. & $2$ & $9$ & $69$ & $716$ & $8986$ & $128065$ & $\cdots$ & \href{https://oeis.org/A391691}{A391691}
	\end{tabular}
	}
	\caption{The numbers of congruences of~$\ASM_n$ depending on whether we require all minima and/or maxima to be permutational.}
	\label{table:numberCongruences}
\end{table}


\pagebreak
\section{Bases of the Temperley--Lieb algebra}
\label{sec:TL}

In this section, we construct various bases of the Temperley--Lieb algebra~$\TL_n(2)$ using the quotients studied in \cref{sec:quotientsASMs,sec:maxima,sec:minimalPermutations}, following the work of~\cite{BergeronGagnon}.


\subsection{The Temperley--Lieb algebra}

Recall that, for~$q \in \C$, the \defn{Temperley--Lieb algebra}~$\TL_n(q)$ is the $\C$-algebra generated by~$e_1, \dots, e_{n-1}$ subject to the \defn{Jones relations}
\begin{itemize}
\item $e_i^2 = q e_i$ for~$i \in [n-1]$,
\item $e_i e_{i+1} e_i = e_i$ for~$i \in [n-2]$,
\item $e_{i+1} e_i e_{i+1} = e_{i+1}$ for~$i \in [n-2]$,
\item $e_i e_j = e_j e_i$ for~$i,j \in [n-1]$ with~$|i-j| \ne 1$.
\end{itemize}
Although we will not use it in this paper, let us recall that there is also a classical diagrammatic description of~$\TL_n(q)$ as the vector space generated by noncrossing matchings of two vertical lines of~$n$ points, where the multiplication of two diagrams is obtained by concatenating them and replacing each closed loop by a factor~$q$, and where the generator~$e_i$ corresponds to the diagram connecting the $i$th and~$(i+1)$th points of each line, and the $j$th points of both lines for all~$j \notin \{i,i+1\}$.
See \cref{fig:TL}.

\begin{figure}[ht]
	\centering
	\includegraphics[scale=.5]{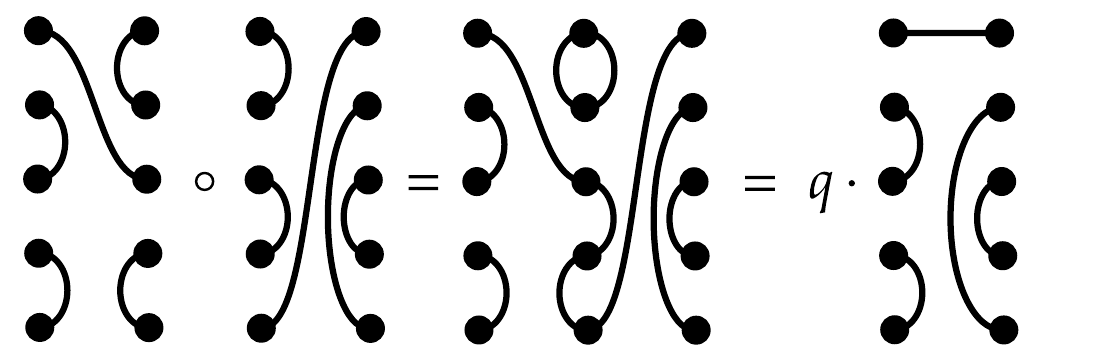}
	\caption{The classical diagrammatic description of the Temperley--Lieb algebra.}
	\label{fig:TL}
\end{figure}


\subsection{Bases}

In this section, we produce various bases of the Temperley--Lieb algebra~$\TL_n(2)$ following~\cite{BergeronGagnon}.
It is well known that~
\[
\TL_n(2) \simeq \C\Ss_n / \left\langle () - (ij) - (jk) - (ik) + (ijk) + (ikj) \mid 1 \leq i < j < k \leq n \right\rangle,
\]
where permutations are written in cycle notation. Hence any $321$ pattern can be rewritten as $321 \to 123 - 213 - 132 + 231 + 312$.
As a consequence, the set~$T_n$ of $321$-avoiding permutations of~$[n]$ forms a basis of $\TL_n(2)$.
This was extended in~\cite{BergeronGagnon} to the following.

\begin{theorem}[\cite{BergeronGagnon}]
Any choice of representative permutations of the excedance classes on~$\Ss_n$ forms a basis of the Temperley--Lieb algebra~$\TL_n(2)$.
\end{theorem}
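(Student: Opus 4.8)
The plan is to compare the chosen system of representatives with the known basis $T_n$ of $321$-avoiding permutations and to show that the change-of-basis matrix between them is \emph{unitriangular}. Throughout I index the excedance classes by noncrossing partitions, writing $C_\lambda$ for the class and $\b{\lambda}(\sigma)$ for the noncrossing partition indexing the class of $\sigma$. Recall from \cref{thm:minimalPermutation} that each $C_\lambda$ contains a \emph{unique} minimal permutation $\sigma_\lambda$, that $\sigma_\lambda$ is $321$-avoiding, and that $\lambda\mapsto\sigma_\lambda$ is a bijection onto $T_n$. Since $T_n$ is a basis and its cardinality equals the number of classes, it suffices to relate the family $(\rho_\lambda)$ of representatives to $(\sigma_\lambda)$ by a matrix that is triangular with $1$'s on the diagonal, for then it is automatically invertible and $(\rho_\lambda)$ is a basis.

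The first step is triangularity. In $\TL_n(2)$ the defining relation says that the antisymmetrizer $\sum_{w\in\Ss_3}\mathrm{sgn}(w)\,w$ vanishes on every triple of positions; concretely, any $\sigma$ containing a $321$-pattern at positions $i<j<k$ satisfies $\sum_{w}\mathrm{sgn}(w)\,\sigma\circ w=0$, where $w$ ranges over the six rearrangements of the values at positions $i,j,k$. Hence $\sigma$ rewrites as a signed sum of the five permutations $\sigma\circ w$ with $w\neq e$, each obtained by partially sorting a descending triple and so strictly below $\sigma$ in Bruhat order. Inducting on Bruhat length, every $\sigma$ thus expands in $T_n$. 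Because the projection $\ASM_n\to\ASM_n/{\equiv}\simeq\Dyck_n$ is a lattice morphism, hence order preserving, every permutation occurring in the reduction of $\sigma$ lies in a class $\mu\le\b{\lambda}(\sigma)$ in the Stanley order; as $\sigma_\lambda$ is the \emph{only} $321$-avoiding element of its class, this gives $\rho_\lambda=\sum_{\mu\le\lambda}c_{\lambda\mu}\,\sigma_\mu$, a lower-triangular expansion.

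It remains to pin the diagonal coefficients. I would prove the stronger claim that for \emph{every} $\sigma$ the coefficient of $\sigma_{\b{\lambda}(\sigma)}$ in its $T_n$-expansion equals $1$, by induction on Bruhat length. Extracting that coefficient from $\sigma=-\sum_{w\ne e}\mathrm{sgn}(w)\,\sigma\circ w$ and using the inductive hypothesis (a lower permutation contributes $1$ to its own class' diagonal term, and $0$ to strictly higher classes) yields
\[
\big[\,\sigma_{\b{\lambda}(\sigma)}\,\big]\,\sigma \;=\; 1 - \sum_{w\in\Ss_3}\mathrm{sgn}(w)\,\big[\,\sigma\circ w\equiv\sigma\,\big],
\]
so the step closes exactly when the sign-balance identity $\sum_{w\in G}\mathrm{sgn}(w)=0$ holds, with $G=\set{w\in\Ss_3}{\sigma\circ w\equiv\sigma}$. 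To prove it, note that on the three active positions the excedance class is recorded by the set $P_0$ of excedance positions and the set $V_0$ of excedance values; a rearrangement lies in $G$ precisely when it carries $P_0$ onto $V_0$ respecting the excedance threshold ($\ge$) and $\overline{P_0}$ onto $\overline{V_0}$ respecting the non-excedance threshold ($<$). Thus $G$ factors as a product of two blocks and $\sum_{w\in G}\mathrm{sgn}(w)$ factors accordingly. If $|P_0|\in\{0,3\}$ the extremal value dominates all positions and $G=\Ss_3$, which is balanced; otherwise the blocks have sizes $1$ and $2$, and the size-$1$ factor contributes $1$.

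The main obstacle is this last lemma. The key point is that \cref{prop:minimalPermutation} guarantees a transposition in $G$, and a transposition preserving both $P_0$ and $V_0$ cannot move a value between an excedance and a non-excedance slot, so it must swap two positions \emph{inside a single block}. Since the only block of size $\ge 2$ is the size-$2$ block, its nontrivial swap is valid, that factor is sign-balanced, and $\sum_{w\in G}\mathrm{sgn}(w)=0$ follows. This gives $c_{\lambda\lambda}=1$, the change-of-basis matrix is unitriangular, and any choice of representatives $(\rho_\lambda)$ is a basis of $\TL_n(2)$. The same argument applies verbatim to an arbitrary catalan congruence, replacing the excedance description of $G$ by the wall/nappe description of \cref{prop:nappe}, which is how the statement is upgraded in \cref{thm:main2}.
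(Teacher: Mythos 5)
Your proposal is correct and shares its skeleton with the paper's proof (which treats all catalan congruences at once, the excedance congruence being one of them): expansion in the $321$-avoiding basis $T_n$, Bruhat triangularity, and an induction on Bruhat order showing that the coefficient of the unique minimal permutation of the class (\cref{thm:minimalPermutation}) equals $1$, with \cref{prop:minimalPermutation} as the key input. The genuine difference lies in how you decide which of the six terms $\sigma\circ w$ are congruent to $\sigma$, namely your sign-balance lemma for $G=\{w\in\Ss_3 \mid \sigma\circ w\equiv\sigma\}$. The paper argues purely lattice-theoretically: if only one of the two transpositions of \cref{prop:minimalPermutation} is congruent to $\sigma$, then none of the three lower terms can be (each lies Bruhat-below the other transposition, and congruence classes are intervals, hence convex), so $G$ is a pair; if both are, the second half of \cref{prop:minimalPermutation} plus convexity give $G=\Ss_3$; in both cases the signed sum vanishes, and this reasoning applies verbatim to every catalan congruence. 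You instead prove sign-balance through the excedance-specific block structure: every $w\in G$ preserves the excedance-position set $P_0$ and value set $V_0$, so signs factor over the two blocks, and the transposition guaranteed by \cref{prop:minimalPermutation} forces the size-two block to be balanced. This is perfectly valid for the statement at hand, and it has a payoff you did not exploit: for a $321$-pattern $\sigma(i)>\sigma(j)>\sigma(k)$, the set $P_0$ is forced to be a prefix of $\{i,j,k\}$, and a direct check shows the within-block swap always respects the thresholds, so in the excedance case you could dispense with \cref{prop:minimalPermutation} entirely and recover the Bergeron--Gagnon theorem with no ASM machinery at all. Conversely, your closing claim that the argument extends ``verbatim'' to an arbitrary catalan congruence overreaches: the $P_0/V_0$ block description has no analogue there, and the general case is precisely what the paper's congruence-convexity case analysis is designed to handle.
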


We extend this result to all catalan congruences of~$\ASM_n$.

\begin{theorem}
Any choice of representative permutations in the classes of any catalan congruence~$\equiv$ of $\ASM_n$ forms a basis of~$\TL_n(2)$.
\end{theorem}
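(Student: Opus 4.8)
The plan is to prove that, for any choice of representatives $\{\rho_\lambda\}$, the change-of-basis matrix expressing them in the basis $T_n$ of $321$-avoiding permutations is unitriangular for (a linear extension of) the Stanley order on classes, hence invertible. First I would assemble the structural input. Since $\ASM_n/{\equiv}\cong\Dyck_n$, there are exactly $C_n$ congruence classes, matching $|T_n|=C_n=\dim\TL_n(2)$. By \cref{thm:minimalPermutation} each class contains a unique $321$-avoiding permutation, namely its minimal permutation $\sigma_\lambda$; by the count, $\lambda\mapsto\sigma_\lambda$ is a bijection from classes to $T_n$. I would also use two monotonicity facts: congruence classes are intervals of $\ASM_n$ (\cref{subsec:latticeCongruences}), hence order-convex for the Bruhat order, which is the restriction of the $\ASM_n$ order to permutation matrices; and the projection $\ASM_n\to\ASM_n/{\equiv}\cong\Dyck_n$ is a lattice morphism, hence monotone, so the class map $\Ss_n\to\Dyck_n$ carries Bruhat order to Stanley order.

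The heart is the following triangularity lemma, proved by induction on the Bruhat order: for every $\sigma\in C_\lambda$ one has in $\TL_n(2)$
\[
\sigma = \sigma_\lambda + \sum_{\mu < \lambda} c_{\sigma,\mu}\,\sigma_\mu,
\]
the sum running over classes strictly below $\lambda$. If $\sigma$ is $321$-avoiding it equals its own $\sigma_\lambda$ and there is nothing to prove. Otherwise pick any $321$ pattern $i<j<k$ and rewrite, using the presentation of $\TL_n(2)$ recalled above,
\[
\sigma = \sigma\circ(i,j) + \sigma\circ(j,k) + \sigma\circ(i,k) - \sigma\circ(i,j,k) - \sigma\circ(i,k,j).
\]
The five permutations on the right realize the five $321$-avoiding rearrangements of the three values at positions $i,j,k$, hence are Bruhat-strictly-below $\sigma$; by monotonicity they lie in classes $\le\lambda$, so the inductive hypothesis applies to each. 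Consequently $\sigma$ expands into $\sigma_\mu$'s with $\mu\le\lambda$, and the coefficient of $\sigma_\lambda$ is exactly the signed number of these five terms that lie in class $\lambda$.

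To evaluate this signed count I would combine \cref{prop:minimalPermutation} with order-convexity. By \cref{prop:minimalPermutation}, either exactly one of $\sigma\circ(i,j),\sigma\circ(j,k)$ is in $C_\lambda$, or both are and then $\sigma\circ(i,k)$ is too. In the first case, say $\sigma\circ(i,j)\in C_\lambda$ and $\sigma\circ(j,k)\notin C_\lambda$: each of $\sigma\circ(i,k)$, $\sigma\circ(i,j,k)$, $\sigma\circ(i,k,j)$ satisfies $\,\cdot\le\sigma\circ(j,k)\le\sigma$ (locally $123,213,132\le312\le321$), so were any of them in $C_\lambda$, convexity together with $\sigma\in C_\lambda$ would trap $\sigma\circ(j,k)$ inside $C_\lambda$, a contradiction; the only class-$\lambda$ term is $\sigma\circ(i,j)$, of sign $+1$. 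In the second case $\sigma\circ(i,k)$ (the local minimum $123$) and $\sigma$ (the maximum $321$) both lie in $C_\lambda$, so by convexity the intermediate terms $\sigma\circ(i,j,k)$ and $\sigma\circ(i,k,j)$ do as well; all five are in $C_\lambda$ and the signed count is $(+1)+(+1)+(+1)+(-1)+(-1)=+1$. In every case the coefficient of $\sigma_\lambda$ is $1$, completing the induction. Applying the lemma to each representative $\rho_\lambda$ (which may be chosen a permutation since the maximum of each class is a covexillary permutation by \cref{thm:covexillary}) yields $\rho_\lambda=\sigma_\lambda+\sum_{\mu<\lambda}c_\mu\sigma_\mu$, so in any linear extension of the Stanley order the $\rho_\lambda$ expand unitriangularly over the basis $T_n$, and therefore form a basis of $\TL_n(2)$.

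The main obstacle is precisely the signed-coefficient computation, i.e.\ determining which of the five reduction terms lie in the class of $\sigma$. This is exactly where the extra clause of \cref{prop:minimalPermutation}—that $\sigma\circ(i,k)$ is congruent whenever both $\sigma\circ(i,j)$ and $\sigma\circ(j,k)$ are—is indispensable, since convexity reaches the intermediate three-cycle terms only once the bottom term $\sigma\circ(i,k)$ is known to belong to the class. A secondary point to keep clean is that the induction allows the $321$ pattern to be chosen arbitrarily, which is legitimate because both \cref{prop:minimalPermutation} and the convexity argument hold verbatim for every occurrence of a $321$ pattern.
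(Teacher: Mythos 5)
Your proposal is correct and takes essentially the same route as the paper's proof: the same five-term rewriting $\sigma = \sigma\circ(i,j)+\sigma\circ(j,k)+\sigma\circ(i,k)-\sigma\circ(i,j,k)-\sigma\circ(i,k,j)$ applied to an arbitrary $321$ pattern, induction on Bruhat order, the dichotomy of \cref{prop:minimalPermutation}, and unitriangular expansion over the $321$-avoiding basis via \cref{thm:minimalPermutation}. If anything, your Case 1 is slightly more explicit than the paper's, which simply asserts that the three remaining terms are not congruent to $\sigma$, whereas you derive this from order-convexity of the congruence classes together with the local Bruhat comparisons $123,213,132\le 312$.
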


\begin{proof}
Using the relation $(ik) = () - (ij) - (jk) + (ijk) + (ikj)$, we obtain that any permutation~$\sigma \in \Ss_n$ can be written as a linear combination of permutations avoiding $321$ that are below $\sigma$ in the Bruhat order
\[
\sigma = \sum_{\tau\in T_n,\tau\le \sigma} c_{\tau,\sigma} \ \tau.
\]
By \cref{thm:minimalPermutation}, the congruence class of $\sigma$ contains a unique minimal permutation~$\tau$, which moreover avoids the pattern $321$.
We prove by induction that $c_{\tau,\sigma}=1$.
This is obvious if~$\sigma = \tau$, \ie if~$\sigma$ avoids the pattern~$321$.
Assume now that~$\tau < \sigma$ and suppose by induction that~$c_{\tau,\sigma'} = 1$ for any permutations~$\sigma'$ such that~$\tau \le \sigma' < \sigma$. 
Note that for~$\sigma' < \sigma$, we have~$\tau \le \sigma'$ if and only if~$\sigma' \equiv \sigma$.
Since~$\sigma \ne \tau$, there is~$i < j < k$ such that~$\sigma(i) > \sigma(j) > \sigma(k)$, so that~$\sigma = \sigma\circ (i,j)+\sigma\circ (j,k)-\sigma\circ (i,j,k)-\sigma\circ (i,k,j)+\sigma\circ (i,k)$.
\cref{prop:minimalPermutation} states that at least one of the permutations $\sigma\circ (i,j)$ and $\sigma\circ (j,k)$ is congruent to $\sigma$.
If only $\sigma\circ (i,j)$ is congruent to $\sigma$, then we get~$c_{\tau,\sigma}=c_{\tau,\sigma \circ (i,j)}=1$ since all the other permutations in the relation are not congruent to~$\sigma$, hence not larger than~$\tau$.
If both~$\sigma\circ (i,j)$ and~$\sigma\circ (j,k)$ are congruent to $\sigma$, then $\sigma\circ (i,j,k)$,  $\sigma\circ (i,k,j)$, and~$\sigma\circ (i,k)$ are also congruent to them and we get~$c_{\tau,\sigma}=c_{\tau,\sigma \circ (i,j)}+c_{\tau,\sigma \circ (j,k)}-c_{\tau,\sigma \circ (i,j,k)}-c_{\tau,\sigma \circ (i,k,j)}+c_{\tau,\sigma \circ (i,k)}=1$.
By triangularity, any set of representatives of the congruence classes of~$\equiv$ can be expressed as linearly independent combinations of the $321$-avoiding permutations, hence forms a basis of $\TL_n(2)$.
\end{proof}


\section{Posets $\P^n/\Ss_n$}
\label{sec:Pn/Sn}

To conclude this paper, we observe that the join irreducible posets of all the lattices considered along the paper (and many others) are all instances of a general symmetrization operation on posets, which will certainly deserve further study.

Let $\P$ be a finite poset and $n\ge 0$.
The set~$\P^n$ of $n$-tuples of elements of $\P$ is ordered by coordinatewise comparison.
For all $\b{x} = (x_1,...,x_n) \in \P^n$ and~$\sigma\in \Ss_n$, we write~$\b{x}^\sigma \eqdef (x_{\sigma(1)},...,x_{\sigma(n)})$.
We consider the set $\P^n/\Ss_n$ of tuples of $\P^n$ up to permutation of their coordinates. 
We write~$\bar{\b{x}} \le \bar{\b{y}}$ if there exists~$\b{x} \in \bar{\b{x}}$ and~$\b{y} \in \bar{\b{y}}$ such that~$\b{x} \le \b{y}$.

\begin{proposition}
\label{prop:symmetrizationPoset}
The relation~$\le$ is a partial order on~$\P^n/\Ss_n$.
\end{proposition}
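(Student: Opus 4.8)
The plan is to recast the relation in terms of matchings of index sets and then verify reflexivity, transitivity, and antisymmetry in turn. First I would unwind the definition and absorb the freedom in the choice of representatives into a single bijection, obtaining the reformulation
\[
\bar{\b{x}} \le \bar{\b{y}} \quad\Longleftrightarrow\quad \exists\, \phi \in \Ss_n \text{ such that } x_i \le y_{\phi(i)} \text{ for all } i \in [n].
\]
Indeed, $\bar{\b{x}} \le \bar{\b{y}}$ means $\b{x}^\sigma \le \b{y}^\tau$ for some $\sigma,\tau \in \Ss_n$, \ie $x_{\sigma(i)} \le y_{\tau(i)}$ for all $i$, which after reindexing by $j = \sigma(i)$ is exactly $x_j \le y_{\phi(j)}$ with $\phi = \tau \circ \sigma^{-1}$; the converse is obtained by reordering $\b{y}$ along $\phi$. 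With this reformulation, reflexivity is witnessed by the identity permutation, and transitivity follows by composing matchings: if $x_i \le y_{\phi(i)}$ and $y_j \le z_{\psi(j)}$ for all $i,j$, then $x_i \le y_{\phi(i)} \le z_{\psi(\phi(i))}$, so $\psi \circ \phi \in \Ss_n$ witnesses $\bar{\b{x}} \le \bar{\b{z}}$.

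The antisymmetry is the crux of the argument. Suppose $\bar{\b{x}} \le \bar{\b{y}}$ and $\bar{\b{y}} \le \bar{\b{x}}$, witnessed by $\phi, \psi \in \Ss_n$ with $x_i \le y_{\phi(i)}$ and $y_j \le x_{\psi(j)}$ for all $i,j$. I would combine these into the single permutation $\rho \eqdef \psi \circ \phi \in \Ss_n$, for which
\[
x_i \le y_{\phi(i)} \le x_{\psi(\phi(i))} = x_{\rho(i)} \qquad \text{for all } i \in [n].
\]
Iterating along the cycle of $\rho$ through a fixed index $i$, of length $k$, produces a chain $x_i \le x_{\rho(i)} \le \dots \le x_{\rho^k(i)} = x_i$ that closes up on itself; by antisymmetry of the order on $\P$, all its terms are therefore equal, so $x_i = x_{\rho(i)}$ for every $i$. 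Sandwiching $y_{\phi(i)}$ between the two equal values $x_i$ and $x_{\rho(i)}$ then forces $x_i = y_{\phi(i)}$ for all $i$, and since $\phi$ is a bijection this exhibits $\b{y}$ as a reordering of $\b{x}$, whence $\bar{\b{x}} = \bar{\b{y}}$.

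The main obstacle is exactly this last step: the two matchings witnessing $\bar{\b{x}} \le \bar{\b{y}}$ and $\bar{\b{y}} \le \bar{\b{x}}$ are a priori unrelated, so one cannot directly conclude that the two multisets coincide coordinatewise. The device that resolves it is to compose the matchings into one permutation of $[n]$ and to exploit the finiteness of its cycles to collapse the resulting chains of inequalities into equalities; this is the only point where the finiteness of $n$ and the antisymmetry of the order on $\P$ are genuinely used.
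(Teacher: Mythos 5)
Your proof is correct and follows essentially the same route as the paper: reflexivity via the identity, transitivity by composing the two witnessing permutations, and antisymmetry by composing the witnesses into a single permutation $\rho$ with $x_i \le x_{\rho(i)}$ and using the finiteness of its cycles to force equality along each cycle. The only difference is one of presentation: you spell out the reduction to a single witnessing bijection and the final sandwiching step $x_i \le y_{\phi(i)} \le x_{\rho(i)} = x_i$, which the paper leaves implicit.
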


\begin{proof}
The relation~$\equiv$ is
\begin{itemize}
\item reflexive since~$\b{x} \le \b{x}$ for any~$\b{x} \in \bar{\b{x}}$,
\item transitive since~$\b{x} \le \b{y}^\sigma$ and~$\b{y} \le \b{z}^\tau$ implies~$\b{x} \le \b{z}^{\tau\sigma}$,
\item antisymmetric since~$\b{x} \le \b{x}^\sigma$ implies that~$\b{x}$ is constant on every cycle of~$\sigma$ (since ${x_{i_1} \le x_{\sigma(i_1)} = x_{i_2} \le \dots \le x_{i_k} \le x_{\sigma(i_k)} = x_{i_1}}$ for any cycle~$(i_1,...,i_k)$ of~$\sigma$), hence \linebreak that~$\b{x} = \b{x}^\sigma$.
\qedhere
\end{itemize}
\end{proof}

\pagebreak
\begin{example}
For the $2$-element chain~$\Cc_2$, the poset~$\Cc_2^n$ is the boolean lattice, and the quotient~$\Cc_2^n / \Ss_n$ is a $(n+1)$-element chain.
For the $k$-element chain~$\Cc_k$, the poset~$\Cc_k^n$ is a poset on the integer points in the $(k-1)$th dilate of the cube, and~$\Cc_k^n / \Ss_n$ is a poset on the integer points in one of the fundamental simplices of this cube.
\end{example}

\begin{example}
As illustrated in \cref{fig:smallposets}, the join irreducible posets of the lattices $\ASM_n$, $\Dyck_n$ and $\Cat_n$ considered in this paper are all instances of \cref{prop:symmetrizationPoset}.
Other examples include the join irreducible posets of the lattices of totally symmetric partitions, Magog triangles, gapless triangles, Gelfand-Tsetlin triangles with top line $12 \cdots n$, \emph{etc.}
\begin{figure}[ht]
	\centering
	\includegraphics[scale=.8]{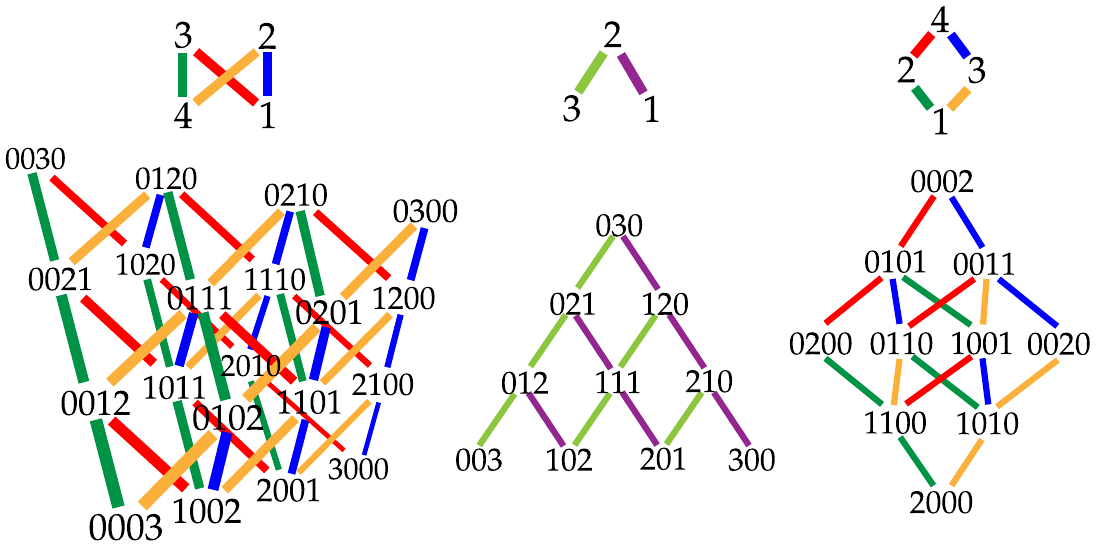}
	\caption{The posets $\P$ (top) such that $\P^{n-2}/\Ss_{n-2}$ (bottom) is isomorphic to the join irreducible posets of~$\Dyck_n$ (left), $\ASM_n$ (middle) and $\Cat_n$ (right). Here,~$n = 3$ for the first two columns, and $n = 2$ for the last one.}
	\label{fig:smallposets}
\end{figure}
\end{example}

An element $\bar{\b{x}} \in \P^n/\Ss_n$ can be encoded by the vector~$(|\b{x}|_p)_{p \in \P}$ where $|\b{x}|_p$ denotes the number of occurences of~$p$ in any representative~$\b{x} \in \bar{\b{x}}$.
These vectors generalize the barycentric coordinates described earlier to encode the join irreducible ASMs, Dyck paths, and Catalan triangles.

\begin{proposition}
The following assertions are equivalent for any~$\b{x}, \b{y} \in \P^n$:
\begin{enumerate}[(i)]
\item $\bar{\b{x}} \le \bar{\b{y}}$, \ie there exists~$\sigma \in \Ss_n$ such that~$\b{x} \le \b{y}^\sigma$ componentwise,
\item $\sum_{p \in U} |\b{x}|_p \le \sum_{p \in U} |\mathbf y|_p$ for all upper sets~$U$~of~$\P$,
\item there exists a nonnegative flow on the Hasse diagram of $\P$ with excess function ${p \mapsto |\b{x}|_p - |\mathbf y|_p}$.
\end{enumerate}
\end{proposition}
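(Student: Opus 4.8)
The plan is to establish the cyclic chain of implications (i) $\Rightarrow$ (iii) $\Rightarrow$ (ii) $\Rightarrow$ (i), which yields the full equivalence. Throughout I orient each covering relation $p \lessdot q$ of $\P$ as a directed edge $p \to q$ of the Hasse diagram, and I read the excess of a flow $f$ at a vertex $p$ as the net outflow $\sum_{p \to q} f(p,q) - \sum_{r \to p} f(r,p)$; condition (iii) then asks that this quantity equal $|\b{x}|_p - |\b{y}|_p$ for every $p \in \P$. Observe that $\sum_{p} (|\b{x}|_p - |\b{y}|_p) = n - n = 0$, which is the necessary global balance for such a flow to exist.

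For (i) $\Rightarrow$ (iii), I would start from a permutation $\sigma$ with $\b{x} \le \b{y}^\sigma$, so that $x_i \le y_{\sigma(i)}$ for every $i$. For each $i$ I choose a saturated chain from $x_i$ up to $y_{\sigma(i)}$ in the Hasse diagram and push one unit of flow along its edges; summing these $n$ unit flows produces a nonnegative flow $f$. Each unit path contributes $+1$ to the outflow at its source $x_i$ and $-1$ to the net outflow at its sink $y_{\sigma(i)}$, while intermediate vertices cancel, so the total excess at $p$ equals $\#\{i : x_i = p\} - \#\{i : y_{\sigma(i)} = p\} = |\b{x}|_p - |\b{y}|_p$, as required. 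For (iii) $\Rightarrow$ (ii), I fix an upper set $U$ and sum the excess over $U$. Edges with both endpoints in $U$, or both outside, cancel; and because $U$ is an upper set there is no edge $p \to q$ with $p \in U$ and $q \notin U$ (since $q > p$ would force $q \in U$). Hence only edges entering $U$ from below survive, each with a minus sign, giving $\sum_{p \in U}(|\b{x}|_p - |\b{y}|_p) = -\sum_{p \to q,\, p \notin U,\, q \in U} f(p,q) \le 0$ by nonnegativity of $f$, which is exactly the inequality of (ii).

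The main step, and the one I expect to be the real obstacle, is (ii) $\Rightarrow$ (i), which I would deduce from Hall's marriage theorem. Consider the bipartite graph on two copies of $[n]$ with an edge $i \sim j$ whenever $x_i \le y_j$; a perfect matching is precisely a permutation $\sigma$ witnessing (i). For any index set $A$, its neighborhood is $N(A) = \{j : y_j \ge x_i \text{ for some } i \in A\} = \{j : y_j \in U_A\}$, where $U_A \eqdef \uparrow\{x_i : i \in A\}$ is the upper set generated by the values of $\b{x}$ on $A$; hence $|N(A)| = \sum_{p \in U_A} |\b{y}|_p$. Since every $i \in A$ has $x_i \in U_A$, we also have $|A| \le \sum_{p \in U_A} |\b{x}|_p$. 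Applying hypothesis (ii) to the upper set $U_A$ then gives $|A| \le \sum_{p \in U_A} |\b{x}|_p \le \sum_{p \in U_A} |\b{y}|_p = |N(A)|$, so Hall's condition holds for all $A$ and the matching exists. The delicate point is the identification of each neighborhood with an upper-set sum: this is what allows the single family of inequalities in (ii), indexed by upper sets, to certify Hall's condition across all of the exponentially many index sets $A$, and it is where the hypothesis is used in an essential way.
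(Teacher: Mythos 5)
Your proof is correct and follows essentially the same route as the paper's: the same cycle of implications (i) $\Rightarrow$ (iii) $\Rightarrow$ (ii) $\Rightarrow$ (i), with the flow built by superposing unit flows along saturated chains, the upper-set boundary argument for (iii) $\Rightarrow$ (ii), and Hall's marriage theorem applied to a bipartite comparability graph where the neighborhood of any set is identified with an upper-set sum. The only cosmetic differences are that you index the bipartite graph by positions in $[n]$ rather than by occurrence pairs $(p,k)$, and you are more explicit about the orientation and sign conventions in the flow argument.
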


\begin{proof}
\underline{(i) $\Rightarrow$ (iii):}
Let~$\sigma$ be such that~$\b{x} \le \b{y}^\sigma$ componentwise.
For each~$i \in [n]$, let~$f_i$ denote the flow on the Hasse diagram of~$\P$ with~$1$ along an arbitrary directed path joining~$x_i$ to~$y_{\sigma^{-1}(i)}$ and $0$ elsewhere.
Then~$\sum_{i \in [n]} f_i$ is a nonnegative flow on the Hasse diagram of $\P$ with excess function $p \mapsto |\b{x}|_p - |\mathbf y|_p$.

\underline{(iii) $\Rightarrow$ (ii):}
Consider a nonnegative flow~$f$ on the Hasse diagram of $\P$ with excess function $p \mapsto |\b{x}|_p - |\mathbf y|_p$ and let~$U$ be an upper set of~$\P$.
Then~$\sum_{p \in U} |\b{x}|_p - |\mathbf y|_p$ equals the incoming flow of~$f$ to~$U$, hence it is nonnegative.

\underline{(ii) $\Rightarrow$ (i):}
Assume that $\sum_{p \in U} |\b{x}|_p \le \sum_{p \in U} |\mathbf y|_p$ for all upper sets~$U$~of~$\P$.
Consider the the bipartite graph $G \eqdef (X \sqcup Y, E)$ where $X \eqdef \set{(p,k)}{p \in \P \text{ and } 1 \le k \le |\b{x}|_p}$ while $Y \eqdef \set{(p,k)}{p \in \P \text{ and } 1 \le k \le |\b{y}|_p}$, and $E$ has an edge between $(p_1,k_1) \in X$ and $(p_2,k_2) \in Y$ if and only if~$p_1\le p_2$.
Consider an arbitrary subset~$W \subseteq X$, denote by~$N_G(W) \eqdef \bigcup_{x \in W} \set{y \in Y}{(x,y) \in E}$ the neighborhood of~$W$ in~$G$, and by~$U$ the upper set of $\P$ generated by elements $p \in \P$ such that there exists some $(p,i)\in W$.
Then
\[
|W| \le \sum_{p \in U} |\b{x}|_p  \le \sum_{p \in U} |\b{y}|_p = |N_G(W)|,
\]
where the second inequality holds by~(ii).
Hence, by application of Hall's perfect matching theorem, there is a matching of~$G$ covering~$X$.
This matching clearly defines a permutation~$\sigma \in \Ss_n$ with~$\b{x} \le \b{y}^\sigma$.
\end{proof}


\section*{Acknowledgments}

We are grateful to Lucas Gagnon, Jean-Christophe Novelli, Wenjie Fang, and
Jessica Striker for their insightful discussions and suggestions. We also
thank Romain Di Vozzo and Selena Pere for their helpful discussions and assistance with the 3D models.
We thank anonymous referees of the FPSAC'26 conference for comments on the presentation.

\bibliographystyle{alpha}
\bibliography{Quotients_ASMs}

\end{document}